\newcommand{\dint}{\dyle\int}
\newcommand{\p}{\partial}
\newcommand{\re}{{I\!\!R}}
\newcommand{\ren}{\re^N}
\newcommand{\dyle}{\displaystyle}
\newcommand{\ene}{{I\!\!N}}
\newcommand{\irn}{\int_{\re^N}}
\newcommand{\io}{\int\limits_\O}
\newcommand{\limit}{\lim\limits}
\renewcommand{\a }{\alpha }
\renewcommand{\b }{\beta }
\renewcommand{\d }{\delta }
\newcommand{\D }{\Delta }
\newcommand{\e }{\varepsilon }
\newcommand{\g }{\gamma}
\renewcommand{\l }{\lambda }
\newcommand{\n }{\nabla }
\newcommand{\s }{\sigma }
\renewcommand{\t }{\tau }
\renewcommand{\O }{\Omega }
\newcommand{\inn}{\mbox{ in }}
\renewcommand{\ge }{\geqslant}
\renewcommand{\geq }{\geqslant}
\renewcommand{\le }{\leqslant}
\renewcommand{\leq }{\leqslant}
\newtheorem{Theorem}{Theorem}[section]
\newtheorem{Corollary}[Theorem]{Corollary}
\newtheorem{Lemma}[Theorem]{Lemma}
\newtheorem{Proposition}[Theorem]{Proposition}
\theoremstyle{definition}
\newtheorem{Definition}[Theorem]{Definition}
\newtheorem{remarks}[Theorem]{Remarks}
\newtheorem{remark}[Theorem]{Remark}
\newcommand{\cqd}{{\unskip\nobreak\hfil\penalty50
        \hskip2em\hbox{}\nobreak\hfil\mbox{$\square$ \qquad}
        \parfillskip=0pt \finalhyphendemerits=0\par\medskip}}
\renewcommand\theequation{\thesection.\@arabic\c@equation}
\begin{document}

\title[]{On the KPZ equation with fractional diffusion: global regularity and existence results}
\author[B. Abdellaoui, I. Peral, A. Primo,  F. Soria]{Boumediene Abdellaoui, Ireneo Peral${}^{(*)}$, Ana Primo, Fernando Soria }
\address{\hbox{\parbox{5.7in}{\medskip\noindent {$*$ Laboratoire d'Analyse Nonlin\'eaire et Math\'ematiques
Appliqu\'ees. \hfill \break\indent D\'epartement de Math\'ematiques, Universit\'e Abou Bakr Belka\"{\i}d, Tlemcen, \hfill\break\indent Tlemcen 13000,
Algeria.}}}}
\address{\hbox{\parbox{5.7in}{\medskip\noindent{Departamento de Matem\'aticas,\\ Universidad Aut\'onoma de Madrid,\\
        28049, Madrid, Spain. \\[3pt]
        \em{E-mail:\,}{\tt boumediene.abdellaoui@inv.uam.es, ireneo.peral@uam.es, ana.primo@uam.es, fernando.soria@uam.es
         }.}}}}

\date{\today}
\thanks{ This work was partially supported by research grants MTM2016-80474-P and  PID2019-110712GB-I00, MINECO, Spain.
The first author has been also partially supported by an Erasmus grant from Autonomous University of Madrid and by the DGRSDT, Algeria. \\
During the final stages of this work, Ireneo Peral sadly passed away after a very short period of illness. He worked very hard and made fundamental contributions to this project, but the responsibility for the correctness of the presentation rests solely and exclusively with the other three authors. The paper is dedicated to him as a tribute for his guidance and teachings.} 
\keywords{Fractional
heat equations, Global regularity, Nonlinear term in the gradient, Kardar-Parisi-Zhang equation, general comparison principle.
\\
\indent 2000 {\it Mathematics Subject Classification: MSC 2000: 35K59,47G20, 47J35. }}

\begin{abstract}
In this work we analyze the existence of solutions to the fractional quasilinear problem,
$$
(P) \left\{
\begin{array}{rcll}
u_t+(-\Delta )^s u &=&|\nabla u|^{\alpha}+ f &\inn \Omega_T\equiv\Omega\times (0,T),\\ u(x,t)&=&0 & \inn(\mathbb{R}^N\setminus\Omega)\times [0,T),\\
u(x,0)&=&u_{0}(x) & \inn\Omega,\\
\end{array}\right.
$$
where $\Omega$ is a $C^{1,1}$ bounded domain in $\mathbb{R}^N$, $N> 2s$ and  $\frac{1}{2}<s<1$. We will assume  that $f$ and $u_0$ are non negative functions
satisfying some additional hypotheses that  will be specified later on.

Assuming certain regularity on $f$, we will prove the existence of a solution to problem $(P)$ for values $\alpha<\dfrac{s}{1-s}$, as well as the non existence of such a
solution when $\alpha>\dfrac{1}{1-s}$. This behavior clearly exhibits a deep difference with the local case.
\vskip 2mm
\hfill {\it To Ireneo, our teacher, colleague and friend, in memoriam.}

\end{abstract}

\maketitle
{\footnotesize
\tableofcontents
}

\section{Introduction}
In the paper \cite{KPZ}, Kardar, Parisi and Zhang describe the following  model for the growth of surfaces
$$u_t-\Delta u=c\sqrt{1+|\nabla u|^2}+f, $$
where $f$ represents in general  a stochastic process. After a Taylor expansion for small size of the gradient, they consider instead the so called KPZ equation,
$$u_t-\Delta u=c|\nabla u|^2+g.$$
It is important to say that, from a physical point of view, the KPZ equation is a relevant case of study because, among other things,  it defines a new \textit{ universality class} for a lot of models in Statistical
Mechanics (see for instance \cite{barstan} and \cite{cor}).
In that sense, the behavior of the so called Hopf-Cole class of solutions has been deeply researched in the seminal paper by M. Hairer  \cite{Hairer}.

\

We will restrict ourselves to the deterministic setting, that is, when  the source term is a function with a suitable summability.
In the local critical case $\a=2$ with $s=1$ there is a large literature of results. For instance,  in the paper \cite{ADP} (see also the references therein) a classification of the
solutions was found showing in particular an extreme case of non-uniqueness.

\

The relevant facts in the KPZ model are that the growth is driven in the direction of the gradient of the interface while the diffusion comes from the classical Laplacian
(remember that behind all this one finds always a Brownian motion).

\

 There is another question to take into account, which is that the diffusion to consider  may  change according with the medium.  For instance in the paper by
 Barenblatt,  Bertsch,  Chertock, and  Prostokishin, \cite{baren}, the growth in a porous medium was considered.  This model has been studied for instance in the
 papers, \cite{APW} and \cite{APW1}.  The diffusion for a power law in the gradient (the p-Laplacian) has been also studied, see for instance  \cite{ADPS},
 \cite{BG} and the references therein.

\

The main goal of this work is to study a non local version of the  Kardar-Parisi-Zhang equation. More precisely, the idea is to consider diffusion driven by the
\textit{fractional Laplacian} (so that  behind it what one finds is a Levy process). More specifically, we deal with the problem
\begin{equation}\label{grad}
\left\{
\begin{array}{rcll}
u_t+(-\Delta )^s u&=&|\nabla u|^{\alpha}+ f & \inn \Omega_T\equiv\Omega\times (0,T),\\ u(x,t)&=&0 & \inn(\mathbb{R}^N\setminus\Omega)\times [0,T),\\
u(x,0)&=&u_{0}(x) & \inn\Omega,\\
\end{array}\right.
\end{equation}
where $\Omega$ is a $C^{1,1}$ bounded domain in $\mathbb{R}^N$, $N> 2s$ and  $\frac{1}{2}<s<1$. We suppose  that $f$ and $u_0$ are non negative functions
satisfying some hypotheses that we will precise later.

\

By $(-\Delta)^s$ we mean the  fractional Laplacian operator of order $2s$ given as the multiplier of the Fourier transform with symbol $|\xi|^{2s}$. That is,  for every $u\in \mathscr{S}(\mathbb{R}^N)$, the Schwartz class, $$(-\Delta)^{s}u=\mathcal{F}^{-1}(|\xi|^{2s}\mathcal{F}(u)),\, \xi\in\mathbb{R}^{N}, s\in(0,1),$$
where $\mathcal{F}$ denotes Fourier transform and $\mathcal{F}^{-1}$ its inverse.

\

As was indicated by M. Riesz in his foundational paper \cite{Riesz}, since  the formal homogeneous  kernel corresponding to the multiplier $|\xi|^{2s}$ is a constant multiple of
$|x|^{-N-2s}$, therefore not in $L^1_{loc}$, the definition cannot be a convolution but rather a principal value given by the following expression
\begin{equation}\label{fraccionario}
(-\Delta)^{s}u(x):=a_{N,s}\mbox{ P.V. }\int_{\mathbb{R}^{N}}{\frac{u(x)-u(y)}{|x-y|^{N+2s}}\, dy},\quad s\in(0,1),
\end{equation}
where
$$a_{N,s}:=2^{2s-1}\pi^{-\frac N2}\frac{\Gamma(\frac{N+2s}{2})}{|\Gamma(-s)|}$$
is the normalization constant related  to the definition through the Fourier transform. This formula is obtained by analytic continuation of the Riesz
potentials in the complex plane.  See the details for instance in \cite{Landkof} and \cite{PSbook}.
The hypothesis $s>\frac 12$ is a natural assumption to allow the presence of a power of  the gradient as a nonlinear perturbation.
The stationary problem associated to problem \eqref{grad} has recently been studied in \cite{CV2} and \cite{AP}.

\

 The interest of the fractional  Laplacian is motivated, in addition  to the mathematical relevance, by the fact that it has recently been used  in a number of equations
 modeling concrete phenomena. Among others, we mention crystal dislocation \cite{DP_P_V_2015}, \cite{DP_F_V_2014}, mathematical finances \cite{Apple_2004} and
 quantum mechanics, see \cite{L_2000}.

For the nonlocal case $s\in (\frac 12,1)$ and for regular data, the authors in \cite{W,W1} proved the existence of a regular solution using semi-group theory and probabilistic tools.  More precisely, the authors in \cite{W} treat the case $f=0$ and $\O=\ren$, under regular assumption on $u_0$, in order to get global estimates. In their approach, the fact that $\O=\ren$  turns out to be a fundamental key,  which is lost in the case of a bounded domain.  As we will see later, the work on bounded domains, $\O$, generates a loss of regularity near the boundary  and, as a consequence, non existence results holds for large values of $q$.

\

The main goal of this paper is to consider  a general class of data. It is important to remark that monotony arguments have serious limitations in order to pass to
the limit in the approximating problems. To overcome these difficulties we will  follow the arguments used for the elliptic case in \cite{AP}. In particular, we will use
apriori estimates and the Schauder fixed point theorem, which in the stationary case  are inspired by results in  \cite{CON2} and \cite{CON1} for local operators.

\

We briefly sketch now the main results in the paper. First,
via a fixed point argument we obtain the following results for general data.
\begin{Theorem}\label{maria}
Suppose in the problem \eqref{grad} that $\alpha<\dfrac{N+2s}{N+1}$, then { there exists $T:=T(\O,s)>0$ such that } for
all $(f, u_0)\in L^m(\O_T)\times L^1(\O)$ with $1\le m<\dfrac{1}{s}$, problem
\eqref{grad} has a solution $u\in L^{q}(0,T;W_{0}^{1,q}(\Omega))$ for all $q<\dfrac{N+2s}{N+1}$ and $T_k(u)\in L^{2}(0,T;H^s_0(\Omega))$ for all $k>0$, { where $T_k(\s)$ is given by
\begin{gather*}\label{f-trun}
    T_k(\s)=\left\{\begin{array}{cl}
    \s\,,&\hbox{ if }|\s|\le k\,;\\[2mm]
    k\frac{\s}{|\s|}\,,&\hbox{ if }|\s|> k\,;
    \end{array}\right.
   \end{gather*}
}
\end{Theorem}

Notice that, even for $f\in L^m(\O_T)$ with $1<m<\dfrac{1}{s}$, the existence result holds with the same assumption on $\a$ as in $L^1$ data. This assumption does
not appear in the local case $s=1$ where the relation $\a\leftrightarrow m$ is strictly increasing.  In the non local case, this  limitation is due to the fact
that the global regularity for the gradient term imposes many restrictions on the parameters $s,m,\a$ and makes a fundamental difference between the local and the
nonlocal case.

In the case of $L^1$-data the above existence result is optimal, in the sense that  if $\a>\frac{N+2s}{N+1}$, then we can find $f\in L^1(\O_T)$ or $u_0\in
L^1(\O)$ such that problem \eqref{grad} has no solution in the space $L^\a(0,T; W^{1,\a}_0(\O))$.

\

For large value of $\a$ a serious limitation appears as a consequence of the lack
 of regularity for the linear problem near the boundary.
 This loss of regularity allows us to  get the following non existence
 result which makes more significant the difference between the local and
 the nonlocal case. However,  this is coherent with the local case; indeed,
 one sees in the threshold that $\dfrac 1{1-s}\to \infty$ as $s\to 1$.

\begin{Theorem}\label{nonint}
Suppose that $\a>\dfrac{1}{1-s}$, and let $(f,u_0)\in L^\infty(\O_T)\times L^\infty(\O)$ be nonnegative functions with $(f,u_0)\neq (0,0)$. Then, the problem
$$
\left\{
\begin{array}{rcll}
u_t+(-\Delta )^s u&=&|\nabla u|^{\alpha}+ f & \inn \Omega_T\equiv\Omega\times (0,T),\\ u(x,t)&=&0 & \inn(\mathbb{R}^N\setminus\Omega)\times [0,T),\\
u(x,0)&=&u_{0}(x) & \inn\Omega,\\
\end{array}\right.
$$
has no solution $u$ such that $u\in L^\a(0,T;W^{1,\a}_0(\O))$.
\end{Theorem}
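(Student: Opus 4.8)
The plan is to argue by contradiction. Suppose a solution $u\in L^\alpha(0,T;W^{1,\alpha}_0(\Omega))$ exists, so that in particular $|\nabla u|^\alpha\in L^1(\Omega_T)$. Writing $h:=|\nabla u|^\alpha+f\ge 0$, the function $u$ then solves the \emph{linear} fractional heat equation $u_t+(-\Delta)^s u=h$ with nonnegative right-hand side, nonnegative initial datum $u_0$ and homogeneous exterior condition. Since $(f,u_0)\neq(0,0)$, the data of this linear problem are nontrivial, so $u$ is strictly positive inside $\Omega$ for $t>0$. The whole argument rests on the sharp boundary behavior of such solutions: they vanish only like $\dist(x,\partial\Omega)^s$ and not faster. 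I write $\delta(x):=\dist(x,\partial\Omega)$.

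First I would establish a quantitative lower bound of the form
$$u(x,t)\ge C\,\delta(x)^s \qquad \text{for } (x,t)\in \Omega\times[t_1,t_2],$$
for some subinterval $[t_1,t_2]\subset(0,T)$ and a constant $C>0$. To obtain it, compare $u$ with the solution $v$ of the purely linear problem $v_t+(-\Delta)^s v=f$ in $\Omega_T$, $v=0$ in $(\R^N\setminus\Omega)\times[0,T)$, $v(\cdot,0)=u_0$. Because $|\nabla u|^\alpha\ge 0$, the comparison principle gives $u\ge v$, so it suffices to bound $v$ from below. Representing $v$ through the Dirichlet heat kernel of $(-\Delta)^s$ and using its known two-sided estimates (equivalently, the fractional Hopf/boundary-regularity result for $(-\Delta)^s\phi=g\ge0$, whose solution is comparable to $\delta^s$), one gets $v\ge C\delta^s$ on a suitable space-time region. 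This step works in all cases, since whichever of $f$ or $u_0$ is nontrivial already forces $v$ to be positive and comparable to $\delta^s$ for positive times; this is exactly where the nontriviality of $(f,u_0)$ and the nonlocal boundary behavior enter decisively.

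With this lower bound in hand, I would transfer it to the gradient by a one-dimensional argument along inward normals. Since $u=0$ on $\partial\Omega$ and $\nabla u\in L^\alpha$, for a.e. boundary point the restriction of $u$ to the normal segment is absolutely continuous, and parametrizing by $\rho=\delta$ the fundamental theorem of calculus yields $C\rho^s\le u(\rho)=\int_0^{\rho}\partial_\nu u\,d\sigma\le\int_0^{\rho}|\nabla u|\,d\sigma$. Applying Hölder with exponents $\alpha$ and $\alpha/(\alpha-1)$ gives
$$\int_0^{\rho}|\nabla u|^\alpha\,d\sigma \ \ge\ C^\alpha\,\rho^{\,\alpha(s-1)+1}.$$
The exponent $\alpha(s-1)+1$ is negative precisely when $\alpha>\tfrac{1}{1-s}$, so the right-hand side blows up as $\rho\to0^+$, while the left-hand side would have to tend to $0$ if $|\nabla u|^\alpha$ were integrable near the boundary. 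Integrating this contradiction over the boundary layer (via a tubular-neighborhood change of variables) and over $t\in[t_1,t_2]$ shows $\int_{t_1}^{t_2}\!\!\int_\Omega|\nabla u|^\alpha=+\infty$, contradicting $u\in L^\alpha(0,T;W^{1,\alpha}_0(\Omega))$. Note that the threshold $\tfrac{1}{1-s}\to\infty$ as $s\to1$ recovers the local picture.

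The main obstacle is the first step: making rigorous and quantitative the bound $u\ge C\delta^s$, uniformly on a time interval, for merely $L^\infty$ data (with possibly vanishing $f$ or $u_0$). This demands precise boundary estimates for the Dirichlet fractional heat kernel together with a careful comparison argument, and it is here that the loss of regularity near $\partial\Omega$ peculiar to $(-\Delta)^s$ is exploited. A secondary technical point is justifying the absolute continuity of $u$ along a.e. normal segment and the tubular-neighborhood foliation with controlled Jacobian, which relies on the $C^{1,1}$ regularity of $\Omega$.
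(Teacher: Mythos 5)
Your proposal is correct and follows essentially the same route as the paper: the key step in both is the lower bound $u(x,t)\ge C\,\delta^s(x)$ on a time subinterval, obtained by comparison with the linear problem and the two-sided Dirichlet heat-kernel estimates, followed by the observation that this is incompatible with $\delta^{-\alpha(1-s)}$ being integrable. The only cosmetic difference is that the paper invokes the Hardy inequality for $W^{1,\alpha}_0(\Omega)$ directly to get $u/\delta\in L^\alpha$, whereas you re-derive the same one-dimensional estimate by the fundamental theorem of calculus and H\"older along normal segments.
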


In the same direction we prove a general non existence result  of  weak solutions in a suitable weighted Sobolev space for a range of the parameter $\a$. This
gives a fundamental difference related to the local case, $s=1$, where existence of a solution is proved for all $\a>1$ under suitable regularity assumptions on
the data, see for example \cite{BS}.

\

For the existence result, we will distinguish two   types of problems according to the integrability of the gradient term. In the first case we look for the global integrability of the gradient term in the whole domain $\Omega_T$.
\begin{Theorem}\label{hhh}
Assume that $\dfrac{2s-1}{1-s}>\dfrac{(N+2s)^2}{N+1}$ and that $\dfrac{N+2s}{N+1}\le \a<\dfrac{2s-1}{(1-s)(N+2s)}$.

Suppose that $u_0=0$, $f\in L^{m}(\Omega_T)$ with $m\ge \dfrac{1}{s}$ satisfies  one of the following conditions:
\begin{enumerate}
\item[(I)] either $\dfrac{N+2s}{2s-1}\le m$,
\item[(II)] or $\dfrac{N+2s}{\a'}\dfrac{1}{(2s-1)-(1-s)(N+2s)}<m<\dfrac{N+2s}{2s-1}$,
\end{enumerate}
then {there exists $T:=T(\O,s)>0$ such that} problem \eqref{grad} has a solution $u\in L^{\a}(0,T;W_{0}^{1,\a}(\Omega))$  and moreover $u\in L^{\g}(0,T;W_{0}^{1,\g}(\Omega))$ for all $\g<\dfrac{1}{1-s}$ if $(I)$ holds and $u\in L^{\g}(0,T;W_{0}^{1,\g}(\Omega))$ for all
$\g<\dfrac{m(N+2s)}{(N+2s)(m(1-s)+1)-m(2s-1)}$ if $(II)$ holds.
\end{Theorem}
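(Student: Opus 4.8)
The plan is to realize the solution as a fixed point of the solution operator of the linearized equation, following the Schauder scheme announced in the Introduction. Since here $u_0=0$, given $v$ in a suitable set let $w=\mathcal{T}(v)$ be the unique solution of the linear nonlocal heat equation $w_t+(-\Delta)^s w=|\nabla v|^{\alpha}+f$ in $\Omega_T$ with $w=0$ in $(\mathbb{R}^N\setminus\Omega)\times[0,T)$ and $w(\cdot,0)=0$; a fixed point $u=\mathcal{T}(u)$ is exactly a solution of \eqref{grad}. The engine of the whole argument is a sharp gradient estimate for this linear problem, which I would establish (or quote) first. Measuring everything in the parabolic homogeneous dimension $N+2s$, the Duhamel representation of $w$ is a Riesz-type potential of the source whose spatial gradient has a regularizing gain of order $2s-1$; however, because $w$ only vanishes like $d^{\,s}$ at $\partial\Omega$ (with $d$ the distance to the boundary), one has $|\nabla w|\sim d^{\,s-1}$ there. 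The correct statement is therefore a weighted estimate $\|d^{\,1-s}\nabla w\|_{L^{r}(\Omega_T)}\lesssim\|g\|_{L^{\sigma}(\Omega_T)}$ with $\tfrac1r=\tfrac1\sigma-\tfrac{2s-1}{N+2s}$, followed by removal of the weight through H\"older's inequality, which costs a factor controlled by $d^{\,s-1}$ and is summable only up to the exponent $\tfrac{1}{1-s}$. This yields $\nabla w\in L^{\gamma}(\Omega_T)$ whenever $\tfrac1\gamma>\tfrac1\sigma+(1-s)-\tfrac{2s-1}{N+2s}$, always capped by $\gamma<\tfrac{1}{1-s}$. The standing hypothesis $\tfrac{2s-1}{1-s}>\tfrac{(N+2s)^2}{N+1}$ is precisely the one making the interval $\tfrac{N+2s}{N+1}\le\alpha<\tfrac{2s-1}{(1-s)(N+2s)}$ nonempty.

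Next I would set up the self-mapping. Let $\gamma$ be the target integrability of the gradient ($\gamma<\tfrac{1}{1-s}$ in case (I), $\gamma<\gamma_{II}:=\tfrac{m(N+2s)}{(N+2s)(m(1-s)+1)-m(2s-1)}$ in case (II)), chosen with $\gamma\ge\alpha$, and take $E=\{v:\ \|\nabla v\|_{L^{\gamma}(\Omega_T)}\le R\}$, a closed convex subset of $L^{\alpha}(0,T;W^{1,\alpha}_0(\Omega))$. For $v\in E$ one has $|\nabla v|^{\alpha}\in L^{\gamma/\alpha}$ with norm $\le R^{\alpha}$ and $f\in L^{m}$, so the source lies in $L^{\sigma}$ with $\sigma=\min(\gamma/\alpha,m)$. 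Feeding this into the linear estimate and bookkeeping the exponents is the crux: when the gradient term governs ($\sigma=\gamma/\alpha$) the admissibility $\tfrac1\gamma>\tfrac\alpha\gamma+(1-s)-\tfrac{2s-1}{N+2s}$, taken as $\gamma\uparrow\tfrac{1}{1-s}$, reduces exactly to $\alpha<\tfrac{2s-1}{(1-s)(N+2s)}$, while when $f$ governs ($\sigma=m$) one is led to $\gamma_{II}$, and the requirement that a valid $\gamma\in[\alpha m,\gamma_{II})$ exist reduces exactly to the lower bound $m>\tfrac{N+2s}{\alpha'}\,\tfrac{1}{(2s-1)-(1-s)(N+2s)}$ of case (II). Thus the two cases of the statement are nothing but the two ways the source exponent can be binding. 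To close $\mathcal{T}(E)\subseteq E$ one still has to beat the superlinearity $R\mapsto R^{\alpha}$ with $\alpha>1$; this is done by keeping strict inequalities in the exponents, which makes the linear estimate carry a factor $T^{\theta}$ with $\theta>0$ from the time integration, and then choosing $R$ and $T=T(\Omega,s)$ small so that $C\,T^{\theta}(R^{\alpha}+\|f\|_{L^m})\le R$. Absorbing the superlinear gradient term against a regularizing gain of only $2s-1$ is the main obstacle, and it is exactly here that the upper bound on $\alpha$ and the lower bound on $m$ are forced.

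To invoke Schauder's theorem I would then check that $\mathcal{T}$ is compact and continuous on $E$. Compactness follows because the gradient estimate holds with room to spare, placing $\nabla w$ in a space strictly better than $L^{\gamma}$, which combined with the bound on $w_t=-(-\Delta)^s w+g$ gives equicontinuity in time; an Aubin--Lions / Fr\'echet--Kolmogorov argument then yields relative compactness in $L^{\alpha}(0,T;W^{1,\alpha}_0(\Omega))$. For continuity, if $v_n\to v$ in that space then $\nabla v_n\to\nabla v$ in $L^{\alpha}(\Omega_T)$, and since $\{|\nabla v_n|^{\alpha}\}$ is bounded in $L^{\gamma/\alpha}$ with $\gamma/\alpha>1$ it is equi-integrable, so Vitali's theorem gives $|\nabla v_n|^{\alpha}\to|\nabla v|^{\alpha}$ in $L^{1}$; the continuous dependence of the linear solution on its source then yields $\mathcal{T}(v_n)\to\mathcal{T}(v)$. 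Schauder's fixed point theorem produces $u\in E$ solving \eqref{grad} with $u\in L^{\alpha}(0,T;W^{1,\alpha}_0(\Omega))$.

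Finally, the announced higher integrability is a one-step a posteriori bootstrap: the fixed point satisfies $u_t+(-\Delta)^s u=|\nabla u|^{\alpha}+f$ with $|\nabla u|^{\alpha}\in L^{\gamma/\alpha}$ and $f\in L^{m}$, so a single application of the linear gradient estimate with $\sigma=\min(\gamma/\alpha,m)$ gives $\nabla u\in L^{\gamma}$ for every $\gamma<\tfrac{1}{1-s}$ under (I) and for every $\gamma<\gamma_{II}$ under (II), which is precisely the regularity claimed. I expect no essential difficulty beyond the linear estimate in this last step, the only point being to check that the relevant exponents stay admissible, which the standing hypotheses guarantee.
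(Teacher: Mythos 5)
Your proposal is correct and follows essentially the same route as the paper: a Schauder fixed point for the linearized solution operator on a gradient ball, driven by the weighted estimate $\||\nabla w|\,\delta^{1-s}\|_{L^p}\lesssim\|g\|_{L^\sigma}$ with $\tfrac1p>\tfrac1\sigma-\tfrac{2s-1}{N+2s}$ and its unweighted consequence (the paper's Theorem \ref{regu-g} and Corollary \ref{cor11}), with smallness of $T$ absorbing the superlinearity; your exponent bookkeeping reproduces exactly the conditions $\alpha<\tfrac{2s-1}{(1-s)(N+2s)}$ and the lower bound on $m$ in case (II). The only cosmetic difference is that you argue compactness via Aubin--Lions rather than through the kernel-based compactness of the solution operator established in Theorems \ref{gradiente} and \ref{key}, which is immaterial.
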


Notice that the hypothesis on $s$ means that $s$ must be close to $1$ and $\alpha<<\dfrac{s}{1-s}$. The above conditions are needed  to  ensure  the global integrability of the gradient term in the whole $\O_T$.

\noindent For the complete range of the parameter $\alpha$, that is $\dfrac{N+2s}{N+1}\le \alpha<\dfrac{s}{1-s}$ and without any restriction in  the order of the operator  $s>\dfrac 12$, to have  a weak solution to the problem requires a natural weight that in fact is a power of the distance to the boundary of $\Omega$. { For simplicity, throughout this paper we denote $\d(x):=\text{dist}(x,\p\O)$, with $x\in \O$, such a distance.} Hence, the existence of a distributional solution in this case will be obtained in a weighted Sobolev space.
More precisely we have the following result.
\begin{Theorem}\label{fix001}
For every $s\in(\dfrac 12, 1)$, assume that $\dfrac{N+2s}{N+1}\le \a<\dfrac{s}{1-s}$. Let $f$ be a nonnegative function such that $f\in L^m(\O)$ with
$m>\max\bigg\{\dfrac{N+2s}{s(2s-1)}, \dfrac{N+2s}{s-\a(1-s)}\bigg\}$, then {{there exists $T:=T(\O,s)>0$ such that}}  problem \eqref{grad} has
a distributional solution
$u\in L^{\a}(0,T;W^{1,\a}_{loc}(\O))\cap L^1(0,T;W^{1,1}_0(\O))$.
Moreover $u\d^{1-s}\in L^{m\a}(0,T;W^{1,m\a}_0(\O)).$
\end{Theorem}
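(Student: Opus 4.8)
The plan is to realize $u$ as a fixed point of the solution operator of the associated \emph{linear} nonlocal heat equation, following the elliptic strategy of \cite{AP} and the scheme already used for Theorem \ref{maria}. Throughout I take $u_0=0$ and regard $f=f(x)\in L^m(\Omega)$ as independent of time, so that $\|f\|_{L^m(\Omega_T)}=T^{1/m}\|f\|_{L^m(\Omega)}\to0$ as $T\to0$. Writing $\delta(x)=\operatorname{dist}(x,\partial\Omega)$, I introduce the closed convex set
$$
E=\Big\{\,v\in L^{1}(0,T;W^{1,1}_{0}(\Omega)):\ v\,\delta^{1-s}\in L^{m\alpha}(0,T;W^{1,m\alpha}_{0}(\Omega)),\ \big\|\nabla(v\,\delta^{1-s})\big\|_{L^{m\alpha}(\Omega_T)}\le R\,\Big\},
$$
and to $v\in E$ I associate $u=\mathcal{T}(v)$, the solution of
$$
\left\{
\begin{array}{rcll}
u_t+(-\Delta )^{s}u&=&|\nabla v|^{\alpha}+f &\inn\Omega_T,\\
u&=&0 &\inn(\mathbb{R}^N\setminus\Omega)\times[0,T),\\
u(\cdot,0)&=&0 &\inn\Omega.
\end{array}\right.
$$
A fixed point of $\mathcal T$ is exactly a distributional solution of \eqref{grad}, so the task reduces to checking the hypotheses of Schauder's theorem for $\mathcal T$ on $E$.

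The analytic core is a weighted regularity estimate for the linear fractional heat equation: if the source $h$ satisfies $h\,\delta^{(1-s)\alpha}\in L^m(\Omega_T)$ with $m$ in the admissible range, then the solution $w$ obeys $w\,\delta^{1-s}\in L^{m\alpha}(0,T;W^{1,m\alpha}_0(\Omega))$ with a bound by $\|h\,\delta^{(1-s)\alpha}\|_{L^m(\Omega_T)}$. The weight $\delta^{1-s}$ on the solution is forced by the boundary behaviour of the nonlocal operator---solutions only grow like $\delta^{s}$, so $\nabla w\sim\delta^{s-1}$ is singular, whereas $\nabla(w\,\delta^{1-s})$ stays bounded---and the compensating weight $\delta^{(1-s)\alpha}$ on the source absorbs part of the corresponding singularity of $|\nabla v|^\alpha$. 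The effective boundary budget is $s-(1-s)\alpha$, which must be positive, i.e.\ $\alpha<\frac{s}{1-s}$; the two thresholds $m>\frac{N+2s}{s(2s-1)}$ and $m>\frac{N+2s}{s-\alpha(1-s)}$ are precisely the Serrin-type integrability conditions under which this weighted estimate closes, the first encoding $s>\tfrac12$.

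To verify $\mathcal T(E)\subset E$ I must control the gradient nonlinearity in the norm matched to the weighted estimate. The key algebraic identity is
$$
\big\||\nabla v|^{\alpha}\,\delta^{(1-s)\alpha}\big\|_{L^{m}(\Omega_T)}=\big\|\delta^{1-s}\nabla v\big\|_{L^{m\alpha}(\Omega_T)}^{\alpha},
$$
so that, after the Leibniz splitting
$$
\delta^{1-s}\nabla v=\nabla\!\big(v\,\delta^{1-s}\big)-(1-s)\,v\,\delta^{-s}\,\nabla\delta
$$
and absorption of the last term by a Hardy inequality in the weight $\delta$, the weighted source $|\nabla v|^{\alpha}\delta^{(1-s)\alpha}$ is bounded in $L^m$ by $C R^{\alpha}$. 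Since $\delta^{(1-s)\alpha}$ is bounded, $f$ lies in the same weighted class with norm $\le C\,T^{1/m}\|f\|_{L^m(\Omega)}$. Feeding this into the weighted linear estimate returns $u\,\delta^{1-s}$ in $L^{m\alpha}(W^{1,m\alpha}_0)$ with
$$
\big\|\nabla(u\,\delta^{1-s})\big\|_{L^{m\alpha}(\Omega_T)}\le C\,R^{\alpha}+C\,T^{1/m}\|f\|_{L^m(\Omega)},
$$
and choosing $R$ small (licit because $\alpha>1$) and then $T=T(\Omega,s)$ small makes the right-hand side $\le R$, so $E$ is invariant.

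Finally, compactness and continuity of $\mathcal T$ follow because the linear estimate yields slightly more than membership in $E$---a gain in the spatial integrability of the weighted gradient together with time regularity read off from $u_t=-(-\Delta)^s u+h$---so that an Aubin--Lions/Fr\'echet--Kolmogorov argument adapted to the weight $\delta^{1-s}$ gives relative compactness of $\mathcal T(E)$, while continuity comes from the continuous dependence of $|\nabla v|^{\alpha}$ on $v$ in the weighted $L^m$ class and the linearity of the solution map. Schauder's theorem then produces $u=\mathcal T(u)$; de-weighting on compact subsets of $\Omega$ gives $u\in L^{\alpha}(0,T;W^{1,\alpha}_{\mathrm{loc}}(\Omega))$, and pairing the weighted bound with the integrable power $\delta^{s-1}$ via H\"older gives $u\in L^{1}(0,T;W^{1,1}_{0}(\Omega))$, together with the asserted $u\,\delta^{1-s}\in L^{m\alpha}(0,T;W^{1,m\alpha}_{0}(\Omega))$. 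I expect the main obstacle to be exactly this boundary bookkeeping: establishing the weighted linear estimate and controlling the singular factor $\delta^{-(1-s)\alpha}$ of the gradient nonlinearity through the compensating weight without exceeding the budget $s-(1-s)\alpha>0$, i.e.\ without sacrificing the sharp range $\alpha<\frac{s}{1-s}$ below the non-existence exponent $\frac{1}{1-s}$.
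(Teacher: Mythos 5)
Your overall architecture coincides with the paper's: the same convex set $E$ of functions $v$ with $\nabla(v\,\delta^{1-s})\in L^{m\alpha}(\Omega_T)$, the same map $v\mapsto u$ through the linear problem with source $|\nabla v|^{\alpha}+f$, the same algebraic identity $\||\nabla v|^{\alpha}\delta^{(1-s)\alpha}\|_{L^m}=\|\,|\nabla v|\,\delta^{1-s}\|_{L^{m\alpha}}^{\alpha}$ combined with Hardy's inequality, and Schauder at the end. However, the proposal defers the entire analytic content to an asserted ``weighted regularity estimate for the linear fractional heat equation,'' and this is precisely where the proof lives; as written there is a genuine gap in two places. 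First, well-definedness of $\mathcal T$ is not free: for $v\in E$ the source $|\nabla v|^{\alpha}$ need not belong to $L^1(\Omega_T)$ (when $\alpha(1-s)\ge 2s-1$ it is only controlled in $L^1(\delta^{\beta}\,dx\,dt)$ for a suitable $\beta<2s-1$, found using $m>\frac{N+2s}{s-\alpha(1-s)}$ and $\alpha<\frac{s}{1-s}$), so one first needs an existence, uniqueness and compactness theory for the linear problem with data merely in $L^1(\delta^{\beta}\,dx\,dt)$ (the paper's Theorem \ref{key}, proved by testing against the solution of $(-\Delta)^s\psi=\delta^{\beta-2s}$ and by further kernel estimates). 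Your sketch never addresses in which sense $u=\mathcal T(v)$ solves the equation or why it is unique, and your compactness argument (Aubin--Lions) would also have to be run in this weighted $L^1$ framework rather than in a standard energy setting.

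Second, the claimed implication ``$h\,\delta^{(1-s)\alpha}\in L^m\Rightarrow \nabla w\,\delta^{1-s}\in L^{m\alpha}$ with the corresponding bound'' is not a known off-the-shelf estimate and is exactly what consumes most of the paper's proof: one must split $\nabla_x P_{\Omega}$ according to $\delta(x)\lessgtr (t-\sigma)^{1/2s}$ and $|x-y|\lessgtr\delta(x)$, distribute the boundary decay of the kernel between $x$ and $y$ through interpolation parameters $\theta$ chosen as $\frac{\alpha(1-s)}{s}$ and $\frac{(\alpha-1)(1-s)}{s}$ (both admissible only because $\alpha<\frac{s}{1-s}$), and invoke the boundedness of $\int_{\Omega}\mathcal{G}_s(x,y)\delta^{-\gamma}(x)\,dx$ for $\gamma<2s$; it is only through this case analysis that the two thresholds $m>\frac{N+2s}{s(2s-1)}$ and $m>\frac{N+2s}{s-\alpha(1-s)}$ appear, and the output is in fact slightly stronger than membership in $E$ (an exponent $\gamma>m\alpha$), which is what makes the fixed point close. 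Your ``boundary budget'' heuristic $s-\alpha(1-s)>0$ correctly predicts the range of $\alpha$, but it does not substitute for these estimates, and the invariance inequality $\|\nabla(u\,\delta^{1-s})\|_{L^{m\alpha}}\le CR^{\alpha}+CT^{1/m}\|f\|_{L^m(\Omega)}$ cannot be justified without them. So the route is the right one, but the proposal as it stands postpones, rather than supplies, the proof.
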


\

In the common values of $\alpha$ in Theorem \ref{hhh} we require  $s$ to be close to $1$. Also,  the integrability of the datum, $m$, is bigger than in  Theorem \ref{fix001}, where there is no restriction in $s$. The optimality of the results remain open.

\

If  the source term { $f$ is null, then we can prove the existence of a solution using a suitable change of function and  Theorem \ref{hhh}. More precisely we have }
\begin{Theorem}\label{hhh2}
In the problem  \eqref{grad}, let us consider $f=0$. Assume that $\dfrac{2s-1}{1-s}>\dfrac{(N+2s)^2}{N}$ and that $\dfrac{N+2s}{N+1}\le \a<\dfrac{2s}{(1-s)(N+2s)+1}$.
Let $u_0$ be a nonnegative measurable function such that $u_0\in L^\s(\O)$ with $\s>\dfrac{(\a-1)N}{(2s-\a)-\a(1-s)(N+2s)}$, then {there exists $T:=T(\O,s)>0$ such that} the problem
\begin{equation}\label{gradu0}
\left\{
\begin{array}{rcll}
u_t+(-\Delta )^s u&=&|\nabla u|^{\alpha} &\inn \Omega_T,\\ u(x,t)&=&0 &\inn(\mathbb{R}^N\setminus\Omega)\times [0,T),\\ u(x,0)&=& u_0(x) & \inn\Omega,
\end{array}\right.
\end{equation}
has a solution $u\in L^{\g}(0,T;W_{0}^{1,\g}(\Omega))$, for all $\g< \dfrac{\s(N+2s)}{(1-s)\s(N+2s)+N+\s}$.
\end{Theorem}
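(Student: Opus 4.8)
The plan is to trade the nonzero initial datum for an effective source term and then invoke Theorem \ref{hhh}. Let $w$ be the solution of the homogeneous linear fractional heat problem
\begin{equation}
\left\{
\begin{array}{rcll}
w_t+(-\Delta )^s w&=&0 &\inn \Omega_T,\\
w(x,t)&=&0 &\inn(\mathbb{R}^N\setminus\Omega)\times [0,T),\\
w(x,0)&=&u_0(x) & \inn\Omega,
\end{array}\right.
\end{equation}
and set $v=u-w$. Then $v$ solves the zero-initial-datum problem $v_t+(-\Delta)^s v=|\nabla(v+w)|^{\alpha}$, with $v=0$ on the complement and $v(\cdot,0)=0$, which is exactly the setting of Theorem \ref{hhh} once $w$ is regarded as a fixed, known function and the extra gradient contribution is absorbed into the source.

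First I would record the smoothing estimates for the linear semigroup. Writing $w(t)=S(t)u_0$, the scaling of the fractional heat kernel gives $\|\nabla S(t)u_0\|_{L^q(\Omega)}\le C\,t^{-\theta}\|u_0\|_{L^{\sigma}(\Omega)}$ with $\theta=\frac{1}{2s}\big(1+N(\frac{1}{\sigma}-\frac{1}{q})\big)$. Choosing $q=\alpha m$ and integrating in time (which requires $\theta\,\alpha m<1$) yields $\nabla w\in L^{\alpha m}(\Omega_T)$, hence $g:=|\nabla w|^{\alpha}\in L^{m}(\Omega_T)$. The lower bound $\sigma>\frac{(\alpha-1)N}{(2s-\alpha)-\alpha(1-s)(N+2s)}$ in the statement is precisely the condition that makes this integrability hold for an exponent $m\ge \frac{1}{s}$ lying in the admissible range (I)--(II) of Theorem \ref{hhh}; checking that the hypotheses $\frac{2s-1}{1-s}>\frac{(N+2s)^2}{N}$ and $\alpha<\frac{2s}{(1-s)(N+2s)+1}$ translate into the corresponding hypotheses of Theorem \ref{hhh} for this $m$ is the core bookkeeping.

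Next I would solve the problem for $v$. Using the elementary inequality $|\nabla(v+w)|^{\alpha}\le 2^{\alpha-1}\big(|\nabla v|^{\alpha}+|\nabla w|^{\alpha}\big)$, the term $2^{\alpha-1}|\nabla w|^{\alpha}=2^{\alpha-1}g$ plays the role of the datum $f\in L^{m}(\Omega_T)$ in Theorem \ref{hhh}. I would then rerun the a priori estimates and the Schauder fixed point argument from the proof of Theorem \ref{hhh}, applied now to the map sending $\psi$ to the solution $z$ of $z_t+(-\Delta)^s z=|\nabla(\psi+w)|^{\alpha}$; the regularity of $w$ guarantees that the nonlinear term stays in the required space and that the scheme leaves invariant a suitable ball. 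The fixed point $v$, together with $u=v+w$, provides the asserted solution, and the final exponent $\gamma<\frac{\sigma(N+2s)}{(1-s)\sigma(N+2s)+N+\sigma}$ arises by combining the regularity $\nabla v\in L^{\gamma}$ coming from Theorem \ref{hhh} with the (better) regularity of $\nabla w$.

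The hard part will be the exponent arithmetic of the two previous paragraphs: one must verify that the smoothing estimate produces an $m$ that simultaneously satisfies $m\ge\frac{1}{s}$, falls into case (I) or (II), and delivers precisely the claimed integrability $\gamma$. A secondary technical point is that the behaviour of $w$ near $\partial\Omega$ must be compatible with the global (non-weighted) integrability sought here, which is why the range of $\alpha$ and the condition on $\frac{2s-1}{1-s}$ are more restrictive than in Theorem \ref{fix001}.
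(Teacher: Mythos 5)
Your proposal follows essentially the same route as the paper: decompose $u=v+\psi$ with $\psi$ the solution of the linear problem with initial datum $u_0$, bound $|\nabla(v+\psi)|^{\alpha}\le C_1|\nabla v|^{\alpha}+C_2|\nabla \psi|^{\alpha}$, treat $C_2|\nabla \psi|^{\alpha}$ as an $L^m$ source and run the fixed-point scheme of Theorem \ref{hhh}. The one caveat is that on a bounded domain the unweighted smoothing estimate $\|\nabla S(t)u_0\|_{L^q(\Omega)}\le C\,t^{-\theta}\|u_0\|_{L^{\sigma}(\Omega)}$ you quote does not hold up to $\partial\Omega$; one must instead use the weighted bound \eqref{sem111} combined with Corollary \ref{cor1100}, and it is exactly this step that produces the exponent $\check{U}=\dfrac{\sigma(N+2s)}{(1-s)\sigma(N+2s)+N+\sigma}$ appearing in the statement.
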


\

 A similar result with weights could be obtained by application of Theorem \ref{fix001}. This problem will be studied in a forthcoming paper, in which we   will also look for the asymptotic behavior of the solutions with respect to $t$.

\

As a direct application of the arguments developed,  we  will treat the problem with \textit{drift}, that is, the nonlinearity  on the gradient is substituted by a term of the form  $B(x,t)\,\cdot\, \nabla u$.
 The existence of a solution is known in the literature under
 regularity conditions on the data $f$ and $u_0$.

 Here we will prove the existence of a solution under natural
 condition on the field $B$ and general data  $(f,u_0)\in L^1(\O_T)\times L^1(\O)$.  Using a suitable Harnack inequality we are able to  prove  a
 comparison principle and then the uniqueness of the solution follows
 for the linear problem with drift. This will be the key in order to show
 the uniqueness of a \emph{good solution} to problem \eqref{grad}.

\
{
\begin{remark}
 Notice that the above existence results also hold true without any additional assumption on $T$ if we alternatively assume that the corresponding norm of the data are small. This  can be seen  if we substitute the data $f,u_0$ with $\l f,\l u_0$ with $|\l|$ small.
 However in the case of Theorem  \ref{maria}, using the uniqueness result proved in Theorem \ref{uniqapr}, we are able to show the existence of a minimal solution (and in some cases a unique solution) without any rescription on the norm of the data. See Theorem \ref{uniqq} below.
 \end{remark}
}

The paper is organized as follows.
In Section \ref{sec2} we collect some  tools that will be used systematically in the paper. We begin by specifying  the sense in which the solutions are
understood, and we state some  tools as the Kato's inequality and the gradient regularity for an associated elliptic problem.

In order to prove gradient regularity for the solution based on the representation  formula, we need to show gradient regularity for the fractional heat kernel. This is done in Section \ref{sec3} where we also consider the general linear fractional heat equation with $L^1$ data. In this case we are able to show the uniqueness of the solution and to prove the strong convergence of the solution of the approximating problems in a suitable Sobolev space without using the Landes regularizing approximation. General regularity results of the gradient and the Hardy-Sobolev term $\dfrac{u}{\delta^s}$ are obtained, in particular,  close to  $\partial \Omega\times(0,T)$.
This will be useful in order to complete the regularity schema of this class of equations, as is done in \cite{AP} for the elliptic equation.

As a consequence of the results in Section 3, Section 4 is devoted to prove the surprising nonexistence result.  We consider the Problem \eqref{grad} in Section \ref{sec4}. We begin with the case of $L^1$ data and for all $\a<\a_0=\dfrac{N+2s}{N+1}$ and  prove the existence of a weak solution with suitable regularity. We also
prove that the condition on $\a$ is optimal in the sense that for $\a>\a_0$, then there exists $f\in L^1(\O_T)$ such that problem \eqref{grad} has no solution.

Problem \eqref{grad} with general $\alpha<\dfrac{s}{1-s}$ is treated in Subsection \ref{sub:sec41}. Under suitable hypotheses on $f$, we are able to show the
existence of a weak solution. We also treat the case where $f\equiv 0$ and $u_0\gneqq 0$. Following closely the ideas as in the case $f\gneqq 0$, we
 prove the existence of a solution that lives in a suitable Sobolev space.

In the last section we consider the linear problem with \textit{drift}. The existence of a weak solution is proved for all data in $L^1$.
According to additional hypotheses on the \textit{drift} term, we are able to show the uniqueness of the weak solution. As a consequence we prove a  general comparison principle, using a suitable singular Gronwall-Bellman inequality, that allows us to show the existence of a minimal solution to problem \eqref{grad} under suitable hypothesis on $\alpha$.

{ We thank A. Younes for pointing  us some misprints in an earlier version of this work.}

\section{Preliminaries and functional setting}\label{sec2}

Let us begin by some results from fractional Sobolev spaces that will be used in this paper. We refer to \cite{dine} for more details and proofs.

Assume that $s\in (0,1)$ and $p>1$. For a measurable $\O\subset \ren$,  the fractional Sobolev Space $W^{s,p}(\Omega)$ is defined by
$$
W^{s,p}(\Omega)\equiv \Big\{ \phi\in L^p(\O):\dint_{\O}\dint_{\O}|\phi(x)-\phi(y)|^pd\nu<+\infty\Big\},
$$
where, for simplicity, we set
\begin{equation}\label{thekernel}
d\nu=\dyle\frac{dxdy}{|x-y|^{N+ps}}.
\end{equation}

Notice that $W^{s,p}(\O)$ is a Banach Space endowed with the norm
$$
\|\phi\|_{W^{s,p}(\O)}= \Big(\dint_{\O}|\phi(x)|^pdx\Big)^{\frac 1p} +\Big(\dint_{\O}\dint_{\O}|\phi(x)-\phi(y)|^pd\nu\Big)^{\frac 1p}.
$$
The space $W^{s,p}_{0} (\O)$ is defined as the completion of $\mathcal{C}^\infty_0(\O)$ with respect to the previous norm.

If $\O$ is a bounded regular domain, we can endow $W^{s,p}_{0}(\O)$ with the equivalent norm
$$
||\phi||_{W^{s,p}_{0}(\O)}= \Big(\int_{\O}\dint_{\O}|\phi(x)-\phi(y)|^pd\nu\Big)^{\frac 1p}.
$$

The next Sobolev inequality is proved in \cite{Adams}, see also \cite{dine} and \cite{Ponce} for an elementary proof.
\begin{Theorem} \label{Sobolev}(Fractional Sobolev inequality)
Assume that $0<s<1, p>1$ satisfy $ps<N$. There exists a positive constant $S\equiv S(N,s,p)$ such that for all $v\in C_{0}^{\infty}(\ren)$,
$$
\iint_{\re^{2N}} \dfrac{|v(x)-v(y)|^{p}}{|x-y|^{N+ps}}\,dxdy\geq S \Big(\dint_{\mathbb{R}^{N}}|v(x)|^{p_{s}^{*}}dx\Big)^{\frac{p}{p^{*}_{s}}},
$$
where $p^{*}_{s}= \dfrac{pN}{N-ps}$.
\end{Theorem}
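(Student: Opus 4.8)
The plan is to establish the equivalent statement $\|v\|_{L^{p^*_s}(\mathbb{R}^N)}^p \le C\,[v]^p$, where I abbreviate $[v]^p := \iint_{\mathbb{R}^{2N}} \frac{|v(x)-v(y)|^p}{|x-y|^{N+ps}}\,dx\,dy$ for the Gagliardo energy on the left-hand side. Since $v\in C_0^\infty(\mathbb{R}^N)$, the quantity $A := \|v\|_{L^{p^*_s}}$ is finite, and I may assume $A>0$ (otherwise there is nothing to prove). I would introduce $g(x) := \left(\int_{\mathbb{R}^N}\frac{|v(x)-v(y)|^p}{|x-y|^{N+ps}}\,dy\right)^{1/p}$, so that by Fubini $[v]^p = \|g\|_{L^p(\mathbb{R}^N)}^p$. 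The entire argument then reduces to producing a pointwise bound for $|v(x)|$ in terms of $A$ and $g(x)$, and optimizing.

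For every $x$ and every radius $\rho>0$, writing $v(x)$ as the sum of its average over the ball $B_\rho(x)$ and its mean oscillation yields the elementary bound
$$|v(x)| \le \frac{1}{|B_\rho(x)|}\int_{B_\rho(x)}|v(x)-v(y)|\,dy + \frac{1}{|B_\rho(x)|}\int_{B_\rho(x)}|v(y)|\,dy.$$
For the first term, Jensen's inequality together with the observation that $|x-y|<\rho$ on $B_\rho(x)$ (hence $|x-y|^{-(N+ps)}\ge \rho^{-(N+ps)}$) gives, after dividing by $|B_\rho|\simeq \rho^N$, the bound $C\rho^s g(x)$. For the second term, Hölder's inequality with exponents $p^*_s$ and $(p^*_s)'$ gives $C A\,\rho^{-N/p^*_s}$. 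Thus $|v(x)| \le C\big(\rho^s g(x) + A\,\rho^{-N/p^*_s}\big)$ for all $\rho>0$.

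It remains to choose $\rho=\rho(x)$ balancing the two terms, i.e. $\rho^{s} g(x) = A\,\rho^{-N/p^*_s}$. Using the identity $s + N/p^*_s = N/p$, which follows directly from $p^*_s = pN/(N-ps)$, this collapses to $|v(x)| \le C\,A^{\theta} g(x)^{1-\theta}$ with $\theta = ps/N$ and $1-\theta = p/p^*_s$. Raising to the power $p^*_s$, integrating in $x$, and using $(1-\theta)p^*_s = p$ yields $A^{p^*_s} \le C\,A^{\theta p^*_s}\|g\|_{L^p}^p$; dividing by the finite positive factor $A^{\theta p^*_s}$ gives $A^p \le C\,[v]^p$, which is the claim with $S=C^{-1}$. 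The only delicate points, and where I expect the main bookkeeping obstacle, are verifying the exponent identity $s+N/p^*_s=N/p$ that makes the optimization land exactly on the Sobolev exponent, and confirming $\theta = ps/N<1$ so that the power of $A$ can be absorbed; both are direct consequences of the hypothesis $ps<N$. A secondary point is the a priori finiteness of $A$, needed to justify the division, which is guaranteed here by $v\in C_0^\infty(\mathbb{R}^N)$.
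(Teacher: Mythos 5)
Your argument is correct, and there is nothing in the paper to compare it against: the paper does not prove Theorem \ref{Sobolev} at all, but simply quotes it, attributing the proof to \cite{Adams} and pointing to \cite{dine} and \cite{Ponce} for elementary proofs. What you give is a complete, self-contained derivation. The decomposition of $v(x)$ into its average over $B_\rho(x)$ plus its mean oscillation, the bound $C\rho^s g(x)$ for the oscillation term (using $|x-y|<\rho$ on the ball together with Jensen), the bound $CA\rho^{-N/p_s^*}$ for the average (H\"older against the $L^{p_s^*}$ norm), and the optimization in $\rho$ all check out; the exponent identities $s+N/p_s^*=N/p$ and $\bigl(1-\tfrac{ps}{N}\bigr)p_s^*=p$ are exactly what make the balancing land on the Sobolev exponent, and the division by $A^{\theta p_s^*}$ is legitimate since $A=\|v\|_{L^{p_s^*}}$ is finite for $v\in C_0^\infty(\mathbb{R}^N)$, as you note. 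This is the classical ``balance the local oscillation against the far-field average'' proof, close in spirit to the elementary argument in \cite{Ponce}; it produces a non-sharp constant $S(N,s,p)$, which is all the theorem asserts, whereas the reference \cite{dine} proceeds instead by a level-set/dyadic decomposition. Two points you leave tacit are harmless: $g(x)$ is finite for every $x$ when $v\in C_0^\infty(\mathbb{R}^N)$ and $0<s<1$ (the local singularity integrates because $p-ps>0$), and if $g(x)=0$ at some point then $v$ is a.e.\ constant, hence identically zero, so the pointwise bound holds trivially there.
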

We will denote by $H^s(\ren)$ the Hilbert space $W^{s,2}(\ren)$. If $u\in H^s(\ren)$, we define
$$
(-\Delta)^s u(x)={ P.V. } \dint_{\ren}\dfrac{u(x)-u(y)}{|x-y|^{N+2s}}{dy}.
$$
For $w, v\in H^s(\ren)$, we have
$$
\langle (-\Delta)^s w,v\rangle =\dfrac 12\iint_{\re^{2N}}\dfrac{(w(x)-w(y))(v(x)-v(y))}{|x-y|^{N+2s}}{dxdy}.
$$
If $H^s_0(\O)$ is the closure of $\mathcal{C}_0^\infty(\Omega)$ with respect to the norm $H^s(\ren)$ and if $w, v\in H^s_0(\O)$, then
$$
\langle (-\Delta)^s w,v\rangle =\dfrac 12\iint_{D_\O}\dfrac{(w(x)-w(y))(v(x)-v(y))}{|x-y|^{N+2s}}{dxdy},
$$
where $D_{\O}=(\ren\times \ren)\setminus (\mathcal{C}\O\times \mathcal{C}\O)$.

Since we are considering parabolic problems, we need to define the corresponding parabolic spaces.
 For $q\ge 1$, the space $L^{q}(0,T; W^{s,q}_0(\O))$ is defined as the set of functions $\phi$ such that
$\phi\in L^q(\O_T)$ with $||\phi||_{L^{q}(0,T; W^{s,q}_0(\O))}<\infty$ where
$$
||\phi||_{L^{q}(0,T; W^{s,q}_0(\O))}=\Big(\int_0^T\iint_{D_{\O}}|\phi(x,t)-\phi(y,t)|^qd\nu\,dt\Big)^{\frac 1q}.
$$
It is clear that $L^{q}(0,T; W^{s,q}_0(\O))$ is a Banach Space.

Consider now the linear problem
\begin{equation}\label{eq:def-0}
\left\{
\begin{array}{rcll}
u_t+(-\D)^s u&=& \dyle f  & \text{ in } \O_{T}=\Omega \times (0,T)  , \\ u&=&0 & \text{ in }(\ren\setminus\O) \times (0,T), \\ u(x,0)&=&u_0(x)& \mbox{  in  }\O,
\end{array}%
\right.
\end{equation}
where $\O\subset \ren$ is a bounded regular domain. If the data $(f,u_0)\in L^2(\O_T)\times L^2(\O)$, then we can deal with energy solution. More precisely we
have the next definition.
\begin{Definition}\label{energy}
Assume  $(f,u_0)\in L^2(\O_T)\times L^2(\O)$, then we say that $u$ is an energy solution to problem \eqref{eq:def-0} if $u\in L^{2}(0,T; H^s_0(\O))\cap
\mathcal{C}([0,T], L^2(\O))$, $u_t\in L^{2}(0,T; H^{-s}(\O))$, and for all $v\in  L^{2}(0,T; H^s_0(\O))$ we have
\begin{equation*}
\begin{array}{lll}
&\dyle\int_0^T\langle u_t, v\rangle dt +\dfrac 12\int_0^T\iint_{D_{\O}}(u(x,t)-u(y,t))(v(x,t)-v(y,t))d\nu\ dt\\ &\dyle=\iint_{\O_T} fv dx\,dt
\end{array}
\end{equation*}
and $u(x,.)\to u_0$ strongly in $L^2(\O)$, as $t\to 0$.
\end{Definition}
Notice that the existence of energy solution follows using classical arguments, see for instance \cite{LPPS}.

If the datum lies in $L^1$, we need to define a more general concept of solution. Let us begin by the next definitions.

Assume that $\alpha,\beta\in (0,1)$, we define the set
\begin{equation*}\begin{split}
\mathcal{T}:=\{&\phi:\mathbb{R}^N\times [0,T]\rightarrow\mathbb{R},\,\hbox{ s.t. }-\phi_t+(-\Delta)^s\phi=\varphi,\, \varphi\in L^\infty(\Omega\times (0,T))\cap
\mathcal{C}^{\alpha, \beta}(\Omega\times (0,T)),\\ &\phi=0\inn (\ren\setminus \Omega)\times   {(0,T]},\phi(x,T)=0 \inn \Omega \}.
\end{split}\end{equation*}
From \cite{LPPS}, we know that if $\phi\in \mathcal{T}$, then $\phi\in L^\infty(\Omega\times (0,T))$ and $\phi\in \mathcal{T}$ satisfies the equation in a
pointwise sense.

We are now able to state the meaning of weak solution.

\begin{Definition}\label{veryweak}  Assume that $(f,u_{0})\in L^1(\O_T)\times L^{1}(\Omega)$.
We  say that $u\in \mathcal{C}([0,T); {L}^{1}(\O))$,  is a weak solution to problem \eqref{eq:def-0} if for all $\phi\in \mathcal{T}$ we have
\begin{equation}\label{eq:subsuper}
\iint_{\O_T}\,u\big(-\phi_t\, +(-\Delta)^{s}\phi\big)\,dx\,dt=\\ \iint_{\O_T}\,f\phi\,dxdt +\int_\Omega{u_0(x)\phi(x,0)\,dx}.
\end{equation}
\end{Definition}
The next existence result is proved in \cite{LPPS}, ({ see also \cite{AABP} and \cite{BWZ} for some different approaches.})
\begin{Theorem}\label{th1}
Suppose that $(f,u_0)\in L^1(\O_T)\times L^1(\O)$, then problem \eqref{eq:def-0} has a unique weak solution $u$ such that $u\in \mathcal{C}([0,T];L^1(\O))\cap
L^m(\O_T)$ for all $m\in [1, \frac{N+2s}{N})$, \;$|(-\D)^{\frac{s}{2}} u|\in L^{r}(\O_T)$ for all $r\in[1, \frac{N+2s}{N+s})$ and $T_k(u)\in L^2(0,T,H^s_{
0}(\Omega))$ for all $k>0$ where $T_k(\s)=\max \{-k, \min\{k,\s\}\}$. Moreover $u\in {L^q(0,T,W^{s,q}_{ 0}(\O))}$  for all $1\le q<\frac{N+2s}{N+s}$. In addition
we have
\begin{equation}\label{main-estim}
\begin{array}{lll}
&\dyle ||u||_{\mathcal{C}([0,T],L^1(\O))}+ ||u||_{L^m(\O_T)}+||(-\D)^{\frac{s}{2}} u||_{L^{r}(\O_T)}+ ||u||_{L^q(0,T,W^{s,q}_{ 0}(\O))}\\ &
\dyle \le
C(\O_T)\bigg(||f||_{L^1(\O_T)}+||u_0||_{L^1(\O)}\bigg).
\end{array}
\end{equation}
\end{Theorem}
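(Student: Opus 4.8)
The plan is to build the solution as a limit of energy solutions associated to regularized data, using the truncation method to obtain uniform estimates, and then to establish uniqueness by a duality argument against the test functions of $\mathcal{T}$. First I would regularize: choose $f_n\in L^\infty(\O_T)$ and $u_{0,n}\in L^\infty(\O)$ with $f_n\to f$ in $L^1(\O_T)$ and $u_{0,n}\to u_0$ in $L^1(\O)$. For each $n$, Definition \ref{energy} together with the classical $L^2$ theory (see \cite{LPPS}) furnishes an energy solution $u_n$ of \eqref{eq:def-0}. The engine of the whole argument is the a priori estimate obtained by testing with the truncation $T_k(u_n)\in L^2(0,T;H^s_0(\O))$. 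Using the elementary monotonicity inequality $(\sigma-\tau)(T_k(\sigma)-T_k(\tau))\ge (T_k(\sigma)-T_k(\tau))^2$ inside the quadratic form of $(-\D)^s$, and the convexity of the primitive $\Theta_k(\sigma)=\int_0^\sigma T_k(r)\,dr$ in the time term, I would arrive at
$$\sup_{0\le t\le T}\int_\O \Theta_k(u_n)(\cdot,t)\,dx+\int_0^T\|T_k(u_n)\|^2_{H^s_0(\O)}\,dt\le k\big(\|f\|_{L^1(\O_T)}+\|u_0\|_{L^1(\O)}\big).$$

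From this single estimate the integrability exponents are produced by the Boccardo--Gallou\"et machinery adapted to the fractional setting. Using the Sobolev embedding $H^s_0(\O)\hookrightarrow L^{2^{*}_s}(\O)$ of Theorem \ref{Sobolev} (with $2^{*}_s=\dfrac{2N}{N-2s}$, admissible since $N>2s$) to control the level sets $\{|u_n|>k\}$, an interpolation between the time-uniform $L^1$ bound coming from the $\Theta_k$ term and the space-time $L^{2^{*}_s}$ bound on $T_k(u_n)$ yields a uniform Marcinkiewicz estimate, hence $u_n$ bounded in $L^m(\O_T)$ for every $m<\dfrac{N+2s}{N}$. A parallel computation on the fractional gradient, decomposing according to the truncation levels, gives the uniform bounds $|(-\D)^{s/2}u_n|\in L^r(\O_T)$ for $r<\dfrac{N+2s}{N+s}$ and $u_n\in L^q(0,T;W^{s,q}_0(\O))$ for $q<\dfrac{N+2s}{N+s}$. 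This is precisely where the threshold exponents enter, and it is the most delicate part of the computation.

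To pass to the limit I would invoke the $L^1$-contraction principle for the linear problem: applying Kato's inequality to the difference of two approximate solutions gives
$$\|u_n-u_{n'}\|_{\mathcal{C}([0,T];L^1(\O))}\le \|f_n-f_{n'}\|_{L^1(\O_T)}+\|u_{0,n}-u_{0,n'}\|_{L^1(\O)},$$
so that $\{u_n\}$ is Cauchy in $\mathcal{C}([0,T];L^1(\O))$ and converges to some $u$ in that space, hence a.e.\ along a subsequence. Combined with the uniform bounds of the previous step, this lets me identify $u$ as a weak solution in the sense of Definition \ref{veryweak} --- linearity making the passage to the limit in the weak formulation routine --- while lower semicontinuity of the norms yields \eqref{main-estim}. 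Uniqueness then follows by duality: if $u$ solves the homogeneous weak problem, testing against any $\phi\in\mathcal{T}$ gives $\iint_{\O_T}u\,\varphi\,dx\,dt=0$ for every admissible $\varphi\in L^\infty(\O_T)\cap\mathcal{C}^{\a,\b}(\O_T)$, and the density of such $\varphi$ forces $u\equiv 0$. I expect the main obstacle to be the sharp interpolation producing the precise exponents $\dfrac{N+2s}{N}$ and $\dfrac{N+2s}{N+s}$ while keeping all constants independent of $n$ and $k$.
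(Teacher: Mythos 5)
The paper does not prove Theorem \ref{th1} at all: it is imported verbatim from \cite{LPPS} (see also \cite{AABP}, \cite{BWZ}), so there is no internal proof to compare against. That said, your sketch is a faithful outline of the argument actually carried out in those references: regularize the data, test the approximate problems with $T_k(u_n)$ to get $\sup_t\int_\O\Theta_k(u_n)\,dx+\int_0^T\|T_k(u_n)\|^2_{H^s_0}\,dt\le Ck(\|f\|_{L^1}+\|u_0\|_{L^1})$, run the Boccardo--Gallou\"et interpolation through the fractional Sobolev embedding to extract the Marcinkiewicz-type bounds giving $m<\frac{N+2s}{N}$ and $r,q<\frac{N+2s}{N+s}$, pass to the limit via the $L^1$-contraction coming from Kato's inequality, and prove uniqueness by duality against $\mathcal{T}$. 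The only points I would flag as genuinely nontrivial (and which you correctly identify as the delicate part) are the level-set decomposition of the Gagliardo seminorm needed to convert the $H^s$ bound on truncations into the $W^{s,q}_0$ bound on $u$ itself, and the verification that the limit attains the initial datum in $\mathcal{C}([0,T];L^1(\O))$; both are handled in \cite{LPPS} and your outline is consistent with that treatment.
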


\begin{remark}\label{optimal}
The regularity condition obtained in Theorem \ref{th1} is optimal in the sense that if $m\ge \dfrac{N+2s}{N}$, then we can find $f\in L^1(\O_T)$ and $u_0\in
L^1(\O)$ such that $u^m\notin L^1(\O_T)$. This fact will be used in Theorem \ref{maria} in order to show the optimality of the condition imposed on $\a$.
\end{remark}

{In the case of having data in $L^1_{loc}$  the natural concept is
the usual \textit{distributional solution}, that is, given by the following
definition.
\begin{Definition}\label{distribu}
Assume that $(f,u_0)\in L^1_{loc}(\O_T)\times L^1_{loc}(\O)$. We say that $u\in L^1(\O_T)\cap\,\mathcal{C}([0,T], L^1_{loc}(\O))$ is a distributional solution to Problem \eqref{eq:def-0} if for all $\varphi \in \mathcal{C}^\infty_0(\O_T)$, for all $\eta\in \mathcal{C}^\infty_0(\O)$, we have
$$
\iint_{\O_T} u(-\varphi_t+(-\Delta)^s\varphi)\,dx\,dt=\iint_{\O_T}f(x,t)\varphi(x,t)\,dx\,dt, $$
and
$$
\io u(x,t)\,\eta(x)dx\to \int_{\O}u_0(x)\,\eta(x)\,dx \mbox{ as  }t\to 0.
$$
\end{Definition}
}
\

\

Finally, the next Kato type inequality will be useful in order to show apriori estimates and the positivity of the solution. { The proof follows exactly as in the elliptic case proved in \cite{CV1}. { (See also \cite{PSbook}).}
\begin{Theorem}\label{Conv}
Let $\phi\in \mathcal{C}^{2}(\re)$ be a convex function. Assume $u\in L^{2}(0,T; H^s_0(\O))\,\cap\, \mathcal{C}([0,T], L^2(\O))$. Define $v=\phi(u)$ and suppose
that $|v_t+(-\D)^s v|\in L^1(\O_T)$, then
\begin{equation}\label{ka}
v_t+(-\D)^s v\leq \phi'(u) (u_t+(-\D)^s (u)).
\end{equation}
\end{Theorem}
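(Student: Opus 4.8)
The plan is to split the left-hand side of \eqref{ka} into its time and space parts and treat them differently: the time derivative obeys the chain rule \emph{with equality}, $v_t=\phi'(u)u_t$, whereas the fractional Laplacian obeys only the convexity (C\'ordoba--C\'ordoba type) \emph{inequality} $(-\D)^s\phi(u)\le \phi'(u)(-\D)^s u$. Adding the two and using linearity then produces \eqref{ka}. The hypotheses $u\in L^2(0,T;H^s_0(\O))\cap\mathcal{C}([0,T],L^2(\O))$ together with $|v_t+(-\D)^s v|\in L^1(\O_T)$ are exactly what guarantee that $v_t$ and $(-\D)^s v$ are separately well defined as distributions, so that the splitting is meaningful.

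The core step is the spatial inequality, which I would prove in weak form against a nonnegative test function $\psi$, working at a fixed time slice and then integrating in $t$; this avoids handling the principal value directly. Using the symmetric bilinear form for $(-\D)^s$ one writes
$$\langle(-\D)^s v,\psi\rangle-\langle(-\D)^s u,\phi'(u)\psi\rangle=\frac12\iint_{\re^{2N}}\frac{B(x,y)}{|x-y|^{N+2s}}\,dx\,dy,$$
where, abbreviating $a=u(x),\ b=u(y),\ p=\psi(x)\ge0,\ q=\psi(y)\ge0$,
$$B(x,y)=p\big[\phi(a)-\phi(b)-(a-b)\phi'(a)\big]+q\big[\phi(b)-\phi(a)-(b-a)\phi'(b)\big].$$
Each bracket is nonpositive: the supporting-line inequalities $\phi(b)\ge\phi(a)+\phi'(a)(b-a)$ and $\phi(a)\ge\phi(b)+\phi'(b)(a-b)$ force $\phi(a)-\phi(b)-(a-b)\phi'(a)\le0$ and $\phi(b)-\phi(a)-(b-a)\phi'(b)\le0$ respectively. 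Since $p,q\ge0$ the whole integrand is $\le0$, whence $\langle(-\D)^s v,\psi\rangle\le\langle\phi'(u)(-\D)^s u,\psi\rangle$, which is the desired spatial inequality.

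To make this weak computation rigorous I would regularize $u$ by a mollification $u_\e$ (in space, and if necessary in time), for which $\phi(u_\e)$ and $\phi'(u_\e)\psi$ are genuine elements of the relevant spaces and the displayed identity holds verbatim. Passing to the limit $\e\to0$ relies on $u_\e\to u$ in $L^2(0,T;H^s_0(\O))$, on the continuity of $\phi$ and $\phi'$, and on the $L^1(\O_T)$ bound for $v_t+(-\D)^s v$ to identify the limit of each term. The main obstacle I anticipate is precisely this limit passage under the low regularity available: one must verify that $\phi'(u)\psi$ is an admissible test function for $(-\D)^s u$ --- delicate for a general convex $\phi\in\mathcal{C}^2(\re)$ unless $\phi'$ is controlled on the range of $u$ --- and that the chain rule $v_t=\phi'(u)u_t$ survives in the distributional limit. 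These two points, together with the justification that all the singular integrals above are absolutely convergent, are where the regularity hypotheses on $u$ and the assumption $|v_t+(-\D)^s v|\in L^1(\O_T)$ are essential.
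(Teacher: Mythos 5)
The paper states Theorem \ref{Conv} without proof, presenting it as a known Kato-type inequality, so there is no argument in the text to compare yours against. Your route is the standard and correct one: the time derivative contributes with equality via the chain rule, while the nonlocal part obeys the C\'ordoba--C\'ordoba convexity inequality $(-\D)^s\phi(u)\le \phi'(u)(-\D)^s u$, and your bilinear-form computation against a nonnegative test function is right --- both brackets in $B(x,y)$ are nonpositive by the supporting-line inequality, so the whole integrand has the correct sign. The only genuinely delicate point is the one you flag yourself: for a general convex $\phi\in\mathcal{C}^2(\re)$ and $u$ merely in $L^{2}(0,T;H^s_0(\O))\cap\mathcal{C}([0,T],L^2(\O))$, the products $\phi'(u)u_t$ and $\phi'(u)(-\D)^s u$ need not be defined, and the distributional chain rule for $v_t$ requires justification; in the paper the theorem is only ever invoked for $\phi$ with bounded derivative (truncation-type nonlinearities) applied to solutions of equations with integrable right-hand side, where every term is an $L^1$ function and your regularization and limit passage go through without obstruction.
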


\section{Gradient regularity of the solutions to the linear problem with $\frac 12<s<1$.}\label{sec3}

In this section the principal goal is to study the gradient regularity of the solution to the linear problem
\begin{equation}\label{eq:def}
\left\{
\begin{array}{rcll}
u_t+(-\D)^s u&=& \dyle f  & \text{ in } \O_{T}=\Omega \times (0,T)  , \\ u&=&0 & \text{ in }(\ren\setminus\O) \times (0,T), \\ u(x,0)&=&u_0(x)& \mbox{  in  }\O,
\end{array}%
\right.
\end{equation}
where $\O\subset \ren$ is a bounded regular domain and assuming  that $s\in (\frac 12,1)$.

We emphasize that the hypothesis $\frac 12< s<1$ is assumed across the whole section.

Under this hypothesis the heat kernel lies in the space $L^q((0,T), W^{1,q}_0(\O))$  for suitable $q$ as we will see later. This regularity is the starting point  to study the problem \eqref{eq:def}.

Denote by $P_{\Omega}$ the  kernel of the heat equation $\dfrac{d}{dt}+(-\Delta)^s$ in $\Omega\times(0,T)$ with Dirichlet boundary condition. The solution,  $u$, to problem \eqref{eq:def} is represented by
$$
u(x,t)=\dint_{\Omega} u_0(y) P_{\Omega} (x,y, t)\,dy+ \iint_{\O_t}f(y,\sigma) P_{\Omega} (x,y, t-\sigma)\,dy\,d\sigma,
$$
where $\O_t=\O\times (0,t)$.
As in the elliptic case, in order to get apriori estimates on the gradient of $u$, we need some sharp properties of the heat kernel $P_{\Omega}$. These properties
are resumed in the next lemma whose proof can be found in \cite{BJ}, \cite{CZ} and  \cite{KS}.
 \begin{Lemma}\label{estimmm}
Assume that  $s\in (\frac 12,1)$, then for all $x,y\in \O$ and for all $0<t<T$,
\begin{equation}\label{green1}
P_{\Omega}(x,y,t)\backsimeq \Big( 1\wedge \frac{\delta^s(x)}{\sqrt{t}}\Big)\times \Big( 1\wedge \frac{\delta^s(y)}{\sqrt{t}}\Big)\times
\Big(t^{-\frac{N}{2s}}\wedge \frac{t}{|x-y|^{N+2s}}\Big)
\end{equation}
and
\begin{equation}\label{green2}
|\nabla_x P_{\Omega}(x,y,t)|\leq C \Big( \dfrac{1}{\delta(x) \wedge t^{\frac{1}{2s}}}\Big)\, P_{\Omega}(x,y,t).
\end{equation}
Setting
\begin{equation}\label{green0}
\dint_{0}^{\infty} P_{\Omega} (x,y,
t)\,dt=\mathcal{G}_s(x,y)
\end{equation}
where $\mathcal{G}_s(x,y)$ is the Green function of the fractional Laplacian operator with Dirichlet condition, that is,
\begin{equation}\label{green00}
\mathcal{G}_s(x,y)\simeq \frac{1}{|x-y|^{N-2s}}\bigg(\frac{\d^s(x)}{|x-y|^{s}}\wedge 1\bigg) \bigg(\frac{\d^s(y)}{|x-y|^{s}}\wedge 1\bigg).
\end{equation}
\end{Lemma}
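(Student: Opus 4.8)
The plan is to read all three estimates off the probabilistic description of $P_\Omega$ as the transition density of the symmetric $2s$-stable process $(X_\sigma)_{\sigma\ge 0}$ killed on first exit from $\Omega$. Write $p_\sigma(x,y)$ for the free (whole-space) kernel; by the scaling property of the stable process it satisfies the classical sharp two-sided bound $p_\sigma(x,y)\backsimeq \sigma^{-\frac{N}{2s}}\wedge \frac{\sigma}{|x-y|^{N+2s}}$, which already supplies the last factor in \eqref{green1}. The killed kernel is recovered from the Duhamel/Hunt identity $P_\Omega(x,y,t)=p_t(x,y)-\mathbb{E}_x\big[p_{t-\tau}(X_\tau,y);\,\tau<t\big]$, where $\tau$ is the exit time from $\Omega$, so that all boundary effects sit in the correction term.

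First I would establish the two-sided bound \eqref{green1}. The upper bound combines the free bound with the survival estimate $\mathbb{P}_x(\tau>t)\backsimeq 1\wedge \frac{\delta^s(x)}{\sqrt t}$ — valid because $\partial\Omega$ is $C^{1,1}$, hence satisfies uniform interior and exterior ball conditions — symmetrized in $x$ and $y$ through the Chapman--Kolmogorov identity $P_\Omega(x,y,t)=\int_\Omega P_\Omega(x,z,t/2)\,P_\Omega(z,y,t/2)\,dz$. The matching lower bound is the delicate part: one proves a near-diagonal lower bound from the parabolic Harnack inequality and then propagates the correct boundary decay $\delta^s$ at each of $x$ and $y$ via the parabolic boundary Harnack principle, again using the $C^{1,1}$ regularity essentially; a chaining argument along a Harnack chain joining $x$ to $y$ covers the off-diagonal regime.

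Next I would treat the gradient bound \eqref{green2}. Fixing $y$, I regard $(x,\sigma)\mapsto P_\Omega(x,y,\sigma)$ as a caloric function for $\partial_\sigma+(-\Delta)^s$ that vanishes on $(\ren\setminus\Omega)\times(0,T)$ with boundary decay $\delta^s$. Introducing the natural local length scale $\rho(x)=\delta(x)\wedge t^{\frac{1}{2s}}$, I would rescale the kernel on the parabolic cylinder of radius $\rho(x)/2$ around $(x,t)$ and apply interior gradient estimates for the fractional heat operator; the scaling then converts the normalized derivative bound into $|\nabla_x P_\Omega(x,y,t)|\le C\,\rho(x)^{-1}P_\Omega(x,y,t)$, which is \eqref{green2}. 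The $\wedge$ is exactly the interpolation between the diffusive scale $t^{1/2s}$ in the interior (and for large time) and the boundary scale $\delta(x)$.

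Finally, the Green function estimates \eqref{green0}--\eqref{green00} follow by integrating \eqref{green1} in $t$ over $(0,\infty)$, the convergence at $t=\infty$ being guaranteed by the standing assumption $N>2s$. Splitting the integral at $t\simeq|x-y|^{2s}$, the off-diagonal factor integrates to the prefactor $|x-y|^{-(N-2s)}$, while the two boundary factors, evaluated at the dominant time scale $t\simeq|x-y|^{2s}$, saturate to $1\wedge \frac{\delta^s(x)}{|x-y|^s}$ and $1\wedge \frac{\delta^s(y)}{|x-y|^s}$; this reproduces \eqref{green00}. I expect the lower bound in \eqref{green1} and the scaling step in \eqref{green2} to be the main obstacles, since both rest on the boundary Harnack principle and on uniform geometric control of $\partial\Omega$ near the diffusive scale; these are precisely the ingredients supplied by \cite{BJ}, \cite{CZ} and \cite{KS}, which I would invoke to close the argument.
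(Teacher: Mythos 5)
The paper offers no proof of this lemma at all: the estimates are quoted as known, with the reader sent to \cite{BJ}, \cite{CZ} and \cite{KS}. Your sketch is a correct outline of how those references actually obtain \eqref{green1} and \eqref{green2} (survival--probability factorization plus parabolic boundary Harnack for the two-sided bound, rescaling at the local scale $\delta(x)\wedge t^{1/2s}$ together with interior gradient estimates for the gradient bound), and since you ultimately invoke the same references to close the hard steps, what you establish independently is essentially what the paper simply imports. One small caution on the only step you carry out yourself: \eqref{green1} is asserted only for $0<t<T$ with constants depending on $T$, so integrating it over $(0,\infty)$ to derive \eqref{green00} additionally requires the large-time bound $P_{\Omega}(x,y,t)\lesssim e^{-\lambda_1 t}\,\delta^s(x)\delta^s(y)$ for $t\geq T$ (also available in the cited works); the resulting tail contribution $\delta^s(x)\delta^s(y)$ is then absorbed into the right-hand side of \eqref{green00} precisely because $\Omega$ is bounded, so $|x-y|\lesssim 1$.
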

\begin{remark} Notice that
$$ P_{\Omega} (x,y,t)\le t^{-\frac{N}{2s}}\wedge \frac{t}{|x-y|^{N+2s}},$$
therefore
\begin{equation}\label{upper-kernel}
P_{\Omega} (x,y,t)\le \dfrac{2t}{t^\frac{N+2s}{2s}+|x-y|^{N+2s}}\le C(s,N)\dfrac{t}{(t^\frac 1{2s}+|x-y|)^{N+2s}}.
\end{equation}
Hereafter we will call
\begin{equation}\label{upper-kernel11}
H(x,t):=\dfrac{t}{(t^\frac 1{2s}+|x|)^{N+2s}}.
\end{equation}
\end{remark}
We begin by proving the following basic result.
\begin{Proposition}\label{propo1}
Assume that  $s\in (\frac 12,1)$, then for all $q<\dfrac{N+2s}{N+1}$, we have
$$
\dint_{0}^{T} \dint_{\Omega} \dint_{\Omega}|\nabla_x P_{\Omega}|^q\, dx\,dy\,dt\le C(\O)(T^{\frac{N+2s-q(N+s)}{2s}}+T^{\frac{N+2s-q(N+1)}{2s}}).
$$
\end{Proposition}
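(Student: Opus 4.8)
The plan is to feed the two pointwise bounds of Lemma \ref{estimmm} into the triple integral, do the three integrations in the order $y$, then $x$, then $t$, and arrange the $x$--integration so that the two stated powers of $T$ fall out directly, without ever cutting $\O$ into a boundary strip and an interior region. The whole difficulty is concentrated in how one treats the gradient near $\partial\O$, so I would isolate that as a single algebraic step at the start.

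First I would record the starting inequality. Combining the gradient estimate \eqref{green2}, the kernel bound \eqref{green1} (throwing away the harmless factor $1\wedge\frac{\delta^s(y)}{\sqrt t}\le 1$) and the upper profile \eqref{upper-kernel}, one has for $x,y\in\O$ and $0<t<T$
\[
|\nabla_x P_\Omega(x,y,t)|\le \frac{C}{\delta(x)\wedge t^{\frac 1{2s}}}\Big(1\wedge\frac{\delta^s(x)}{\sqrt t}\Big)H(x-y,t).
\]
The key observation is that the prefactor collapses to a form that splits \emph{globally}. Since $1\wedge\frac{\delta^s(x)}{\sqrt t}=\frac{(\delta(x)\wedge t^{\frac1{2s}})^s}{\sqrt t}$, we get
\[
\frac{1}{\delta(x)\wedge t^{\frac1{2s}}}\Big(1\wedge\frac{\delta^s(x)}{\sqrt t}\Big)=\frac{1}{(\delta(x)\wedge t^{\frac1{2s}})^{1-s}\,\sqrt t}\le C\Big(\frac{1}{\delta(x)^{1-s}\sqrt t}+\frac{1}{t^{\frac1{2s}}}\Big),
\]
using that $x\mapsto x^{-(1-s)}$ is decreasing, so $(\delta\wedge t^{\frac1{2s}})^{-(1-s)}=\max\{\delta^{-(1-s)},t^{-\frac{1-s}{2s}}\}\le\delta^{-(1-s)}+t^{-\frac{1-s}{2s}}$, together with $t^{-1/2}t^{-(1-s)/2s}=t^{-1/2s}$. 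This is exactly where the hypothesis $s>\tfrac12$ is decisive: it trades the full boundary singularity $\delta^{-1}$ (which is \emph{not} integrable for the admissible $q$, since here $\frac{N+2s}{N+1}>1$ and $q$ may exceed $1$) for the milder, integrable $\delta^{-(1-s)}$.

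Next I would integrate. Raising the resulting bound to the power $q$ and using $(a+b)^q\le 2^q(a^q+b^q)$ reduces matters to two terms. For the $y$--variable, the scaling $z=t^{\frac1{2s}}w$ gives $\int_\O H(x-y,t)^q\,dy\le\int_{\ren}H(z,t)^q\,dz=C\,t^{\frac{N(1-q)}{2s}}$, the $w$--integral being finite precisely because $q(N+2s)>N$ in the main range $q>\frac{N}{N+2s}$. For the $x$--variable the interior term contributes $\int_\O dx=|\O|$, while the boundary term contributes $\int_\O\delta(x)^{-(1-s)q}\,dx=C(\O)<\infty$; this holds because $q<\frac1{1-s}$, which follows from $q<\frac{N+2s}{N+1}$ and the elementary fact that $\frac{N+2s}{N+1}<\frac1{1-s}$ for $\frac12<s<1$. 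Collecting powers of $t$, the boundary term has $t$--integrand $\sim t^{\frac{N(1-q)}{2s}-\frac q2}=t^{\frac{N-q(N+s)}{2s}}$ and the interior term $\sim t^{\frac{N(1-q)}{2s}-\frac q{2s}}=t^{\frac{N-q(N+1)}{2s}}$.

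Finally I would integrate in $t$ on $(0,T)$. Both integrands are integrable at $t=0$ under the standing hypotheses: one needs $q<\frac{N+2s}{N+s}$ for the first (automatic, since it is weaker than $q<\frac{N+2s}{N+1}$) and $q<\frac{N+2s}{N+1}$ for the second; then $\int_0^T t^{\beta}\,dt=\frac{T^{\beta+1}}{\beta+1}$ raises each exponent by one and produces exactly $T^{\frac{N+2s-q(N+s)}{2s}}$ and $T^{\frac{N+2s-q(N+1)}{2s}}$, which is the claim. Since $\O\times\O\times(0,T)$ has finite measure and $L^{q_2}\hookrightarrow L^{q_1}$ there for $q_1\le q_2$, it suffices to run the argument for $q$ in the range $\frac{N}{N+2s}<q<\frac{N+2s}{N+1}$ (which is nonempty and is where the scaling integral in $w$ converges); the smaller values of $q$ then only give finiteness, which is all that is needed downstream. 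I expect the one genuine obstacle to be the boundary passage of the second paragraph: the sharp conversion of $\delta^{-1}$ into the integrable $\delta^{-(1-s)}$ via the intrinsic decay of $P_\Omega$, and the check that the admissible range of $q$ keeps $\delta^{-(1-s)q}$ integrable, are what make the estimate work and are responsible for the precise pair of exponents in $T$; everything else is scaling bookkeeping.
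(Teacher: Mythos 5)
Your proof is correct and follows essentially the same route as the paper's: the identity $1\wedge\frac{\delta^s(x)}{\sqrt t}=\big(\delta(x)\wedge t^{1/2s}\big)^s/\sqrt t$ turns your single global pointwise bound into exactly the sum of the two case-bounds the paper obtains by splitting $\Omega_T\times\Omega$ along $\{\delta(x)<t^{1/2s}\}$, and the subsequent $y$-, $x$- and $t$-integrations, the integrability condition $(1-s)q<1$, and the two exponents of $T$ coincide with the paper's treatment of its terms $I_1$ and $I_2$. The one caveat you flag --- that the scaling integral in $y$ needs $q(N+2s)>N$ --- is shared by the paper's own proof (its $\theta$-integral requires the same condition), so nothing is lost.
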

\begin{proof}
From \eqref{green1} and \eqref{green2} we obtain that $$ |\nabla_x P_{\Omega}(x,y,t)|\leq C \Big( \dfrac{1}{\delta(x) \wedge t^{\frac{1}{2s}}}\Big)\,\Big( 1\wedge
\frac{\delta^s(x)}{\sqrt{t}}\Big)\times \Big( 1\wedge \frac{\delta^s(y)}{\sqrt{t}}\Big)\times \dfrac{t}{\bigg(t^{\frac{1}{2s}}+|x-y|\bigg)^{N+2s}}.
$$
Hence, according with \eqref{upper-kernel},
\begin{equation}\label{rrr}
|\nabla_x P_{\Omega} (x,y,t)|\leq \left\{\begin{array}{rcl}
 \Big( 1\wedge \dfrac{\delta^s(y)}{\sqrt{t}}\Big)\cdot \dfrac{\sqrt{t}}{(\delta(x))^{1-s}} \cdot \dfrac{C}{(t^{\frac{1}{2s}} +|x-y|)^{(N+2s)}}, \mbox{ if }
 \delta(x)<t^{\frac{1}{2s} },\\
 \\
 C \Big( 1\wedge \dfrac{\delta^s(y)}{\sqrt{t}}\Big)\cdot \dfrac{t^{\frac{2s-1}{2s}}}{(t^{\frac{1}{2s}}+ |x-y|)^{N+2s}}, \mbox{ if } \delta(x)\geq t^{\frac{1}{2s}
 }.
\end{array}
\right.
\end{equation}
Thus
\begin{eqnarray*}
&\dyle \!\!\!\! \iiint_{\O_T\times \O}|\nabla_x P_{\Omega}|^q\, dx\,dy\,dt\le \\ \\ &\dyle \iiint_{\{(0,T)\times \O\times \O\}\cap
\{\delta(x)<t^{\frac{1}{2s}}\}}|\nabla_x P_{\Omega}|^q\, dx\,dy\,dt+\!\!  \iiint_{\{(0,T)\times \O\times \O\}\cap \{\delta(x)\ge t^{\frac{1}{2s}}\}}|\nabla_x
P_{\Omega}|^q\, dx\,dy\,dt:= I_1+I_2.
\end{eqnarray*}
Using \eqref{rrr}, it holds that
$$
\begin{array}{rcl}
I_1 &\leq& \dint_{\Omega}\dfrac {dx}{(\delta (x))^{(1-s)q}}\Bigg(\iint_{\O_T}\dfrac{t^{\frac{q}{2}}\,dydt} {(t^{\frac{1}{2s}}+|x-y|)^{q(N+2s)}}
\Bigg)\\
\\
&\leq& \dint_{\Omega}\dfrac {dx}{(\delta (x))^{(1-s)q}} \Bigg(\dint_{0}^T \dint_{{ \ren}}  \dfrac{t^{\frac{q}{2}- \frac{1}{2s} q(N+2s)}\,dydt}
{\Big(1+\frac{|x-y|}{t^{\frac{1}{2s}}}\Big)^{q(N+2s)}} \Bigg)\\
\\
&=&  C\dint_{\Omega}\dfrac {dx}{(\delta (x))^{(1-s)q}}\cdot \dint_{0}^{T} t^{\frac{N}{2s}} t^{\frac{qs-q(N+2s)}{2s}}\,dt\cdot \dint_{0}^{\infty}
\dfrac{\theta^{N-1}}{(1+\theta) ^{(N+2s)q}}\,d\theta,
\end{array}
$$
with $\theta=\dfrac{|x-y|}{t^{\frac{1}{2s}}}$.
Since $q<\dfrac{N+2s}{N+s}<\dfrac{1}{1-s}$, then $(1-s)q<1$ and $\dfrac{N}{2s}+\dfrac{qs-q(N+2s)}{2s}>-1$. Hence
$$
I_1\le C(\O)T^{\frac{N+2s-q(N+s)}{2s}}.
$$
Respect to $I_2$, we have
$$
\begin{array}{rcl}
I_2 &\leq& C  \dint_{0}^{T} \dint_{\Omega} \dint_{\Omega} \dfrac{t^{q\frac{2s-1}{2s}}}{(t^{\frac{1}{2s}}+ |x-y|)^{(N+2s)q}}\,dxdy dt\\ &&\\ &=&  C \Big(
\dint_{0}^{T}  t^{\frac{q(2s-1)+N-(N+2s)q}{2s}}\,dt\Big) \dint_{0}^{\infty} \dfrac{\theta^{N-1}}{(1+\theta)^{q(N+2s)}}\,d\theta.
\end{array}
$$
Since $q<\frac{N+2s}{N+1}$, then $\frac{q(2s-1)+N-(N+2s)q}{2s}> -1$. Hence
$$
I_2\le C(\O)T^{\frac{N+2s-q(N+1)}{2s}}.
$$
Combining the above estimates on $I_1$ and $I_2$, the result follows.
\end{proof}
We start with the following elementary result about the linear problem \eqref{eq:def} without source term.
\begin{Proposition}\label{first11}
Suppose that $f\equiv 0$ and $u_0\in L^\rho(\O)$. If $u$ is the unique weak solution to problem \eqref{eq:def}, then for all $r\ge \rho$ and for all $t>0$, we
have
\begin{equation}\label{sem100}
||u(\cdot,t)||_{L^r(\O)}\le C(\O)t^{-\frac{N}{2s}(\frac{1}{\rho}-\frac{1}{r})}||u_0||_{L^\rho(\O)}.
\end{equation}
\end{Proposition}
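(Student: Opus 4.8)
The plan is to read off the smoothing estimate directly from the representation formula combined with the pointwise upper bound on the heat kernel isolated in \eqref{upper-kernel}, using Young's inequality for convolutions. Since $f\equiv 0$, the weak solution is
$$
u(x,t)=\int_\Omega u_0(y)\,P_\Omega(x,y,t)\,dy .
$$
First I would invoke the kernel bound from the Remark after Lemma~\ref{estimmm}, namely $P_\Omega(x,y,t)\le C(s,N)\,H(x-y,t)$ with $H(z,t)=t\,(t^{1/2s}+|z|)^{-(N+2s)}$, valid for all $x,y\in\Omega$ and $t>0$. Extending $u_0$ by zero outside $\Omega$, this gives the pointwise domination
$$
|u(x,t)|\le C\int_{\mathbb{R}^N}|u_0(y)|\,H(x-y,t)\,dy=C\,\bigl(|u_0|\ast H(\cdot,t)\bigr)(x),\qquad x\in\Omega .
$$

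Next, for $r\ge\rho$, I would apply Young's convolution inequality with the exponent $p$ fixed by $1+\frac1r=\frac1p+\frac1\rho$. The hypothesis $r\ge\rho$ guarantees $\frac1\rho-\frac1r\in[0,1]$, hence $p\in[1,\infty]$, so Young's inequality is applicable and yields
$$
\|u(\cdot,t)\|_{L^r(\Omega)}\le C\,\|H(\cdot,t)\|_{L^p(\mathbb{R}^N)}\,\|u_0\|_{L^\rho(\Omega)} .
$$
It then remains to evaluate $\|H(\cdot,t)\|_{L^p(\mathbb{R}^N)}$, which is a pure scaling computation: with $z=t^{1/2s}\zeta$ one finds
$$
\|H(\cdot,t)\|_{L^p(\mathbb{R}^N)}^p=t^p\int_{\mathbb{R}^N}\frac{dz}{(t^{1/2s}+|z|)^{(N+2s)p}}=C_{N,s,p}\,t^{\frac{N}{2s}(1-p)},
$$
where the dimensionless integral $\int_{\mathbb{R}^N}(1+|\zeta|)^{-(N+2s)p}\,d\zeta$ is finite because $p\ge1$ forces $(N+2s)p>N$. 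Thus $\|H(\cdot,t)\|_{L^p(\mathbb{R}^N)}=C\,t^{-\frac{N}{2s}(1-\frac1p)}$, and since $1-\frac1p=\frac1\rho-\frac1r$ by the choice of $p$, we recover exactly the factor $t^{-\frac{N}{2s}(\frac1\rho-\frac1r)}$ of \eqref{sem100}.

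The argument is essentially routine; the only points demanding care are verifying that Young's inequality applies (this is precisely where $r\ge\rho$ enters, ensuring $p\ge1$) and checking that the scaling exponent collapses to $\frac1\rho-\frac1r$. As an alternative I could avoid Young's inequality and interpolate: the crude bound $P_\Omega\le t^{-N/2s}$ gives the ultracontractive endpoint $\|u(\cdot,t)\|_\infty\le C\,t^{-N/2s}\|u_0\|_1$, while the sub-Markovian property $\int_\Omega P_\Omega(x,y,t)\,dx\le1$ (and its symmetric counterpart, using $P_\Omega(x,y,t)=P_\Omega(y,x,t)$) gives the $L^1$ and $L^\infty$ contractions; two applications of the Riesz--Thorin theorem then interpolate these endpoints to the same $L^\rho\to L^r$ estimate. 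I expect the convolution route to be shorter, since it uses directly the kernel bound $H$ already recorded in \eqref{upper-kernel}.
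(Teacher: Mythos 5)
Your proof is correct and follows essentially the same route as the paper: both start from the representation formula, dominate $P_\Omega$ by the kernel $H$ of \eqref{upper-kernel}, apply Young's inequality, and conclude with the scaling computation $\|H(\cdot,t)\|_{L^a}\le Ct^{\frac{N}{2s}(\frac1a-1)}$ of \eqref{L-a-H}. The only cosmetic difference is that the paper phrases Young's inequality in its trilinear/duality form (testing against $\phi$ with $\|\phi\|_{L^{r'}}\le1$ and using $\frac{1}{r'}+\frac{1}{\rho}+\frac{1}{a}=2$), whereas you use the bilinear convolution form directly; the two are equivalent and your exponent $p$ coincides with the paper's $a$.
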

\begin{proof}
We have the representation formula for the solution,
$$
u(x,t)=\dint_{\Omega}u_0(y) P_{\Omega} (x,y, t)\,dy.
$$
From Theorem \ref{th1} we obtain that $u\in L^m(\O_T)$ for all $m<\frac{N+2s}{N}$.
To prove the  estimate \eqref{sem100} we take advantage of the linearity of the problem  and use a duality argument.
Let $\phi\in \mathcal{C}^\infty_0(\O)$, then
$$
||u(.,t)||_{L^r(\O)} =\dyle \sup_{\{||\phi||_{L^{r'}(\O)}\le 1\}}\io \phi(x) u(x,t)dx= \dyle
\sup_{\{||\phi||_{L^{r'}(\O_T)}\le 1\}}\io \phi(x) \dint_{\Omega}u_0(y) P_{\Omega} (x,y, t)\,dy \,dx.
$$
Using estimate \eqref{green1}, it holds that
$$
\begin{array}{rcl}
||u(.,t)||_{L^r(\O)} &\le& \dyle \sup_{\{||\phi||_{L^{r'}(\O)}\le 1\}}\iint_{\O\times \O}
|u_0(y)||\phi(x)|\dfrac{t}{(t^{\frac{1}{2s}}+|x-y|)^{N+2s}}\,dy\,dx\\ &= & \dyle \sup_{\{||\phi||_{L^{r'}(\O)}\le 1\}} \iint_{\O\times \O} |\phi(x)||u_0(y)|
H(x-y,t)dydx
\end{array}
$$
with $H(x,\s)$ defined in \eqref{upper-kernel11}. Using Young inequality, we get
$$
||u(.,t)||_{L^r(\O)}\le C\dyle \sup_{\{||\phi||_{L^{r'}(\O)}\le 1\}}||\phi||_{L^{r'}(\O)} ||u_0||_{L^\rho(\O)}||H(., t)||_{L^a(\O)}
$$
with $\frac{1}{r'}+\frac{1}{\rho}+\frac{1}{a}=2$. By a direct computations we reach that
$$
||H(.,t)||^a_{L^a(\O)}\le ||H(.,t)||^a_{L^a(\ren)}= t^{a-\frac{a(N+2s)}{2s}}\dyle \dint_{\ren}
\dfrac{1}{\Big(1+\dfrac{|x|}{t^{\frac{1}{2s}}}\Big)^{a(N+2s)}}\,dx.
$$
Setting $z=\dfrac{x}{t^{\frac{1}{2s}}}$, then $||H(.,t)||^a_{L^a(\O)}\le C t^{a+\frac{N}{2s}-\frac{a(N+2s)}{2s}}.$
Thus
\begin{equation}\label{L-a-H}
||H(., t)||_{L^a(\O)}\le C(\O) t^{\frac{N}{2s}(\frac{1}{a}-1)}=C t^{\frac{N}{2s}(\frac{1}{r}-\frac{1}{\rho})}.
\end{equation}
Hence
$$
||u(.,t)||_{L^r(\O)} \le C(\O) t^{\frac{N}{2s}(\frac{1}{r}-\frac{1}{\rho})}||u_0||_{L^\rho(\O)}.
$$
\end{proof}
Next we  state the  following  compactness result that could be seen as the parabolic extension of the result in \cite{CV2} and which have an interest in  itself.
\begin{Theorem} \label{gradiente}
Assume that $(f,u_0)\in L^1(\O_T)\times L^{1}(\Omega)$. Let $u$ be the unique solution to problem \eqref{eq:def}, then for all $q<\frac{N+2s}{N+1}$,
$$
||u||_{\mathcal{C}([0,T],L^1(\O))}+ ||\nabla u||_{L^{q}(\Omega_T)}\le C(q,\Omega_T)\bigg(||f||_{L^{1}(\Omega_T)}+||u_0||_{L^1(\O)}\bigg).
$$
Moreover, for $q<\frac{N+2s}{N+1}$ fixed, setting $\hat{K}: L^{1}(\Omega_T)\times L^1(\O)\to L^q(0,T; W_{0}^{1,q}(\Omega))$, $\hat{K}(f, u_0)=u$, the unique
solution to problem \eqref{eq:def}, then $\hat{K}$ is a compact operator.
\end{Theorem}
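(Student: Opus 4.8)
The plan is to read everything off the representation formula
$$
u(x,t)=\int_{\O} u_0(y) P_{\Omega}(x,y, t)\,dy+ \int_0^t\!\!\int_\O f(y,\sigma) P_{\Omega}(x,y, t-\sigma)\,dy\,d\sigma,
$$
differentiating in $x$ and reducing both assertions to integral estimates on $\nabla_x P_{\Omega}$ via Lemma \ref{estimmm} and Proposition \ref{propo1}. I would work first with regular data, say $(f,u_0)\in L^\infty(\O_T)\times L^\infty(\O)$, where $u$ is an energy solution and differentiation under the integral sign is legitimate. The general $L^1$ case then follows by density, crucially using the \emph{linearity} of the problem: if $(f_n,u_{0,n})\to(f,u_0)$ in $L^1(\O_T)\times L^1(\O)$, the a priori bound below applied to the differences forces $\{\nabla u_n\}$ to be Cauchy in $L^q(\O_T)$, whence $u\in L^q(0,T;W^{1,q}_0(\O))$ and the estimate passes to the limit; the $\mathcal{C}([0,T],L^1(\O))$ term is already supplied by \eqref{main-estim} of Theorem \ref{th1}.

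For the quantitative bound I would first upgrade Proposition \ref{propo1} to an estimate uniform in the pole $y$, namely $\sup_{y\in\O}\int_0^T\int_\O|\nabla_x P_{\Omega}(x,y,t)|^q\,dx\,dt\le C(\O,T)$ for $q<\frac{N+2s}{N+1}$. This is obtained by rerunning the proof of Proposition \ref{propo1} with $y$ fixed, bounding the factor $\big(1\wedge \delta^s(y)/\sqrt t\big)\le 1$ and splitting as in \eqref{rrr}; the region $\delta(x)\ge t^{1/2s}$ is symmetric and yields exactly the exponent $\frac{N+2s-q(N+1)}{2s}>0$ as for $I_2$, while in the region $\delta(x)<t^{1/2s}$ the boundary weight $(\delta(x))^{-(1-s)q}$ now sits inside the $x$-integration and must be handled by a dyadic decomposition of the layer $\{\delta(x)<t^{1/2s}\}$ in $|x-y|$ (using $(1-s)q<1$ and that $\O$ is $C^{1,1}$) so as to retain the spatial decay of the kernel. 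Granting this, Minkowski's integral inequality (valid since $q\ge 1$) gives
$$
\|\nabla u\|_{L^q(\O_T)}\le\Big(\sup_{y}\|\nabla_x P_{\Omega}(\cdot,y,\cdot)\|_{L^q(\O_T)}\Big)\big(\|u_0\|_{L^1(\O)}+\|f\|_{L^1(\O_T)}\big),
$$
where for the source term one substitutes $\tau=t-\sigma$ and uses $\int_0^{T-\sigma}\le\int_0^T$.

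For the compactness of $\hat K$ I would exhibit it as an operator-norm limit of compact operators, splitting the time-convolution at a scale $\delta>0$: write $\hat K=\hat K_{\le\delta}+\hat K_{>\delta}$, where $\hat K_{>\delta}$ integrates the kernel only over $t-\sigma>\delta$ (and $t>\delta$ for the datum part). Restricting the uniform kernel bound to $(0,\delta)$ gives $\|\nabla\hat K_{\le\delta}(f,u_0)\|_{L^q(\O_T)}\le C\,\delta^{c}(\|f\|_{L^1}+\|u_0\|_{L^1})$ with $c=\frac{N+2s-q(N+1)}{2sq}>0$, so $\|\hat K_{\le\delta}\|\to 0$ as $\delta\to 0$. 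For $\hat K_{>\delta}$ I would use that the a priori estimate holds for \emph{every} exponent below $\frac{N+2s}{N+1}$, so the image is bounded in $L^{q_1}(\O_T)$ for some $q_1\in(q,\frac{N+2s}{N+1})$; this yields equi-integrability of $\{|\nabla\hat K_{>\delta}(f,u_0)|^q\}$, hence uniform smallness on the boundary layer $\{\delta(x)<\eta\}$. Away from $\partial\O$ and for $t-\sigma>\delta$ the kernel $\nabla_x P_{\Omega}$ is smooth with locally bounded derivatives (non-singular region of the bounds \eqref{green1}--\eqref{green2}), so the family is equibounded and equicontinuous there; combining the boundary-layer tightness with interior Ascoli--Arzel\`a (equivalently Fr\'echet--Kolmogorov) gives precompactness in $L^q(0,T;W^{1,q}_0(\O))$, the Dirichlet condition being built into $P_{\Omega}$. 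Thus each $\hat K_{>\delta}$ is compact, and $\hat K$ is compact as their norm-limit.

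The main obstacle is the uniform-in-$y$ kernel estimate: unlike in Proposition \ref{propo1}, the weight $(\delta(x))^{-(1-s)q}$ is integrated against the kernel in the \emph{same} variable $x$, and the crude bound $(t^{1/2s}+|x-y|)^{-q(N+2s)}\le t^{-q(N+2s)/2s}$ loses too much, producing only the suboptimal range $q<\frac{1+2s}{N+1}$. Retaining the decay of the kernel through the dyadic decomposition of the boundary layer is precisely what restores the sharp exponent $q<\frac{N+2s}{N+1}$, and it is the one place where the $C^{1,1}$ regularity of $\partial\O$ together with $(1-s)q<1$ enters essentially; once this estimate is in hand, both the a priori bound and the compactness are fairly mechanical.
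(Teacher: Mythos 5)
Your quantitative estimate is correct, but it reaches the bound by a genuinely different route than the paper. You prove the uniform-in-the-pole bound $\sup_{y\in\Omega}\|\nabla_x P_{\Omega}(\cdot,y,\cdot)\|_{L^q(\Omega_T)}\le C(\Omega,T)$ directly, via a dyadic decomposition of the layer $\{\delta(x)<t^{1/2s}\}$ in $|x-y|$, using $\int_{B_r(y)\cap\Omega}\delta(x)^{-(1-s)q}\,dx\le Cr^{N-(1-s)q}$ (legitimate since $(1-s)q<1$ and $\partial\Omega$ is $C^{1,1}$); the resulting time integral $\int_0^T t^{(N-q(N+1))/2s}\,dt$ is finite exactly for $q<\frac{N+2s}{N+1}$, and Minkowski's integral inequality then gives the estimate. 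You also correctly diagnose that discarding the spatial decay of the kernel would only yield $q<\frac{1+2s}{N+1}$. The paper instead applies H\"older in $y$ with respect to the measure $u_0(y)\,dy$ (resp.\ $f\,dy\,d\sigma$), uses the self-improvement $P_\Omega^{q}\le C\delta(x)^{-(q-1)N}P_\Omega$ on $\{\delta(x)\le t^{1/2s}\}$, integrates in time via $\int_0^\infty P_\Omega\,dt=\mathcal{G}_s$, and invokes the boundedness of the solution of $(-\Delta)^s\varphi=\delta^{-(q(N+1)-N)}$ from \cite{AP}, \cite{Adm} (the condition $q(N+1)-N<2s$ is again $q<\frac{N+2s}{N+1}$). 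Your version is more self-contained; the paper's is the one that transfers to the weighted setting of Theorem \ref{key}. Both are valid.

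The compactness argument is where the proposal has a genuine gap. The paper proceeds sequentially: it expresses $\|\nabla(u_i-u_j)\|_{L^q}$ through the zeroth-order quantities $w_i-w_j$ and $(\tilde w_i-\tilde w_j)/\delta$ via the pointwise identities \eqref{j12}, \eqref{j22}, and gets strong convergence of those from boundedness in better spaces together with Vitali's lemma. Your scheme --- $\hat K$ as an operator-norm limit of the time-truncated operators $\hat K_{>\delta}$, with $\|\hat K_{\le\delta}\|\le C\delta^{c}$ --- is attractive and the smallness of $\hat K_{\le\delta}$ follows correctly from your uniform kernel bound, but the compactness of $\hat K_{>\delta}$ is not established by what you wrote. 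First, interior Ascoli--Arzel\`a is not available: equicontinuity in $t$ fails pointwise for $L^1$-in-time data, since for $f$ concentrating at a time $\sigma_0$ the truncated potential jumps at $t=\sigma_0+\delta$ by roughly $\int g(y)\nabla_xP_\Omega(x,y,\delta)\,dy$, which is of order $\|g\|_{L^1}$ and not small. One must instead use the $L^q$-translation (Fr\'echet--Kolmogorov) criterion, where the offending set of times has measure $O(h)$ and the argument does close; the two criteria are not interchangeable here. Second, even for the Kolmogorov criterion you need a modulus of continuity of $\nabla_xP_\Omega(x,y,\tau)$ in $(x,\tau)$, uniform in $y$, for $x$ in compact subsets of $\Omega$ and $\tau\ge\delta$; Lemma \ref{estimmm} only supplies size bounds, so this must be imported from interior regularity theory for the fractional heat equation (e.g.\ \cite{CF}, \cite{Gr}) or from second-order kernel estimates. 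Both points are repairable, but as written the compactness proof is incomplete.
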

\begin{proof}

Without loss of generality we can assume that the data $u_0,f$ are nonnegative, since  thanks to the linearity of the operator the general case can be obtained by decomposing the datum into its positive
and negative parts and then dealing with two data separately.

Since $(u_0,f)\in L^1(\O)\times L^{1}(\Omega_T)$, then $u\in L^{m}(\Omega_T)$ for all $m<\frac{N+2s}{N}$ (see \cite{LPPS}). From the representation formula, setting $\O_t=\O\times (0,t)$ with $t<T$,  we get
$$
u(x,t)=\dint_{\Omega}u_0(y) P_{\Omega} (x,y, t)\,dy\,+ \dyle \iint_{\O_t} f(y,\sigma) P_{\Omega} (x,y, t-\sigma)\,dy\,d\sigma.
$$
Hence
$$
\begin{array}{rcl}
|\nabla u(x,t)|^q&\leq&
 C(\O_T) \Big(\dint_{\Omega} u_0(y)|\nabla_x P_{\Omega} (x,y, t)|\,dy\,+
 \iint_{\O_t} f(y,\sigma) |\nabla_x P_{\Omega} (x,y,t-\sigma)|\,dy\,d\sigma\Big)^q\\
 \\ &\leq& C(\O_T) \Big(\dint_{\Omega} u_0(y)\frac{|\nabla_x P_{\Omega} (x,y, t)|}{P_{\Omega} (x,y, t)}\,P_{\Omega} (x,y, t) dy\,\\
  \\
&+ & \dyle \iint_{\O_t} f(y,\sigma) \frac{|\nabla_x P_{\Omega} (x,y, t-\sigma)|}{P_{\Omega} (x,y, t-\sigma)}P_{\Omega} (x,y,
t-\sigma)\,dy\,d\sigma\Big)^q\\ \\
&\leq& C(\O_T) \Big(\dint_{\Omega} u_0(y) h(x,y,t){P_{\Omega} (x,y, t)}\,dy\Big)^q\\
  \\
&+ & \dyle C(\O_T)\Big(\iint_{\O_t} f(y,\sigma) h(x,y,t-\s){P_{\Omega} (x,y, t-\sigma)}\,dy\,d\sigma\Big)^q \\ \\ &:=& J_1(x,t)+J_2(x,t)
\end{array}
$$
with $h(x,y,t)=\dfrac{|\nabla_x P_{\Omega} (x,y, t)|}{P_{\Omega} (x,y, t)}\leq C \Big( \dfrac{1}{\delta(x) \wedge t^{\frac{1}{2s}}}\Big)$.\\

To estimate $J_1$ we decompose the integral as follows
\begin{eqnarray*}
J_1(x,t) & = & C \Big(\dint_{\Omega} u_0(y) h(x,y,t){P_{\Omega} (x,y, t)}\, dy\Big)^q\\ \\
& \le & C \Big(\dint_{\{\Omega\cap \{\d(x)>t^{\frac{1}{2s}}\}\}} u_0(y) h(x,y,t){P_{\Omega} (x,y, t)}\,dy\Big)^q \\ & + &
\Big(\dint_{\{\Omega\cap \{\d(x)\le t^{\frac{1}{2s}}\}\}} u_0(y) h(x,y,t){P_{\Omega} (x,y, t)}\, dy\Big)^q\\
&\le & \frac{C}{t^{\frac{q}{2s}}}\Big(\dint\limits_{\{\Omega\cap \{\d(x)>t^{\frac{1}{2s}}\}\}} u_0(y){P_{\Omega} (x,y, t)}\,dy\Big)^q  +
\frac{C}{\d^q(x)}\Big(\dint\limits_{\{\Omega\cap \{\d(x)\le t^{\frac{1}{2s}}\}\}} u_0(y){P_{\Omega} (x,y, t)}\,dy\Big)^q\\
& = & J_{11}(x,t)+ J_{12}(x,t).
\end{eqnarray*}
By estimate \eqref{green1} and H\"older inequality, we obtain that
\begin{eqnarray*}
J_{11}(x,t) & = & \frac{C}{t^{\frac{q}{2s}}}\Big(\dint_{\{\Omega\cap \{\d(x)>t^{\frac{1}{2s}}\}\}} u_0(y){P_{\Omega} (x,y, t)}\,dy\Big)^q \\
& \le & C\Big(\dint_{\{\Omega\cap \{\d(x)>t^{\frac{1}{2s}}\}\}} u_0(y)\frac{t^{1-\frac{1}{2s}}}{(t^{\frac{1}{2s}}+|x-y|)^{N+2s}}\,dy\Big)^q\\
&\le & C||u_0||^{q-1}_{L^1(\O)}\io u_0(y)\frac{t^{q(1-\frac{1}{2s})}}{(t^{\frac{1}{2s}}+|x-y|)^{q(N+2s)}}\,dy.
\end{eqnarray*}
Thus
$$
\iint_{\O_T}J_{11}(x,t) dxdt \le C||u_0||^{q-1}_{L^1(\O)}\io u_0(y)\bigg(\iint_{\O_T}\frac{t^{q(1-\frac{1}{2s})}}{(t^{\frac{1}{2s}}+|x-y|)^{q(N+2s)}}\,dx\,dt\bigg)\,dy.
$$
Setting, $z=\dfrac{x-y}{t^{\frac{1}{2s}}}$, then
\begin{eqnarray*}
\dyle \iint_{\O_T}J_{11}(x,t) dxdt & \le &  C||u_0||^{q}_{L^1(\O)}\int_0^T t^{q(1-\frac{1}{2s})-\frac{q(N+2s)}{2s}+\frac{N}{2s}} \int_{\ren}\frac{dz}{(1+|z|)^{q(N+2s)}}dz\,dt\\
&\le & C||u_0||^{q}_{L^1(\O)}\int_0^T t^{\g_1} dt
\end{eqnarray*}
where $\g_1=q(1-\frac{1}{2s})-\frac{q(N+2s)}{2s}+\frac{N}{2s}=\frac{N}{2s}-q\frac{N+1}{2s}$. Since $q<\frac{N+2s}{N+1}$, then $\g_1>-1$. Thus
$$
\dyle \iint_{\O_T}J_{11}(x,t) dxdt\le CT^{\g_1+1}||u_0||^{q}_{L^1(\O)}.
$$

We deal now with $J_{12}$ which is more involved.

By estimate \eqref{green1}, we obtain that $P_{\Omega} (x,y, t)\le Ct^{-\frac{N}{2s}}$.
Hence $\bigg(t^{\frac{N}{2s}}P_{\Omega} (x,y, t)\bigg)^{q-1}\le C$ and then
\begin{equation}\label{one0}
P^q_{\Omega} (x,y, t)\le Ct^{-(q-1)\frac{N}{2s}}P_{\Omega} (x,y, t)\le \frac{C}{(\d(x))^{(q-1)N}}P_{\Omega} (x,y, t)\:\: \mbox{  if }\d(x)\le t^{\frac{1}{2s}}.
\end{equation}
Therefore using H\"older inequality, it holds that
\begin{equation}\label{newww}
\begin{array}{lll}
J_{12}(x,t)& = & \dfrac{C}{\d^q(x)}\Big(\dint_{\{\Omega\cap \{\d(x)\le t^{\frac{1}{2s}}\}\}} u_0(y){P_{\Omega} (x,y, t)}\,dy\Big)^q\\
&\le & \dfrac{C||u_0||^{q-1}_{L^1(\O)}}{\d^q(x)}\dint_{\{\Omega\cap \{\d(x)\le t^{\frac{1}{2s}}\}\}} u_0(y){P^q_{\Omega} (x,y, t)}\,dy\\
& \le & \dfrac{C||u_0||^{q-1}_{L^1(\O)}}{\d^q(x)}\dint_{\{\Omega\cap \{\d(x)\le t^{\frac{1}{2s}}\}\}} u_0(y)\frac{1}{(\d(x))^{(q-1)N}}P_{\Omega} (x,y, t)dy\\
& \le & C||u_0||^{q-1}_{L^1(\O)}\dint_{\{\Omega\cap \{\d(x)\le t^{\frac{1}{2s}}\}\}} u_0(y)\frac{P_{\Omega} (x,y, t)}{(\d(x))^{q(N+1)-N}}dy.
\end{array}
\end{equation}
Thus
$$
\iint_{\O_T}J_{12}(x,t)dxdt\le C||u_0||^{q-1}_{L^1(\O)}\io u_0(y)\io \frac{1}{(\d(x))^{q(N+1)-N}}\bigg(\int_0^T P_{\Omega} (x,y, t)dt\bigg)dxdy.
$$
Recall that, from \eqref{green0},
$$
\dint_{0}^{\infty} P_{\Omega} (x,y,
t)\,dt=\mathcal{G}_s(x,y),
$$
the Green function of the fractional Laplacian in $\Omega$.
Then
$$
\iint_{\O_T}J_{12}(x,t)dxdt\le C||u_0||^{q-1}_{L^1(\O)}\io u_0(y)\bigg(\io \frac{\mathcal{G}_s(x,y)}{(\d(x))^{q(N+1)-N}}dx\bigg)dy.
$$
Let $\varphi(x):=\dyle\io \frac{\mathcal{G}_s(x,y)}{\d^{q(N+1)-N}}dx$, then $\varphi$ is the unique solution to the problem
\begin{equation}\label{varphi}
\left\{
\begin{array}{rcll}
(-\D)^s \varphi &=& \dfrac{1}{(\d(x))^{q(N+1)-N}} & \text{ in } \O, \\
\varphi &=&0 & \text{ in }(\ren\setminus\O). \\
\end{array}%
\right.
\end{equation}
Since $q(N+1)-N<2s$, from \cite{AP}, see also \cite{Adm}, it follows that $\varphi\in L^\infty(\O)$. Hence we conclude that
$$
\iint_{\O_T}J_{12}(x,t)dxdt\le C||u_0||^{q}_{L^1(\O)}.
$$
Combing the above estimate we deduce that
$$
\iint_{\O_T}J_1(x,t) dxdt \le C||u_0||^{q}_{L^1(\O)}.
$$
We treat now $J_2$. As in the previous estimates, we have
\begin{eqnarray*}
J_2(x,t) & = & \Big(\iint_{\{\O\times (0,t)\}}f(y,\sigma) h(x,y,t-\s){P_{\Omega} (x,y, t-\sigma)}\,dy\,d\sigma\Big)^q\\
&=& \Big(\iint_{\{\O\times (0,t) \cap\{\d(x)>(t-\s)^{\frac{1}{2s}}\}\}\}}f(y,\sigma) h(x,y,t-\s){P_{\Omega} (x,y, t-\sigma)}\,dy\,d\sigma\Big)^q\\
&+ & \dyle \Big(\iint_{\{\O\times (0,t) \cap\{\d(x)\le (t-\s)^{\frac{1}{2s}}\}\}\}}f(y,\sigma) h(x,y,t-\s){P_{\Omega} (x,y, t-\sigma)}\,dy\,d\sigma\Big)^q\\
&\le & C\Big(\iint_{\{\O\times (0,t) \cap\{\d(x)>(t-\s)^{\frac{1}{2s}}\}\}\}}f(y,\sigma) \frac{P_{\Omega} (x,y, t-\sigma)}{(t-\s)^{\frac{1}{2s}}}\,dy\,d\sigma\Big)^q\\
&+ & \frac{C}{\d^q(x)}\dyle \Big(\iint_{\{\O\times (0,t) \cap\{\d(x)\le (t-\s)^{\frac{1}{2s}}\}\}\}}f(y,\sigma){P_{\Omega} (x,y, t-\sigma)}\,dy\,d\sigma\Big)^q\\ \\
&=& J_{21}(x,t) + J_{22}(x,t).
\end{eqnarray*}
Using estimate \eqref{green1} and by H\"older inequality, we get
\begin{eqnarray*}
J_{21}(x,t) &=& \Big(\iint_{\{\O\times (0,t) \cap\{\d(x)>(t-\s)^{\frac{1}{2s}}\}\}\}}f(y,\sigma) \frac{(t-\s)^{1-\frac{1}{2s}}}{((t-\s)^{\frac{1}{2s}}+|x-y|)^{N+2s}} \,dy\,d\sigma\Big)^q\\
& \le  & C||f||^{q-1}_{L^1(\O_T)}\iint_{\{\O\times (0,t)\}\}}f(y,\sigma) \frac{(t-\s)^{q(1-\frac{1}{2s})}}{((t-\s)^{\frac{1}{2s}}+|x-y|)^{q(N+2s)}} \,dy\,d\sigma.
\end{eqnarray*}
Integrating in $\O_T$, following closely the computations used in the estimate of the term $J_{11}$, we deduce that
$$
\dyle \iint_{\O_T}J_{21}(x,t) dxdt\le CT^{\g_1+{{2}}}||f||^{q}_{L^1(\O_T)}
$$
with $\g_1=\dfrac{N}{2s}-q\dfrac{N+1}{2s}>-1$.

Now respect to $J_{22}$, using estimate \eqref{one0} and by H\"older inequality, it follows that
\begin{equation}\label{newww0}
\begin{array}{lll}
J_{22}(x,t)& \le & \dfrac{C||f||^{q-1}_{L^1(\O_T)}}{\d^q(x)}\dyle\iint_{\{\O\times (0,t)\}\}}f(y,\sigma){P^q_{\Omega} (x,y, t-\s)}\,dy,\,d\s\\
& \le & \dfrac{C||f||^{q-1}_{L^1(\O_T)}}{\d^q(x)}
\dyle\iint_{\{\O\times (0,t)\}\}}\frac{f(y,\sigma)}{(\d(x))^{(q-1)N}}P_{\Omega} (x,y, t-\s)dy\,d\s \\
& \le & C||f||^{q-1}_{L^1(\O_T)} \dyle \iint_{\{\O\times (0,t)\}\}}f(y,\sigma)
\frac{P_{\Omega} (x,y, t-\s)}{(\d(x))^{q(N+1)-N}}dy.
\end{array}
\end{equation}
Integrating in $\O_T$ as above, there results that
$$
\iint_{\O_T}J_{22}(x,t)dxdt\le C||f||^{q-1}_{L^1(\O_T)}\iint_{\O_T}f(y,\s)\Big(\io \frac{1}{(\d(x))^{q(N+1)-N}}\int_{\s}^T P_{\Omega} (x,y, t-\s)dt\,dx\Big)dy\,d\sigma.
$$
It is clear that, from \eqref{green0},
$$
\dint_{\s}^{T} P_{\Omega} (x,y,
t-\s)\,dt= \dint_{0}^{T-\s} P_{\Omega} (x,y,
\eta)\,d\eta\le \mathcal{G}_s(x,y).
$$
Thus
$$
\iint_{\O_T}J_{22}(x,t)dxdt\le C||f||^{q-1}_{L^1(\O_T)}\iint_{\O_T}f(y,\s)\,\varphi(y)dy,
$$
where $\varphi$ is the unique solution to problem \eqref{varphi}. Since $q(N+1)-N<2s$, we have that $\varphi\in L^{\infty}(\O)$ and then
$$
\iint_{\O_T}J_{22}(x,t)dxdt\le C||f||^{q}_{L^1(\O_T)}.
$$
Hence
$$
\iint_{\O_T}J_2(x,t) dxdt \le C||f||^{q}_{L^1(\O_T)}.
$$
Therefore we conclude that
\begin{equation}\label{main001}
\begin{array}{lll}
\dyle \Big(\iint_{\O_T}|\nabla u(x,t)|^q\, dx\,dt \Big)^{\frac{1}{q}} & \le & \dyle
\Big(\iint_{\O_T} (J_1(x,t)+J_2(x,t))\, dx\,dt \Big)^{\frac{1}{q}}\\ \\ & \le &
C(\O,T)\bigg(\|f\|_{L^{1}(\Omega_T)}+||u_0||_{L^1(\O)}\bigg).
\end{array}
\end{equation}
Fixed $q_0<\frac{N+2s}{N+1}$, we define
$$\hat{K}: L^{1}(\Omega_T)\times L^1(\O)\to L^{q_0}(0,T; W_{0}^{1,q_0}(\Omega)),$$
by $\hat{K}(f,u_0)=u$, where $u$ is
the unique solution to \eqref{eq:def}. From \eqref{main001} we deduce that $\hat{K}$ is well defined and continuous. Let show that $\hat{K}$ is a compact operator.

Denote $w$ and $\tilde{w}$ the unique solutions to the problems
\begin{equation}\label{ww11}
\left\{
\begin{array}{rcll}
w_t+(-\D^s) w &=&  0 & \text{ in } \O_{T}  , \\ w&=&0 & \text{ in }(\ren\setminus\O) \times (0,T), \\
w(x,0)&=& u_0 & \mbox{  in  }\O,
\end{array}%
\right.
\end{equation}
 and
 \begin{equation}\label{ww111}
\left\{
\begin{array}{rcll}
\tilde{w}_t+(-\D^s) \tilde{w} &=& f  & \text{ in } \O_{T}  , \\ \tilde{w}&=&0 & \text{ in }(\ren\setminus\O) \times (0,T), \\
\tilde{w}(x,0)&=& 0 & \mbox{  in  }\O,
\end{array}%
\right.
\end{equation}
respectively. It is clear that $w+\tilde{w}=u$.

From Proposition \ref{first11}, we know that $w\in L^{m}(\Omega_T)$ for all $m<\frac{N+2s}{N}$ and for all $\theta>1$,
$$
t^{\frac{N}{2s}(1-\frac{1}{\theta})}||w(.,t)||_{L^{\theta}}\le C||u_0||_{L^1(\O)}.
$$
Then
\begin{equation}\label{ff1}
\sup_{t\in [0,T]} t^{\frac{N}{2s}(\theta-1)} \io w^\theta(x,t)dx\le C(\O_T)||u_0||^\theta_{L^1(\O)}.
\end{equation}
Now, going back to the definition of $J_{11}$, we get
$$
J_{11}(x,t)=\frac{C}{t^{\frac{q}{2s}}}\Big(\dint_{\{\Omega\cap \{\d(x)>t^{\frac{1}{2s}}\}\}} u_0(y){P_{\Omega} (x,y, t)}\,dy\Big)^q\le \frac{C}{t^{\frac{q}{2s}}}w^q(x,t).
$$
Choosing $\theta=q$ in \eqref{ff1},
\begin{eqnarray*}
\iint_{\O_T} J_{11}(x,t) dxdt & \le &  C\iint_{\O_T}t^{-\frac{q}{2s}}  w^q(x,t) t^{\frac{N(q-1)}{2s}}  t^{-\frac{N(q-1)}{2s}}dxdt\\
& \le &  C\int_0^T t^{-\frac{q}{2s}-\frac{N(q-1)}{2s}}\bigg(\io w^q(x,t) t^{\frac{N(q-1)}{2s}} dx\bigg)dt.
\end{eqnarray*}
 By hypothesis $q<\dfrac{N+2s}{N+1}$,  that is, $\dfrac{q}{2s}+\dfrac{N(q-1)}{2s}<1$, then since
$$\bigg(\io w^q(x,t) t^{\frac{N(q-1)}{2s}} dx\bigg)\in L^\infty(0,T),$$ there exists  $a>1$ such that $a(\dfrac{q}{2s}+\dfrac{N(q-1)}{2s})<1$ and
\begin{equation}\label{j11}
\iint_{\O_T} J_{11}(x,t) dxdt\le C  \bigg(\int_0^T\bigg(\io w^q(x,t) t^{\frac{N(q-1)}{2s}} dx\bigg)^{a'}dt\bigg)^{\frac{1}{a'}}.
\end{equation}

Respect to the term $J_{21}$, we have
\begin{eqnarray*}
J_{21}(x,t)& = & C\Big(\iint_{\{\O\times (0,t) \cap\{\d(x)>(t-\s)^{\frac{1}{2s}}\}\}\}}f(y,\sigma) \frac{P_{\Omega} (x,y, t-\sigma)}{(t-\s)^{\frac{1}{2s}}}\,dy\,d\sigma\Big)^q\\
&\le & C\Big(\iint_{\{\O\times (0,t) \}}f(y,\sigma)P_{\Omega} (x,y, t-\sigma)\,dy\,d\sigma\Big)^{q-1}\iint_{\{\O\times (0,t)\}}f(y,\sigma) \frac{P_{\Omega} (x,y, t-\sigma)}{(t-\s)^{\frac{q}{2s}}}\,dy\,d\sigma\\
\\
&\le & C {{\tilde{w}}}^{q-1}(x,t) \iint_{\{\O\times (0,t) \}}f(y,\sigma) \frac{P_{\Omega} (x,y, t-\sigma)}{(t-\s)^{\frac{q}{2s}}}\,dy\,d\sigma.
\end{eqnarray*}
Thus integrating in $\O_T$ and by estimate \eqref{green1}, we obtain that
\begin{eqnarray*}
\iint_{\O_T} J_{21}(x,t)dxdt &\le & C   \iint_{\O_T}f(y,\sigma)  \bigg(\iint_{\{\O\times (\s,T) \}}{{\tilde{w}}}(x,t)^{q-1}\frac{(t-\s)^{1-\frac{q}{2s}}}{((t-\s)^{\frac{1}{2s}}-|x-y|)^{N+2s}}dxdt\bigg)dy\,d\sigma.
\end{eqnarray*}
Since $q<\dfrac{N+2s}{N+1}$ and  { using the fact that $s>\frac 12$}, we get the existence of $\frac{2s-1}{N+1}<r<\dfrac{N+2s}{N}$ such that $q<\dfrac{N+2s+2sr}{N+2s+r}$. Hence $r>q-1$ and
$$
\frac{r}{r-(q-1)}\bigg(\frac{2s-q}{2s}-\frac{N+2s}{2s}\bigg)+\frac{N}{2s}>-1.
$$
Therefore, using H\"older inequality
\begin{eqnarray*}
&\dyle \iint\limits_{\O_T} J_{21}(x,t)dxdt\le \\
& \dyle C   \iint\limits_{\O_T}f(y,\sigma)  \bigg(\iint\limits_{\O_T}{{\tilde{w}}}^r(x,t) dxdt\bigg)^{\frac{q-1}{r}}
\bigg(\iint\limits_{\{\O\times (\s,T)\}}\frac{(t-\s)^{(\frac{2s-q}{2s})
\frac{r}{r-(q-1)}}}{((t-\s)^{\frac{1}{2s}}+|x-y|)^{(N+2s)\frac{r}{r-(q-1)}}}dxdt
\bigg)^{\frac{r-(q-1)}{r}}dyd\sigma.
\end{eqnarray*}
Setting $z=\dfrac{x-y}{(t-\s)^{\frac{1}{2s}}}$, it holds that
\begin{eqnarray*}
&\dyle \iint_{\O_T} J_{21}(x,t)dxdt\le \\
& \dyle C  ||{{\tilde{w}}}||^{q-1}_{L^r(\O_T)}\iint_{\O_T}f(y,\sigma) \bigg\{\bigg(\int_{\s}^T(t-\s)^{\gamma_2} \irn \frac{1}{(1+|z|)^{(N+2s)\frac{r}{r-(q-1)}}}
dzdt\bigg)^{\frac{r-(q-1)}{r}}\bigg\}dy d\sigma,
\end{eqnarray*}
where $\g_2=\dfrac{r}{r-(q-1)}\bigg(\dfrac{2s-q}{2s}-\dfrac{N+2s}{2s}\bigg)+\dfrac{N}{2s}$. Since $\g_2>-1$, then
\begin{equation}\label{j21}
\iint_{\O_T} J_{21}(x,t)dxdt\le C  T^{\frac{(r-(q-1))(\g_2+1)}{r}}||f||_{L^1(\O_T)}||{{\tilde{w}}}||^{q-1}_{L^r(\O_T)}.
\end{equation}

Respect to the terms $J_{12}$ and $J_{22}$ we have that

\begin{equation}\label{j12}
J_{12}(x,t)=\frac{C}{\d^q(x)}\Big(\dint_{\{\Omega\cap \{\d(x)\le t^{\frac{1}{2s}}\}\}} u_0(y){P_{\Omega} (x,y, t)}\,dy\Big)^q=C\frac{w^q(x,t)}{\d^q(x)},
\end{equation}
and
\begin{equation}\label{j22}
J_{22}(x,t)=\frac{C}{\d^q(x)}\dyle \Big(\iint_{\{\O\times (0,t) \cap\{\d(x)\le t^{\frac{1}{2s}}\}\}\}}f(y,\sigma){P_{\Omega} (x,y, t-\sigma)}\,dy\,d\sigma\Big)^q=C\frac{\tilde{w}^q(x,t)}{\d^q(x)}.
\end{equation}
Let $\{f_n, u_{n0}\}_n$ be a bounded sequence in $L^{1}(\Omega_T)\times L^1(\O)$ and define $u_n=\hat{K}(f_n, u_{n0})$. Using the previous
estimates, it follows that, for all $q<\dfrac{N+2s}{N+1}$,
$$
||\nabla u_n||_{L^{q}(\Omega_T)}\le C(q,\Omega)(||f_n||_{L^{1}(\Omega_T)}+||u_{n0}||_{L^{1}(\Omega)})\le C. $$
Hence,  there exists $u\in L^q(0,T; W_{0}^{1,q}(\Omega))$ for all $q<\dfrac{N+2s}{N+1}$ such that, up to a subsequence, $u_n\rightharpoonup u$ weakly in $L^q(0,T; W_{0}^{1,q}(\Omega))$, $u_n\to u$ strongly in $L^\s(\Omega_T)$ for all $\s<\dfrac{N+2s}{N}$ and $u_n\to u$ a.e in $\O_T$.
Fixing the above subsequence, we define $(w_n,\tilde{w}_n)$ be the solutions to problems \eqref{ww11} and \eqref{ww111} with data $u_{n0}$ and $f_n$ respectively. Then the sequences $\{w^q_n(x,t) t^{\frac{N(q-1)}{2s}}\}_n$, $\{w_n\}_n$ and  $\{\dfrac{\tilde{w}_n}{\d}\}_n$ are bounded in $L^\infty((0,T);L^\s(\O))$, $L^r(\O_T)$ and in $L^\s(\O_T)$ for all $\s<\dfrac{N+2s}{N+1}$ and $r<\dfrac{N+2s}{N}$ respectively. Hence using Vitali's lemma we deduce that the sequences $\{w^q_n(x,t) t^{\frac{N(q-1)}{2s}}\}_n$, $\{w_n\}_n$ and $\{\dfrac{\tilde{w}_n}{\d}\}_n$ are strongly converging in $L^a((0,T);L^\s(\O))$, in $L^r(\O_T)$ and in $L^\s(\O_T)$ for all $\s<\dfrac{N+2s}{N+1}$, for all $r<\dfrac{N+2s}{N}$ and for all $a>1$.

By the linearity of the operator, it follows that $(u_i-u_j)$
solves
\begin{equation}\label{eq:def00}
\left\{
\begin{array}{rcll}
(u_i-u_j)_t+(-\D^s) (u_i-u_j)&=& \dyle f_i-f_j  & \text{ in } \O_{T}, \\ u_i-u_j&=&0 & \text{ in }(\ren\setminus\O) \times (0,T), \\ (u_i-u_j)(x,0)&=&
u_{i0}-u_{j0} & \mbox{  in  }\O.
\end{array}%
\right.
\end{equation}
Going back to the first formula in \eqref{main001} and by estimates \eqref{j11}, \eqref{j21}, \eqref{j12}, \eqref{j22}, we get
\begin{eqnarray*}
& \dyle\Big(\iint_{\O_T}|\nabla (u_i-u_j)(x,t)|^q\, dx\,dt \Big)^{\frac{1}{q}} \le \\
& C(\O,T)\dyle \bigg(\bigg(\int_0^T\bigg(\io |w_i(x,t)-w_j(x,t)|^q t^{\frac{N(q-1)}{2s}} dx\bigg)^{a'}dt\bigg)^{\frac{1}{a'q}} +
\|f_i-f_j\|_{L^{1}(\Omega_T)}||\tilde{w}_i-\tilde{w}_j||^{q-1}_{L^r(\O_T)} \\
& + ||\dfrac{w_i-w_j}{\d}||_{L^q(\O_T)} + ||\dfrac{\tilde{w}_i-\tilde{w_j}}{\d}||_{L^q(\O_T)}\bigg).
\end{eqnarray*}
Letting $i,j\to \infty$, it holds that
$$
\Big(\iint_{\O_T}|\nabla (u_i-u_j)(x,t)|^q\, dx\,dt \Big)^{\frac{1}{q}}\to 0.
$$
Then the operator $\hat{K}$ is compact.
\end{proof}

\

\begin{remark}\label{mainrr}

\

\begin{enumerate}

\item Thanks to the above computations, we can prove that the constant $C(T,\O)$, that appears in estimate \eqref{main001} satisfies $C(T,\O)\to 0$ as $T\to
    0$. This fact will be used below in order to show existence result
    for problem \eqref{grad} using a \textit{Fixed Point Theorem}.
    \item Using an approximation argument and by the linearity of the operator, we can prove that the
    result of Theorem \ref{gradiente} holds if $f$ is a bounded Radon measure.
    \item It is worthy to point out that the same arguments give the elliptic case by a slightly different method as in \cite{CV2}.
\end{enumerate}
\end{remark}
Following the same representation argument as above we get the next technical regularity result.
\begin{Proposition}
Assume that $(f,u_0)\in L^1(\O_T)\times L^1(\O)$ and let $u$ be the unique solution to the problem \eqref{eq:def}, then $T_k(u)\in L^\s(0,T;W^{1,\s}_0(\O))$ for
all $1\le \s<2s$. Moveover we have
$$
||T_k(u)||^\s_{L^\s(0,T;W^{1,\s}_0(\O))}\le C(\O,T)k^{\s-1}(||u_0||_{L^1(\O)}+||f||_{L^1(\O_T)}).
$$
In addition, if $u_n=\hat{K}(f_n, u_{n0})$, then, up to a subsequence, it follows that $T_k(u_n)\to T_k(u)$ strongly in $L^\s(0,T;W^{1,\s}_0(\O))$, for all $1\le
\s<2s$.
\end{Proposition}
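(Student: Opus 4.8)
The plan is to combine the representation formula with the kernel gradient bound \eqref{green2} and then to use the truncation in order to \emph{linearize} the weighted integral that appears. First I would reduce, by linearity and by splitting the data into positive and negative parts, to the case $f\ge 0$, $u_0\ge 0$, so that $u\ge 0$. The starting observation is that \eqref{green2} gives the crude but uniform bound $|\nabla_x P_{\Omega}(x,y,\tau)|\le \frac{C}{\d(x)}P_{\Omega}(x,y,\tau)$ (here one simply discards the improvement coming from $t^{\frac{1}{2s}}$, using $a\wedge b\le a$). Summing the two terms of the representation formula, this produces the clean pointwise bound
$$
|\nabla u(x,t)|\le \frac{C}{\d(x)}\Big(\int_{\Omega} u_0(y)P_{\Omega}(x,y,t)\,dy+\iint_{\O_t}f(y,\tau)P_{\Omega}(x,y,t-\tau)\,dy\,d\tau\Big)=\frac{C}{\d(x)}\,u(x,t).
$$

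Second, on the set $\{u<k\}$ one has $u^\s\le k^{\s-1}u$, so that
$$
\int_{\O_T}|\nabla T_k(u)|^\s\,dx\,dt=\int_{\{u<k\}}|\nabla u|^\s\,dx\,dt\le C\int_{\{u<k\}}\frac{u^\s}{\d^\s(x)}\,dx\,dt\le Ck^{\s-1}\int_{\O_T}\frac{u(x,t)}{\d^\s(x)}\,dx\,dt.
$$
Thus the whole statement reduces to the \emph{linear} weighted estimate $\int_{\O_T}\frac{u}{\d^\s}\,dx\,dt\le C(\O,T)\big(||u_0||_{L^1(\O)}+||f||_{L^1(\O_T)}\big)$, valid for $\s<2s$ (that $T_k(u)\in L^\s(\O_T)$ is trivial since $|T_k(u)|\le k$ and $\O_T$ is bounded). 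To prove this weighted bound I would insert the representation formula, apply Fubini, extend each time integral to $(0,\infty)$ and invoke \eqref{green0}, bounding both resulting inner integrals by $\psi(y):=\int_{\Omega}\frac{\mathcal{G}_s(x,y)}{\d^\s(x)}\,dx$. The function $\psi$ solves $(-\Delta)^s\psi=\d^{-\s}$ in $\Omega$ with $\psi=0$ in $\ren\setminus\Omega$; since $\s<2s$ this weight is admissible and, by \cite{AP} (see also \cite{Adm}), $\psi\in L^\infty(\Omega)$. Hence $\int_{\O_T}\frac{u}{\d^\s}\le ||\psi||_{L^\infty(\O)}\big(||u_0||_{L^1(\O)}+||f||_{L^1(\O_T)}\big)$, which is exactly the asserted inequality; moreover $T_k(u)\in W^{1,\s}_0(\O)$ (and not merely $W^{1,\s}$) because $u\in L^q(0,T;W^{1,q}_0(\O))$ by Theorem \ref{gradiente} and $T_k$ is Lipschitz with $T_k(0)=0$.

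The point I want to stress is that this is precisely where the argument becomes elementary in comparison with Theorem \ref{gradiente}: without a truncation one is forced into the delicate splitting $\d(x)\gtrless t^{\frac{1}{2s}}$, which limits the exponent to $q<\frac{N+2s}{N+1}$, whereas the inequality $u^\s\le k^{\s-1}u$ collapses the nonlinear weighted integral into a linear one, and the only genuine input left is the $L^\infty$ bound for $\psi$, valid exactly up to $\s<2s$. This $L^\infty$ bound is the main (and essentially the only) obstacle.

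Finally, for the strong convergence I would argue as follows. Let $u_n=\hat{K}(f_n,u_{n0})$. By the compactness of $\hat{K}$ from Theorem \ref{gradiente} one has, along a subsequence, $u_n\to u$ in $L^{q_0}(0,T;W^{1,q_0}_0(\O))$ for some $q_0<\frac{N+2s}{N+1}$, hence $\nabla u_n\to\nabla u$ a.e., so that $\nabla T_k(u_n)=\nabla u_n\,\chi_{\{|u_n|<k\}}\to\nabla u\,\chi_{\{|u|<k\}}=\nabla T_k(u)$ a.e. (on $\{|u|=k\}$ one has $\nabla u=0$ a.e., so the limit is unaffected). For the equi-integrability of $\{|\nabla T_k(u_n)|^\s\}$ I would use the bound of the second step: for measurable $E\subset\O_T$, $\int_E|\nabla T_k(u_n)|^\s\le Ck^{\s-1}\int_E\frac{u_n}{\d^\s}$, together with $\int_E\frac{u_n}{\d^\s}\le \int_{\O_T}\frac{|u_n-u|}{\d^\s}+\int_E\frac{u}{\d^\s}$. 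The first summand tends to $0$ by the weighted estimate applied to the data $(|f_n-f|,|u_{n0}-u_0|)$ (using $P_{\Omega}\ge0$ and the representation formula to control $|u_n-u|$), and the second tends to $0$ as $|E|\to0$ by absolute continuity of the finite integral $\int_{\O_T}\frac{u}{\d^\s}$. Equi-integrability together with the a.e. convergence then yields, via Vitali's theorem, $\nabla T_k(u_n)\to\nabla T_k(u)$ strongly in $L^\s(\O_T)$, that is $T_k(u_n)\to T_k(u)$ in $L^\s(0,T;W^{1,\s}_0(\O))$ for all $1\le \s<2s$.
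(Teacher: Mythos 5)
There is a genuine gap at the very first step, and it propagates through the whole argument. You claim that \eqref{green2} yields the uniform bound $|\nabla_x P_{\Omega}(x,y,\tau)|\le \frac{C}{\delta(x)}P_{\Omega}(x,y,\tau)$ by ``discarding the improvement coming from $\tau^{1/2s}$, using $a\wedge b\le a$''. But that inequality goes the other way: since $\delta(x)\wedge \tau^{\frac{1}{2s}}\le\delta(x)$ one has
$$
\frac{1}{\delta(x)\wedge \tau^{\frac{1}{2s}}}=\max\Big(\frac{1}{\delta(x)},\frac{1}{\tau^{\frac{1}{2s}}}\Big)\ \ge\ \frac{1}{\delta(x)},
$$
so \eqref{green2} does \emph{not} imply the bound you use. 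Nor can such a bound hold: for $x$ in the interior and small $\tau$ the ratio $|\nabla_x P_{\Omega}|/P_{\Omega}$ is of order $\tau^{-\frac{1}{2s}}$ near the diagonal (already visible for the free kernel), which is much larger than $1/\delta(x)$. Consequently your ``clean pointwise bound'' $|\nabla u|\le \frac{C}{\delta(x)}\,u$ is false --- take $u_0$ concentrated at an interior point and look at times with $t^{1/2s}\ll\delta(x)$. What you have dropped is exactly the contribution of the region $\{\delta(x)\ge (t-\sigma)^{\frac{1}{2s}}\}$, where the correct bound is $\frac{C}{(t-\sigma)^{1/2s}}P_{\Omega}$; the paper treats it separately (the terms $I_1$, $J_1$ there), and after truncation it produces a time factor $\int_0^T t^{-\sigma/2s}\,dt$, finite precisely because $\sigma<2s$. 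So that region is not harmless: it independently enforces the same restriction $\sigma<2s$ and must be estimated.

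The rest of your scheme is sound and essentially coincides with the paper's treatment of the complementary region $\{\delta(x)< (t-\sigma)^{\frac{1}{2s}}\}$: the linearization $u^{\sigma}\le k^{\sigma-1}u$ on $\{u<k\}$ (the paper achieves the same effect via H\"older, extracting $k^{\sigma-1}$ from $u^{\sigma/\sigma'}$), the reduction to the boundedness of $\psi(y)=\int_{\Omega}\mathcal{G}_s(x,y)\,\delta^{-\sigma}(x)\,dx$ for $\sigma<2s$ via \eqref{green0}, and the Vitali argument for the strong convergence of $T_k(u_n)$. If you restore the splitting $\delta(x)\gtrless(t-\sigma)^{1/2s}$ coming from \eqref{green2} and estimate the interior-in-time piece as indicated, your proof closes and matches the paper's.
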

\begin{proof}
Without loss of generality we can assume that $f\ge 0$ in $\O_T$ and $u_0\ge 0$ in $\O$. Fix $1<\rho<2s$, as in the proof of Theorem \ref{gradiente}, we have
$$
\begin{array}{rcl}
& & \dyle |\nabla u(x,t)|^\rho \leq C\Big(\dint_{\Omega}\, u_0(y) P_{\Omega} (x,y, t) \,dy\Big)^{\frac{\rho}{\rho'}} \Big(\dint_{\Omega} h^\rho(x,y,t) |u_0(y)|P_{\Omega} (x,y,
t)\,dy\Big)\\  \\ &+ &  \dyle C\Big(\iint_{\O_t}\, f(y,\s) P_{\Omega} (x,y, t-\sigma) \,dy\,d\sigma \Big)^{\frac{\rho}{\rho'}} \Big(
\iint_{\O_t} h^\rho(x,y,t-\s)f(y,\s) P_{\Omega} (x,y, t-\sigma)\,dy\,d\sigma\Big)\\ \\ &\le & \dyle C\, u^{\frac{\rho}{\rho'}}(x,t) \Big(\dint_{\Omega} h^\rho(x,y,t)
|u_0(y)|P_{\Omega} (x,y, t)\,dy  +  \iint_{\O_t} h^\rho(x,y,t-\s) f(y,\s) P_{\Omega} (x,y, t-\sigma)\,dy\,d\sigma\Big).
\end{array}
$$
Thus,
$$
|\n T_k(u)|^{\rho}\le C\, k^{\rho -1}\Big(\dint_{\Omega} h^\rho(x,y,t) |u_0(y)|P_{\Omega} (x,y, t)\,dy  +  \iint_{\O_t}h^\rho(x,y,t-\s) f(y,\s) P_{\Omega} (x,y,
t-\sigma)\,dy\,d\sigma\Big)\chi_{\{u<k\}}.
$$
Hence
\begin{equation}\label{mainii}
\begin{array}{lll}
&\,&\dyle \iint_{\O_T}|\nabla T_k(u(x,t))|^\rho\, dx\,dt \\
&\le & \dyle C(\O,T)\, k^{\rho -1}\Big(\io u_0(y) \bigg(\iint_{\O_T}h^\rho(x,y,t)P_{\Omega} (x,y,
t)\,dxdt\bigg)dy\\ &+& \dyle \iint_{\O_T}f(y,\s)\bigg(\int_\s^T\io h^\rho(x,y,t-\s) P_{\Omega} (x,y, t-\s)\,dx\,dt\bigg)dy\\ &\le & Ck^{\rho-1}( J_1+J_2).
\end{array}
\end{equation}
Recall that $h(x,y,t)\leq C \Big( \dfrac{1}{\delta(x) \wedge t^{\frac{1}{2s}}}\Big)$, hence using the fact that $\rho<2s$, we reach that
\begin{eqnarray*}
J_1&= & \dyle \io u_0(y) \bigg(\iint_{\{\O_T\cap \{\d(x)\ge t^{\frac{1}{2s}}\}\}}h^\rho P_{\Omega} (x,y,
t)\,dxdt\bigg)dy \\ & + & \dyle  \io u_0(y) \bigg(\iint_{\{\O_T\cap \{\d(x)< t^{\frac{1}{2s}}\}\}}h^\rho(x,y,t) P_{\Omega} (x,y,
t)\,dxdt\bigg)dy\\ &= & I_1+I_2.
\end{eqnarray*}
Now, by Lemma \ref{estimmm}, it holds that
$$
I_1\leq C \io u_0(y)\iint_{\O_T}\dfrac{1}{t^{\frac{\rho}{2s}}}\dfrac{t}{ (t^{\frac{1}{2s}}+|x-y|) ^{N+2s}}\,\,dxdt\, dy.
$$
Setting, $z=\dfrac{x-y}{t^{\frac{1}{2s}}}$, it follows that
\begin{eqnarray*}
I_1 & \le & C ||u_0||_{L^1(\O)}\dint_{0}^{T} t^{1-\frac{\rho}{2s}-\frac{N+2s}{2s}+\frac{N}{2s}}dt\dint_{\ren} \frac{dz}{(1+|z|)^{(N+2s)}}\\
& \le &  C||u_0||_{L^1(\O)}\dint_{0}^{T} t^{-\frac{\rho}{2s}}dt.
\end{eqnarray*}
Since $\frac{\rho}{2s}<1$, then
$$ I_1\le
CT^{1-\frac{\rho}{2s}}||u_0||_{L^1(\O)}.$$
We deal now with $I_2$. We have
\begin{eqnarray*}
I_2 & \leq & C \io u_0(y)\iint_{\O_T}\dfrac{1}{\d^\rho(x)}\,P_{\Omega} (x,y,
t)\,dxdt\, dy\\
&\le & C \io u_0(y)\dint_{\Omega} \dfrac{1}{\d^\rho(x)}\,(\dint_{0}^{\infty} P_{\Omega} (x,y,
t)\,dt)dx\, dy.
\end{eqnarray*}
Recall that
$$
\dint_{0}^{\infty} P_{\Omega} (x,y,
t)\,dt=\mathcal{G}_s(x,y),
$$
then
$$
I_2\leq C \io u_0(y)\iint_{\O_T}\dfrac{1}{\d^\rho(x)}\,\mathcal{G}_s(x,y) dx\, dy= C \io u_0(y)\varphi (y) dy,
$$
where $\varphi$ is the solution to problem \eqref{varphi}. Since $\rho<2s$, then $\varphi\in L^\infty(\O)$, hence
$$ I_2\le
C||u_0||_{L^1(\O)}.$$
Hence we conclude that $J_1\le
C(\O,T)||u_0||_{L^1(\O)}.$

In the same way we obtain that
$$
J_2\le C(\O,T)||f||_{L^1(\O_T)}.
$$
Therefore
$$
\iint_{\O_T}|\nabla T_k(u(x,t))|^\s\, dx\,dt\le\dyle C(\O,T)\, k^{\s -1}(||u_0||_{L^1(\O)}+||f||_{L^1(\O_T)}).
$$
Define $u_n=\hat{K}(f_n, u_{n0})$, then according with the proof of Theorem \ref{gradiente}, we know that, up to a subsequence, $u_n\to u$ strongly in $L^q(0,T;
W_{0}^{1,q}(\Omega))$ for all $q<\dfrac{N+2s}{N+1}$. Now, using the fact that the sequence $\{T_k(u_n)\}_n$ is bounded in $L^\s(0,T;W^{1,\s}_0(\O))$ for all $1\le
\s<2s$ and using Vitali's Lemma, it follows that $T_k(u_n)\to T_k(u)$ strongly in $L^\s(0,T;W^{1,\s}_0(\O))$.
\end{proof}
\begin{remark}
If $u_0=0$ and $f\in L^m(\O_T)$ with $m>1$, we can improve the regularity results obtained previously. Notice that, related to the
fractional Laplacian, a kind of regularity holds in the local gradient where we are close to the boundary. This can be seen in the explicit example
$w(x)=(1-|x|^2)_+^s$ which solves
$$
(-\D)^s w=1 \mbox{  in  }B_1(0) \mbox{  and  }w=0 \mbox{  in  }\ren\backslash B_1(0).
$$
However, as it was observed in \cite{AP}, we can show a complete regularity schema using in a suitable weighted Sobolev space. The main tool will be a universal control of the term $\dfrac{u}{\d^s}$ that holds for any $s\in (0,1)$ and without using the classical Hardy-Sobolev inequality.
\end{remark}
Before considering the  case $m>1$, we enunciate the next regularity result that will be used through the paper and that clarifies the regularity of the solution in the space  for fixed time.
\begin{Proposition}\label{more-tregu}
Suppose that $f\in L^1(\O_T)$ and $u_0=0$. Let $u$ be the unique weak solution to problem \eqref{eq:def}, then for all $1<q<\dfrac{N+2s}{N+1}$ and for  all $\eta>0$,
\begin{equation}\label{time-regu}
\io |\n u(x,t)|^q dx \le C(\O_T)||f||^{q-1}_{L^1(\O_t)}\int_0^t\io |f(y,\s)|(t-\s)^{\hat{\g}-\eta} dy\,d\sigma,
\end{equation}
where $\hat{\g}:=\dfrac{N}{2s}-q\dfrac{N+1}{2s}\in (-1,0)$. In particular, we obtain that for all $\eta>0$,
\begin{equation}\label{time-regu1}
\bigg(\io |\n u(x,t)|^q dx\bigg)^{\frac{1}{q}}\le C(\O_T)\bigg(\int_0^t\io |f(y,\s)|(t-\s)^{\hat{\g}-\eta} dy\,d\sigma\bigg)^{\frac{1}{q}}.
\end{equation}
\end{Proposition}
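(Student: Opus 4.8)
The plan is to re-run the kernel computation from the proof of Theorem \ref{gradiente}, but to freeze the time variable $t$ and carry out only the $x$-integration. Since $u_0=0$, the representation formula gives $u(x,t)=\iint_{\O_t}f(y,\sigma)P_\Omega(x,y,t-\sigma)\,dy\,d\sigma$, so $|\nabla u(x,t)|\le\iint_{\O_t}|f(y,\sigma)|\,|\nabla_x P_\Omega(x,y,t-\sigma)|\,dy\,d\sigma$ and I may work throughout with $|f|$. Writing $|\nabla_x P_\Omega|=h\,P_\Omega$ with $h(x,y,\tau)\le C(\delta(x)\wedge\tau^{1/2s})^{-1}$ and splitting according to whether $\delta(x)\ge(t-\sigma)^{1/2s}$ or $\delta(x)<(t-\sigma)^{1/2s}$ produces exactly the two pieces $J_{21},J_{22}$ of that proof. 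Applying H\"older's inequality with respect to the measure $|f|\,dy\,d\sigma$ then extracts the factor $||f||_{L^1(\O_t)}^{q-1}$ of the statement and leaves, in each term, a single power $P_\Omega^q$ (divided by $(t-\sigma)^{q/2s}$ in the first term) to be integrated against $|f|$.

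The interior piece $J_{21}$ is immediate. Using $P_\Omega\le CH$ from \eqref{upper-kernel} and \eqref{L-a-H}, the frozen–time spatial integral obeys $\io\frac{P_\Omega^q}{(t-\sigma)^{q/2s}}\,dx\le (t-\sigma)^{-q/2s}||H(\cdot,t-\sigma)||^q_{L^q(\ren)}\le C(t-\sigma)^{\frac{N-q(N+1)}{2s}}=C(t-\sigma)^{\hat\gamma}$, the $z$–integral being finite since $q(N+2s)>N$. Integrating against $|f|\,dy\,d\sigma$ already yields the claimed bound for this term, with $\eta=0$; no loss occurs here.

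The boundary piece $J_{22}$ is where the work lies. After the H\"older step and the pointwise inequality \eqref{one0}, matters reduce to estimating, for fixed $\tau:=t-\sigma$, the weighted integral
\[
\io \frac{P_\Omega(x,y,\tau)}{\delta(x)^{\beta}}\,dx,\qquad \beta:=q(N+1)-N\in(0,2s),
\]
which I must show is $\le C\tau^{\hat\gamma}$. The naive route — H\"older separating $\delta^{-\beta}\in L^{p}$ from $P_\Omega\le CH\in L^{p'}$ — is too lossy: it discards the boundary decay of the kernel and produces a \emph{fixed} deficit in the power of $\tau$, not one that vanishes as a parameter tends to a limit; \textbf{this is the main obstacle}. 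To beat it I would retain the boundary factor $1\wedge\delta^s(x)/\sqrt\tau$ of $P_\Omega$ and decompose the layer $\{\delta(x)\le\tau^{1/2s}\}$ dyadically into shells $A_j=\{2^{-j-1}\tau^{1/2s}<\delta(x)\le 2^{-j}\tau^{1/2s}\}$. On $A_j$ one has $\delta^{s-\beta}(x)\simeq 2^{j(\beta-s)}\tau^{(s-\beta)/2s}$, while slicing $H$ over the (essentially flat, by $C^{1,1}$ regularity) level sets of $\delta$ gives $\int_{\{\delta(x)\le\ell\}}H(x-y,\tau)\,dx\le C\,\ell\,\tau^{-1/2s}$, hence $\int_{A_j}H\,dx\le C2^{-j}$ uniformly in $y$. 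Summing, the contribution of the layer is $C\tau^{\hat\gamma}\sum_j 2^{j(\beta-s-1)}$, and the series converges precisely because $\beta-s<1$, which is exactly the hypothesis $q<\frac{N+2s}{N+1}$ combined with $s<1$. The complementary region $\{\delta(x)>\tau^{1/2s}\}$ is trivial, since there $\delta^{-\beta}\le\tau^{-\beta/2s}=\tau^{\hat\gamma}$ and $||H(\cdot,\tau)||_{L^1}\le C$.

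Combining the two pieces gives \eqref{time-regu} with $\eta=0$; as $\tau\in(0,T]$ is bounded, the stated estimate with any $\eta>0$ follows a fortiori, the extra $\tau^{-\eta}$ being mere slack (useful if one prefers a cruder bound in place of the sharp dyadic count). Finally \eqref{time-regu1} follows from \eqref{time-regu} by taking $q$-th roots and absorbing the constant factor $||f||_{L^1(\O_t)}^{(q-1)/q}\le ||f||_{L^1(\O_T)}^{(q-1)/q}$ into $C(\O_T)$. In the write-up the one point deserving care is the slicing estimate for $\int_{\{\delta\le\ell\}}H\,dx$, where the $C^{1,1}$ regularity of $\partial\O$ is used to treat the level sets of $\delta$ as uniformly flat at the scale of the boundary layer.
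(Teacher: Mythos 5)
Your proof of the main estimate \eqref{time-regu} is correct, and for the delicate boundary term it takes a genuinely different route from the paper's. The paper splits the region $\{\delta(x)\le(t-\sigma)^{1/2s}\}$ further according to whether $|x-y|\le\frac13\delta(x)$ or $|x-y|\ge\frac12\delta(x)$, and in the far regime converts the surplus power $\delta(x)^{2s-q-2s\eta}$ into $(t-\sigma)^{(2s-q-2s\eta)/2s}$ so as to reduce matters to the $L^\infty$ bound for the torsion-type function solving $(-\Delta)^s\varphi=\delta^{-2s(1-\eta)}$; the constraint that this exponent be strictly below $2s$ is exactly what forces the loss of $\eta$ in the time power. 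Your dyadic decomposition of the boundary layer into the shells $A_j$, combined with the slicing bound $\int_{\{\delta\le\ell\}}H(x-y,\tau)\,dx\le C\ell\,\tau^{-1/2s}$ (legitimate for a $C^{1,1}$ domain: by the coarea formula with $|\nabla\delta|=1$ and the uniform $(N-1)$-Ahlfors regularity of the level sets $\{\delta=r\}$, the surface integral of the radially decreasing profile $H(\cdot,\tau)$ is at most $C\int_{\mathbb{R}^{N-1}}H((z',0),\tau)\,dz'=C\tau^{-1/2s}$), keeps the boundary factor $1\wedge\delta^s(x)/\sqrt{\tau}$ of the surviving copy of $P_\Omega$ in play and produces the geometric series $\sum_j 2^{j(\beta-s-1)}$, convergent because $\beta=q(N+1)-N<2s<1+s$. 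This buys the estimate with $\eta=0$, strictly sharper than \eqref{time-regu}; the stated version follows since $(t-\sigma)^{\hat\gamma}\le T^{\eta}(t-\sigma)^{\hat\gamma-\eta}$. The interior term and the H\"older step extracting $\|f\|^{q-1}_{L^1(\O_t)}$ coincide with the paper's.

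The one step you should redo is the passage to \eqref{time-regu1}. Absorbing $\|f\|_{L^1(\O_T)}^{(q-1)/q}$ into $C(\O_T)$ makes the constant depend on $f$, which defeats the purpose: the estimate is invoked later (proof of Theorem \ref{th2}) with $f$ replaced by $\langle B,\nabla v\rangle$ inside a singular Gronwall iteration, where the constant must be universal and the right-hand side linear in the datum. The intended deduction is to observe that $(t-\sigma)^{\hat\gamma-\eta}\ge T^{\hat\gamma-\eta}$, hence $\|f\|_{L^1(\O_t)}\le T^{\eta-\hat\gamma}\int_0^t\int_\O|f|(t-\sigma)^{\hat\gamma-\eta}\,dy\,d\sigma$; substituting this into \eqref{time-regu} gives $\int_\O|\nabla u(x,t)|^q\,dx\le C\big(\int_0^t\int_\O|f|(t-\sigma)^{\hat\gamma-\eta}\,dy\,d\sigma\big)^{q}$, i.e.\ after taking $q$-th roots the bound with the integral to the first power, which is the form actually used downstream (the exponent $\frac1q$ on the right of the displayed \eqref{time-regu1} appears to be a misprint).
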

\begin{proof}
From Theorem \ref{gradiente}, we know that $u\in L^q((0,T), W^{1,q}_0(\O))$ for all $q<\frac{N+2s}{N+1}$. Then we have
$$
u(x,t)=\dint_{0}^{t} \dint_{\Omega} f(y,\sigma) P_{\Omega} (x,y, t-\sigma)\,dy\,d\sigma.
$$
Hence
\begin{eqnarray*}
|\nabla u(x,t)| & \leq &  \dyle \dint\limits_{0}^{t} \dint_{\Omega}| f(y,\sigma)| \frac{|\nabla_x P_{\Omega} (x,y, t-\sigma)|}{P_{\Omega} (x,y, t-\sigma)}P_{\Omega} (x,y, t-\sigma)\,dy\,d\sigma\\&\le & C\iint\limits_{\{\O\times (0,t) \cap\{\d(x)>(t-\s)^{\frac{1}{2s}}\}\}\}}|f(y,\sigma)| \frac{P_{\Omega} (x,y, t-\sigma)}{(t-\s)^{\frac{1}{2s}}}\,dy\,d\sigma\\
&+ & \dyle\Bigg(\iint\limits_{\{\O\times (0,t) \cap\{\d(x)\le (t-\s)^{\frac{1}{2s}}\}\}\}}|f(y,\sigma)|{P_{\Omega} (x,y, t-\sigma)}\,dy\,d\sigma\Bigg)\,\frac{C}{\d(x)}.
\end{eqnarray*}
Therefore, fixing $1<q<\dfrac{N+2s}{N+1}$, we reach that
\begin{equation}\label{vvv0}
\begin{array}{lll}
|\nabla u(x,t)|^q &\le & \dyle C\Big(\iint\limits_{\{\O\times (0,t) \cap\{\d(x)>(t-\s)^{\frac{1}{2s}}\}\}\}}|f(y,\sigma)| \frac{P_{\Omega} (x,y, t-\sigma)}{(t-\s)^{\frac{1}{2s}}}\,dy\,d\sigma\Big)^q\\
&+ & \dyle \frac{C}{\d^q(x)}\dyle \Big(\iint\limits_{\{\O\times (0,t) \cap\{\d(x)\le (t-\s)^{\frac{1}{2s}}\}\}\}}|f(y,\sigma)|{P_{\Omega} (x,y, t-\sigma)}\,dy\,d\sigma\Big)^q \\  \\  &=& I_1(x,t) + I_{2}(x,t).
\end{array}
\end{equation}
Using estimate \eqref{upper-kernel} and by H\"older inequality, we get
\begin{eqnarray*}
I_{1}(x,t) &=& \Big(\iint\limits_{\{\O\times (0,t) \cap\{\d(x)>(t-\s)^{\frac{1}{2s}}\}\}\}}|f(y,\sigma)| \frac{(t-\s)^{1-\frac{1}{2s}}}{((t-\s)^{\frac{1}{2s}}+|x-y|)^{N+2s}} \,dy\,d\sigma\Big)^q\\
& \le  & C||f||^{q-1}_{L^1(\O_t)}\iint\limits_{\{\O\times (0,t)\}\}}|f(y,\sigma)| \frac{(t-\s)^{q(1-\frac{1}{2s})}}{((t-\s)^{\frac{1}{2s}}+|x-y|)^{q(N+2s)}} \,dy\,d\sigma.
\end{eqnarray*}
Integrating in $\O$, following the same change of variable as in the proof of Theorem \ref{gradiente},
$$
\begin{array}{lll}
\dyle \io I_{1}(x,t) dx & \le & \dyle C||f||^{q-1}_{L^1(\O_t)}\int_0^t\io |f(y,\sigma)|\io \frac{(t-\s)^{q(1-\frac{1}{2s})}}{((t-\s)^{\frac{1}{2s}}+|x-y|)^{q(N+2s)}} dx \,dy\,d\sigma\\
 &\le & \dyle C||f||^{q-1}_{L^1(\O_t)}\int_0^t\io |f(y,\sigma)|(t-\s)^{\frac{N}{2s}-q\frac{N+1}{2s}}dy\,d\sigma.
\end{array}
$$
Setting $\hat{\g}:=\dfrac{N}{2s}-q\dfrac{N+1}{2s}\in (-1,0)$, then
\begin{equation}\label{ii1}
\dyle \io I_{1}(x,t) dx \le \dyle C||f||^{q-1}_{L^1(\O_t)}\int_0^t\io |f(y,\sigma)|(t-\s)^{\hat{\g}}dy\,d\sigma.
\end{equation}

We treat now $I_2$. As above, we get
\begin{eqnarray*}
I_{2}(x,t)& \le & \frac{C||f||^{q-1}_{L^1(\O_t)}}{\d^q(x)}\iint_{\O_t}|f(y,\sigma)|{P^q_{\Omega} (x,y, t-\s)}\,dy,\,d\s\\
& \le & \frac{C||f||^{q-1}_{L^1(\O_t)}}{\d^q(x)}
\iint_{\{\O_t\cap \{|x-y|<\frac 12 \d(x)\}\}}|f(y,\sigma)|P^q_{\Omega} (x,y, t-\s)dy\,d\s \\
& + & \frac{C||f||^{q-1}_{L^1(\O_t)}}{\d^q(x)} \iint_{\{\O_t \cap\{|x-y|\ge \frac 12 \d(x)\}\}}|f(y,\sigma)|
P^q_{\Omega} (x,y, t-\s) dy \,d\sigma=I_{21}(x,t)+ I_{22}(x,t).
\end{eqnarray*}
We begin by estimating $I_{21}$. Notice that $|\d(y)-\d(x)|\le |x-y|$, then $\d(y)\le \dfrac 32 \d(x)$. Thus $\dfrac{\d(y)}{(t-\s)^{\frac{1}{2{ s}}}}\le C(\O_T)$. Hence
using \eqref{upper-kernel},
\begin{eqnarray*}
P^q_{\Omega} (x,y, t-\s) & \le & C(\O_T)\frac{(t-\s)^q}{((t-\s)^{\frac{1}{2s}}+|x-y|)^{q(N+2s)}}.
\end{eqnarray*}
Since $|x-y|\le \frac 13\d(x)$, then
$$
\frac{P^q_{\Omega} (x,y, t-\s)}{\d^q(x)}\le  C(\O_T)\frac{(t-\s)^q}{|x-y|^q((t-\s)^{\frac{1}{2s}}+|x-y|)^{q(N+2s)}}.
$$
Going back to the definition of $I_{21}$, setting $z=\dfrac{x-y}{(t-\s)^{\frac{1}{2s}}}$ and integrating in $\O$, there results that
\begin{eqnarray*}
\io I_{21}(x,t)dx & \le & C(\O_T)||f||^{q-1}_{L^1(\O_t)}\int_0^t\io |f(y,\s)|\bigg(\io \frac{(t-\s)^q}{|x-y|^q((t-\s)^{\frac{1}{2s}}+|x-y|)^{q(N+2s)}}dx\bigg)\,dy\, d\sigma\\
& \le & C(\O_T)||f||^{q-1}_{L^1(\O_t)}\int_0^t\io |f(y,\s)|(t-\s)^{\hat{\g}}\bigg(\irn\frac{dz}{|z|^q(1+|z|)^{q(N+2s)}}\bigg) dy\,d\sigma,
\end{eqnarray*}
where, as above, $\hat{\g}=\dfrac{N}{2s}-q\dfrac{N+1}{2s}\in (-1,0)$. Using the fact that $q<N$ we deduce that
\begin{equation}\label{ii21}
\io I_{21}(x,t)dx \le C(\O_T)||f||^{q-1}_{L^1(\O_t)}\int_0^t\io |f(y,\s)|(t-\s)^{\hat{\g}}dy\,d\sigma.
\end{equation}

Respect to $I_{22}$, since $\d(x)\le 2 |x-y|$, we get $\d(y)\le C|x-y|$, then, in this case
$$
P^q_{\Omega} (x,y, t-\s)\le C(\O_T) \Big( 1\wedge \frac{\delta^s(x)}{\sqrt{(t-\s)}}\Big)^q\times \Big( 1\wedge \frac{\delta^s(y)}{\sqrt{(t-\s)}}\Big)^q
\frac{(t-\s)^q}{((t-\s)^{\frac{1}{2s}}+|x-y|)^{q(N+2s)}}.
$$
Since, for all $\theta\in (0,1)$,
$$
\Big( 1\wedge \frac{\delta^s(x)}{\sqrt{(t-\s)}}\Big)^q\le \Big( 1\wedge \frac{\delta^s(x)}{\sqrt{(t-\s)}}\Big)^{q\theta}\mbox{  and   }
\Big( 1\wedge \frac{\delta^s(y)}{\sqrt{(t-\s)}}\Big)^q \le \Big( 1\wedge \frac{\delta^s(y)}{\sqrt{(t-\s)}}\Big)^{q\theta}.
$$
Choosing $\theta=\dfrac{1}{q}$, we deduce that
\begin{eqnarray*}
P^q_{\Omega} (x,y, t-\s) & \le & C(\O_T) \frac{\delta^s(x)}{\sqrt{(t-\s)}}\times \frac{\delta^s(y)}{\sqrt{(t-\s)}}
\frac{(t-\s)^q}{((t-\s)^{\frac{1}{2s}}+|x-y|)^{q(N+2s)}}\\
 &\le & C(\O_T)\delta^s(x)\delta^s(y)\frac{(t-\s)^{q-1}}{((t-\s)^{\frac{1}{2s}}+|x-y|)^{q(N+2s)}}\\
 &\le & \frac{C(\O_T)\delta^s(x)\delta^s(y)}{|x-y|^{N}} \frac{(t-\s)^{q-1}}{((t-\s)^{\frac{1}{2s}}+|x-y|)^{q(N+2s)-N}}\\
&\le & \frac{C(\O_T)\delta^s(x)\delta^s(y)}{|x-y|^{N}}(t-\s)^{-\frac{N}{2s}(q-1)-1}.\\
\end{eqnarray*}
Thus, for $\eta\in (0,1)$ small enough,
\begin{eqnarray*}
\frac{P^q_{\Omega} (x,y, t-\s)}{\d^q(x)} &\le &   \frac{C(\O_T)\delta^s(x)\delta^s(y)}{|x-y|^{N}} \frac{1}{(\d(x))^{q +2s-q-2s\eta}}(\d(x))^{2s-q-2s \eta}(t-\s)^{-\frac{N}{2s}(q-1)-1}\\
& \le &  \frac{C(\O_T)\delta^s(x)\delta^s(y)}{|x-y|^{N}} \frac{1}{(\d(x))^{2s(1-\eta)}}
(t-\s)^{\frac{2s-q-2s\eta}{2s}}(t-\s)^{-\frac{N}{2s}(q-1)-1}\\
& \le & \frac{C(\O_T)\delta^s(x)\delta^s(y)}{|x-y|^{N}} \frac{1}{(\d(x))^{2s(1-\eta)}}(t-\s)^{\frac{N}{2s}-\frac{q(N+1)}{2s}-\eta}.
\end{eqnarray*}
From \eqref{green00}, we deduce that, in this case,
 $$
\mathcal{G}_s(x,y)\simeq \frac{\d^s(x)\d^s(y)}{|x-y|^{N}}.
$$
Hence
$$
\frac{P^q_{\Omega} (x,y, t-\s)}{\d^q(x)}\le C(\O_T)\frac{\mathcal{G}_s(x,y)}{(\d(x))^{2s(1-\eta)}}(t-\s)^{\hat{\g}-\eta}.
$$
Therefore
\begin{eqnarray*}
\io I_{22}(x,t)dx & \le & C(\O_T)||f||^{q-1}_{L^1(\O_t)}\int_0^t\io |f(y,\s)|(t-\s)^{\hat{\g}-\eta}\bigg(\io \frac{\mathcal{G}_s(x,y)}{(\d(x))^{2s(1-\eta)}}dx\bigg)\,dy\, d\sigma\\
& \le & C(\O_T)||f||^{q-1}_{L^1(\O_t)}\int_0^t\io |f(y,\s)|(t-\s)^{\hat{\g}-\eta}\varphi(y) dy\,d\sigma,
\end{eqnarray*}
where $\varphi(y):=\dyle\io \frac{\mathcal{G}_s(x,y)}{(\d(x))^{2s(1-\eta)}}dx$. It is clear that $\varphi$ solves the problem
\begin{equation*}
\left\{
\begin{array}{rcll}
(-\D)^s \varphi &=& \dfrac{1}{(\d(x))^{2s(1-\eta)}} & \text{ in } \O, \\
\varphi &=&0 & \text{ in }(\ren\setminus\O). \\
\end{array}%
\right.
\end{equation*}
Since $2s(1-\eta)<2s$, then $\varphi\in L^\infty(\O)$. Thus, for all $\eta>0$,
\begin{equation}\label{ii22}
\io I_{22}(x,t)dx \le C(\O_T)||f||^{q-1}_{L^1(\O_t)}\int_0^t\io |f(y,\s)|(t-\s)^{\hat{\g}-\eta}dy\,d\sigma.
\end{equation}

Combing estimates \eqref{ii1},\eqref{ii21},\eqref{ii22}, going back to \eqref{vvv} and integrating in $\O$, we conclude that for all $\eta>0$,
$$
\dyle \io |\n u(x,t)|^qdx\le C(\O_T) ||f||^{q-1}_{L^1(\O_t)} \int_0^t\io |f(y,\sigma)|\bigg((t-\s)^{\hat{\g}-\eta}+ (t-\s)^{\hat{\g}}\bigg)dy\,d\sigma.
$$
Hence
$$
\dyle \io |\n u(x,t)|^qdx\le C(\O_T) ||f||^{q-1}_{L^1(\O_t)} \int_0^t\io |f(y,\sigma)|(t-\s)^{\hat{\g}-\eta} dy\,d\sigma.
$$
Now using the fact that
$$
||f||_{L^1(\O_t)} \le C(\O_T)\int_0^t\io |f(y,\sigma)|(t-\s)^{\hat{\g}-\eta}dy\,d\sigma,
$$
we conclude that
$$
\Big(\dyle \io |\n u(x,t)|^qdx\Big)^{\frac{1}{q}}\le C(\O_T) \int_0^t\io |f(y,\sigma)|(t-\s)^{\hat{\g}-\eta}dy\,d\sigma.
$$
\end{proof}

If $m>1$, we can prove a regularity result in a suitable weighted Sobolev space whose weight is a power of the distance to the boundary.

This result will be a consequence of the next two Theorems.
\begin{Theorem}\label{hardy0}
Assume that $u_0\equiv0$,   $f\in L^m(\O_T)$ for some $m>1$ and let $u$ be the unique weak solution to problem \eqref{eq:def}, then $\dfrac{u}{\d^s}\in L^\theta(\O_T)$ for
all $\theta>1$ such that $\dfrac{1}{\theta}>\dfrac{1}{m}-\dfrac{s}{N+2s}$. Moreover,
\begin{equation}\label{hardyeq}
\Big|\Big|\frac{u}{\d^s}\Big|\Big|_{L^\theta(\O_T)}\le C ||f||_{L^m(\O_T)},
\end{equation}
with
$$
\left\{
\begin{array}{lll}
\theta &<& \infty \mbox{  if  }m\ge \dfrac{N+2s}{s},\\
&\,&\\
\theta &< & \dfrac{m (N+2s)}{N+2s-ms}\mbox{  if  } m<\dfrac{N+2s}{s}.
\end{array}
\right.
$$
\end{Theorem}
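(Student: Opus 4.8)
The plan is to combine the representation formula for $u$ with the sharp kernel bound \eqref{green1}; the decisive point is that the boundary factor $1\wedge\frac{\d^s(x)}{\sqrt{t}}$ appearing in \eqref{green1} exactly compensates the division by $\d^s(x)$. Since $P_{\Omega}\ge 0$, the representation formula with $u_0\equiv 0$ gives
\[
\Big|\frac{u(x,t)}{\d^s(x)}\Big|\le \iint_{\O_t}|f(y,\s)|\,\frac{P_{\Omega}(x,y,t-\s)}{\d^s(x)}\,dy\,d\s ,
\]
and I would use the elementary inequality $\frac1a\big(1\wedge\frac ab\big)\le\frac1b$ (valid for $a,b>0$) with $a=\d^s(x)$ and $b=(t-\s)^{\frac12}$ to obtain
\[
\frac{1}{\d^s(x)}\Big(1\wedge\frac{\d^s(x)}{\sqrt{t-\s}}\Big)\le \frac{1}{(t-\s)^{\frac12}} .
\]

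Inserting this into \eqref{green1}, discarding the harmless factor $1\wedge\frac{\d^s(y)}{\sqrt{t-\s}}\le 1$, and using the bound $(t-\s)^{-\frac{N}{2s}}\wedge\frac{t-\s}{|x-y|^{N+2s}}\le C\,H(x-y,t-\s)$ recorded after Lemma \ref{estimmm} (with $H$ as in \eqref{upper-kernel}), I reach the pointwise estimate
\[
\frac{P_{\Omega}(x,y,t-\s)}{\d^s(x)}\le \frac{C}{(t-\s)^{\frac12}}\,H(x-y,t-\s)=C\,G(x-y,t-\s),\qquad G(x,t):=\frac{t^{\frac12}}{\big(t^{\frac{1}{2s}}+|x|\big)^{N+2s}}.
\]
Extending $f$ by zero outside $\O_T$, it then follows that $\big|\frac{u}{\d^s}\big|\le C\,(|f|*G)$, a space--time convolution on $\ren\times\R$ (the supports of $G$ and $f$ in $\{t>0\}$ turning it into the causal integral $\iint_{\O_t}$).

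It remains to determine for which exponents $G$ is integrable. With the substitution $z=x\,t^{-\frac{1}{2s}}$,
\[
\|G\|_{L^r(\ren\times(0,T))}^r=\Big(\irn \frac{dz}{(1+|z|)^{r(N+2s)}}\Big)\int_0^T t^{\frac{N-r(N+s)}{2s}}\,dt,
\]
where the spatial integral converges for every $r\ge 1$, while the temporal one is finite exactly when $\frac{N-r(N+s)}{2s}>-1$, i.e. when $r<\frac{N+2s}{N+s}$. Young's inequality for convolutions would then give
\[
\Big\|\frac{u}{\d^s}\Big\|_{L^\theta(\O_T)}\le C\,\|f\|_{L^m(\O_T)}\,\|G\|_{L^r(\ren\times(0,T))},\qquad \frac1\theta+1=\frac1m+\frac1r .
\]
Imposing $r<\frac{N+2s}{N+s}$, that is $\frac1r>\frac{N+s}{N+2s}=1-\frac{s}{N+2s}$, converts the last identity into exactly the condition $\frac1\theta>\frac1m-\frac{s}{N+2s}$; distinguishing $m\ge\frac{N+2s}{s}$ (right-hand side $\le 0$, so every finite $\theta$ is admissible) from $m<\frac{N+2s}{s}$ (which yields $\theta<\frac{m(N+2s)}{N+2s-ms}$) recovers the two displayed ranges, and \eqref{hardyeq} follows.

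The only genuinely delicate step is the pointwise compensation in the first paragraph: without the boundary factor $1\wedge\frac{\d^s(x)}{\sqrt{t}}$ the integral $\int_{\O}\d^{-s}(x)\,P_{\Omega}\,dx$ would diverge near $\partial\O$, so I expect the whole argument to rest on the sharp two-sided estimate \eqref{green1}. Once this cancellation is secured, the problem reduces to the integrability of the explicit convolution kernel $G$ and an elementary bookkeeping of the exponents in Young's inequality, which is routine.
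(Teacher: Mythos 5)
Your argument is correct and rests on the same two pillars as the paper's proof: the representation formula combined with the sharp kernel bound \eqref{green1}, followed by a Young-type convolution estimate and elementary exponent bookkeeping. There are two differences worth recording. First, you keep the full gain
\[
\frac{1}{\d^s(x)}\Big(1\wedge\frac{\d^s(x)}{\sqrt{t-\s}}\Big)\le (t-\s)^{-\frac12},
\]
so your convolution kernel is $G(x,t)=t^{1/2}\big(t^{1/(2s)}+|x|\big)^{-(N+2s)}$, whereas the paper's displayed estimate works directly with $H(x,t)=t\big(t^{1/(2s)}+|x|\big)^{-(N+2s)}$ from \eqref{upper-kernel}. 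Running the exponent count with $G$ yields exactly the stated threshold $\frac1\theta>\frac1m-\frac{s}{N+2s}$, while the computation with $H$ would formally produce the stronger condition $\frac1\theta>\frac1m-\frac{2s}{N+2s}$; your version is the one consistent with the theorem as stated, and also with a genuine pointwise bound, since $\frac{1}{\d^s(x)}\big(1\wedge\frac{\d^s(x)}{\sqrt{t}}\big)$ is controlled by $t^{-1/2}$ but not by a constant. Second, you apply Young's inequality once in the joint space--time variables, while the paper applies Young in $x$ at fixed times and then H\"older in $t$; the two bookkeeping schemes are equivalent here. The only loose end in your write-up is the range $1<\theta< m$, where your exponent $r$ drops below $1$ and space--time Young does not apply directly; this case follows at once from the case $\theta=m$ (where $r=1$ and $\|G\|_{L^1}<\infty$) together with H\"older's inequality on the bounded set $\O_T$.
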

\begin{proof}
Without loss of generality, we can assume that $f\gvertneqq 0$, hence $u\gvertneqq 0$ in $\ren\times (0,T)$. By the representation formula we have that
$$
u(x,t)=\dint_{0}^{t} \dint_{\Omega} f(y,\sigma) P_{\Omega} (x,y, t-\sigma)\,dy\,d\sigma,
$$
then  using the properties of $P_\O$, and \eqref{upper-kernel} it holds that
$$
\begin{array}{rcl}
\dfrac{u(x,t)}{\d^s(x)} &\leq& C \dint_{0}^{t}\dint_{\Omega}f(y,\s) \dfrac{(t-\s)^{{\frac 12}}}{((t-\s)^{\frac{1}{2s}}+|x-y|)^{N+2s}}\,dy\,d\sigma.\\
\end{array}
$$
As above, consider $\phi\in \mathcal{C}^\infty_0(\O_T)$, then
$$
\begin{array}{rcl}
\Big|\Big|\dfrac{u}{\d^s}\Big|\Big|_{L^\theta(\O_T)} &=& \dyle \sup_{\{||\phi||_{L^{\theta'}(\O_T)}\le 1\}} \iint_{\O_T} \phi(x,t) \frac{u(x,t)}{\delta^s(x)}dxdt\\ &\le & \dyle
\sup_{\{||\phi||_{L^{\theta'}(\O_T)}\le 1\}} \iint_{\O_T}|\phi(x,t)|\dint_{0}^{t}\dint_{\Omega}f(y,\s) \dfrac{(t-\s)^{{\frac 12}}}{((t-\s)^{\frac{1}{2s}}+|x-y|)^{N+2s}}\,dy\,d\sigma dxdt\\ &\le & \dyle \sup_{\{||\phi||_{L^{\theta'}(\O_T)}\le 1\}} \int_0^T\int_0^t \io \io |\phi(x,t)|
H(x-y,t-\s)f(y,\s)dydx d\s dt,
\end{array}
$$
where
$$
H(|x-y|,\s)=\dfrac{(t-\s)^{{\frac 12}}}{((t-\s)^{\frac{1}{2s}}+|x-y|)^{N+2s}}.
$$
Using Young inequality, it holds that
\begin{equation}\label{tt}
\begin{array}{rcl}
\Big|\Big|\dfrac{u}{\d^s}\Big|\Big|_{L^\theta(\O_T)} &\le & C\dyle \sup_{\{||\phi||_{L^{\theta'}(\O_T)}\le 1\}} \int_0^T ||\phi(.,t)||_{L^{\theta'}(\O)} \int_0^t
||f(.,\s)||_{L^m(\O)}||H(., t-\s)||_{L^a(\O)}d\s dt,
\end{array}
\end{equation}
with $\dfrac{1}{\theta'}+\dfrac{1}{m}+\dfrac{1}{a}=2$.

As in the computation of the term defined in \eqref{upper-kernel} and by \eqref{L-a-H}, we get
$$
||H(., t-\s)||_{L^a(\O)}\le C(t-\s)^{-{{\frac 12}}+\frac{N}{2s a}-\frac{N}{2s}}.
$$
Substituting in \eqref{tt},
$$
\begin{array}{rcl}
\Big|\Big|\dfrac{u}{\d^s}\Big|\Big|_{L^\theta(\O_T)} &\le & C\dyle \sup_{\{||\phi||_{L^{\theta'}(\O_T)}\le 1\}} \int_0^T ||\phi(.,t)||_{L^{\theta'}(\O)} \int_0^t
||f(.,\s)||_{L^m(\O)} (t-\s)^{-{{\frac 12}}+\frac{N}{2s a}-\frac{N}{2s}} d\s dt
\\[5mm]
&\le & { C\dyle \sup_{\{||\phi||_{L^{\theta'}(\O_T)}\le 1\}} \int_0^T \int_0^T  ||\phi(.,t)||_{L^{\theta'}(\O)} ||f(.,\s)||_{L^m(\O)} |t-\s|^{-{{\frac 12}}+\frac{N}{2s a}-\frac{N}{2s}} d\s dt.}
\end{array}
$$
Setting
$$
\hat{H}(|t-\s|)=|t-\s|^{-{{\frac 12}}+\frac{N}{2s a}-\frac{N}{2s}},
$$
then using again Young inequality, we get
$$
\Big|\Big|\dfrac{u}{\d^s}\Big|\Big|_{L^\theta(\O_T)}\le C\dyle \sup_{\{||\phi||_{L^{\theta'}(\O_T)}\le 1\}} ||\phi(.,t)||_{L^{\theta'}(\O_T)} ||f||_{L^m(\O_T)}\bigg(\int_0^T \hat{H}^\g (t)dt\bigg)^{\frac{1}{\g}},
$$
where $\g\ge 1$ and $\dfrac{1}{\theta'}+\dfrac{1}{m}+\dfrac{1}{\g}=2$. It is clear that $a=\g$.

Hence
$$
\int_0^T \hat{H}^\g (t)=\int_0^T t^{\gamma( -{{\frac 12}}+\frac{N}{2s a}-\frac{N}{2s})}dt.
$$
The previous integral is finite if and only if
$$
\gamma( -{{\frac 12}}+\frac{N}{2s \g}-\frac{N}{2s})>-1.
$$
Hence $\g<\frac{N+2s}{N+s}$. Using the fact that
$\dfrac{1}{\theta'}+\dfrac{1}{m}+\dfrac{1}{ \g}=2$, it holds that $\dfrac{1}{\theta}>\dfrac{1}{m}-\dfrac{s}{N+2s}$ and hence $\theta<\dfrac{m (N+2s)}{(N+2s-sm)_+}$. Then
we conclude.
\end{proof}
\begin{Theorem}\label{regu-g}
Assume that the conditions of Theorem \ref{hardy0} hold. Let $u$ be the unique weak solution to problem \eqref{eq:def}, then $|\n u|\d^{1-s}\in L^p(\O_T)$ for all
$p\ge 1$ such that $\dfrac{1}{p}>\dfrac{1}{m}-\dfrac{2s-1}{N+2s}$. Moreover
\begin{equation}\label{gradeq1}
\big\| |\n u| \d^{1-s} \big\|_{L^p(\O_T)}\le C ||f||_{L^m(\O_T)},
\end{equation}
with
$$
\left\{
\begin{array}{rcll}
p &<& \infty  &\mbox{  if  }  m\ge \dfrac{N+2s}{2s-1},\\
p &< & \dfrac{m (N+2s)}{N+2s-m(2s-1)}  &\mbox{  if  }  m<\dfrac{N+2s}{2s-1}.
\end{array}
\right.
$$
\end{Theorem}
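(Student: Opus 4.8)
The plan is to reproduce the duality scheme used for Theorem \ref{hardy0}, but now carrying the extra weight $\d^{1-s}(x)$ against the \emph{sharp} gradient bound for the kernel. Since $u_0\equiv0$, the representation formula reads $u(x,t)=\int_0^t\int_\O f(y,\s)P_\O(x,y,t-\s)\,dy\,d\s$, so I would first multiply the two--regime estimate \eqref{rrr} by $\d^{1-s}(x)$. The decisive point is that in the boundary regime $\d(x)<(t-\s)^{1/2s}$ the weight cancels the singular factor $\d^{-(1-s)}(x)$ appearing in \eqref{rrr} \emph{exactly}, while in the interior regime $\d(x)\ge(t-\s)^{1/2s}$ one merely bounds $\d^{1-s}(x)\le C(\O)$. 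Using in addition $1\wedge\frac{\d^s(y)}{\sqrt{t-\s}}\le1$, this produces the pointwise bound
\begin{equation*}
\d^{1-s}(x)|\n u(x,t)|\le C(\O)\int_0^t\int_\O f(y,\s)\,\big(K_{1/2}+K_{\frac{2s-1}{2s}}\big)(x-y,t-\s)\,dy\,d\s ,
\end{equation*}
where $K_\beta(x,\tau):=\dfrac{\tau^{\beta}}{(\tau^{1/2s}+|x|)^{N+2s}}$, so that $K_1=H$ is the kernel from \eqref{upper-kernel}.

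Next, exactly as in the proof of Theorem \ref{hardy0}, I would estimate $\big\||\n u|\d^{1-s}\big\|_{L^p(\O_T)}$ by duality against $\phi\in\mathcal{C}^\infty_0(\O_T)$ with $\|\phi\|_{L^{p'}(\O_T)}\le1$, apply Young's inequality in the space variable and then Hölder in time, under the constraint $\frac{1}{p'}+\frac1m+\frac1a=2$. The only new input is the $L^a$--size of these kernels: the change of variables $z=x/\tau^{1/2s}$ gives $\|K_\beta(\cdot,\tau)\|_{L^a(\O)}\le C\,\tau^{\,\beta+\frac{N}{2sa}-\frac{N+2s}{2s}}$ (for $\beta=1$ this is precisely \eqref{L-a-H}). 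Carrying this through the two time integrations, the convergence near $\tau=0$ of the resulting power $\tau^{\,\beta+\frac{N}{2sa}-\frac{N+2s}{2s}+\frac1{m'}}$, once $\frac1a=1+\frac1p-\frac1m$ is substituted, is equivalent to $\frac1p>\frac1m-\frac{2s\beta}{N+2s}$.

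Finally I would collect the two contributions. The term with $K_{1/2}$ yields the milder requirement $\frac1p>\frac1m-\frac{s}{N+2s}$, while the term with $K_{(2s-1)/2s}$ yields the binding requirement $\frac1p>\frac1m-\frac{2s-1}{N+2s}$ (binding because $2s-1<s$ for $s<1$). This last inequality is exactly the hypothesis of the theorem; reading off whether its right-hand side is nonpositive or positive gives the dichotomy $p<\infty$ when $m\ge\frac{N+2s}{2s-1}$ and $p<\frac{m(N+2s)}{N+2s-m(2s-1)}$ when $m<\frac{N+2s}{2s-1}$, together with the quantitative bound \eqref{gradeq1}.

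I expect the main obstacle to be precisely the weight cancellation in the near--boundary regime: one must use the sharp gradient bound \eqref{rrr} (equivalently \eqref{green1}--\eqref{green2}) and \emph{not} the crude bound $|\n_x P_\O|\le\frac{C}{\d(x)\wedge\tau^{1/2s}}P_\O$ combined with $P_\O\le H$, since the latter leaves an uncancelled factor $\d^{-s}(x)$ that would force a weighted Green--function argument. Keeping the two regimes separate and verifying that it is the \emph{interior} regime $\d(x)\ge\tau^{1/2s}$ which carries the worse time exponent $\frac{2s-1}{2s}$ is what pins down the sharp exponent $\frac{2s-1}{N+2s}$ in the statement.
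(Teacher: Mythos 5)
Your proposal is correct and follows essentially the same route as the paper: the same splitting of $|\nabla_x P_{\Omega}|$ according to $\d(x)\lessgtr (t-\s)^{\frac{1}{2s}}$, the same interior kernel $\bar{H}(x,\tau)=\tau^{\frac{2s-1}{2s}}(\tau^{\frac{1}{2s}}+|x|)^{-(N+2s)}$, and the same Young-plus-H\"older duality computation that produces the binding condition $\frac1p>\frac1m-\frac{2s-1}{N+2s}$ and the dichotomy on $p$. The only difference is in the near-boundary regime, and it is cosmetic: the paper uses precisely the ``crude'' bound you warn against, obtaining $\d^{1-s}(x)|\n u|\le C\,u/\d^{s}+C J_2$ and disposing of the first term by invoking Theorem \ref{hardy0} (no weighted Green-function argument is needed, since $u/\d^s$ is exactly the quantity controlled there, in $L^\theta$ for $\frac1\theta>\frac1m-\frac{s}{N+2s}$), whereas you cancel the weight directly against \eqref{rrr} and land on the kernel $K_{1/2}$ --- both give the same non-binding constraint $\frac1p>\frac1m-\frac{s}{N+2s}$.
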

\begin{proof}
We follow the same technique as in the proof of Theorem \ref{hardy0}. By the representation formula and  by using \eqref{green2} and setting $\O_t=\O\times (0,t)$, we have
\begin{equation}\label{madrid20}
\begin{array}{rcll}
& & |\nabla u(x,t)|\leq C \dyle \iint_{\O_t} f(y,\sigma) |\nabla_x P_{\Omega} (x,y, t-\sigma)|\,dy\,d\sigma\\ \\
 &\leq &  C \dyle\iint_{\O_t} f(y,\sigma) \frac{|\nabla_x P_{\Omega} (x,y, t-\sigma)|}{P_{\Omega} (x,y, t-\sigma)}P_{\Omega} (x,y, t-\sigma)\,dy\,d\sigma\\ \\ &\leq & \dfrac{C}{\d(x)}\dyle \iint_{\{\O_t\cap \{\d(x)\le (t-\s)^{\frac{1}{2s}}\}\}}\,f(y,\s) P_{\Omega} (x,y, t-\sigma) \,dy\,d\sigma \\ \\
& + & C\dyle \iint_{\{\O_t\cap \{\d(x)\ge (t-\s)^{\frac{1}{2s}}\}\}}f(y,\s)
\dfrac{(t-\s)^{\frac{2s-1}{2s}}\,dy\,d\sigma }{((t-\s)^{\frac{1}{2s}}+|x-y|)^{N+2s}}\\ \\ &\le &   C\dfrac{u(x,t)}{\d(x)}+C
\dyle\iint_{\O_t}f(y,\s)
\dfrac{(t-\s)^{\frac{2s-1}{2s}}}{((t-\s)^{\frac{1}{2s}}+|x-y|)^{N+2s}}\,dy\,d\sigma.
\end{array}
\end{equation}
Hence
\begin{equation}\label{rrt}
\begin{array}{rcl}
|\nabla u(x,t)| \d^{1-s}(x)&\leq& C\dfrac{u(x,t)}{\d^s(x)}+C\dyle \iint_{\O_t}f(y,\s)
\dfrac{(t-\s)^{\frac{2s-1}{2s}}}{((t-\s)^{\frac{1}{2s}}+|x-y|)^{N+2s}}\,dy\,d\sigma\\ &:= & J_1(x,t)+J_2(x,t).
\end{array}
\end{equation}
From Theorem \ref{hardy0}, it holds that $J_1\in L^\theta(\O_T)$ for all $\theta<\dfrac{m (N+2s)}{(N+2s-ms)_+}$ and
\begin{equation}\label{h000}
||J_1||_{L^\theta(\O_T)}\le C||f||_{L^m(\O_T)}.
\end{equation}
We deal with $J_2$. As above, we will use a duality argument. Let $\phi\in \mathcal{C}^\infty_0(\O_T)$, and define
$$\bar{H}(x,t):=\dfrac{t^{\frac{2s-1}{2s}}}{(t^{\frac{1}{2s}}+|x|)^{N+2s}},$$
then
$$
\begin{array}{lll}
||J_2||_{L^p(\O_T)} &= & \dyle \sup_{\{||\phi||_{L^{p'}(\O_T)}\le 1\}} \iint_{\O_T}\phi(x,t)J_2(x,t) dxdt\\ &\le & \dyle  \sup_{\{||\phi||_{L^{p'}(\O_T)}\le 1\}}
\iint_{\O_T}|\phi(x,t)| \dint_{0}^{t}\dint_{\Omega}f(y,\s) \dfrac{(t-\s)^{\frac{2s-1}{2s}}}{((t-\s)^{\frac{1}{2s}}+|x-y|)^{N+2s}}\,dy\,d\sigma dxdt\\ &\le & \dyle
\sup_{\{||\phi||_{L^{p'}(\O_T)}\le 1\}} \int_0^T\int_0^t \io \io |\phi(x,t)| \bar{H}(x-y,t-\s)f(y,\s)dydx d\s dt.
\end{array}
$$
Hence using Young inequality, we obtain that
\begin{equation}\label{rrr-1}
||J_2||_{L^p(\O_T)}=\dyle \sup_{\{||\phi||_{L^{p'}(\O_T)}\le 1\}} \int_0^T ||\phi(.,t)||_{L^{p'}(\O)} \int_0^t ||\bar{H}(.,t-\s)||_{L^a(\O)} ||f(.,\s)||_{L^m(\O)}
d\s dt,
\end{equation}
with $\dfrac{1}{p'}+\dfrac{1}{m}+\dfrac{1}{a}=2$.
 By direct computations, we have
$$
||\bar{H}(.,t-\s)||_{L^a(\O)}\le C (t-\s)^{\frac{2s-1}{2s} + \frac{N}{2s a}-\frac{(N+2s)}{2s}}={ C (t-\s)^{\frac{-1}{2s} + \frac{N}{2s a}-\frac{N}{2s}}.}
$$
Going back to \eqref{rrr-1}, we conclude that
$$
\begin{array}{lll}
||J_2||_{L^p(\O_T)} &\le & C\dyle \sup_{\{||\phi||_{L^{p'}(\O_T)}\le 1\}} \int_0^T ||\phi(.,t)||_{L^{p'}(\O)} \int_0^t ||f(.,\s)||_{L^m(\O)} (t-\s)^{\frac{-1}{2s} +
\frac{N}{2sa}-\frac{N}{2s}} d\s dt\\
&\le & C\dyle \sup_{\{||\phi||_{L^{p'}(\O_T)}\le 1\}} \int_0^T \int_0^T||\phi(.,t)||_{L^{p'}(\O)} \int_0^t ||f(.,\s)||_{L^m(\O)} |t-\s|^{\frac{-1}{2s} +\frac{N}{2sa}-\frac{N}{2s}} d\s dt\\
\end{array}
$$
Thus, using again Young inequality, we get
$$
\begin{array}{lll}
||J_2||_{L^p(\O_T)} &\le & C ||f||_{L^m(\O_T)} \dyle \sup_{\{||\phi||_{L^{p'}(\O_T)}\le 1\}} ||\phi||_{L^{p'}(\O_T)} \bigg(\int_0^T
|t|^{\g(\frac{-1}{2s} +\frac{N}{2sa}-\frac{N}{2s})}
dt\bigg)^{\frac{1}{\g}},
\end{array}
$$
where $\dfrac{1}{p'}+\dfrac{1}{m}+\dfrac{1}{\g}=2$. Hence $\g=a$.
It is clear that the last integral is finite if and only if
$\g<\frac{N+2s}{N+1}$. Since
$\dfrac{1}{p'}+\dfrac{1}{m}+\dfrac{1}{\g}=2$, then $\dfrac{1}{p}>\dfrac{1}{m}-\dfrac{2s-1}{N+2s}$. Hence
\begin{equation}\label{k000-1}
||J_2||_{L^p(\O_T)}\le C||f||_{L^m(\O_T)}.
\end{equation}
If  $m\ge \dfrac{N+2s}{2s-1}$, then the above condition holds for all $p>1$.
Since $s\in(\dfrac 12, 1)$, $\dfrac{N+2s}{2s-1}>\dfrac{N+2s}{s}$, and then
combining \eqref{h000} and \eqref{k000-1},
we conclude that, for all $p<\infty$,
\begin{equation}\label{k0001}
|||\n u| \d^{1-s}||_{L^p(\O_T)}\le C||f||_{L^m(\O_T)}.
\end{equation}
If  $m<\dfrac{N+2s}{2s-1}$, then $p<\dfrac{m (N+2s)}{N+2s-m(2s-1)}$.
Since $\dfrac{m (N+2s)}{(N+2s-m(2s-1))}<\dfrac{m (N+2s)}{(N+2s-ms)_+}$, using
\eqref{h000} and \eqref{k000-1}, we obtain that
\begin{equation}\label{k000}
||\n u| \d^{1-s}||_{L^p(\O_T)}\le C||f||_{L^m(\O_T)}
\end{equation}
for all $p$ which  satisfies $\dfrac{1}{p}>\dfrac{1}{m}-\dfrac{2s-1}{N+2s}$.
\end{proof}

 {\begin{remark} Let $u(x,t)=t w$ where $w(x)=(1-|x|^2)_+^s$, solves the problem
$$
(-\D)^s w=1 \mbox{  in  }B_1(0) \mbox{  and  }w=0 \mbox{  in  }\ren\backslash B_1(0).
$$
Then
$$
u_t+(-\D)^s u=w+t:=f(x,t) \mbox{  in  }B_1(0)\times (0,T).
$$
Notice that $f\in \mathcal{C}(\overline{\O_T})$, however $|\n u(x)|=2st|x|(1-|x|^2)_+^{s-1}$ in $B_1(0)\times (0,T)$, then $|\n u|\d^\a\in L^\infty(B_1(0)\times (0,T))$ if and only if $\a\ge 1-s$ which show in some way the optimality of the regularity result obtained in Theorem \ref{regu-g}.
\end{remark}
}

\begin{Corollary}\label{cor11}

\

\begin{enumerate}

\item By the result of Theorems \ref{hardy0}
and \ref{regu-g} and since $|\nabla \d|=1$   a.e. in  $\Omega$, it holds that if $u$ is the unique weak solution to problem \eqref{eq:def}, then $(u \d^{1-s})\in L^p(0,T; W^{1,p}_0(\O))$ for all $p$ such that  $\dfrac{1}{p}>\dfrac{1}{m}-\dfrac{2s-1}{N+2s}$ and
$$
||u \d^{1-s} ||_{L^p(0,T; W^{1,p}_0(\O))}\le C ||f||_{L^m(\O)}.
$$
Moveover, if $m<\dfrac{N+2s}{2s-1}$, then the above estimate holds for all $p<\dfrac{m (N+2s)}{N+2s-m(2s-1)}$.
\item  Assume that $f\in L^m(\O_T)$ for some $m>1$ and let $u$ be the unique weak solution to problem \eqref{eq:def}, then
\begin{enumerate}
\item If $m\ge\dfrac{N+2s}{2s-1}$, then $\dyle \iint_{\O_T}|\n u|^a dx<\infty$ for all $a<\dfrac{1}{1-s}$.
\item If $\dfrac{1}{s}\le m<\dfrac{N+2s}{2s-1}$,
    then $\dyle \int_0^T\io |\n u|^a dx<\infty$,

    for all $a<\check{P}:=\dfrac{m(N+2s)}{(N+2s)(m(1-s)+1)-m(2s-1)}$.
\item If $1<m<\dfrac{1}{s}$, then $\dyle
    \iint_{\O_T}|\n u|^a dx<\infty$ for all $a<\dfrac{N+2s}{N+1}$.
\end{enumerate}
\end{enumerate}
\end{Corollary}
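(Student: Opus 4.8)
The plan is to derive both statements from Theorems \ref{hardy0} and \ref{regu-g} by elementary manipulations, so no new kernel estimates are needed. For part (1) I would first use $\nabla\d^{1-s}=(1-s)\d^{-s}\nabla\d$ together with $|\nabla\d|=1$ a.e.\ in $\O$ to obtain the pointwise bound
\[
|\nabla(u\,\d^{1-s})|\le \d^{1-s}|\nabla u|+(1-s)\frac{u}{\d^{s}}\qquad\text{a.e. in }\O_T.
\]
The first term on the right lies in $L^p(\O_T)$ by Theorem \ref{regu-g} exactly when $\frac1p>\frac1m-\frac{2s-1}{N+2s}$, and the second by Theorem \ref{hardy0} when $\frac1p>\frac1m-\frac{s}{N+2s}$. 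Since $s\in(\frac12,1)$ forces $2s-1<s$, hence $\frac{2s-1}{N+2s}<\frac{s}{N+2s}$, the gradient term dictates the smaller range; thus for every $p$ with $\frac1p>\frac1m-\frac{2s-1}{N+2s}$ both terms belong to $L^p(\O_T)$, and adding the two estimates gives the stated control of $\|u\,\d^{1-s}\|_{L^p(0,T;W^{1,p}_0(\O))}$. The refinement for $m<\frac{N+2s}{2s-1}$ is inherited verbatim from Theorem \ref{regu-g}.

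For part (2) the crucial device is the pointwise factorization
\[
|\nabla u|^{a}=\big(|\nabla u|\,\d^{1-s}\big)^{a}\,\d^{-a(1-s)},
\]
to which I would apply H\"older's inequality with conjugate exponents $\frac pa$ and $\frac{p}{p-a}$ (valid once $p>a$), yielding
\[
\iint_{\O_T}|\nabla u|^{a}\,dx\,dt\le \big\||\nabla u|\,\d^{1-s}\big\|_{L^p(\O_T)}^{a}\Big(\iint_{\O_T} \d^{-a(1-s)\frac{p}{p-a}}\,dx\,dt\Big)^{\frac{p-a}{p}}.
\]
Because $\O$ is $C^{1,1}$, the weight factor is finite precisely when $a(1-s)\frac{p}{p-a}<1$, i.e.\ $p>\frac{a}{1-a(1-s)}$ (which already requires $a<\frac1{1-s}$). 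It then remains to optimize $p$ over the range allowed by Theorem \ref{regu-g} in each case.

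In case (a), where $m\ge\frac{N+2s}{2s-1}$, Theorem \ref{regu-g} gives $|\nabla u|\d^{1-s}\in L^p(\O_T)$ for every finite $p$; letting $p\to\infty$ the constraint degenerates to $a(1-s)<1$, i.e.\ $a<\frac1{1-s}$. In case (b), with $\frac1s\le m<\frac{N+2s}{2s-1}$, the admissible exponents are $p<P^{*}:=\frac{m(N+2s)}{N+2s-m(2s-1)}$, and a value $p\in\big(\frac{a}{1-a(1-s)},P^{*}\big)$ exists iff $\frac{a}{1-a(1-s)}<P^{*}$, which rearranges to $a<\frac{P^{*}}{1+(1-s)P^{*}}$; expanding $(1-s)(N+2s)-(2s-1)$ shows this quantity equals exactly $\check P=\frac{m(N+2s)}{(N+2s)(m(1-s)+1)-m(2s-1)}$, and $\check P<\frac1{1-s}$ since $P^{*}<\infty$. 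Case (c), $1<m<\frac1s$, bypasses the weight entirely: as $f\in L^m(\O_T)\subset L^1(\O_T)$, Theorem \ref{gradiente} already yields $\nabla u\in L^{a}(\O_T)$ for all $a<\frac{N+2s}{N+1}$.

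The only genuinely non-routine point is the algebraic reconciliation in case (b), namely that the H\"older threshold $\frac{P^{*}}{1+(1-s)P^{*}}$ coincides with the claimed exponent $\check P$; everything else reduces to the two preceding theorems combined with the standard criterion $\io\d^{-\beta}\,dx<\infty\iff\beta<1$ on a $C^{1,1}$ domain. I therefore expect the verification $\frac{P^{*}}{1+(1-s)P^{*}}=\check P$ to be the main (though elementary) obstacle, since the remaining steps are direct applications of results already established in the excerpt.
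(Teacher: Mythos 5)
Your proof is correct and follows essentially the same route as the paper: part (1) via the product rule combined with Theorems \ref{hardy0} and \ref{regu-g} (the gradient term being the binding constraint since $2s-1<s$), and part (2) via the factorization $|\nabla u|^a=(|\nabla u|\d^{1-s})^a\d^{-a(1-s)}$, H\"older with exponents $p/a$ and $p/(p-a)$, and the criterion $\io\d^{-\beta}dx<\infty$ iff $\beta<1$. Your algebraic identification $\frac{P^*}{1+(1-s)P^*}=\check P$ is exactly the computation the paper performs (and it silently corrects a typo in the paper's proof, which writes $p<\frac{m(N+2s)}{N+2s-ms}$ where the exponent from Theorem \ref{regu-g} is $\frac{m(N+2s)}{N+2s-m(2s-1)}$).
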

\begin{proof}
{ Let us begin with the first statement. Define $v(x,t)=u(x,t)\d^{1-s}(x)$, then
$$
\n v(x,t)=\d^{1-s}(x)\n u +(1-s)\frac{u(x,t)}{\d^s(x)}\n \d(x).
$$
Using the fact that $|\n \d(x)|=1$ a.e. in $\O$, it holds that
$$
|\n v(x,t)|\le \d^{1-s}(x)|\n u(x,t)| +(1-s)\frac{|u(x,t)|}{\d^s(x)}.
$$
Hence the desired estimate follows combining the two estimates obtained in Theorems \ref{hardy0} and
\ref{regu-g}. }

\

We prove the second point that provides a global regularity for the gradient term
without using any weight. It is clear that $u\in L^a(0,T;W^{1,a}_0(\O))$ for all
$a<\dfrac{N+2s}{N+1}$. Now, using Theorem \ref{regu-g}, we reach that
$|\n u|\d^{1-s}\in L^p(\O_T)$ with $p\ge 1$ which satisfies
$\dfrac{1}{p}>\dfrac{1}{m}-\dfrac{2s-1}{N+2s}$.
Hence using H\"older inequality, we get
$$
\begin{array}{lll}
\dyle \iint_{\O_T}|\n u|^a dx dt&= &\dyle \iint_{\O_T}\bigg(|\n u|\d^{1-s}\bigg)^a \d^{-a(1-s)}dx dt\\ &\le & \dyle \le \bigg(\iint_{\O_T}(|\n u|\d^{1-s})^p dx dt
\bigg)^{\frac{a}{p}}\bigg(\iint_{\O_T}\d^{-\frac{pa}{p-a}(1-s)}dx dt \bigg)^{\frac{p-a}{p}},
\end{array}
$$
where $p>a$ to be chosen later. The last integral is finite if and
only if $\dfrac{ap}{p-a}(1-s)<1$, that is, if  $a<\dfrac{p}{(1-s)p+1}$.
Notice that in particular, $a<\dfrac{1}{1-s}$.

\noindent If $m\ge \dfrac{N+2s}{2s-1}$, then by Theorem \ref{regu-g}, we know that
$|\n u|\d^{1-s}\in L^p(\O_T)$ for all $p<\infty$. Hence the condition
$\dfrac{ap}{p-a}(1-s)<1$ holds if $a(1-s)<1$ and then we conclude.

\noindent Assume that $m<\dfrac{N+2s}{2s-1}$, since $p<\dfrac{m (N+2s)}{N+2s-ms}$,
we get $$a<\breve{P}:=\dfrac{m(N+2s)}{(N+2s)(m(1-s)+1)-m(2s-1)}.$$
It is clear that $\breve{P}\ge \dfrac{N+2s}{N+1}$ if $m\ge \dfrac{1}{s}$.
Thus we conclude.
\end{proof}

{In the next result, under suitable hypotheses on $s,m$ and $p$, we improve the integrability of the gradient of the solution without the degenerate weight or with a less degenerate weight at the boundary. }

{
\begin{Theorem}\label{regu-glast}
Assume that the conditions of Theorem \ref{hardy0} hold. Let $u$ be the unique weak solution to problem \eqref{eq:def}, then:
\begin{enumerate}
\item Let $m_1=\min\{\dfrac{s}{1-s}, m\}$. Then $u\in L^p(0,T;W^{1,p}_0(\O))$ for $p<\dfrac{m_1(N+2s)}{N+s+(1-s)m_1}$, and
$$
||\n u||_{L^p(\O_T)}\le C(\O_T,s,N,p)||f||_{L^m(\O_T)}
$$
\item If  $\dfrac{m(N+2s)}{N+s+(1-s)m}\le p <\min\{\dfrac{m(N+2s)}{N+s}, \dfrac{m(N+2s)}{N+2s-m(2s-1)}\}$, then for all $\a\in (0,1)$ such that
$$
\a>\frac{1}{1-s}\bigg((\frac{1}{m}-\frac{1}{p})(N+2s)+(1-s)-\frac{s}{m}\bigg),
$$
we have $|\n u|\d^{\a(1-s)}\in L^p(\O_T)$ and
$$
|| |\n u|\, \d^{\a(1-s)}||_{L^p(\O_T)}\le C(\O_T,s,N,p)||f||_{L^m(\O_T)}.
$$
\end{enumerate}
\end{Theorem}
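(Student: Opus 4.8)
The plan is to run the same duality-plus-convolution scheme used in Theorems \ref{hardy0} and \ref{regu-g}, but now splitting the gradient of the heat kernel according to the parabolic boundary layer and letting the weight $\delta^{\alpha(1-s)}$ absorb part of the boundary singularity. As before one reduces to $f\gvertneqq 0$, $u_0\equiv 0$ and starts from the representation $u(x,t)=\iint_{\Omega_t}f(y,\sigma)P_\Omega(x,y,t-\sigma)\,dy\,d\sigma$. Writing $\tau=t-\sigma$ and using \eqref{green2} together with the refined bound \eqref{rrr}, I would decompose
$$|\nabla u(x,t)|\,\delta^{\alpha(1-s)}(x)\le C\,(\mathrm{far})+C\,(\mathrm{near}),$$
where the far part collects the contribution of $\{\delta(x)\ge\tau^{1/2s}\}$ and the near part that of the boundary layer $\{\delta(x)<\tau^{1/2s}\}$.

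On the far set $|\nabla_xP_\Omega|\le C\,\overline H(x-y,\tau)$ with $\overline H(x,\tau)=\tau^{(2s-1)/2s}(\tau^{1/2s}+|x|)^{-(N+2s)}$, and since $\Omega$ is bounded $\delta^{\alpha(1-s)}\le C$; hence the far term is dominated by $\iint_{\Omega_t}f\,\overline H$, which is exactly the object estimated in Theorem \ref{regu-g}. This reproduces the bound for $\frac1p>\frac1m-\frac{2s-1}{N+2s}$, i.e. the constraint $p<\frac{m(N+2s)}{N+2s-m(2s-1)}$, and contributes nothing to the lower bound on $\alpha$.

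The heart of the argument is the near part. There \eqref{rrr} gives $|\nabla_xP_\Omega|\le C\,\delta^{s-1}(x)\,\tilde H(x-y,\tau)\,(1\wedge\delta^s(y)\tau^{-1/2})$ with $\tilde H(x,\tau)=\tau^{1/2}(\tau^{1/2s}+|x|)^{-(N+2s)}$, so after multiplying by $\delta^{\alpha(1-s)}$ there remains the singular factor $\delta^{-(1-\alpha)(1-s)}(x)$. I would estimate the $L^p(\Omega_T)$ norm of the near term by duality against $\phi$ with $\|\phi\|_{L^{p'}(\Omega_T)}\le1$, pairing the residual weight with $\phi$ through H\"older and using $\delta^{-(1-\alpha)(1-s)}\in L^c(\Omega)$ for every $\frac1c>(1-\alpha)(1-s)$. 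A spatial Young inequality against $\tilde H$ (computing $\|\tilde H(\cdot,\tau)\|_{L^a}$ by the self-similar change $z=(x-y)\tau^{-1/2s}$, as in \eqref{L-a-H}) followed by H\"older in time reduces everything to a single temporal integral $\int_0^T\tau^{p(\cdots)}\,d\tau$, whose convergence at $\tau=0$ is exactly the stated lower bound on $\alpha$; imposing $\alpha<1$ then caps the range at $p<\frac{m(N+2s)}{N+s}$. The factor $1\wedge\delta^s(y)\tau^{-1/2}$ on the source side must be retained: as in the treatment of $J_{22}$ in Theorem \ref{regu-g}, it is absorbed by using $\int_0^\infty P_\Omega\,dt=\mathcal G_s$ from \eqref{green0} and the fact that $(-\Delta)^s\varphi=\delta^{-\gamma}$ has a bounded solution precisely when $\gamma<2s$. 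Statement (1) is the endpoint $\alpha\to0$, where the full singularity $\delta^{-(1-s)}$ survives and lies in $L^c(\Omega)$ only for $c<\frac1{1-s}$; this ceiling replaces the data exponent $m$ by $m_1=\min\{\frac s{1-s},m\}$ and yields $p<\frac{m_1(N+2s)}{N+s+(1-s)m_1}$. As a consistency check, the lower endpoint of the range in (2) forces $\alpha\to0$ (recovering (1)) and the upper endpoint forces $\alpha\to1$ (recovering Theorem \ref{regu-g}).

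The main obstacle is exactly this near-boundary bookkeeping. One cannot convert the singular weight $\delta^{-(1-\alpha)(1-s)}(x)$ into a power of $\tau$, because on $\{\delta(x)<\tau^{1/2s}\}$ the inequality runs the wrong way; it must be kept as a genuine spatial weight and balanced simultaneously against the anisotropic decay of $\tilde H$ and against the boundary factor in $y$, all while keeping the resulting temporal exponent above $-1$. Obtaining the sharp interplay of the four parameters $s,m,p,\alpha$, so that the final time integral converges with constants inherited from Theorems \ref{hardy0}--\ref{regu-g} and the threshold on $\alpha$ comes out in the stated form, is where the computation is most delicate.
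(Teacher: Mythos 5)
Your decomposition of $|\nabla u|\,\delta^{\alpha(1-s)}$ into a far part on $\{\delta(x)\ge\tau^{1/2s}\}$ and a near part carrying the residual singularity $\delta^{-(1-\alpha)(1-s)}(x)$ matches the paper's starting point \eqref{madrid20}, and the far part is indeed handled exactly as $\hat J_2$ in Theorem \ref{regu-g}. The gap is in the near part. If one carries out the scheme you describe --- place $\delta^{-(1-\alpha)(1-s)}$ in $L^c(\Omega)$ with $\frac1c>(1-\alpha)(1-s)$ by H\"older against the dual function, then apply Young's inequality to the convolution with $\tilde H(x,\tau)=\tau^{1/2}(\tau^{1/2s}+|x|)^{-(N+2s)}$ and H\"older in time --- the condition that comes out is
$$(1-\alpha)(1-s)<\frac1p-\frac1m+\frac{s}{N+2s},$$
whereas the theorem asserts the much weaker requirement
$$(1-\alpha)(1-s)<\Big(\frac1p-\frac1m\Big)(N+2s)+\frac sm.$$
For instance with $N=3$, $s=3/4$, $m=2$, $p=2.2$ your route needs $\alpha>0.515$ while the theorem claims $\alpha>0.318$ suffices; at $\alpha=0$ your route gives $p<9/5.25\approx1.71$ against the claimed $p<9/4.25\approx2.12$ of part (1). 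Separating the spatial weight from the kernel by H\"older is too lossy near the boundary, so the proposal, carried out consistently, proves a strictly weaker statement than the one claimed; your assertion that the temporal integral "is exactly the stated lower bound on $\alpha$" does not hold.

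The paper's treatment of the near term is not a duality argument. It exploits $P_\Omega(x,y,\tau)\,dy\,d\tau$ as a measure: first, H\"older against this measure combined with $\iint_{\Omega_t}P_\Omega(x,y,t-\sigma)\,dy\,d\sigma\le C\delta^s(x)$ upgrades the singularity from $\delta^{-1}(x)$ to $\delta^{-(1-s/m')}(x)$ while leaving $\big(\iint f^m P_\Omega\big)^{1/m}$; second, on the layer $\delta(x)\le\tau^{1/2s}$ one has $P_\Omega^{p/m}\le C\,\delta(x)^{-N(p/m-1)}P_\Omega$, which brings the $p$-th power inside at the cost of a further explicit power of $\delta(x)$; finally Fubini and $\int_0^{T}P_\Omega\,dt\le\mathcal G_s$ reduce everything to the boundedness of $\varphi(y)=\int_\Omega\mathcal G_s(x,y)\,\delta^{-\hat\beta}(x)\,dx$, valid precisely for $\hat\beta<2s$. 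The resulting exponent is $\hat\beta=\frac pm\big(N+s+(1-s)m-\alpha m(1-s)\big)-N$, and $\hat\beta<2s$ is exactly the stated threshold on $\alpha$ (and, at $\alpha=0$, the range of $p$ in part (1), with $m$ capped at $s/(1-s)$). You do invoke the Green-function fact at the end, but it cannot be grafted onto a Young-inequality scheme: it only becomes usable after the two H\"older/pointwise steps above, which are absent from your proposal and are precisely what produce the factor $N+2s$ multiplying $\frac1p-\frac1m$ in the stated condition.
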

}
\begin{proof}  As in the proof of Theorem \ref{regu-g}, from estimate \eqref{madrid20}, we have
\begin{equation}\label{madrid21}
\begin{array}{rcll}
|\nabla u(x,t)|&\leq &
\dyle \dfrac{C}{\d(x)}\dyle \iint\limits_{\{\O_t\cap \{\d(x)\le (t-\s)^{\frac{1}{2s}}\}\}}\,f(y,\s) P_{\Omega} (x,y, t-\sigma) \,dy\,d\sigma \\ \\ &+& C\dyle \iint\limits_{\O_t}f(y,\s) \dfrac{(t-\s)^{\frac{2s-1}{2s}}\,dy\,d\sigma }{((t-\s)^{\frac{1}{2s}}+|x-y|)^{N+2s}}\\ \\
& :=&\hat{J}_1(x,t)+ \hat{J}_2(x,t).
\end{array}
\end{equation}
The estimate of the term $\hat{J}_2$ is similar to the estimate of the term
 $J_2$ in the proof of Theorem \ref{regu-g}. Precisely,
it holds that $\hat{J}_2\in L^p(\O_T)$ for $p\ge 1$ satisfying
$\dfrac{1}{p}>\dfrac{1}{m}-\dfrac{2s-1}{N+2s}$ and
\begin{equation}\label{k000-1last}
||\hat{J}_2||_{L^p(\O_T)}\le C||f||_{L^m(\O_T)}.
\end{equation}
We deal with $\hat{J}_1$, in this case we  follow the argument as in the proof of Theorem \ref{gradiente}.

Since $f\in L^m(\O_T)$ with $m>1$, then using H\"older inequality we deduce that
\begin{equation}\label{last0}
\begin{array}{lll}
\hat{J}_1(x,t)& \le & \dfrac{C}{\d(x)}\dyle\bigg(\iint\limits_{\{\O_t\cap \{\d(x)\le (t-\s)^{\frac{1}{2s}}\}\}}\,f^m(y,\sigma){P_{\Omega} (x,y, t-\s)}\,dy\,d\s\bigg)^{\frac{1}{m}}\\
   & \times & \dyle \bigg(\iint\limits_{\{\O_t\cap \{\d(x)\le (t-\s)^{\frac{1}{2s}}\}\}}\,{P_{\Omega} (x,y, t-\s)}\,dy\,d\s\bigg)^{\frac{1}{m'}}\\
& \le & \dfrac{C}{(\d(x))^{1-\frac{s}{m'}}}\dyle\bigg(\iint\limits_{\{\O_t\cap \{\d(x)\le (t-\s)^{\frac{1}{2s}}\}\}}\,f^m(y,\sigma){P_{\Omega} (x,y, t-\s)}\,dy\,d\s\bigg)^{\frac{1}{m}},
\end{array}
\end{equation}
where we have used the fact that
$$
\iint\limits_{\{\O_t\cap \{\d(x)\le (t-\s)^{\frac{1}{2s}}\}\}}\, {P_{\Omega} (x,y, t-\s)}\,dy\,d\s\le C(\O,T)\d^s(x) \mbox{  for  }(x,t)\in \O_T.
$$
Fix $p>m$ to be chosen later, then

\begin{equation}\label{last1}
\begin{array}{lll}
\hat{J}^p_1(x,t)& \le & \dfrac{C}{(\d(x))^{p(1-\frac{s}{m'})}}\dyle\bigg(\iint\limits_{\{\O_t\cap \{\d(x)\le (t-\s)^{\frac{1}{2s}}\}\}}\,f^m(y,\sigma){P_{\Omega} (x,y, t-\s)}\,dy\,d\s\bigg)^{\frac{p}{m}}\\  \\
& \le & \dfrac{C}{(\d(x))^{p(1-\frac{s}{m'})}}\dyle\bigg(\iint\limits_{\{\O_t\cap \{\d(x)\le (t-\s)^{\frac{1}{2s}}\}\}}\,f^m(y,\sigma){P^{\frac{p}{m}}_{\Omega} (x,y, t-\s)}\,dy\,d\s\bigg)
\bigg(\iint\limits_{\O_t}\,f^m(y,\sigma)\,dy\,d\s\bigg)^{\frac{p-m}{m}}\\  \\
& \le & \dfrac{C ||f||^{p-m}_{L^m(\O_T)}}{(\d(x))^{p(1-\frac{s}{m'})}}\dyle \iint\limits_{\{\O_t\cap \{\d(x)\le (t-\s)^{\frac{1}{2s}}\}\}}\,f^m(y,\sigma){P^{\frac{p}{m}}_{\Omega} (x,y, t-\s)}\,dy\,d\s.
\end{array}
\end{equation}
Recall that $t^{\frac{N}{2s}}P_{\Omega} (x,y, t)\le C:=C(\O,s,N,T)$, thus $\bigg(t^{\frac{N}{2s}}P_{\Omega} (x,y, t)\bigg)^{\frac{p}{m}-1}\le C$. Hence, if $\d(x)\le (t-\s)^{\frac{1}{2s}}$, it follows that
$$
P^{\frac{p}{m}}_{\Omega} (x,y, t-\s)\le C(t-\s)^{-(\frac{p}{m}-1)\frac{N}{2s}}P_{\Omega} (x,y, t-\s)\le \frac{C}{(\d(x))^{(\frac{p}{m}-1)N}}P_{\Omega} (x,y, t-\s).
$$
Therefore we obtain that
\begin{equation}\label{last12}
\begin{array}{lll}
\hat{J}^p_1(x,t)& \le & \dfrac{C ||f||^{p-m}_{L^m(\O_T)}}{(\d(x))^{p(1-\frac{s}{m'})+N(\frac{p}{m}-1)}}\dyle\iint_{\O_t}f^m(y,\sigma){P}_{\Omega} (x,y, t-\s)\,dy\,d\s.
\end{array}
\end{equation}
Let $\beta=p(1-\dfrac{s}{m'})+N(\dfrac{p}{m}-1)$, then according to the value of $\beta$, we will consider two cases:

\

{\bf The first case $\beta<2s$:} Notice that in this case if $m_1<\dfrac{s}{1-s}$ then  $m_1<p<\dfrac{m_1(N+2s)}{N+s+(1-s)m_1}$. Moreover,  $\dfrac{m_1(N+2s)}{N+s+(1-s)m_1}<\dfrac{m_1(N+2s)}{N+2s-m_1(2s-1)}$ defined in Theorem \ref{regu-g}.

Integrating in $\O_T$, there results that
$$
\iint_{\O_T}\hat{J}^p_{1}(x,t)dxdt\le C||f||^{p-m_1}_{L^{m_1}(\O_T)}\iint_{\O_T}f^{m_1}(y,\s)\Big(\io \frac{1}{\d^\beta(x)}\int_{\s}^T P_{\Omega} (x,y,t-\s)dt\,dx\Big)dy\,d\sigma.
$$
Recall that
$$
\dint_{\s}^{T} P_{\Omega} (x,y,
t-\s)\,dt\le \dint_{0}^{T-\s} P_{\Omega} (x,y,
\eta)\,d\eta\le \mathcal{G}_s(x,y).
$$
Hence
$$
\iint_{\O_T}\hat{J}^p_{1}(x,t)dxdt\le C||f||^{p-m_1}_{L^{m_1}(\O_T)}
\iint_{\O_T}f^{m_1}(y,\s)\varphi(y) dy\,d\sigma.
$$
where $\varphi(y)=\dyle\io \frac{\mathcal{G}_s(x,y)}{(\d(x))^{\beta_0}}dx$.
Since $\beta<2s$, then $\varphi\in L^{\infty}(\O)$ and then
$$
\iint_{\O_T}\hat{J}^p_{1}(x,t)dxdt\le C||f||^{p}_{L^{m_1}(\O_T)}.
$$
If $m>\dfrac{s}{1-s}$ the final estimate follows by the H\"{o}lder inequality.
Notice that in this case, $p<\dfrac{s}{1-s}$.

 {\bf The second case $\beta\ge 2s$}: Consider
 $\dfrac{m(N+2s)}{N+s+(1-s)m}\le p <\min\{\dfrac{m(N+2s)}{N+s}, \dfrac{m(N+2s)}{N+2s-m(2s-1)}\}$.
Since $\beta\ge 2s$,
 $$
 (\dfrac{1}{m}-\dfrac{1}{p})(N+2s)<\dfrac{s}{m}.
 $$
Let
$$
{\Upsilon}:= (\frac{1}{m}-\frac{1}{p})(N+2s)+(1-s)-\frac{s}{m},
$$
then $0<\Upsilon<1-s$. Fix $0<\a<1$ such that $\frac{\Upsilon}{1-s}<\a<1$, thus we reach that $\beta-p\a(1-s)<2s$.

Going back to \eqref{last12}, it holds that

\begin{equation}\label{last1211}
\begin{array}{lll}
\hat{J}^p_1(x,t) (\d(x))^{p\a(1-s)}& \le & \dfrac{C ||f||^{p-m}_{L^m(\O_T)}}{(\d(x))^{p(1-\frac{s}{m'})+N(\frac{p}{m}-1)-p\a(1-s)}}\dyle\iint_{\O_t}f^m(y,\sigma){P}_{\Omega} (x,y, t-\s)\,dy\,d\s.
\end{array}
\end{equation}
Setting $\hat{\beta}=p(1-\dfrac{s}{m'})+N(\dfrac{p}{m}-1)-p\a(1-s)$, then
$\hat{\beta}=\dfrac{p}{m}(s+(1-s)m+N-\a m(1-s))-N$.
By the above condition on $p$ and $\a$, we deduce that $\hat{\beta}<2s$. Repeating the argument used in the first case, it holds that
$$
\iint_{\O_T}\hat{J}^p_{1}(x,t)(\d(x))^{p\a(1-s)}dxdt\le C||f||^{p}_{L^m(\O_T)},
$$
and then we conclude.
\end{proof}
\begin{remark}
Notice that $u\in L^{2s}(0,T;W^{1,2s}_0(\O))$ if $m>\dfrac{2s(N+2s)}{N+2s^2}$.
\end{remark}
Now, if $f\in L^1(\O_T)\cap L^m(K\times (0,T))$, where $m>1$ and $K\subset\subset \O$ is any compact set of $\O$, then the regularity result of Theorem \ref{regu-g} holds locally in $\O\times (0,T)$. More precisely we have
\begin{Proposition}\label{key2-locc}
{ Suppose that $m>1$ and assume that $f\in L^1(\O_T)\cap L^m(K\times (0,T))$ for every compact set $K\subset\subset \O$.} Let $u$ be the unique weak solution to
problem \eqref{eq:def} and consider $\O_1\subset\subset \O$ with $\text{dist}(\O_1,\p\O)>0$. Let $K_1\subset\subset \O$ be a compact set of $\O$ such that
$\O_1\subset\subset K$. Then $u\in L^\theta(\O_1\times (0,T))$ for all $\theta<\dfrac{m (N+2s)}{(N+2s-ms)_+}$ and $|\n u|\in L^p(\O_1\times (0,T))$ for all
$p<\dfrac{m (N+2s)}{(N+2s-m(2s-1))_+}$. Moveover,
\begin{equation}\label{eqq1}
||u||_{L^\theta(\O_1\times (0,T))}\le C (||f||_{L^m(K_1\times (0,T))}+||f||_{L^1(\O_T)}),
\end{equation}
and
\begin{equation}\label{eqq2}
\|\n u\|_{L^p(\O_1\times (0,T))}\le C (||f||_{L^m(K_1\times (0,T))}+||f||_{L^1(\O_T)}),
\end{equation}
where $C:=C(K_1,\O_1, \O, T,N,m)$.
\end{Proposition}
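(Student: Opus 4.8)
The plan is to use the linearity of \eqref{eq:def} together with the weighted global estimates already established in Theorems \ref{hardy0} and \ref{regu-g}, after splitting the datum according to where it is merely integrable and where it lies in $L^m$. As in those theorems we take $u_0\equiv 0$. Fix a compact set $K_1$ with $\O_1\subset\subset K_1\subset\subset\O$ and $K_1\subset K$, and decompose $f=f_1+f_2$, where $f_1=f\,\chi_{K_1\times(0,T)}$ and $f_2=f-f_1$. Let $u_1$ and $u_2$ be the unique weak solutions of \eqref{eq:def} with data $(f_1,0)$ and $(f_2,0)$; by uniqueness $u=u_1+u_2$, so it suffices to bound each piece separately.

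Since $f_1\in L^m(\O_T)$ with $\|f_1\|_{L^m(\O_T)}=\|f\|_{L^m(K_1\times(0,T))}$, Theorem \ref{hardy0} gives $u_1/\d^s\in L^\theta(\O_T)$ for every $\theta<\frac{m(N+2s)}{(N+2s-ms)_+}$, and Theorem \ref{regu-g} gives $|\n u_1|\,\d^{1-s}\in L^p(\O_T)$ for every $p<\frac{m(N+2s)}{(N+2s-m(2s-1))_+}$, with the corresponding norms controlled by $\|f\|_{L^m(K_1\times(0,T))}$. On the interior set the weights are comparable to constants: setting $d_0:=\text{dist}(\O_1,\p\O)>0$ we have $d_0\le\d(x)\le\sup_\O\d$ for $x\in\O_1$, so $u_1=\d^s(u_1/\d^s)$ and $|\n u_1|=\d^{-(1-s)}(|\n u_1|\d^{1-s})\le d_0^{-(1-s)}|\n u_1|\d^{1-s}$ on $\O_1$. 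This produces the $L^m$--contribution to \eqref{eqq1} and \eqref{eqq2}.

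For $u_2$ the datum is only $L^1$, but it is supported away from $\O_1$, which is the crucial point. Writing $u_2(x,t)=\iint_{\O_t}f_2(y,\sigma)P_{\Omega}(x,y,t-\sigma)\,dy\,d\sigma$ and observing that for $x\in\O_1$ the integrand vanishes unless $|x-y|\ge d_1:=\text{dist}(\O_1,\O\setminus K_1)>0$, the kernel bound \eqref{upper-kernel} gives $P_{\Omega}(x,y,t-\sigma)\le (t-\sigma)/d_1^{N+2s}\le T/d_1^{N+2s}$ on the support. Hence $\|u_2\|_{L^\infty(\O_1\times(0,T))}\le C\|f\|_{L^1(\O_T)}$, and since $\O_1\times(0,T)$ has finite measure this provides the $L^1$--term of \eqref{eqq1}. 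For the gradient we differentiate the representation formula under the integral sign, which is legitimate because $x\mapsto P_{\Omega}(x,y,t-\sigma)$ is of class $C^1$ on $\{|x-y|\ge d_1\}$ and agrees there with the weak gradient supplied by Theorem \ref{gradiente}, and we bound $|\n_x P_{\Omega}|$ by \eqref{rrr}. In the regime $\d(x)\ge(t-\sigma)^{1/2s}$ this gives $|\n_x P_{\Omega}|\le C(t-\sigma)^{(2s-1)/2s}d_1^{-(N+2s)}$, while in the complementary regime the inequality $(t-\sigma)^{1/2s}>\d(x)\ge d_0$ forces $t-\sigma>d_0^{2s}$ and keeps the factor $(t-\sigma)^{1/2}$ bounded by $T^{1/2}$; in both cases $|\n_x P_{\Omega}|\le C(d_0,d_1,s,N,T)$, so that $\|\n u_2\|_{L^\infty(\O_1\times(0,T))}\le C\|f\|_{L^1(\O_T)}$.

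Adding the two contributions yields \eqref{eqq1} and \eqref{eqq2}. The main obstacle is precisely the gradient estimate for $u_2$: one needs a uniform bound on $|\n_x P_{\Omega}|$ that survives the limit $\sigma\to t$, and this is exactly where the standing hypothesis $s>\tfrac12$ enters, since for $x\in\O_1$ one is in the regime $\d(x)\ge(t-\sigma)^{1/2s}$ as $t-\sigma\to0$, and the positivity of the exponent $\frac{2s-1}{2s}$ is what prevents the kernel gradient from blowing up there. Apart from this point, the argument only combines the weighted estimates of Theorems \ref{hardy0} and \ref{regu-g} with the elementary comparison of the weights $\d^{\pm s}$ and $\d^{\pm(1-s)}$ to constants on the interior set $\O_1$.
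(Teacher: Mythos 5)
Your proof is correct and follows essentially the same route as the paper: both split the datum according to whether it lives on the compact set $K_1$ or away from it, handle the far part by observing that $|x-y|\ge \hat c>0$ makes the kernel (and its gradient) bounded on $\O_1$, and handle the near part with the $L^m$ duality machinery of Theorems \ref{hardy0} and \ref{regu-g}. The only cosmetic difference is that you split the solution by linearity and then trade the weights $\d^{s}$, $\d^{1-s}$ for constants on $\O_1$, whereas the paper splits the representation integral directly and reruns the duality argument; the paper is also terser on the gradient bound for the far part, which you spell out explicitly.
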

\begin{proof}
Since $f\in L^1(\O_T)$, then $|\n u|\in L^{q}(\O_T)$ for all $q<\dfrac{N+2s}{N+1}$. Now we closely follow the proofs of Theorem \ref{hardy0} and Theorem
\ref{regu-g}. We have
$$
\begin{array}{rcl}
\dfrac{u(x,t)}{\d^s(x)} &\leq& C \dint_{0}^{t}\dint_{\Omega}f(y,\s) \dfrac{(t-\s)}{((t-\s)^{\frac{1}{2s}}+|x-y|)^{N+2s}}\,dy\,d\sigma.\\
\end{array}
$$
Fix $\O_1\subset\subset \O$ with $\text{dist}(\O_1,\p\O)=c_0>0$ and let $K_1$ be a compact set of $\O$ such that $\O_1\subset\subset K_1\subset\subset \O$. Let
$x\in \O_1$, then
$$
\begin{array}{rcl}
u(x,t) &\le & C(\O_1, c_0, C)\dint_{0}^{t}\dint_{\Omega}f(y,\s) \dfrac{(t-\s)}{((t-\s)^{\frac{1}{2s}}+|x-y|)^{N+2s}}\,dy\,d\sigma\\ &\le & C(\O_1, c_0,
C)\bigg\{\dint_{0}^{t}\dint_{K_1}f(y,\s) \dfrac{(t-\s)}{((t-\s)^{\frac{1}{2s}}+|x-y|)^{N+2s}}\,dy\,d\sigma\\ & + &
\dint_{0}^{t}\dint_{(\O\backslash K_1)}f(y,\s) \dfrac{(t-\s)}{((t-\s)^{\frac{1}{2s}}+|x-y|)^{N+2s}}\,dy\,d\sigma\bigg\}.
\end{array}
$$
Since $x\in \O_1\subset\subset K_1$, then for all $y\in \O\backslash K_1, |x-y|>\hat{c}>0$. Thus
$$
\begin{array}{rcl}
& &\dint_{0}^{t}\dint_{(\O\backslash K_1)}f(y,\s) \dfrac{(t-\s)}{((t-\s)^{\frac{1}{2s}}+|x-y|)^{N+2s}}\,dy\,d\sigma\\ & \le &
\dint_{0}^{t}\dint_{(\O\backslash K_1)}f(y,\s) \dfrac{(t-\s)}{((t-\s)^{\frac{1}{2s}}+\hat{c})^{N+2s}}\,dy\,d\sigma\\ & \le & C
||f||_{L^1(\O_T)}.
\end{array}
$$
Therefore we conclude that
$$
\begin{array}{rcl}
u(x,t) &\le & C(\O_1, c_0, C)\bigg(\dint_{0}^{t}\dint_{K_1}f(y,\s) \dfrac{(t-\s)}{((t-\s)^{\frac{1}{2s}}+|x-y|)^{N+2s}}\,dy\,d\sigma
+||f||_{L^1(\O_T)}\bigg).
\end{array}
$$
Since $f\in L^m(K_1\times (0,T))$, then the rest of the proof follows exactly from the same duality argument as in the proof of Theorem \ref{hardy0}. Hence, estimate
\eqref{eqq1} holds. In a similar way, we prove estimate \eqref{eqq2}.
\end{proof}

\begin{remarks}\label{rm001}

\

\begin{enumerate}

\item For $a\ge 1$, we define the space $L^{a}_{loc}(\Omega_T)$ as the set of measurable functions $u$ such that $u\eta\in L^{a}(\Omega_T)$, for any $\eta\in \mathcal{C}^\infty_0(\O)$. Then the result of Proposition \ref{key2-locc} affirms  that if $u$ is the unique solution to problem \eqref{eq:def}, then
    $u\in L^\theta_{loc}(\O_T)$ for all $\theta<\dfrac{m (N+2s)}{(N+2s-ms)_+}$ and $|\n u|\in L_{loc}^p(\O_T)$ for all $p<\dfrac{m (N+2s)}{(N+2s-m(2s-1))_+}$.

\item The result of Proposition \ref{key2-locc} will be useful in order to get $\mathcal{C}^1$ regularity using a bootstrap argument if, in addition, we have global
    bounds in $L^1$ and a local family of bounds in a suitable $L^m_{loc}$ space.
\end{enumerate}
\end{remarks}
\noindent We deal now with the case $f\equiv 0$ and $u_0\in L^\theta(\O)$  with $\theta\ge 1$. Following the same computations as above, we get the next results.
\begin{Theorem}\label{u0}
Suppose that $f\equiv 0$ and $u_0\in L^\rho(\O)$  with $\rho\ge 1$. If $u$ is the unique weak solution to problem \eqref{eq:def}, then $\dfrac{u}{\d^s}\in
L^\theta(\O_T)$ for all ${\theta<\dfrac{\rho(N+2s)}{N+s\rho}}$ and $|\n u|\d^{1-s}\in L^{p}(\O)$ for all $p<\dfrac{\rho(N+2s)}{N+\rho}$. Moveover,
$$
\Big|\Big|\dfrac{u}{\d^s}\Big|\Big|_{L^\theta(\O_T)}+|||\n u|\d^{1-s}||_{L^p(\O_T)}\le C(\O_T, p,\theta) ||u_0||_{L^\rho(\O)}.
$$
\end{Theorem}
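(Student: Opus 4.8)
The plan is to imitate the duality arguments of Theorems \ref{hardy0} and \ref{regu-g}, now with the initial datum $u_0$ playing the role of the source $f$; the essential difference is that the solution is given by a single time slice of the kernel,
$$
u(x,t)=\int_\Omega u_0(y)\,P_\Omega(x,y,t)\,dy,
$$
rather than by a time convolution, so the whole analysis reduces to estimating the $L^a_x$-size of one kernel slice and then integrating in $t$. By linearity of the operator I may assume $u_0\ge0$, as in the cited proofs.

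First, for the Hardy--Sobolev term, I would cancel the boundary weight $\delta^s(x)$ against the factor $1\wedge\frac{\delta^s(x)}{\sqrt t}\le\frac{\delta^s(x)}{\sqrt t}$ in \eqref{green1}, and bound the remaining factors by the upper kernel \eqref{upper-kernel}, obtaining the pointwise estimate
$$
\frac{u(x,t)}{\delta^s(x)}\le C\int_\Omega u_0(y)\,\frac{t^{1/2}}{(t^{1/2s}+|x-y|)^{N+2s}}\,dy.
$$
Testing against $\phi\in\mathcal C^\infty_0(\Omega_T)$ with $\|\phi\|_{L^{\theta'}(\Omega_T)}\le1$ and applying Young's convolution inequality in the space variables (with $\frac1{\theta'}+\frac1a+\frac1\rho=2$), exactly as in \eqref{tt}, reduces matters to the $L^a_x$-norm of the kernel slice, which by the scaling computation leading to \eqref{L-a-H} is a fixed power of $t$. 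A final H\"older inequality in time leaves an integral $\int_0^T t^{c}\,dt$ whose convergence at $t=0$ yields the admissible range $\theta<\frac{\rho(N+2s)}{N}$, and simultaneously produces the bound $\|u/\delta^s\|_{L^\theta(\Omega_T)}\le C\|u_0\|_{L^\rho(\Omega)}$.

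For the weighted gradient term I would proceed as in Theorem \ref{regu-g}, using \eqref{green2} together with the dichotomy \eqref{rrr} to split according to whether $\delta(x)<t^{1/2s}$ or $\delta(x)\ge t^{1/2s}$. In the first region the $\delta(x)^{-(1-s)}$ singularity is absorbed by the weight $\delta^{1-s}(x)$, while in the second region one gains a factor $t^{(2s-1)/2s}$; since $\delta$ is bounded on $\Omega$, this gives
$$
|\nabla u(x,t)|\,\delta^{1-s}(x)\le C\int_\Omega u_0(y)\,\frac{t^{1/2}+t^{(2s-1)/2s}}{(t^{1/2s}+|x-y|)^{N+2s}}\,dy.
$$
The same duality/Young/H\"older scheme then applies to each kernel, the more singular power $\frac{2s-1}{2s}$ (which is $<\frac12$ precisely because $s<1$) dictating the threshold, and convergence of the time integral gives $|\nabla u|\delta^{1-s}\in L^p(\Omega_T)$ for $p<\frac{\rho(N+2s)}{N+\rho}$ with the corresponding norm bound.

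I expect the main obstacle to be the behaviour near $t=0$. Unlike the source-term case, where the extra time convolution regularizes the singularity of the kernel, here each estimate rests on a single slice $P_\Omega(x,y,t)$ whose $L^a_x$-norm blows up as $t\to0$; keeping careful track of the exact power of $t$ produced by the weight cancellation (the $t^{1/2}$, respectively $t^{(2s-1)/2s}$, factors) is what pins down the admissible exponents, and getting these powers right, together with the bookkeeping of the boundary weights $\delta^s(x)$ and $\delta^{1-s}(x)$, is the delicate point. The spatial integrability is comparatively harmless, being handled uniformly by the convolution estimate \eqref{L-a-H}.
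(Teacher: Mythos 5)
Your treatment of the weighted gradient term reproduces the paper's own argument: the dichotomy $\delta(x)\lessgtr t^{1/2s}$ coming from \eqref{rrr}, absorption of the $\delta(x)^{-(1-s)}$ singularity by the weight in the boundary region, the gain of $t^{(2s-1)/2s}$ in the interior region, and then duality plus Young's inequality in $x$ and H\"older in $t$. The exponent $\tfrac{2s-1}{2s}=1-\tfrac1{2s}$ does dictate the threshold, and the time integral closes exactly at $p<\tfrac{\rho(N+2s)}{N+\rho}$, which is what the paper obtains (it splits $|\nabla u|$ into two terms $J_{11}+J_{12}$ rather than splitting the kernel, but the decomposition is the same). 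That half of the proposal is fine.

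The gap is in the Hardy--Sobolev part. Your pointwise bound
$$
\frac{u(x,t)}{\delta^s(x)}\le C\int_\Omega u_0(y)\,\frac{t^{1/2}}{(t^{1/2s}+|x-y|)^{N+2s}}\,dy
$$
is the honest consequence of $\delta^{-s}(x)\bigl(1\wedge\frac{\delta^s(x)}{\sqrt t}\bigr)\le t^{-1/2}$, but feeding it into your own scheme does \emph{not} give the stated range. Indeed $\|H(\cdot,t)\|_{L^a(\Omega)}\le Ct^{\frac12+\frac{N}{2sa}-\frac{N+2s}{2s}}$ with $\frac1{\theta'}+\frac1\rho+\frac1a=2$, and the final time integral converges iff $\theta\bigl(\frac12+\frac{N}{2s}(\frac1\rho-\frac1\theta)\bigr)<1$, that is, iff $\theta<\frac{\rho(N+2s)}{N+s\rho}$, which is strictly smaller than the claimed $\frac{\rho(N+2s)}{N}$: the factor $t^{-1/2}$ you pay to cancel the boundary weight is exactly what shrinks the admissible range, and you assert the larger exponent without checking this. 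For comparison, the paper's proof of this part uses the kernel $\frac{t}{(t^{1/2s}+|x|)^{N+2s}}$, i.e.\ it charges nothing for the division by $\delta^s(x)$ -- a step that \eqref{green1} does not justify near $\partial\Omega$ -- and the paper itself later records only the smaller exponent $\frac{\rho(N+2s)}{N+\rho}$ for $u/\delta^s$ in Proposition \ref{pro:lp2}. So the obstruction you would hit is a real one, but as written your argument proves the first assertion only for $\theta<\frac{\rho(N+2s)}{N+s\rho}$; reaching $\frac{\rho(N+2s)}{N}$ (if attainable at all) would require a finer argument, for instance treating the region $\{\delta(x)>t^{1/2s}\}$, where $1\wedge\frac{\delta^s(x)}{\sqrt t}=1$ and no power of $t$ should be spent, separately from the boundary layer.
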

\begin{proof}
For the reader convenience, we include here some details for the estimate of the term $\dfrac{u}{\d^s}$.
$$
\begin{array}{rcl}
\Big|\Big|\dfrac{u}{\d^s}\Big|\Big|_{L^\theta(\O_T)} &=& \dyle \sup_{\{||\phi||_{L^{\theta'}}(\O_T)\le 1\}} \iint_{\O_T}\phi(x,t) \frac{u(x,t)}{\d^s(x)}dxdt\\ &\le & \dyle
\sup_{\{||\phi||_{L^{\theta'}}(\O_T)\le 1\}} \iint_{\O_T}|\phi(x,t)|\dint_{\Omega}u_0(y) \dfrac{t^{{\frac 12}}}{(t^{\frac{1}{2s}}+|x -y|)^{N+2s}}\,dy\,dxdt\\
& \le &  \dyle \sup_{\{||\phi||_{L^{\theta'}(\O_T)}\le 1\}} \int_0^T\io \io |\phi(x,t)|
H(x-y,t)u_0(y)dydx dt
\end{array}
$$
with $\theta'=\dfrac{\theta}{\theta-1}$. Similarly to the proofs above, by Young's inequality, we have that
\begin{equation*}
\Big|\Big|\dfrac{u}{\d^s}\Big|\Big|_{L^\theta(\O_T)}\le C\dyle \sup_{\{||\phi||_{L^{\theta'}(\O_T)}\le 1\}} \int_0^T ||\phi(.,t)||_{L^{\theta'}(\O)} ||u_0||_{L^\rho(\O)}||H(., t)||_{L^a(\O)}d\s dt,
\end{equation*}
where
$$
H(|x-y|,t)= \dfrac{t^{{\frac 12}}}{(t^{\frac{1}{2s}}+|x-y|)^{N+2s}}
$$
and $\dfrac{1}{\theta'}+\dfrac{1}{\rho}+\dfrac{1}{a}=2$.
Notice that
$$
||H(., t)||_{L^a(\O)}\le C t^{-\frac{1}{2}+\frac{N}{2s a}-\frac{N}{2s}}.
$$
Therefore,
$$
\begin{array}{rcl}
\Big|\Big|\dfrac{u}{\d^s}\Big|\Big|_{L^\theta(\O_T)} &\le & C ||u_0||_{L^\rho(\O)} \dyle \sup_{\{||\phi||_{L^{\theta'}(\O_T)}\le 1\}} \int_0^T ||\phi(.,t)||_{L^{\theta'}(\O)}
t^{-\frac 12+\frac{N}{2s a}-\frac{N}{2s}}dt
\end{array}
$$
and by using the H\"older inequality, we get
$$
\begin{array}{lll}
\Big|\Big|\dfrac{u}{\d^s}\Big|\Big|_{L^\theta(\O_T)} &\le & C ||u_0||_{L^\rho(\O)} \dyle \sup_{\{||\phi||_{L^{\theta'}(\O_T)}\le 1\}} ||\phi||_{L^{\theta'}(\O_T)}
\dyle\left(\int_0^T t^{\theta(-\frac 12+\frac{N}{2s a}-\frac{N}{2s})}dt\dyle\right)^{\frac{1}{\theta}}.
\end{array}
$$
The last integral is finite if and only if $\theta(-\dfrac 12+\dfrac{N}{2s a}-\dfrac{N}{2s})>-1$, thus
$$ \dfrac{N}{2s}(1-\frac{1}{a})<\frac{1}{\theta}-\frac 12.
$$
Since  $\dfrac{1}{\rho}+\dfrac{1}{a}=1+\dfrac{1}{\theta}$
then $1-\dfrac{1}{a}=\dfrac{1}{\rho}-\dfrac{1}{\theta}$.
Substituting in the previous inequality, we conclude that
${{\theta<\dfrac{\rho (N+2s)}{N+\rho s}}}$.

To estimate the gradient term we consider that, by the representation formula, we have
$$
\begin{array}{rcl}
|\nabla u(x,t)|&\leq&
 C(\O_T) \dint_{\Omega} u_0(y)|\nabla_x P_{\Omega} (x,y, t)|\,dy\leq C(\O_T)\dint_{\Omega} u_0(y)\frac{|\nabla_x P_{\Omega} (x,y, t)|}{P_{\Omega} (x,y, t)}\,P_{\Omega} (x,y, t) dy\,\\
&\leq& C(\O_T)\dint_{\Omega} u_0(y) h(x,y,t){P_{\Omega} (x,y, t)}\,dy
\end{array}
$$
with $h(x,y,t)=\dfrac{|\nabla_x P_{\Omega} (x,y, t)|}{P_{\Omega} (x,y, t)}\leq C \Big( \dfrac{1}{\delta(x) \wedge t^{\frac{1}{2s}}}\Big)$.

Hence
\begin{eqnarray*}
|\n u(x,t)| & \le & \frac{C}{t^{\frac{1}{2s}}}}{ \chi_{\{\d(x)>t^{\frac{1}{2s}}\}}\io u_0(y){P_{\Omega} (x,y, t)}\,dy  +
\frac{C}{\d(x)}\chi_{\{\d(x)\le t^{\frac{1}{2s}}\}}\io u_0(y){P_{\Omega} (x,y, t)}\,dy\\
& = & J_{11}(x,t)+ J_{12}(x,t).
\end{eqnarray*}
We again use the duality argument, starting by the estimation of the term $J_{11}$.
By estimate \eqref{green1} and H\"older inequality, we obtain that
$$
\begin{array}{rcl}
||J_{11}||_{L^p(\O_T)} &=& \dyle \sup_{\{||\phi||_{L^{p'}}(\O_T)\le 1\}} \iint_{\O_T}\phi(x,t) J_{11}(x,t)dxdt\\ &\le & \dyle
\sup_{\{||\phi||_{L^{p'}}(\O_T)\le 1\}} \iint_{\O_T}|\phi(x,t)|\dint_{\Omega}u_0(y) \dfrac{t^{1-\frac 1{2s}}}{(t^{\frac{1}{2s}}+|x-y|)^{N+2s}}\,dy\,dxdt\\
& \le &  \dyle \sup_{\{||\phi||_{L^{p'}(\O_T)}\le 1\}} \int_0^T\io \io |\phi(x,t)|
H(x-y,t)u_0(y)dydx dt
\end{array}
$$
with $H(x,t)=\dfrac{t^{1-\frac 1{2s}}}{(t^{\frac{1}{2s}}+|x|)^{N+2s}}$.
Using Young inequality, it holds that
\begin{equation*}
||J_{11}||_{L^p(\O_T)}\le C\dyle \sup_{\{||\phi||_{L^{p'}(\O_T)}\le 1\}} \int_0^T ||\phi(.,t)||_{L^{p'}(\O)} ||u_0||_{L^\rho(\O)}||H(., t)||_{L^a(\O)}d\s dt,
\end{equation*}
with $\dfrac{1}{\theta'}+\dfrac{1}{\rho}+\dfrac{1}{a}=2$.
Notice that by a direct calculation
$$
||H(., t)||_{L^a(\O)}\le C t^{(1-\frac 1{2s})+\frac{N}{2s a}-\frac{(N+2s)}{2s}}.
$$
Therefore we reach that
$$
\begin{array}{rcl}
||J_{11}||_{L^\theta(\O_T)} &\le & C ||u_0||_{L^\rho(\O)} \dyle \sup_{\{||\phi||_{L^{p'}(\O_T)}\le 1\}} \int_0^T ||\phi(.,t)||_{L^{p'}(\O)}
t^{(1-\frac 1{2s})+\frac{N}{2s a}-\frac{(N+2s)}{2s}}dt.
\end{array}
$$
Using H\"older inequality, we get
$$
\begin{array}{lll}
||J_{11}||_{L^p(\O_T)} &\le & C ||u_0||_{L^\rho(\O)} \dyle \sup_{\{||\phi||_{L^{p'}(\O_T)}\le 1\}} ||\phi||_{L^{p'}(\O_T)}
\dyle\left(\int_0^T t^{p((1-\frac 1{2s})+\frac{N}{2s a}-\frac{(N+2s)}{2s})}dt\dyle\right)^{\frac{1}{p}}.
\end{array}
$$
The last integral is finite if and only if $p((1-\dfrac 1{2s})+\dfrac{N}{2s a}-\dfrac{(N+2s)}{2s})>-1$.
Thus
$$\dfrac 1{2s}+\dfrac{N}{2s}(1-\frac{1}{a})<\frac{1}{p}.
$$
Since $\dfrac{1}{p'}+\dfrac{1}{\rho}+\dfrac{1}{a}=2$  we have
 $\dfrac{1}{\rho}+\dfrac{1}{a}=1+\dfrac{1}{p}$.
Then $1-\dfrac{1}{a}=\dfrac{1}{\rho}-\dfrac{1}{p}$.
Going back to the previous inequality, we conclude that
$$
\dfrac{1}{2s}+\dfrac{1}{\rho}  \dfrac{N}{2s}<\dfrac{1}{p}(1+\dfrac{N}{2s}).
$$
Hence $J_{11}\in L^p(\O_T)$ for all $p<\dfrac{\rho (N+2s)}{N+\rho}$.

We deal now with $J_{12}$.
We have
$$
\begin{array}{lll}
J_{12}(x,t) &= & \dfrac{C}{\d(x)}\dint\limits_{\{\Omega\cap \{\d(x)\le t^{\frac{1}{2s}}\}\}} u_0(y){P_{\Omega} (x,y, t)}\,dy\\
& \le & C \frac{C}{\d^{1-s}(x)}\chi_{\{\d(x)\le t^{\frac{1}{2s}}\}}\io u_0(y)\dfrac{t^{1-{ \frac 1{2}}}}{(t^{\frac{1}{2s}}+|x-y|)^{N+2s}}\,dy.\\
\end{array}
$$
Hence
$$
J_{12}(x,t)\d^{1-s}(x)\le C\io u_0(y)\dfrac{t^{{{\frac 12}}}}{(t^{\frac{1}{2s}}+|x-y|)^{N+2s}}\,dy,
$$
that is the same term estimated above. Hence we conclude that $J_{12}\d^{1-s}\in L^p(\O_T)$ for all $p<\dfrac{\rho (N+2s)}{N+s\rho}$.

Since $\dfrac{\rho (N+2s)}{N+s\rho}>\dfrac{\rho (N+2s)}{N+\rho}$, then using the fact that $\O_T$ is bounded, it follows that $|\n u|\d^{1-s}(x)
\in L^p(\O_T)$ for all $p<\dfrac{\rho (N+2s)}{N+\rho}$.
\end{proof}

 As in Corollary \ref{cor11}, we have the next regularity for the gradient.
\begin{Corollary}\label{cor1100}
Assume that $f\equiv 0$ and $u_0\in L^\rho(\O)$  with $\rho\ge 1$. If $u$ is the unique weak solution to problem \eqref{eq:def}, then $|\n u|\in L^{a}(\O)$ for all $a<\check{U}:=\dfrac{\rho(N+2s)}{(1-s)\rho(N+2s)+N+\rho}$. Moveover $\check{U}>\dfrac{N+2s}{N+1}$ if $\dfrac{N+2s}{N}<\frac{1}{1-s}$ and  $\rho>\dfrac{N}{sN-2s(1-s)}$.
\end{Corollary}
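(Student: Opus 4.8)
The plan is to follow verbatim the argument of the second part of Corollary \ref{cor11}, replacing the weighted gradient estimate for source data (Theorem \ref{regu-g}) by the corresponding estimate for initial data supplied by Theorem \ref{u0}. First I would invoke Theorem \ref{u0}: since $f\equiv 0$ and $u_0\in L^\rho(\O)$, the unique weak solution satisfies $|\n u|\,\d^{1-s}\in L^p(\O_T)$ for every $p<P_0:=\dfrac{\rho(N+2s)}{N+\rho}$, together with the bound $\||\n u|\,\d^{1-s}\|_{L^p(\O_T)}\le C\|u_0\|_{L^\rho(\O)}$.

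Next, I would remove the degenerate weight by a Hölder splitting. Fix $a<\check U$ and take $p<P_0$ (to be pinned down below); since $\check U\le P_0$ such a $p$ exists. Writing $|\n u|^a=(|\n u|\,\d^{1-s})^a\,\d^{-a(1-s)}$ and applying Hölder's inequality with conjugate exponents $\dfrac{p}{a}$ and $\dfrac{p}{p-a}$ gives
$$
\iint_{\O_T}|\n u|^a\,dx\,dt\le \Big(\iint_{\O_T}(|\n u|\,\d^{1-s})^p\,dx\,dt\Big)^{\frac ap}\Big(\iint_{\O_T}\d^{-\frac{pa}{p-a}(1-s)}\,dx\,dt\Big)^{\frac{p-a}p}.
$$
Because $\O$ is a $C^{1,1}$ domain, $\d^{-\gamma}$ is integrable over $\O$ exactly when $\gamma<1$, so the second factor is finite precisely when $\dfrac{pa}{p-a}(1-s)<1$, that is, when $a<\dfrac{p}{(1-s)p+1}$. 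Observing that the map $p\mapsto \dfrac{p}{(1-s)p+1}$ is strictly increasing and that
$$
\lim_{p\uparrow P_0}\frac{p}{(1-s)p+1}=\frac{P_0}{(1-s)P_0+1}=\frac{\rho(N+2s)}{(1-s)\rho(N+2s)+N+\rho}=\check U,
$$
I conclude that for any $a<\check U$ one may choose an admissible $p<P_0$ with $a<\dfrac{p}{(1-s)p+1}$, whence $|\n u|\in L^a(\O_T)$. This proves the first assertion.

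For the \emph{Moreover} part I would argue by elementary algebra. After cancelling the factor $N+2s$ and cross-multiplying, the inequality $\check U>\dfrac{N+2s}{N+1}$ is equivalent to $\rho\big[N-(1-s)(N+2s)\big]>N$. Expanding, $N-(1-s)(N+2s)=sN-2s(1-s)$, and the standing hypothesis $\dfrac{N+2s}{N}<\dfrac{1}{1-s}$ is itself equivalent to $2(1-s)<N$, i.e. to $sN-2s(1-s)>0$. Since this coefficient is then strictly positive, dividing preserves the inequality and yields precisely $\rho>\dfrac{N}{sN-2s(1-s)}$, which is the assumed lower bound on $\rho$.

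There is no genuine analytic obstacle here, as the statement is a direct corollary of Theorem \ref{u0}; the only points requiring care are the monotonicity argument that identifies $\check U$ as the supremum of the attainable exponents (so that one must approach, but cannot reach, $p=P_0$) and the use of the $C^{1,1}$ regularity of $\partial\O$ to guarantee integrability of the negative power of $\d$. The \emph{Moreover} clause is purely a matter of keeping track of the equivalences between the three algebraic conditions.
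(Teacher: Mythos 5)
Your proposal is correct and follows essentially the same route as the paper, which proves this corollary only implicitly by the remark ``As in Corollary \ref{cor11}'': namely, the weighted gradient bound $|\n u|\,\d^{1-s}\in L^p(\O_T)$ for $p<\frac{\rho(N+2s)}{N+\rho}$ from Theorem \ref{u0}, followed by the H\"older splitting against $\d^{-a(1-s)}$ and the condition $a<\frac{p}{(1-s)p+1}$. Your identification of $\check U$ as the limit of $\frac{p}{(1-s)p+1}$ as $p\uparrow\frac{\rho(N+2s)}{N+\rho}$ and the algebraic verification of the \emph{Moreover} clause are both accurate.
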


{ As a conclusion, from Proposition \ref{first11} and using the same approach as in the proof of Theorem \ref{u0}, keeping  the power in the time variable, we get the next results. }

\begin{Proposition}\label{pro:lp2}
Suppose that $f\equiv 0$ and $u_0\in L^\rho(\O)$. If $u$ is the unique weak solution to problem \eqref{eq:def}, then for all $r\ge 1$ and for all $t>0$, we
have
\begin{equation}\label{sem1}
||u(\cdot,t)||_{L^r(\O)}\le Ct^{-\frac{N}{2s}(\frac{1}{\rho}-\frac{1}{r})}||u_0||_{L^\rho(\O)},
\end{equation}
\begin{equation}\label{sem001}
\Big\|\frac{u(\cdot,t)}{\d^s}\Big\|_{L^r(\O)}\le Ct^{-\frac{N}{2s}(\frac{1}{\rho}-\frac{1}{r})-\frac{1}{2s}}||u_0||_{L^\rho(\O)},
\end{equation}
and
\begin{equation}\label{sem111}
||\n u(\cdot,t)\d^{1-s}||_{L^r(\O)}\le Ct^{-\frac{N}{2s}(\frac{1}{\rho}-\frac{1}{r})-\frac{1}{2s}}||u_0||_{L^\rho(\O)}.
\end{equation}
Moreover, $u\in L^{\s}(\O_T)$ for all $\s<\frac{\rho(N+2s)}{N}$ and $\dfrac{u}{\d^s}, |\n u|\d^{1-s}\in L^{\gamma}(\O_T)$ for all $\gamma<\frac{\rho(N+2s)}{N+\rho}$.
\end{Proposition}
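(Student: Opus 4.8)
The first estimate \eqref{sem1} is nothing but Proposition \ref{first11}, whose substantive range is $r\ge\rho$ (the only one used in what follows). The two weighted bounds \eqref{sem001} and \eqref{sem111} will be obtained by rerunning the duality argument from the proof of Theorem \ref{u0}, this time keeping the time $t$ frozen so as to track the explicit power of $t$; the integrated statements in the \emph{moreover} part will then follow by raising the pointwise-in-time bounds to the appropriate power and integrating in $t$ over $(0,T)$.

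Starting from the representation $u(x,t)=\io u_0(y)\,P_{\Omega}(x,y,t)\,dy$, the heart of the matter is to control both weighted kernels by a single convolution kernel. Using \eqref{green1}, \eqref{green2} and splitting $\O$ according to whether $\d(x)\ge t^{\frac1{2s}}$ or $\d(x)<t^{\frac1{2s}}$, I would show that for $(x,t)\in\O_T$
$$\frac{P_{\Omega}(x,y,t)}{\d^s(x)}\le C(\O_T)\,\bar{H}(x-y,t),\qquad |\n_x P_{\Omega}(x,y,t)|\,\d^{1-s}(x)\le C(\O_T)\,\bar{H}(x-y,t),$$
where $\bar{H}(x,t)=\dfrac{t^{\frac{2s-1}{2s}}}{(t^{\frac1{2s}}+|x|)^{N+2s}}$. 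In the interior region $\d(x)\ge t^{\frac1{2s}}$ one has $1\wedge\frac{\d^s(x)}{\sqrt{t}}=1$ and $\frac1{\d(x)\wedge t^{\frac1{2s}}}=t^{-\frac1{2s}}$, which together with \eqref{upper-kernel} produces exactly $\bar{H}$; in the boundary layer $\d(x)<t^{\frac1{2s}}$ the weight $\d^{-s}(x)$ (respectively $\d^{1-s}(x)/\d(x)$) cancels the vanishing factor $\frac{\d^s(x)}{\sqrt{t}}$ and leaves the sharper kernel $t^{\frac12}/(t^{\frac1{2s}}+|x-y|)^{N+2s}$, which on $(0,T)$ is $\le C(T)\bar{H}$ precisely because $s\in(\tfrac12,1)$.

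With this reduction in hand, for fixed $t$ I would estimate $\|u(\cdot,t)/\d^s\|_{L^r(\O)}$ and $\||\n u(\cdot,t)|\,\d^{1-s}\|_{L^r(\O)}$ by duality against test functions in $L^{r'}$ and apply Young's convolution inequality to $u_0*\bar{H}(\cdot,t)$, obtaining the common bound $C\|u_0\|_{L^\rho(\O)}\,\|\bar{H}(\cdot,t)\|_{L^a(\O)}$ with $\frac1a=1+\frac1r-\frac1\rho$ (so $a\ge1$ exactly when $r\ge\rho$). The scaling substitution $z=x/t^{\frac1{2s}}$, as in \eqref{L-a-H}, gives $\|\bar{H}(\cdot,t)\|_{L^a(\O)}\le C\,t^{\frac{2s-1}{2s}+\frac N{2sa}-\frac{N+2s}{2s}}$, and inserting $\frac1a=1+\frac1r-\frac1\rho$ collapses the exponent to $-\frac N{2s}\big(\frac1\rho-\frac1r\big)-\frac1{2s}$, which is precisely \eqref{sem001} and \eqref{sem111}.

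Finally, for the \emph{moreover} part I would integrate in $t$. Raising \eqref{sem1} to the power $\s$ yields $\int_0^T t^{-\frac N{2s}(\frac1\rho-\frac1\s)\s}\,dt$, finite iff $\frac N{2s}\big(\frac\s\rho-1\big)<1$, i.e. $\s<\frac{\rho(N+2s)}{N}$; raising \eqref{sem001} and \eqref{sem111} to the power $\gamma$ yields $\int_0^T t^{-(\frac N{2s}(\frac1\rho-\frac1\gamma)+\frac1{2s})\gamma}\,dt$, finite iff $\gamma<\frac{\rho(N+2s)}{N+\rho}$, as asserted. I expect the main obstacle to be the bookkeeping in the boundary layer $\{\d(x)<t^{\frac1{2s}}\}$: one must verify that the vanishing of $\d$ there is exactly compensated by the $\d^s$ and $\d^{1-s}$ weights, so that the interior time scaling $t^{-\frac1{2s}}$ governs both weighted estimates and the borderline integrability at $t=0$ is decided by the stated thresholds rather than by the behaviour near $\partial\O$.
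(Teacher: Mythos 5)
Your treatment of the three displayed estimates is correct and is, in substance, the method the paper intends: Proposition \ref{pro:lp2} is stated as a ``summary'' of Proposition \ref{first11} and Theorem \ref{u0} with no written proof, but those results only contain \eqref{sem1} and the space--time integrated bounds, so the fixed-time estimates \eqref{sem001} and \eqref{sem111} do require exactly the frozen-$t$ rerun of the duality argument that you carry out. Your kernel reductions check out (in the boundary layer $\d(x)<t^{1/(2s)}$ the factor $1\wedge \d^s(x)t^{-1/2}$ equals $\d^s(x)t^{-1/2}$ and cancels the weights, leaving $t^{1/2}(t^{1/(2s)}+|x-y|)^{-N-2s}\le C(T)\,\bar{H}$ since $t^{(1-s)/(2s)}\le T^{(1-s)/(2s)}$), the Young bookkeeping $\frac1a=1+\frac1r-\frac1\rho$ is right, and the exponent of $t$ does collapse to $-\frac{N}{2s}\big(\frac1\rho-\frac1r\big)-\frac1{2s}$. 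You are also right to flag that the substantive range is $r\ge\rho$: Young requires $a\ge1$, and for $r$ much smaller than $\rho$ the stated inequalities actually fail as $t\to0$, so this is a defect of the statement rather than of your argument.

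The one step that does not go through as written is the passage to the ``moreover'' part for exponents below $\rho$. You propose to raise \eqref{sem001}--\eqref{sem111} to the power $\gamma$ and integrate, but those pointwise-in-time bounds are only available for $\gamma\ge\rho$. For $\gamma<\rho$ the natural fallback is H\"older in $x$ on the bounded domain, which reduces to $r=\rho$ and gives $\|\cdot\|_{L^\gamma(\O)}\le Ct^{-1/(2s)}\|u_0\|_{L^\rho(\O)}$, hence integrability in time only for $\gamma<2s$. Since $\frac{\rho(N+2s)}{N+\rho}>2s$ exactly when $\rho>2s$ (in which case the entire claimed range lies below $\rho$), there remains a residual range $2s<\gamma<\frac{\rho(N+2s)}{N+\rho}$ that your scheme does not reach. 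The space--time duality of Theorem \ref{u0} carries the same silent constraint $a\ge1$, i.e. $\gamma\ge\rho$, so this soft spot is shared with the paper's own derivation; still, it should be named rather than absorbed into ``bookkeeping,'' because in the application to Theorem \ref{hhh2} the exponent $\rho=\s$ is typically large and this is precisely the regime that matters.
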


{ Within the same framework of the Theorem \ref{regu-glast}, in order to get regularity for the gradient term without any degenerate weight we have the next result.

 \begin{Theorem}\label{lastu0}
Suppose that $f\equiv 0$ and $u_0\in L^\rho(\O)$ and let $\rho_1$ be defined by $\rho_1=\min\{\rho, 2s\}$.  If $u$ is the unique weak solution to problem \eqref{eq:def}, then $|\n u|\in L^{q}(\O)$ for all $q<\dfrac{\rho_1(N+2s)}{N+\rho_1}$. Moveover,
$$
|||\n u|||_{L^q(\O_T)}\le C(\O_T, p,\theta) ||u_0||_{L^\rho(\O)}.
$$
\end{Theorem}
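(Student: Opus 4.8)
The plan is to reuse the pointwise gradient splitting already established in the proof of Theorem \ref{u0}. Assuming, without loss of generality, that $u_0\gvertneqq0$, the representation formula together with \eqref{green1}--\eqref{green2} gives $|\nabla u(x,t)|\le J_{11}(x,t)+J_{12}(x,t)$, where
$$J_{11}(x,t)=\frac{C}{t^{\frac1{2s}}}\dint_{\{\Omega\cap\{\d(x)>t^{\frac1{2s}}\}\}}u_0(y)P_\Omega(x,y,t)\,dy,\qquad J_{12}(x,t)=\frac{C}{\d(x)}\dint_{\{\Omega\cap\{\d(x)\le t^{\frac1{2s}}\}\}}u_0(y)P_\Omega(x,y,t)\,dy.$$
For $J_{11}$ there is nothing new: Theorem \ref{u0} already yields $J_{11}\in L^p(\O_T)$ for every $p<\frac{\rho(N+2s)}{N+\rho}$, and since $g(\tau):=\frac{\tau(N+2s)}{N+\tau}$ is increasing and $\rho\ge\rho_1$, this range contains the target $p<\frac{\rho_1(N+2s)}{N+\rho_1}$. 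Thus the entire difficulty is to control the singular term $J_{12}$ \emph{without} the weight $\d^{1-s}$ used in Theorem \ref{u0}, exactly in the spirit of the first case ($\beta<2s$) of Theorem \ref{regu-glast}.

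First I would treat the subcase $\rho\le2s$, so $\rho_1=\rho$ and $g(\rho_1)\ge\rho_1$; since $\O_T$ is bounded, $L^{p_2}(\O_T)\hookrightarrow L^{p_1}(\O_T)$ for $p_1\le p_2$, so it suffices to bound $J_{12}$ in $L^p$ for $\rho_1\le p<g(\rho_1)$. On the region $\{\d(x)\le t^{1/2s}\}$ one has $\int_\Omega P_\Omega(x,y,t)\,dy\le C$ by \eqref{green1} and scaling, so Hölder's inequality with exponent $\rho_1$ against the measure $P_\Omega\,dy$ gives $J_{12}\le C\d(x)^{-1}\big(\int_\Omega u_0^{\rho_1}P_\Omega\,dy\big)^{1/\rho_1}$. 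Raising to the power $p$, a second (interpolation) application of Hölder — legitimate precisely because $p\ge\rho_1$ — yields $\big(\int u_0^{\rho_1}P_\Omega\,dy\big)^{p/\rho_1}\le\|u_0\|_{L^{\rho_1}(\O)}^{p-\rho_1}\int u_0^{\rho_1}P_\Omega^{p/\rho_1}\,dy$, and then $t^{\frac N{2s}}P_\Omega\le C$ together with $t^{-1/2s}\le\d(x)^{-1}$ on $\{\d(x)\le t^{1/2s}\}$ produces $P_\Omega^{p/\rho_1}\le C\d(x)^{-N(\frac p{\rho_1}-1)}P_\Omega$, just as in \eqref{one0}. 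Collecting the powers of $\d(x)$,
$$J_{12}^p(x,t)\le\frac{C\|u_0\|_{L^{\rho_1}(\O)}^{p-\rho_1}}{\d(x)^{\beta}}\dint_\Omega u_0^{\rho_1}(y)\,P_\Omega(x,y,t)\,dy,\qquad\beta:=p\Big(1+\frac N{\rho_1}\Big)-N.$$

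Integrating in $(x,t)$ and using $\int_0^TP_\Omega(x,y,t)\,dt\le\mathcal G_s(x,y)$ from \eqref{green0} reduces matters to the elliptic quantity $\varphi(y)=\int_\Omega\d(x)^{-\beta}\mathcal G_s(x,y)\,dx$, which solves $(-\Delta)^s\varphi=\d^{-\beta}$ in $\Omega$ with $\varphi=0$ outside, exactly as in \eqref{varphi}. By the result of \cite{AP} one has $\varphi\in L^\infty(\O)$ as soon as $\beta<2s$, that is precisely when $p<\frac{\rho_1(N+2s)}{N+\rho_1}$; this gives $\iint_{\O_T}J_{12}^p\le C\|u_0\|_{L^{\rho_1}(\O)}^{p}$ and hence the asserted estimate. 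The main obstacle is the remaining subcase $\rho>2s$, where $\rho_1=2s$ forces $p<2s=\rho_1$, so the interpolation step above (which requires $p\ge\rho_1$) is no longer available. I would circumvent this by monotonicity of $g$: given $p<2s$, choose $\rho''<2s$ with $g(\rho'')>p$; since $\O$ is bounded, $u_0\in L^\rho(\O)\subset L^{\rho''}(\O)$, and applying the already-proved subcase with $\rho''$ in place of $\rho$ gives $|\nabla u|\in L^p(\O_T)$ together with $\||\nabla u|\|_{L^p(\O_T)}\le C\|u_0\|_{L^{\rho''}(\O)}\le C\|u_0\|_{L^{\rho}(\O)}$, which is exactly the desired conclusion.
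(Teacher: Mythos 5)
Your proposal is correct and follows essentially the same route as the paper: the same splitting of $|\nabla u|$ into the two kernel terms, the same duality estimate for the first, and the same two-step H\"older argument reducing the singular term to the boundedness of $\varphi(y)=\int_\Omega \delta(x)^{-\beta}\,\mathcal{G}_s(x,y)\,dx$ for $\beta<2s$. The only (harmless) deviation is that you bound $\int_\Omega P_\Omega\,dy$ by a constant rather than by $C\delta^s(x)$, so your exponent $\beta=p(1+N/\rho_1)-N$ meets the condition $\beta<2s$ exactly on the stated range $p<\rho_1(N+2s)/(N+\rho_1)$, whereas the paper's $\beta_0=p(1-s/\rho_1')+N(p/\rho_1-1)$ leaves some slack there; your explicit reduction of the case $\rho>2s$ to a smaller integrability exponent is precisely what the definition $\rho_1=\min\{\rho,2s\}$ in the statement implicitly relies on.
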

}
\begin{proof}
To estimate the gradient term we consider that, by the representation formula, we have
\begin{eqnarray*}
& & |\nabla u(x,t)|\leq
 C(\O_T) \dint_{\Omega} u_0(y)|\nabla_x P_{\Omega} (x,y, t)|\,dy\leq C(\O_T)\dint_{\Omega} u_0(y)\frac{|\nabla_x P_{\Omega} (x,y, t)|}{P_{\Omega} (x,y, t)}\,P_{\Omega} (x,y, t) dy\,\\
& & \le \frac{C}{t^{\frac{1}{2s}}}\chi_{\{\d(x)>t^{\frac{1}{2s}}\}}\io  u_0(y){P_{\Omega} (x,y, t)}\,dy  +
\frac{C}{\d(x)}\chi_{\{\d(x)\le t^{\frac{1}{2s}}\}}\io u_0(y){P_{\Omega} (x,y, t)}\,dy\\
& &= \breve{J}_{1}(x,t)+ \breve{J}_{2}(x,t).
\end{eqnarray*}
As in the proof of Theorem \ref{u0}, using the duality argument we obtain that, for all $q<\dfrac{\rho(N+2s)}{N+\rho}$,
$$
||\breve{J}_{1}||_{L^q(\O_T)}\le C ||u_0||_{L^\rho(\O)}.
$$
We treat now $\breve{J}_{2}$. We have
{
$$
\begin{array}{lll}
\breve{J}_{2}(x,t) &= & \dyle \dfrac{C}{\d(x)}\chi_{\{\d(x)\le t^{\frac{1}{2s}}\}} \io u_0(y){P_{\Omega} (x,y, t)}\,dy\\
 &\le & \dyle \dfrac{C}{\d(x)}\chi_{\{\d(x)\le t^{\frac{1}{2s}}\}} \bigg(\io u^\rho_0(y){P_{\Omega} (x,y, t)}\,dy\bigg)^{\frac{1}{\rho}}
\bigg(\io{P_{\Omega} (x,y, t)}\,dy\bigg)^{\frac{1}{\rho'}}\\
& \le & \dyle \dfrac{C}{\d(x)} \chi_{\{\d(x)\le t^{\frac{1}{2s}}\}} \bigg(\io u^\rho_0(y){P_{\Omega} (x,y, t)}\,dy\bigg)^{\frac{1}{\rho}},
\end{array}
$$
where we have used the fact that
$$
\io{P_{\Omega} (x,y, t)}\,dy\le C\frac{\d^s(x)}{\sqrt{t}}\le C \mbox{  in the set  } \{\d(x)\le t^{\frac{1}{2s}}\}.
$$
}
Hence, as in the estimate of the term $\hat{J}_1$ in Theorem \ref{regu-glast}, we deduce that
$$
\begin{array}{lll}
\breve{J}^q_{2}(x,t) &\le & \dyle \dfrac{C}{\d^q(x)}\chi_{\{\d(x)\le t^{\frac{1}{2s}}\}} \bigg(\io u^\rho_0(y){P_{\Omega} (x,y, t)}\,dy\bigg)^{\frac{q}{\rho}}\\
&\le & \dyle\dfrac{C}{\d^q(x)} \chi_{\{\d(x)\le t^{\frac{1}{2s}}\}} \bigg(\io u^\rho_0(y){P^{\frac{q}{\rho}}_{\Omega} (x,y, t)}\,dy\bigg)\bigg(\io u^\rho_0(y) dy \bigg)^{\frac{q-\rho}{\rho}}\\
&\le & \dyle \dfrac{C||u_0||^{\frac{q-\rho}{\rho}}_{L^\rho(\O)}}{\d^q(x)} \chi_{\{\d(x)\le t^{\frac{1}{2s}}\}} \bigg(\io  u^\rho_0(y){P^{\frac{q}{\rho}}_{\Omega} (x,y, t)}\,dy\bigg)\\
&\le & \dyle \dfrac{C||u_0||^{\frac{q-\rho}{\rho}}_{L^\rho(\O)}}{(\d(x))^{q+(\frac{q}{\rho}-1)N}}\chi_{\{\d(x)\le t^{\frac{1}{2s}}\}} \io  u^\rho_0(y){P}_{\Omega} (x,y, t)\,dy.\\
\end{array}
$$
Setting $\beta_0=q+(\frac{q}{\rho}-1)N$, then $\beta<2s$ if and only if $q<\frac{\rho(N+2s)}{N+\rho}$. It is clear that $\frac{\rho(N+2s)}{N+\rho}>\rho$ if and only if $\rho<2s$. Thus assuming the above assumptions and integrating in $\O_T$, we obtain that
$$
\iint_{\O_T}\breve{J}^q_{2}(x,t) dxdt \le C||u_0||^{\frac{q-\rho}{\rho}}_{L^\rho(\O)} \io u^\rho_0(y)\phi (y) dy,
$$
where, as above,  $\phi(y)=\dyle\io \frac{\mathcal{G}_s(x,y)}{(\d(x))^{\beta_0}}dx\le C$.
Hence
$$
\iint_{\O_T}\breve{J}^q_{2}(x,t) dxdt \le C||u_0||^{\frac{q}{\rho}}_{L^\rho(\O)}.
$$
Thus $u\in L^q((0,T); W^{1,q}_0(\O))$ and
$$
|| |\n u|||_{L^q(\O_T)}\le C(\O_T,N,s,p) ||u_0||_{L^\rho(\O)}.
$$
\end{proof}

Notice that, from \cite{BKW}, see also \cite{W}, working in the whole space $\ren$, the above regularity result on the gradient holds  globally. However, when working in a bounded domain, the  term $\delta^{1-s}$  appears in a natural way describing  the gradient's behavior.

\

 Under the local summability condition $f\d^\beta\in L^1(\O_T)$ for some $\beta<2s-1$, it is possible to show the existence of weak solution with the same range of regularity. This will be the key in order to analyze problem \eqref{grad} for large value of $\a$.

More precisely, we have the following existence result.
\begin{Theorem} \label{key}
Assume that $f, u_0$ are measurable functions such that $f\d^\beta \in L^1(\O_T)$ for some $\beta<(2s-1)$ and $u_0\in L^1(\O)$.

Then problem \eqref{eq:def} has a
unique weak solution $u$ such that for all $q<\dfrac{N+2s}{N+\beta+1}$,
$$
||u||_{\mathcal{C}([0,T],L^1(\O, \d^\beta dx))}+ ||\nabla u||_{L^{q}(\Omega_T)}\le C(q,\beta,\Omega_T)\bigg(||f\d^\beta
||_{L^{1}(\Omega_T)}+||u_0||_{L^1(\O)}\bigg).
$$
Moreover, for $q<\dfrac{N+2s}{N+\beta+1}$ fixed, setting $\hat{K}: L^{1}(\Omega_T, \d^\beta(x)dxdt)\times L^1(\O)\to L^q(0,T; W_{0}^{1,q}(\Omega))$, $\hat{K}(f,
u_0)=u$, the unique solution to problem \eqref{eq:def}, then $\hat{K}$ is a compact operator.
\end{Theorem}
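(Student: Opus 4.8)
The plan is to follow, almost verbatim, the scheme of Theorem \ref{gradiente} (the case $\b=0$), carrying the boundary weight $\d^\b$ through every estimate. By linearity of the operator I would first reduce to nonnegative data $f,u_0\ge 0$ (splitting into positive and negative parts), and then argue by approximation: replace $(f,u_0)$ by bounded data $(f_n,u_{n0})$ with $f_n\d^\b\to f\d^\b$ in $L^1(\O_T)$ and $u_{n0}\to u_0$ in $L^1(\O)$, solve \eqref{eq:def} for each $n$ by Theorem \ref{th1}, and establish uniform estimates permitting passage to the limit. For the bounded solutions I would start from the representation formula
$$u_n(x,t)=\io u_{n0}(y)P_\O(x,y,t)\,dy+\iint_{\O_t}f_n(y,\s)P_\O(x,y,t-\s)\,dy\,d\s,$$
and bound $|\n u_n|^q$ as in \eqref{main001}, writing $|\n_x P_\O|=h\,P_\O$ with $h\le C(\d(x)\wedge t^{1/2s})^{-1}$ and splitting both the $u_0$-part and the $f$-part according to $\d(x)\gtrless(t-\s)^{1/2s}$.

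The contribution of $u_{n0}$ is handled exactly as the term $J_1$ in Theorem \ref{gradiente}, giving a bound $C\|u_{n0}\|_{L^1(\O)}$ valid for every $q<\frac{N+2s}{N+1}$, hence in our range as well. The whole difficulty is the $f$-part. Here, instead of factoring out $\|f_n\|_{L^1}$, I would apply H\"older's inequality with respect to the measure $f_n\d^\b\,dy\,d\s$, which produces a leftover weight $\d^{-\b q}(y)$. In the region $\d(x)\le(t-\s)^{1/2s}$ I would use \eqref{one0} to replace $P_\O^q$ by $\d^{-(q-1)N}(x)\,P_\O$, integrate in $t$ via $\int_0^\infty P_\O\,dt=\mathcal{G}_s$, and reduce matters to the elliptic potential $\varphi(y)=\io \mathcal{G}_s(x,y)\,\d^{-a}(x)\,dx$ with $a=q(N+1)-N$; in the region $\d(x)>(t-\s)^{1/2s}$ I would keep the boundary factor $1\wedge\frac{\d^s(y)}{\sqrt{t-\s}}$ of \eqref{green1}, which contributes an extra $\d^{sq}(y)$ near $\p\O$ and so compensates $\d^{-\b q}(y)$ since $s-\b>0$.

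The key new ingredient, replacing the mere boundedness of $\varphi$ used for $\b=0$, is a sharp estimate for the weighted Green potential: from \eqref{green00} one obtains
$$\varphi(y)=\io \frac{\mathcal{G}_s(x,y)}{\d^{a}(x)}\,dx\le C\,\d^{\min\{s,\,2s-a\}}(y),\qquad a=q(N+1)-N.$$
Since $q<\frac{N+2s}{N+\b+1}$ forces $a<2s$ and, more precisely, $2s-a=N+2s-q(N+1)\ge \b q$ (and likewise $s\ge \b q$ in the range where the minimum equals $s$), one gets $\varphi(y)\le C\,\d^{\b q}(y)$, which exactly cancels the factor $\d^{-\b q}(y)$ left by H\"older and yields $\iint_{\O_T}J_{22}\le C\|f_n\d^\b\|_{L^1(\O_T)}^{q}$; the remaining pieces are treated analogously. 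The hypothesis $\b<2s-1$ enters precisely here: it is equivalent to $\frac{N+2s}{N+\b+1}>1$, so that $q>1$ is admissible, and it guarantees $s-\b>0$ for the compensation above. I expect this refined weighted Green estimate, together with the bookkeeping needed to retain the boundary decay in $y$ throughout, to be the main obstacle.

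Collecting the estimates gives $\|\n u_n\|_{L^q(\O_T)}\le C(q,\b,\O_T)(\|f_n\d^\b\|_{L^1(\O_T)}+\|u_{n0}\|_{L^1(\O)})$, with $C\to 0$ as $T\to 0$ as in Remark \ref{mainrr}; the mass bound in $\mathcal{C}([0,T],L^1(\O,\d^\b dx))$ follows from the representation formula together with the elementary estimate $\io \d^\b(x)P_\O(x,y,t)\,dx\le C\,\d^\b(y)$ and the continuity in time of the semigroup. Uniqueness is inherited, by linearity and the duality formulation with test functions in $\mathcal{T}$, from the uniqueness in Theorem \ref{th1}. Finally, for the compactness of $\hat{K}$ I would repeat the argument of Theorem \ref{gradiente}: given a bounded sequence in $L^1(\O_T,\d^\b dx\,dt)\times L^1(\O)$, the uniform bound above yields a weak limit in $L^q(0,T;W^{1,q}_0(\O))$, and applying the same estimates to the differences $u_i-u_j$---after passing, via Vitali's lemma and the uniform integrability furnished by those estimates, to strong convergence of the auxiliary quantities $w_n$, $\tilde{w}_n$ and $w_n^q\,t^{N(q-1)/2s}$---shows that $\|\n(u_i-u_j)\|_{L^q(\O_T)}\to 0$, so that $\hat{K}$ is compact.
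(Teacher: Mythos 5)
Your proposal reproduces the paper's own proof of Theorem \ref{key} in all essential respects: approximation by truncated data, the representation formula, the splitting according to $\d(x)\gtrless(t-\s)^{\frac{1}{2s}}$, H\"older's inequality taken with respect to the measure $f_n\d^\b\,dy\,d\s$, the use of \eqref{one0} and of $\int_0^\infty P_\O\,dt=\mathcal{G}_s$, and above all the weighted Green potential estimate $\varphi(y)=\io\mathcal{G}_s(x,y)\d^{-a}(x)\,dx\simeq \d^{2s-a}(y)$ with $a=q(N+1)-N$ (the paper quotes this from \cite{abatan}, \cite{Adm}), whose exponent $2s-a\ge \b q$ is exactly the condition $q\le\frac{N+2s}{N+\b+1}$. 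So you have correctly identified the one genuinely new ingredient relative to Theorem \ref{gradiente}.

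There is, however, one step that as written does not deliver the stated range of $q$: your treatment of the region $\d(x)>(t-\s)^{\frac{1}{2s}}$. You propose to compensate the leftover weight $\d^{-\b q}(y)$ by the full factor $\d^{sq}(y)$ extracted from $\bigl(1\wedge\frac{\d^s(y)}{\sqrt{t-\s}}\bigr)^q$ and then discard $\d^{(s-\b)q}(y)$ as bounded. But extracting $\d^{sq}(y)$ costs the denominator $(t-\s)^{q/2}$, and once that is carried into the time integral the exponent becomes $\frac{N-q(N+1)}{2s}-\frac{q}{2}=\frac{N-q(N+1+s)}{2s}$, which exceeds $-1$ only for $q<\frac{N+2s}{N+s+1}$ --- strictly smaller than the claimed $\frac{N+2s}{N+\b+1}$, since $\b<2s-1<s$. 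The paper avoids this loss by spending only a fraction $\b/s$ of the boundary decay, via the pointwise bound $\d^{-\b}(y)\bigl(1\wedge\frac{\d^s(y)}{\sqrt{t-\s}}\bigr)\le C(t-\s)^{-\b/2s}$, which yields the sharp exponent $\g_1=\frac{N-q(N+\b+1)}{2s}>-1$ on the whole range. The repair in your scheme is immediate: in the regime $\d^s(y)\le\sqrt{t-\s}$ bound the leftover $\d^{(s-\b)q}(y)$ by $(t-\s)^{(s-\b)q/2s}$ rather than by a constant, which cancels all but $(t-\s)^{-\b q/2s}$ of the time singularity. A second, minor remark: the paper does not deduce uniqueness from Theorem \ref{th1} (whose hypotheses require $f\in L^1(\O_T)$, which is not assumed here) but proves it directly for the homogeneous difference equation via Kato's inequality and the first eigenfunction; your duality route can be salvaged because test functions in $\mathcal{T}$ vanish like $\d^s$ at $\p\O$ and $\b<s$, but this needs to be said.
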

\begin{proof}
Without loss of generality, we can assume that $u_0=0$ and that $f\gneqq 0$ in $\O_T$. We begin by proving the existence part. Let $f_n=T_n(f)$ and define $u_n$
to be the unique energy solution to the approximating problem
\begin{equation}\label{apprd}
\left\{
\begin{array}{rcll}
u_{nt}+(-\D)^s u_n&=& \dyle f_n  & \text{ in } \O_{T}, \\ u_n&=&0 & \text{ in }(\ren\setminus\O) \times (0,T), \\ u(x,0)&=& 0& \mbox{
in  }\O.
\end{array}%
\right.
\end{equation}
It is clear that $\{u_n\}_n$ is an increasing sequence in $n$. Let $\psi$ be the solution to the problem
\begin{equation}\label{adim}
\left\{
\begin{array}{rcll}
(-\Delta)^s \psi&=& \dfrac{1}{\d^{2s-\beta}} & \text{ in } \O, \\ \psi &=&0 & \text{ in }\ren\setminus\O,
\end{array}%
\right.
\end{equation}
whose existence is a consequence of \cite{CV2}. By the results in \cite{abatan} and  \cite{Adm}  extending the results in  \cite{quim}, we find that $\psi\backsimeq C \d^\beta$.  Hence, by an approximation argument, we can use $\psi$ as a test function in \eqref{apprd} to obtain that
$$
\sup_{t\in [0,T]}\io u_n(x,t) \d^\beta(x) dx +\int_0^T\io \frac{u_n(x,t)}{\d^{2s-\beta}(x)}dxdt \le C(\O,\beta,s) \int_0^T\io f\d^\beta(x)dxdt.
$$
Hence there exists a measurable function $u\in L^\infty(0,T;L^1(\O, \d^\beta dx))\cap L^1(\O_T)$, $\dfrac{u}{\d^{2s-\beta}}\in L^1(\O_T)$,  such that
$\dfrac{u_n}{\d^{2s-\beta}}\to \dfrac{u}{\d^{2s-\beta}}$ strongly in $L^1(\O_T)$.

We claim that the sequence $\{u_n\}_n$ is bounded in $L^\theta(\O_T)$, for all $\theta<\frac{N+2s}{N+\beta}$.

To prove the claim consider the representation formula,
$$
u_n(x,t)=\dint_{0}^{t} \dint_{\Omega} f_n(y,\sigma) P_{\Omega} (x,y, t-\sigma)\,dy\,d\sigma,
$$
then  using the properties of $P_\O$, it holds that
$$
\begin{array}{rcl}
u_n(x,t) &\leq& C \dint_{0}^{t}\dint_{\Omega}\Big( 1\wedge \frac{\delta^s(y)}{\sqrt{t-\s}}\Big) f(y,\s)
\dfrac{(t-\s)}{((t-\s)^{\frac{1}{2s}}+|x-y|)^{N+2s}}\,dy\,d\sigma\\ &\le & C \dint_{0}^{t}\dint_{\Omega}\Big( 1\wedge
\frac{\delta^s(y)}{\sqrt{t-\s}}\Big)\frac{1}{\d^\beta(y)} \, f(y,\s)\d^\beta(y) \dfrac{(t-\s)}{((t-\s)^{\frac{1}{2s}}+|x-y|)^{N+2s}}\,dy\,d\sigma.\\
\end{array}
$$
Since
$$
\Big( 1\wedge \frac{\delta^s(y)}{\sqrt{t-\s}}\Big)\frac{1}{\d^\beta(y)}\le C(\O_T,s,\beta)(t-\s)^{-\frac{\beta}{2s}} \mbox{  in  }\O_T,
$$
we conclude that
$$
u_n(x,t)\leq C \dint_{0}^{t}\dint_{\Omega}\, f(y,\s)\d^\beta(y) \dfrac{(t-\s)^{\frac{2s-\beta}{2s}}}{((t-\s)^{\frac{1}{2s}}+|x-y|)^{N+2s}}\,dy\,d\sigma.
$$
Setting $g(y,\s)=f(y,\s)\d^\beta(y)$, then $g\in L^1(\O_T)$. We use the duality argument as in the proof of Theorem \ref{hardy0}. For the reader convenience we
include here some details.

Let $\phi\in \mathcal{C}^\infty_0(\O_T)$, then
$$
\begin{array}{rcl}
||u_n||_{L^\theta(\O_T)} &=& \dyle \sup_{\{||\phi||_{L^{\theta'}(\O_T)}\le 1\}} \iint_{\O_T}\phi(x,t) u_n(x,t)dxdt\\ &\le & \dyle
\sup_{\{||\phi||_{L^{\theta'}(\O_T)}\le 1\}} \iint_{\O_T}|\phi(x,t)| \dint_{0}^{t}\dint_{\Omega}\, g(y,\s)\d^\beta(y)
\dfrac{(t-\s)^{\frac{2s-\beta}{2s}}}{((t-\s)^{\frac{1}{2s}}+|x-y|)^{N+2s}}\,dy\,d\sigma\\ &\le & \dyle \sup_{\{||\phi||_{L^{\theta'}(\O_T)}\le 1\}}
\int_0^T\int_0^t \io \io |\phi(x,t)| H(x-y,t-\s)g(y,\s)dydx d\s dt,
\end{array}
$$
where $H(x,\s)=\dfrac{\s^{\frac{2s-\beta}{2s}}}{(\s^{\frac{1}{2s}}+|x|)^{N+2s}}$.

Using Young inequality, it holds that
\begin{equation}\label{ttbeta}
\begin{array}{rcl}
||u_n||_{L^\theta(\O_T)} &\le & C\dyle \sup_{\{||\phi||_{L^{\theta'}(\O_T)}\le 1\}} \int_0^T ||\phi(.,t)||_{L^{\theta'}(\O)} \int_0^t ||g(.,\s)||_{L^1(\O)}||H(.,
t-\s)||_{L^a(\O)}d\s dt,
\end{array}
\end{equation}
with $\dfrac{1}{\theta'}+\dfrac{1}{a}=1$  { and then $a=\theta$.}

By a direct computation we deduce that $||H(.,t-\s)||_{L^a(\O)} \le C(t-\s)^{\frac{2s-\beta}{2s}+\frac{N}{2s a}-\frac{(N+2s)}{2s}}$. Thus
$$
\begin{array}{rcl}
||u_n||_{L^\theta(\O_T)} &\le & C\dyle \sup_{\{||\phi||_{L^{\theta'}(\O_T)}\le 1\}} \int_0^T ||\phi(.,t)||_{L^{\theta'}(\O)} \int_0^t ||g(.,\s)||_{L^1(\O)}
(t-\s)^{\frac{2s-\beta}{2s}+\frac{N}{2s a}-\frac{(N+2s)}{2s}} d\s dt\\ &\le & C\dyle \sup_{\{||\phi||_{L^{\theta'}(\O_T)}\le 1\}} \int_0^T
||g(.,\s)||_{L^1(\O)}\int_{\s}^T||\phi(.,t)||_{L^{\theta'}(\O)} (t-\s)^{\frac{2s-\beta}{2s}+\frac{N}{2s a}-\frac{(N+2s)}{2s}} d\s dt.
\end{array}
$$
Using H\"older inequality, we get
$$
\begin{array}{lll}
& ||u_n||_{L^\theta(\O_T)}\le\\ & C\dyle \sup_{\{||\phi||_{L^{\theta'}(\O_T)}\le 1\}} ||\phi||_{L^{\theta'}(\O_T)} \int_0^T ||g(.,\s)||_{L^1(\O)}\bigg(\int_{\s}^T
(t-\s)^{\theta[\frac{2s-\beta}{2s}+\frac{N}{2s a}-\frac{(N+2s)}{2s}]} d\s\bigg)^\frac{1}{\theta} dt.
\end{array}
$$
The last integral is finite if and only if $\theta[\dfrac{2s-\beta}{2s}+\dfrac{N}{2s a}-\dfrac{(N+2s)}{2s}]>-1$. Since $\dfrac{1}{\theta'}+\dfrac{1}{a}=1$, then the
above condition holds if $\theta<\dfrac{N+2s}{N+\beta}$ and then the claim follows. Thus
\begin{equation}\label{eq:beta1}
||u_n||_{L^\theta(\O_T)}\le C(\O_T,s,\beta)||g||_{L^1(\O_T)}=C||f\d^\beta||_{L^1(\O_T)}.
\end{equation}
In the same way we can prove that the sequence $\{\dfrac{u_n}{\d^s}\}_n$ is bounded in $L^\theta(\O_T)$ for all $\theta<\dfrac{N+2s}{N+s+\beta}$.

 To show  that the sequence $\{|\n u_n|\}_n$ is bounded in $L^p(\O_T)$ for all $1\le p<\dfrac{N+2s}{N+\beta+1}$ we use  the same arguments as in the proof of Theorem \ref{regu-g}. We have
$$
\begin{array}{rcl}
& & |\nabla u_n(x,t)|\leq C \dint_{0}^{t} \dint_{\Omega} f_n(y,\sigma) |\nabla_x P_{\Omega} (x,y, t-\sigma)|\,dy\,d\sigma\\ \\ &\leq& C \dyle \dyle\iint_{\{\O\times (0,t)\}}f_n(y,\sigma) \frac{|\nabla_x P_{\Omega} (x,y, t-\sigma)|}{P_{\Omega} (x,y, t-\sigma)}P_{\Omega} (x,y, t-\sigma)\,dy\,d\sigma\\ \\ &\leq & C\Big(\dyle\iint_{\{\O\times (0,t) \cap\{\d(x)>t^{\frac{1}{2s}}\}\}\}} f_n(y,\s)\d^\beta(y)
\frac{P_{\Omega} (x,y, t-\sigma)}{(t-\s)^{\frac{\b+1}{2s}}}\,dy\,d\sigma \\
& + &  \dyle \dfrac{1}{\d(x)}\dyle\iint_{\{\O\times (0,t) \cap\{\d(x)\le t^{\frac{1}{2s}}\}\}\}} f_n(y,\s) P_{\Omega} (x,y, t-\sigma) \,dy\,d\sigma \bigg)\\ \\ &=& I_{1n}(x,t)+I_{2n}(x,t).
\end{array}
$$
Let us begin by estimating $I_{1n}$. Using estimate \eqref{green1} and by H\"older inequality, we get
\begin{eqnarray*}
I^q_{1n}(x,t) &=& \Big(\iint_{\{\O\times (0,t) \cap\{\d(x)>t^{\frac{1}{2s}}\}\}\}}f_n(y,\sigma)\d^\beta(y) \frac{(t-\s)^{1-\frac{1+\beta}{2s}}}{((t-\s)^{\frac{1}{2s}}+|x-y|)^{N+2s}} \,dy\,d\sigma\Big)^q\\
& \le  & C||f_n\d^\beta||^{q-1}_{L^1(\O_T)}\iint_{\{\O\times (0,t)\}\}}f_n(y,\sigma)\d^\beta(y) \frac{(t-\s)^{q\frac{2s-\beta-1}{2s}}}{((t-\s)^{\frac{1}{2s}}+|x-y|)^{q(N+2s)}} \,dy\,d\sigma.
\end{eqnarray*}
Integrating in $\O_T$, as in  the proof of Theorem \ref{regu-g}, we deduce that
$$
\dyle \iint_{\O_T}I^q_{1n}(x,t) dxdt\le CT^{\g_1+1}||f_n\d^\beta||^{q}_{L^1(\O_T)},
$$
with $\g_1=\dfrac{N}{2s}-q\dfrac{N+\beta+1}{2s}>-1$.

Now respect to $I_{2n}$, using estimate \eqref{one0} and by H\"older inequality, it follows that
\begin{eqnarray*}
I^q_{2n}(x,t)& =& \dfrac{C}{\d^q(x)}\dyle\bigg(\iint_{\{\O\times (0,t) \cap\{\d(x)\le t^{\frac{1}{2s}}\}\}\}} f_n(y,\s) P_{\Omega} (x,y, t-\sigma) \,dy\,d\sigma \bigg)^q\\ & = & \dfrac{C}{\d^q(x)}\dyle\bigg(\iint_{\{\O\times (0,t) \cap\{\d(x)\le t^{\frac{1}{2s}}\}\}\}} f_n(y,\s)\d^\beta(y) \frac{P_{\Omega} (x,y, t-\sigma)}{\d^\beta(y)}\,dy\,d\sigma \bigg)^q\\
&\le & \frac{C||f_n\d^\beta||^{q-1}_{L^1(\O_T)}}{\d^q(x)}\iint_{\{\O\times (0,t)\}\}}f_n(y,\sigma)\d^\beta(y)\frac{P^q_{\Omega} (x,y, t-\s)}{\d^{q\beta(y)}}\,dy,\,d\s\\
& \le & C||f_n\d^\beta||^{q-1}_{L^1(\O_T)}
\iint_{\{\O\times (0,t)\}\}}\frac{f_n(y,\sigma)\d^\beta(y)}{(\d(x))^{(q-1)N+q}}\frac{P_{\Omega} (x,y, t-\s)}{\d^{q\beta(y)}}dy\,d\s.
\end{eqnarray*}
Integrating in $\O_T$,
$$
\iint\limits_{\O_T}I^q_{2n}(x,t)dxdt\le C||f_n\d^\beta||^{q-1}_{L^1(\O_T)}\iint\limits_{\O_T}\frac{f_n(y,\s)\d^\beta(y)}{\d^{q\beta}(y)}\Big(\io \frac{1}{(\d(x))^{q(N+1)-N}}\int_{\s}^T P_{\Omega} (x,y, t-\s)dt\,dx\Big)dy\,d\sigma.
$$
Recalling that
$$
\dint_{\s}^{T} P_{\Omega} (x,y,
t-\s)\,dt\le \dint_{0}^{T-\s} P_{\Omega} (x,y,
\eta)\,d\eta\le \mathcal{G}_s(x,y),
$$
we find that
$$
\iint_{\O_T}I^q_{2n}(x,t)dxdt\le C||f_n\d^\beta||^{q-1}_{L^1(\O_T)}\iint_{\O_T}f_n(y,\s)\d^\beta(y)\,\frac{\varphi(y)}{\d^{q\beta}(y)}dy,
$$
where $\varphi$ is the unique solution to problem \eqref{varphi}. Since $s<q(N+1)-N<2s$ then from \cite{abatan} and \cite{Adm}, it follows that $\varphi\in L^{\infty}(\O)$ and $\varphi(y)\simeq (\d(y))^{2s-(q(N+1)-N)}$. Thus $\dfrac{\varphi(y)}{\d^{q\beta}(y)}\simeq (\d(y))^{2s-(q(N+1)-N)-q\beta}$.
It is clear that $2s-(q(N+1)-N)-q\beta>0$ if and only if $q<\dfrac{N+2s}{N+1+\beta}$, which is the  hypothesis. Thus
$$
\iint_{\O_T}I^q_{2n}(x,t)(x,t)dxdt\le C||f_n\d^\beta||^{q}_{L^1(\O_T)}.
$$
As a conclusion, we have proved that for all $q<\frac{N+2s}{N+1+\beta}$,
$$
||\n u_n||_{L^q(\O_T)}\le C(\O_T)||f_n\d^\beta||_{L^1(\O_T)}.
$$
Hence  there exists a solution $u$ to problem \eqref{eq:def}  in the
sense of distributions such that
$$u\in  L^q(0,T; W_{0}^{1,q}(\Omega))\cap\mathcal{C}([0,T], L^1(\O, \d^\beta dx)),
\hbox{ for all } q<\frac{N+2s}{N+\beta+1}.$$
It is clear that if $u_1,u_2\in L^q(0,T; W_{0}^{1,q}(\Omega))\cap\mathcal{C}([0,T], L^1(\O,\d^\beta dx))$ are solutions
to \eqref{eq:def}, then the difference $v=u_1-u_2$, satisfies $v\in L^q(0,T;
W_{0}^{1,q}(\Omega))\cap\mathcal{C}([0,T], L^1(\O, \d^\beta dx))$ and
\begin{equation}\label{vvv}
\left\{
\begin{array}{rcll}
v_{t}+(-\D)^s v&=& \dyle 0 & \text{ in } \O_{T}, \\ v&=&0 & \text{ in }(\ren\setminus\O) \times (0,T), \\ v(x,0)&=&0& \mbox{  in  }\O.
\end{array}%
\right.
\end{equation}
Using Kato inequality as in Theorem \ref{Conv}, we reach that $v_+\in L^q(0,T; W_{0}^{1,q}(\Omega))\cap\mathcal{C}([0,T], L^1(\O, \d^\beta dx))$ satisfies
\begin{equation}\label{v++}
(v_{+})_t+(-\D)^s v_+\le 0 \text{ in } \O_{T}.
\end{equation}
Hence using $\phi_1$, the positive first eigenfunction of the fractional Laplacian in $\O$, as a test function in \eqref{v++}, we reach that $v_+=0$. In the same
way and since $-v$ is also a solution to \eqref{vvv}, we conclude that $v_-=0$. Thus $v=0$.

Now setting $$\hat{\mathcal{K}}: L^{1}(\Omega_T, \d^\beta(x)dxdt)\times L^1(\O)\to L^q(0,T; W_{0}^{1,q}(\Omega)), q<\dfrac{N+2s}{N+\beta+1},$$
where  $\hat{\mathcal{K}}(f, u_0)=u$ is
the unique solution to problem \eqref{eq:def}, then as in the proof
 of Theorem \ref{gradiente}, taking advantage of the linearity of the operator,
 we conclude that $\hat{\mathcal{K}}$ is a compact operator.
\end{proof}
As in proposition \ref{key2-locc}, if
$f\in L^{1}(\Omega_T, \d^\beta(x)dxdt)\cap L^m(K\times (0,T))$, with $m>1$
and $K\subset\subset \Omega$ is any compact set of $\Omega$, then we have the next  general regularity result.
\begin{Proposition}\label{key2-beta}
{ Let $m>1$. Assume that $f\in L^{1}(\Omega_T, \d^\beta(x)dxdt)\cap L^m(K\times (0,T))$ for any compact set $K$ of $\O$.} Define  $u$ to be the
unique weak solution to problem \eqref{eq:def} and let $\O_1\subset\subset \O$ with $\text{dist}(\O_1,\p\O)>0$. Consider  $K_1\subset\subset \O$, a compact set of
$\O$ such that $\O_1\subset\subset K$.

Then $u\in L^\theta(\O_1\times (0,T))$ for all $\theta<\dfrac{m (N+2s)}{(N+2s-m(2s-\beta))_+}$ and $|\n u|\in
L^p(\O_1\times (0,T))$ for all $p<\dfrac{m (N+2s)}{(N+2s-m(2s-1-\beta))_+}$. Moveover
\begin{equation}\label{eqq1beta}
||u||_{L^\theta(\O_1\times (0,T))}+ \|\n u\|_{L^p(\O_1\times (0,T))}\le C (||f||_{L^m(K_1\times (0,T))}+||f\d^{\beta}||_{L^1(\O_T)}),
\end{equation}
where $C:=C(K_1,\O_1, \O, T,N,m)$.
\end{Proposition}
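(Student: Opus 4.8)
The plan is to follow the proof of Proposition \ref{key2-locc} step by step, the only genuinely new point being that the global datum now lies in the weighted space $L^1(\O_T,\d^\beta dx\,dt)$ instead of $L^1(\O_T)$, so that $f$ need not be integrable near $\p\O$. Taking $u_0=0$ as in Theorem \ref{key} (so that the representation involves only $f$), I would first invoke that theorem to know that \eqref{eq:def} has a unique weak solution $u$ with $\|\n u\|_{L^q(\O_T)}\le C\|f\d^\beta\|_{L^1(\O_T)}$ for every $q<\frac{N+2s}{N+\beta+1}$, represented by $u(x,t)=\int_0^t\int_\O f(y,\s)P_\O(x,y,t-\s)\,dy\,d\s$. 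Exactly as in the proof of Theorem \ref{key}, I would absorb the boundary factor using $\big(1\wedge\frac{\d^s(y)}{\sqrt{t-\s}}\big)\d^{-\beta}(y)\le C(t-\s)^{-\frac{\beta}{2s}}$ and set $g:=f\d^\beta\in L^1(\O_T)$, which yields the pointwise bound
$$u(x,t)\le C\int_0^t\int_\O g(y,\s)\,\frac{(t-\s)^{\frac{2s-\beta}{2s}}}{((t-\s)^{\frac1{2s}}+|x-y|)^{N+2s}}\,dy\,d\s ,$$
and, via \eqref{green2}, an analogous bound for $|\n u(x,t)|$ whose leading kernel carries the exponent $\frac{2s-1-\beta}{2s}$ in $(t-\s)$ (note $2s-1-\beta>0$ since $\beta<2s-1$), plus a harmless factor $\frac1{\d(x)}$.

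Next I would fix the chain $\O_1\subset\subset K_1\subset\subset\O$ with $\text{dist}(\O_1,\p\O)=c_0>0$, so that $\d(x)\backsimeq 1$ on $\O_1$ and every weight $\d^s(x),\d(x)$ appearing in the bounds is bounded from above and below there; in particular the factor $\frac1{\d(x)}$ in the gradient estimate is $\le c_0^{-1}$ on $\O_1$. For $x\in\O_1$ I split the $y$-integral into the near region $K_1$ and the far region $\O\setminus K_1$. On the far region $|x-y|\ge\hat c>0$, so both kernels are bounded by a constant depending only on $\hat c$ and $T$ (here the positivity of the time exponents $\frac{2s-\beta}{2s}$ and $\frac{2s-1-\beta}{2s}$ keeps $(t-\s)^{\bullet}\le T^{\bullet}$); hence the far contribution to $u$ and to $|\n u|$ is dominated by $C\int_0^T\int_{\O\setminus K_1}g\,dy\,d\s\le C\|f\d^\beta\|_{L^1(\O_T)}$, i.e. it belongs to $L^\infty(\O_1\times(0,T))$ and does not affect the integrability exponent. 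On the near region, $K_1\subset\subset\O$ forces $\d(y)\backsimeq 1$, so $g=f\d^\beta\backsimeq f$ and $g\in L^m(K_1\times(0,T))$ with comparable norm.

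For the near contribution I would then reproduce the duality computation of Theorems \ref{hardy0} and \ref{regu-g}: testing against $\phi\in\mathcal C^\infty_0(\O_1\times(0,T))$, applying Young's inequality in the space variable and H\"older's inequality in time with $\frac1{\theta'}+\frac1m+\frac1a=2$, and using that the two kernels have $L^a(\O)$-norms of order $(t-\s)^{\frac{2s-\beta}{2s}+\frac{N}{2sa}-\frac{N+2s}{2s}}$ and $(t-\s)^{\frac{2s-1-\beta}{2s}+\frac{N}{2sa}-\frac{N+2s}{2s}}$ respectively. The resulting time integral converges precisely when $\frac1\theta>\frac1m-\frac{2s-\beta}{N+2s}$ for $u$ and when $\frac1p>\frac1m-\frac{2s-1-\beta}{N+2s}$ for $|\n u|$, which are exactly the ranges $\theta<\frac{m(N+2s)}{(N+2s-m(2s-\beta))_+}$ and $p<\frac{m(N+2s)}{(N+2s-m(2s-1-\beta))_+}$ asserted. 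Adding the near and far bounds gives \eqref{eqq1beta}.

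The step I expect to be the main obstacle is precisely the far-field control, which is where this statement departs from Proposition \ref{key2-locc}: there $f\in L^1(\O_T)$ made the bounded-kernel contribution trivially finite, whereas here $f$ may blow up at $\p\O$ and only $f\d^\beta$ is integrable. The remedy is the weighted absorption $\big(1\wedge\frac{\d^s(y)}{\sqrt{t-\s}}\big)\d^{-\beta}(y)\le C(t-\s)^{-\frac{\beta}{2s}}$ inherited from Theorem \ref{key}, which turns the dangerous $f$ into $g=f\d^\beta\in L^1$ at the sole cost of lowering the kernel's time exponent from $1$ to $\frac{2s-\beta}{2s}$ (and from $\frac{2s-1}{2s}$ to $\frac{2s-1-\beta}{2s}$ for the gradient); this downward shift is exactly what replaces $2s$ and $2s-1$ by $2s-\beta$ and $2s-1-\beta$ in the admissible exponents. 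A secondary technical point is that the strongly singular factor $\frac1{\d(x)}$ of the gradient kernel, responsible for the boundary nonexistence phenomena, becomes inert because we only estimate on the interior set $\O_1$.
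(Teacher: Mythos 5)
Your proposal is correct and follows essentially the same route as the paper: both start from the pointwise bound $u(x,t)\le C\iint (f\d^{\beta})(y,\s)\,(t-\s)^{\frac{2s-\beta}{2s}}((t-\s)^{\frac1{2s}}+|x-y|)^{-(N+2s)}\,dy\,d\s$ obtained in the proof of Theorem \ref{key}, split the $y$-integral into $K_1$ and $\O\setminus K_1$ for $x\in\O_1$ (the far part being controlled by $\|f\d^{\beta}\|_{L^1(\O_T)}$ since $|x-y|\ge\hat c$), and run the duality argument of Theorem \ref{hardy0} on the near part where $f\d^\beta\in L^m$. Your additional remarks on the gradient kernel's exponent $\frac{2s-1-\beta}{2s}$ and the harmlessness of $\d(x)^{-1}$ on $\O_1$ correctly fill in the details the paper leaves to ``in a similar way''.
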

\begin{proof}
Since $f\d^\beta\in L^1(\O_T)$, then $|\n u|\in L^{q}(\O_T)$ for all $q<\frac{N+2s}{N+\beta+1}$. As in the proof of Proposition \ref{key2-locc}, we have
$$
u_n(x,t)\leq C \dint_{0}^{t}\dint_{\Omega}\, f(y,\s)\d^\beta(y) \dfrac{(t-\s)^{\frac{2s-\beta}{2s}}}{((t-\s)^{\frac{1}{2s}}+|x-y|)^{N+2s}}\,dy\,d\sigma.
$$
Fix $\O_1\subset\subset \O$ with $\text{dist}(\O_1,\p\O)=c_0>0$ and let $K_1$ be a compact set of $\O$ such that $\O_1\subset\subset K_1\subset\subset \O$. Let
$x\in \O_1$, then
$$
\begin{array}{rcl}
u(x,t) &\le & C(\O_1, c_0, C)\bigg\{\dint_{0}^{t}\dint_{K_1}f(y,\s)\d^\beta(y) \dfrac{(t-\s)^{\frac{2s-\beta}{2s}}
}{((t-\s)^{\frac{1}{2s}}+|x-y|)^{N+2s}}\,dy\,d\sigma\\ & + & \dint_{0}^{t}\dint_{\O\backslash K_1}f(y,\s)\d^\beta(y)
\dfrac{(t-\s)^{\frac{2s-\beta}{2s }}}{((t-\s)^{\frac{1}{2s}}+|x-y|)^{N+2s}}\,dy\,d\sigma\bigg\}.
\end{array}
$$
If $(x,y)\in (\O_1\subset\subset K_1)\times (\O\backslash K_1)$, then $|x-y|>\hat{c}>0$. Hence
$$
\begin{array}{rcl}
\dint_{0}^{t}\dint_{\O\backslash K_1}f(y,\s)\d^\beta(y) \dfrac{(t-\s)^{\frac{2s-\beta}{2s}}}{((t-\s)^{\frac{1}{2s}}+|x-y|)^{N+2s}}\,dy\,d\sigma\le C
||f\d^\beta||_{L^1(\O_T)}.
\end{array}
$$
Thus
$$
\begin{array}{rcl}
u(x,t) &\le & C(\O_1, c_0, C)\bigg(\dint_{0}^{t}\dint_{K_1}f(y,\s)\d^\beta(y) \dfrac{(t-\s)^{\frac{2s-\beta}{2s}}
\,dy\,d\sigma}{((t-\s)^{\frac{1}{2s}}+|x-y|)^{N+2s}} +||f\d^\beta ||_{L^1(\O_T)}\bigg).
\end{array}
$$
Since $f\d^\beta\in L^m(K_1\times (0,T))$, then we exactly follow  the same duality argument as in the proof of Theorem \ref{hardy0}. Hence estimate
\eqref{eqq1beta} follows. In a similar way we prove estimate  in the gradient.
\end{proof}
\begin{remarks}\label{rm00}
To obtain the above regularity result we have used the fact that if
$$
g(x,t):=\dint_{0}^{t}\dint_{\Omega}f(y,\s)\dfrac{(t-\s)^a}{((t-\s)^{\frac{1}{2s}}+|x-y|)^{N+2s}}\,dy\,d\sigma
$$
with $f\in L^m(\O_T), m\ge 1$ and $a>0$, then $g\in L^\gamma(\O_T)$ where $\g$ satisfies
$$
\frac{1}{\g}>\frac{1}{m}-\frac{2s a}{N+2s}.
$$
This result will be used systematically in what follows.
\end{remarks}

\section{Non existence result.}
In the local case, $s=1$, existence of solution holds for all $\a>0$ at least for $f\in L^\infty(\O_T)$ and $u_0\in L^\infty(\O)$, see for instance \cite{BS}, \cite{GGK} and \cite{QS}. However in the nonlocal case, $s<1$,  and by the lack of regularity  near of the boundary  $\partial\Omega$,  a threshold on $\alpha $ appears for the existence.
This behavior  represents  a deep difference with the local case, though it is stable when $s\to 1$.
Let us recall and prove the non existence result stated in the introduction.
\begin{Theorem}\label{non1}
Assume that $\a\ge \dfrac{1}{1-s}$, then for all nonnegative data $(f,u_0)\in L^\infty(\O_T)\times L^\infty(\O)$ with $(f,u_0)\neq (0,0)$, the problem
\begin{equation}\label{grad-nn}
\left\{
\begin{array}{rcll}
u_t+(-\Delta )^s u&=&|\nabla u|^{\alpha}+ f & \inn \Omega_T,\\ u(x,t)&=&0 & \inn(\mathbb{R}^N\setminus\Omega)\times [0,T),\\
u(x,0)&=&u_{0}(x) & \inn\Omega,\\
\end{array}\right.
\end{equation}
has no weak solution $u$ in the sense of Definition \ref{veryweak} with $u\in L^\a(0,T;W^{1,\alpha}_0(\O))$.
\end{Theorem}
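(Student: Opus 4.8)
The plan is to argue by contradiction: assume a weak solution $u\in L^1(0,T;W^{1,\alpha}_0(\Omega))$ exists. Being a weak solution in the sense of Definition \ref{veryweak} for the equation with source $|\nabla u|^{\alpha}+f$ forces in particular $g:=|\nabla u|^{\alpha}+f\in L^1(\Omega_T)$, and clearly $g\ge 0$. Hence $u$ is the unique weak solution of the \emph{linear} problem \eqref{eq:def} with nonnegative data $(g,u_0)$, so by the representation formula
$$
u(x,t)=\int_\Omega u_0(y)P_\Omega(x,y,t)\,dy+\int_0^t\int_\Omega g(y,\sigma)P_\Omega(x,y,t-\sigma)\,dy\,d\sigma .
$$
Since $P_\Omega\ge 0$ and $|\nabla u|^{\alpha}\ge 0$, discarding the nonnegative gradient contribution it suffices to estimate from below the part of $u$ driven by $(f,u_0)$; in particular the lower bound below uses only that $(f,u_0)$ is nonnegative and $\neq(0,0)$.

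The heart of the matter, and the step I expect to be the main obstacle, is a boundary lower bound of the form
$$
u(x,t)\ge c\,\delta^s(x)\qquad\text{for }(x,t)\in\{0<\delta(x)<\rho\}\times(t_1,t_2),
$$
for suitable $0<t_1<t_2<T$, $\rho>0$ and $c>0$. This is precisely the behavior of the model solution $w(x)=(1-|x|^2)_+^s$ of $(-\Delta)^s w=1$, i.e.\ the quantitative form of the loss of boundary regularity. To obtain it I would use the sharp two-sided kernel estimate \eqref{green1}: if, say, $f\not\equiv 0$ (the case $u_0\not\equiv0$ being identical, using the first term of the representation), pick a ball $B\subset\subset\Omega$ and an interval $(s_1,s_2)$ on which $\int_B f(y,\sigma)\,dy\ge c_0>0$. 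For $x$ with $\delta(x)<\rho$, $t\in(t_1,t_2)$ with $t_1>s_2$, and $y\in B$, $\sigma\in(s_1,s_2)$, the quantities $\delta(y)$, $|x-y|$ and $t-\sigma$ all range over compact sets bounded away from the degenerate values, so that $1\wedge\tfrac{\delta^s(y)}{\sqrt{t-\sigma}}\simeq1$ and $\big((t-\sigma)^{-N/2s}\wedge\tfrac{t-\sigma}{|x-y|^{N+2s}}\big)\simeq1$, while $1\wedge\tfrac{\delta^s(x)}{\sqrt{t-\sigma}}\simeq\delta^s(x)$ for $\rho$ small. Integrating the lower bound in \eqref{green1} over $B\times(s_1,s_2)$ then yields $u(x,t)\ge c\,\delta^s(x)$.

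Once the lower bound is available, the contradiction is elementary. Extend $u(\cdot,t)$ by zero outside $\Omega$; since $u(\cdot,t)\in W^{1,\alpha}_0(\Omega)$ for almost every $t$, the extension lies in $W^{1,\alpha}(\mathbb{R}^N)$ and is therefore absolutely continuous on almost every line, with boundary value $0$. Fix a good time $t\in(t_1,t_2)$ and, in tubular coordinates near $\partial\Omega$, a boundary point with inner normal $\nu$ for which $r\mapsto u(\sigma+r\nu,t)$ is absolutely continuous on $(0,\rho)$, satisfies $\int_0^\rho|\partial_r u(\sigma+r\nu,t)|^{\alpha}\,dr<\infty$, and tends to $0$ as $r\to0^+$. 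Writing $\varphi(r)=u(\sigma+r\nu,t)\ge c\,r^s$ and applying H\"older's inequality,
$$
c\,r^s\le\varphi(r)=\int_0^r\partial_\tau\varphi(\tau)\,d\tau\le\Big(\int_0^r|\partial_\tau\varphi|^{\alpha}\,d\tau\Big)^{1/\alpha}r^{1-1/\alpha},
$$
whence $\int_0^r|\partial_\tau\varphi|^{\alpha}\,d\tau\ge c^{\alpha}\,r^{\alpha(s-1)+1}$.

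Since $\alpha>\tfrac{1}{1-s}$ we have $\alpha(s-1)+1<0$, so the right-hand side tends to $+\infty$ as $r\to0^+$, while the left-hand side tends to $0$ by integrability. This contradiction shows that no such $u$ can exist, proving the theorem. The entire mechanism rests on the incompatibility between two facts near $\partial\Omega$: a positive source forces the solution to grow like $\delta^s$, whereas membership in $W^{1,\alpha}_0$ would require the normal derivative to be no worse than $\delta^{s-1}$ in an $L^{\alpha}$ sense, and these are incompatible precisely when $\alpha(1-s)\ge1$. As remarked in the introduction, the threshold $\tfrac{1}{1-s}\to\infty$ as $s\to1$ recovers the local case, where no such obstruction appears.
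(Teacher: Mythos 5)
Your proposal is correct and follows essentially the same route as the paper: the key step, the lower bound $u(x,t)\ge c\,\delta^s(x)$ obtained from the representation formula and the two-sided kernel estimate \eqref{green1}, is exactly the content of Proposition \ref{diss}, and the endgame is the same obstruction, namely that a function bounded below by $\delta^s$ near $\partial\Omega$ cannot lie in $W^{1,\alpha}_0(\Omega)$ once $\alpha(1-s)\ge 1$. The only difference is cosmetic: the paper derives the contradiction by invoking the Hardy inequality $\|u/\delta\|_{L^\alpha}\le C\|\nabla u\|_{L^\alpha}$ and the non-integrability of $\delta^{-\alpha(1-s)}$, whereas you prove the same incompatibility by hand through absolute continuity on almost every normal line together with H\"older's inequality; this makes the step self-contained (and in fact also covers the borderline case $\alpha=\frac{1}{1-s}$) but is mathematically the same mechanism.
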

Before starting with the proof of Theorem \ref{non1}, we need the following
result that extends to the fractional framework the one proved in \cite{Mart}
for the heat equation. The result is proved in \cite{BM} using apriori estimates. We  give here a different proof using the properties of the Dirichlet heat kernel.
\begin{Proposition}\label{diss}
Assume that the condition on $(f,u_0)$ of the above Theorem holds. Let $w$ be the unique solution to the problem
\begin{equation}\label{grad-nn11}
\left\{
\begin{array}{rcll}
w_t+(-\Delta )^s w&=& f & \inn \Omega_T,\\ w(x,t)&=&0 & \inn(\mathbb{R}^N\setminus\Omega)\times [0,T),\\ w(x,0)&=&u_{0}(x) &
\inn\Omega,\\
\end{array}\right.
\end{equation}
with either $u_0\gneqq 0$ or $f\gneqq 0$ in $\O\times (0,t_0)$, $t_0<T$ being fixed. Then there exists $C:=C(t_0,\O,u_0,f)>0$ such that
\begin{equation}\label{contr}
w(x,t_0)\ge C \d^s(x) \mbox{  in   }\O.
\end{equation}
\end{Proposition}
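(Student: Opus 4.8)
The plan is to exploit the nonnegativity of the data together with the two-sided bound \eqref{green1} for the Dirichlet heat kernel $P_\O$. Since $f,u_0\ge0$, the representation formula
$$ w(x,t_0)=\int_\O u_0(y)\,P_\O(x,y,t_0)\,dy+\iint_{\O_{t_0}} f(y,\s)\,P_\O(x,y,t_0-\s)\,dy\,d\s $$
shows that $w(x,t_0)$ dominates each of its two nonnegative contributions separately. As $(f,u_0)\neq(0,0)$, at least one contribution is positive, and it suffices to bound that one from below by $C(t_0)\d^s(x)$.

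The decisive point is a pointwise lower bound for $P_\O$ coming from the lower estimate in \eqref{green1}. Because $\O$ is bounded we set $D=\operatorname{diam}(\O)$, so that $\d(x),\d(y)\le D$ and $|x-y|\le D$ for all $x,y\in\O$. First I would record the elementary inequality, valid for every fixed $t>0$,
$$ 1\wedge\frac{\d^s(x)}{\sqrt t}\ \ge\ \min\!\Big(\tfrac{1}{\sqrt t},\tfrac{1}{D^s}\Big)\,\d^s(x), $$
checked by distinguishing $\d^s(x)\le\sqrt t$ (where the left side equals $\d^s(x)/\sqrt t$) from $\d^s(x)>\sqrt t$ (where it equals $1\ge\d^s(x)/D^s$). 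The other two factors of \eqref{green1} are harmless: for $y$ at a fixed positive distance $\eta$ from $\partial\O$ one has $1\wedge\frac{\d^s(y)}{\sqrt t}\ge\min(1,\eta^s/\sqrt t)>0$, while $|x-y|\le D$ gives $t^{-N/2s}\wedge\frac{t}{|x-y|^{N+2s}}\ge\min(t^{-N/2s},t\,D^{-(N+2s)})>0$. Hence, on the region $\{\d(y)\ge\eta\}$ and for $t$ ranging in a fixed compact subinterval of $(0,T]$,
$$ P_\O(x,y,t)\ \ge\ c(t_0,\eta)\,\d^s(x). $$

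Finally I would integrate this lower bound against the data. If $u_0\not\equiv0$, then since $\int_\O u_0>0$ there are $\eta>0$ and a set $A\subset\{\d\ge\eta\}$ with $\int_A u_0>0$; inserting the kernel bound with $t=t_0$ into the first term of the representation formula yields $w(x,t_0)\ge c(t_0,\eta)\big(\int_A u_0\big)\,\d^s(x)=C(t_0)\d^s(x)$. If instead $u_0\equiv0$ and $f\not\equiv0$, I would choose $\rho,\eta>0$ and a set $B\subset\{\d\ge\eta\}\times(0,t_0-\rho)$ of positive measure on which $f\ge\varepsilon>0$ (possible once $\int_0^{t_0}\!\!\int_\O f>0$); on $B$ one has $t_0-\s\in[\rho,t_0]$, so the kernel argument applies uniformly and integration again gives $w(x,t_0)\ge C(t_0)\d^s(x)$.

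The only genuinely delicate point is the behaviour of the $x$–factor $1\wedge\d^s(x)/\sqrt t$, which is exactly what produces the boundary rate $\d^s$; everything else is a uniform positivity statement made possible by the boundedness of $\O$ and by confining $y$ away from $\partial\O$. A minor caveat, which I would flag explicitly, is that in the purely source-driven case $u_0\equiv0$ one must restrict to times $t_0$ for which the source has already acted, i.e. $\int_0^{t_0}\!\!\int_\O f>0$; for such $t_0$ the stated lower bound holds, and this is precisely what is needed in the ensuing nonexistence argument.
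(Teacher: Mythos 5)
Your proof is correct and follows essentially the same route as the paper's: the representation formula combined with the lower bound in \eqref{green1}, where the key factor $1\wedge\frac{\delta^s(x)}{\sqrt{t_0}}\ge C(t_0,\Omega)\,\delta^s(x)$ produces the boundary rate and the remaining factors are bounded below using $|x-y|\le\operatorname{diam}(\Omega)$; the paper merely keeps $\delta^s(y)$ as a weight and uses $\int_\Omega u_0\delta^s>0$ instead of localizing $y$ away from $\partial\Omega$. Your caveat about the purely source-driven case (needing $\iint_{\Omega_{t_0}}f>0$) is a fair observation that the paper glosses over by writing out only the case $u_0\gneqq 0$.
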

\begin{proof}
Notice that
\begin{eqnarray*}
w(x,t) &= & \dint_{\Omega}u_0(y) P_{\Omega} (x,y, t)\,dy\,+ \dint_{0}^{t} \dint_{\Omega} f(y,\sigma) P_{\Omega} (x,y, t-\sigma)\,dy\,d\sigma\\ &\ge &
\dint_{\Omega}u_0(y) P_{\Omega} (x,y, t)\,dy.
\end{eqnarray*}
{ Suppose in the first case that $u_0\gneqq 0$, then
$$
w(x,t) \ge \dint_{\Omega}u_0(y) P_{\Omega} (x,y, t)\,dy.
$$
}
Now fix $t_0\in (0,T)$, using the estimate on $P_\O$ given in \eqref{green1}, we deduce that
$$
w(x,t_0)\ge C\Big( 1\wedge \frac{\delta^s(x)}{\sqrt{t_0}}\Big)\ \io \Big( 1\wedge \frac{\delta^s(y)}{\sqrt{t_0}}\Big)\times \Big(t_0^{-\frac{N}{2s}}\wedge
\frac{t_0}{|x-y|^{N+2s}}\Big) u_0(y) dy.
$$
Notice that for all $x, y\in \O$, we have
$$
\Big(t_0^{-\frac{N}{2s}}\wedge \frac{t_0}{|x-y|^{N+2s}}\Big)\ge C\dfrac{t_0}{(t_0^{\frac{1}{2s}}+|x-y|)^{N+2s}}\ge
C\dfrac{t_0}{(t_0^{\frac{1}{2s}}+\text{diam}(\O))^{N+2s}},
$$
and
$$
\Big( 1\wedge \frac{\delta^s(x)}{\sqrt{t_0}}\Big)\ge \d^s(x)\min\{\frac{1}{\sqrt{t_0}}, \frac{1}{\delta^s(x)}\}\ge \d^s(x)\min\{\frac{1}{\sqrt{t_0}},
\frac{1}{\text{diam}^s(\O)}\}={{\d^s(x)}}C(t_0,\O).
$$
Hence
$$
w(x,t_0)\ge R(t_0)\delta^s(x)\io u_0(y)\d^s(y)dy,
$$
with
$$
R(t)=\bigg(\dfrac{t_0}{(t_0^{\frac{1}{2s}}+\text{diam}(\O))^{N+2s}} (\min\{\frac{1}{\sqrt{t_0}},
\frac{1}{\text{diam}^s(\O)}\})^2\bigg).
$$
Notice that
$$
R(t)\ge
\left\{
\begin{array}{lll}
\dfrac{C(\O)}{t^{\frac{N+2s}{2s}}} & \mbox{  if  } & t\ge (\text{diam}(\O))^{2s}\\ \\
C(\O)t & \mbox{  if  } & t\le (\text{diam}(\O))^{2s}.
\end{array}
\right.
$$
Thus $w(x,t_0)\ge C(t_0,u_0,\O)\d^s(x)$ which is the desire estimate.\\

{ Consider now the case where $f\gneqq 0$ in $\O\times (0,t_0)$. As above we deduce that
$$
w(x,t) \ge \dint_{0}^{t} \dint_{\Omega} f(y,\sigma) P_{\Omega} (x,y, t-\sigma)\,dy\,d\sigma.
$$
}
For $t_0>0$ fixed, using the properties of the kernel $P_\O$, given in \eqref{green1}, it holds that
$$
w(x,t_0)\ge C \dint_{0}^{t_0} \dint_{\Omega} \Big( 1\wedge \frac{\delta^s(x)}{\sqrt{t_0-\s}}\Big)\, \Big( 1\wedge \frac{\delta^s(y)}{\sqrt{t_0-\s}}\Big)\times \Big((t_0-\s)^{-\frac{N}{2s}}\wedge
\frac{t_0-\s}{|x-y|^{N+2s}}\Big) f(y,\s)dy\,d\s.
$$
As in the first case, it follows that
$$
w(x,t_0)\ge C \d^s(x)\dint_{0}^{t_0} \dint_{\Omega} R(t_0-\s)f(y,\s)dy\,d\s.
$$
Since $R_0(t_0-\s)\ge C(T,\O)(t_0-\s)$, then using the fact that $f\gneqq 0$ in  $\O\times (0,t_0$, it follows that
$$
\dint_{0}^{t_0} \dint_{\Omega} R(t_0-\s)f(y,\s)dy\,d\s=C(t_0,f,\O)>0.
$$
Thus we conclude.
\end{proof}
\begin{remark}
It is clear that if $t\in (t_1,t_2)\subset \subset (0,T)$, by
Proposition \ref{diss} we have that, for all $(x,t)\in \O\times (t_1,t_2)$,
$$
w(x,t)\ge C(t_0,t_1,\O) \delta^s(x)\io u_0(y)\d^s(y)dy.
$$
\end{remark}

We are now in position to prove the non existence result in Theorem \ref{non1}.

\noindent{\bf Proof of Theorem \ref{non1}}. We argue by contradiction. Assume that $\a\ge \dfrac{1}{1-s}$ and suppose that problem \eqref{grad-nn} has a solution $u\in
L^\a(0,T; W^{1,\a}_0(\O))$. Fix $(t_1,t_2)\subset \subset (0,T)$, then by Proposition \ref{diss}, we reach that
$$
u(x,t)\ge C(t_0,t_1\O) \delta^s(x)\io u_0(y)\d^s(y)dy \mbox{  for all  }(x,t)\in \O\times (t_1,t_2).
$$
Since $u\in L^\a(0,T; W^{1,\a}_0(\O))$, using Hardy inequality it holds that $\dfrac{u}{\d}\in L^\a(\O\times (t_1,t_2))$. Thus
$$
C(t_0,t_1\O) \int_{t_1}^{t_2}\io\frac{\delta^{\a s}(x)}{\d^\a(x)}dx\le C\int_{t_1}^{t_2} \io \frac{u^\a(x,t)}{\d^\a(x)}dx<\infty.
$$
Since $\a\ge \dfrac{1}{1-s}$, then $\a(1-s)\ge 1$ and then we reach a
contradiction. Hence we conclude. $\Box$

If we are dealing with weak solution in the sense of Definition \ref{veryweak},
we can prove a non existence result for a suitable range
of $\a$. Before starting with the non existence result,
we recall the weighted Hardy inequality proved in \cite{Ne} (see also \cite{LE}).
\begin{Proposition}\label{hardygg}
Assume that $\alpha>1$ and $0<\s<\a-1$, then for all $\phi\in W^{1,\a}_0(\O)$, we have
\begin{equation}\label{eq:super-hardy}
\io \dfrac{|\phi(x)|^\a}{\d^{\a-\s}(x)}dx\le C\io |\n \phi(x)|^\a \d^\s(x)\, dx,
\end{equation}
where $C:=C(\O,p,N)$.
\end{Proposition}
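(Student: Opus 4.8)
The plan is to establish the inequality first for $\phi\in C_0^\infty(\Omega)$ with a constant depending only on $\Omega,\alpha,\sigma$ (and $N$), and then extend it to all of $W_0^{1,\alpha}(\Omega)$ by density together with Fatou's lemma applied to the left-hand (singular) integral along an a.e.\ convergent subsequence. Throughout I exploit that, since $\partial\Omega$ is $C^{1,1}$, the distance $\delta$ is Lipschitz with $|\nabla\delta|=1$ a.e., and in a boundary collar $\Sigma_{\delta_0}:=\{x\in\Omega:0<\delta(x)<\delta_0\}$ (for $\delta_0$ small) the normal map $\Phi(x',r)=x'-r\nu(x')$, $(x',r)\in\partial\Omega\times(0,\delta_0)$, is a bi-Lipschitz change of variables onto $\Sigma_{\delta_0}$ with Jacobian bounded above and below and with $\delta(\Phi(x',r))=r$.

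The core of the argument is the estimate on the collar $\Sigma_{\delta_0}$. Fixing $x'\in\partial\Omega$ and setting $g_{x'}(r):=\phi(\Phi(x',r))$, the compact support of $\phi$ in $\Omega$ gives $g_{x'}(0)=0$, while the chain rule and the boundedness of $\partial_r\Phi$ give $|g_{x'}'(r)|\le C|\nabla\phi(\Phi(x',r))|$. Since $\sigma<\alpha-1$, i.e.\ $\alpha-\sigma>1$, the one-dimensional weighted Hardy inequality of Muckenhoupt type, $\int_0^{\delta_0}|g(r)|^\alpha r^{\sigma-\alpha}\,dr\le C(\alpha,\sigma)\int_0^{\delta_0}|g'(r)|^\alpha r^\sigma\,dr$ valid whenever $g(0)=0$, applies to $g_{x'}$. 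Integrating over $x'\in\partial\Omega$ and transferring back through $\Phi$ (using that the Jacobian is comparable to a constant and $\delta=r$) yields $\int_{\Sigma_{\delta_0}}|\phi|^\alpha\delta^{\sigma-\alpha}\,dx\le C\int_{\Sigma_{\delta_0}}|\nabla\phi|^\alpha\delta^\sigma\,dx$.

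It then remains to control the interior region $\{\delta\ge\delta_0\}$, where both weights are non-degenerate. Since $\delta^{\sigma-\alpha}$ is bounded there, it suffices to bound $\int_{\{\delta\ge\delta_0\}}|\phi|^\alpha\,dx$ by the right-hand side. For this I take a cutoff $\eta\in C_0^\infty(\Omega)$ with $\eta\equiv1$ on $\{\delta\ge\delta_0\}$, $\eta\equiv0$ on $\{\delta\le\delta_0/2\}$ and $|\nabla\eta|\le C/\delta_0$, and apply the classical unweighted Poincaré inequality to $\eta\phi$, which is supported in the fixed interior region $\{\delta>\delta_0/2\}$. The resulting gradient term is controlled by $\int_{\{\delta\ge\delta_0/2\}}|\nabla\phi|^\alpha\,dx\le C(\delta_0,\sigma)\int_\Omega|\nabla\phi|^\alpha\delta^\sigma\,dx$, and the lower-order term $\delta_0^{-\alpha}\int_{\{\delta_0/2<\delta<\delta_0\}}|\phi|^\alpha\,dx$ is reabsorbed into the collar estimate already obtained, using that $\delta^{\alpha-\sigma}\le\delta_0^{\alpha-\sigma}$ on this annulus because $\alpha-\sigma>0$. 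Adding the collar and interior contributions gives the claim, with $C=C(\Omega,\alpha,N)$.

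The main obstacle is the collar estimate, and within it the genuinely delicate point is the flattening of the boundary: one needs the $C^{1,1}$ regularity precisely to guarantee that the normal coordinates $\Phi$ are bi-Lipschitz with two-sided Jacobian bounds on a uniform collar, so that the sharp constant of the one-dimensional Hardy inequality survives the change of variables; the hypothesis $\sigma<\alpha-1$ enters exactly as the integrability threshold $\alpha-\sigma>1$ that makes that one-dimensional inequality valid. The interior part is routine, but care is needed (as above) to let the cutoff argument feed back into the collar estimate rather than into an a priori uncontrolled interior gradient, which is the only subtlety that arises when $\sigma>0$.
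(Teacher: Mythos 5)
Your argument is essentially correct, but it cannot be compared with a proof in the paper because the paper gives none: Proposition \ref{hardygg} is quoted verbatim from Ne\v{c}as (and Lehrback), and the authors only supplement it with a density remark (via the Muckenhoupt property of $\d^\s$ when $\s+1<\a$) to extend the inequality to the weighted space $\widehat{W}_{\a,\s}(\O)$. What you have written is the standard self-contained proof: bi-Lipschitz normal coordinates on a collar (which is exactly where the $C^{1,1}$, or even Lipschitz, regularity of $\p\O$ is used), the one-dimensional Hardy--Muckenhoupt inequality on each normal ray with threshold $\s<\a-1$, and a cutoff-plus-Poincar\'e argument in the interior whose lower-order term is reabsorbed into the collar estimate because $\a-\s>0$. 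All of these steps are sound, and the reabsorption is handled correctly. The only point to be careful about is the final density step: your Fatou argument on the left-hand side is fine, but the convergence of the right-hand side under an approximation $\phi_n\to\phi$ in $W^{1,\a}_0(\O)$ uses that $\d^\s$ is bounded, i.e.\ $\s\ge 0$; for $-1<\s<0$ one should instead invoke density of $\mathcal{C}^\infty_0(\O)$ in the weighted space (the $A_\a$ property the paper itself records), and for $\s\le -1$ the right-hand side is generically infinite so the inequality is vacuous. Since every application in the paper takes $\s>0$, this caveat is harmless, and your proof gives the constant with its correct additional dependence on $\s$ (through the one-dimensional Hardy constant, which degenerates as $\s\uparrow\a-1$), a dependence the paper's statement silently omits.
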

Define the space
$$
\widehat{W}_{\a,\s}(\O):=\{\varphi\in W^{1,1}_0(\O) \mbox{  with  }\io |\n \varphi|^\a \d^\s dx<\infty\}.
$$
If $\s+1<\a$, then using H\"older inequality, the space $\widehat{W}_{\a,\s}(\O)$ can be endowed with the norm
$$
||\varphi||_{\widehat{W}_{\a,\s}(\O)}=\bigg(\io |\n \varphi|^\a \d^\s dx\bigg)^{\frac{1}{\a}}.
$$
Let $\widehat{H}_{\a,\s}(\O)$ be the completion of $\mathcal{C}^\infty_0(\O)$ with respect to the norm of $\widehat{W}_{\a,\s}(\O)$.

Notice  that $\d^\s$ belongs to the Muckenhoupt class $A_\a$ if { $0<(\s+1)<\a$, see for instance Theorem 3.1 in \cite{Dur}}.
Hence by the results of \cite{ZZ}, we reach that
$\widehat{H}_{\a,\s}(\O)=\widehat{W}_{\a,\s}(\O)$. Thus,
for all $\varphi\in \widehat{W}_{\a,\s}(\O)$ and by Proposition \ref{hardygg}, for all
$\varphi\in \widehat{W}_{\a,\s}(\O)$, we have
\begin{equation}\label{super-hardy}
C(\O,N) \io \dfrac{|\varphi(x)|^\a}{\d^{\a-\s}(x)}dx \le\io |\n \varphi|^\a \d^\s dx.
\end{equation}
As a consequence, we have the next general non existence result.
\begin{Theorem}\label{non100}
Assume that $\a\ge \dfrac{1+\beta}{1-s}$ for some $\beta>0$. Then for all nonnegative data $(f,u_0)\in L^\infty(\O_T)\times L^\infty(\O)$ with  $(f,u_0)\neq (0,0)$, the problem \eqref{grad-nn} has no weak solution $u$ in the sense of
Definition \ref{veryweak} such that $u\in L^1(0,T;W^{1,1}_0(\O))$ with $|\n u|^\a \d^\b\in L^1(\O_T)$.
\end{Theorem}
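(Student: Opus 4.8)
The plan is to argue by contradiction, transcribing the scheme of Theorem~\ref{non1} but with the weighted Hardy inequality \eqref{super-hardy} replacing the unweighted one. So I would assume that \eqref{grad-nn} admits a weak solution $u$ in the sense of Definition~\ref{veryweak} with $u\in L^1(0,T;W^{1,1}_0(\O))$ and $|\n u|^\a\d^\b\in L^1(\O_T)$, and deduce a contradiction from the hypothesis $\a\ge\frac{1+\b}{1-s}$.

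First I would reduce to the linear lower bound. Since $f,u_0\ge 0$ and $|\n u|^\a\ge 0$, the function $u$ is a supersolution of the purely linear problem \eqref{grad-nn11} carrying the same data; writing $v=u-w$, with $w$ the solution of \eqref{grad-nn11}, one has $v_t+(-\Delta)^s v=|\n u|^\a\ge 0$, $v(\cdot,0)=0$, and $v=0$ outside $\O$. Applying Kato's inequality (Theorem~\ref{Conv}) to $v_-$ and testing against the first eigenfunction $\phi_1>0$ of $(-\Delta)^s$ — exactly as in the uniqueness part of Theorem~\ref{key} — gives $v\ge 0$, i.e. $u\ge w$. Then Proposition~\ref{diss}, in the time-slab form recorded in the remark following it, furnishes for any slab $(t_1,t_2)\subset\subset(0,T)$ a constant $C>0$ with
\begin{equation*}
u(x,t)\ge C\,\d^s(x)\qquad\text{for all }(x,t)\in\O\times(t_1,t_2).
\end{equation*}

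Next I would invoke the weighted Hardy inequality. From $|\n u|^\a\d^\b\in L^1(\O_T)$ and $u\in L^1(0,T;W^{1,1}_0(\O))$, Fubini shows $u(\cdot,t)\in\widehat{W}_{\a,\b}(\O)$ for a.e. $t$; moreover the admissibility condition $\b<\a-1$ required by \eqref{super-hardy} holds automatically, since $0<s<1$ forces $\frac{1}{1-s}>1$ and hence $\a\ge\frac{1+\b}{1-s}>1+\b$. Applying \eqref{super-hardy} with $\s=\b$ to $u(\cdot,t)$, integrating over $(t_1,t_2)$ and inserting the lower bound yields
\begin{equation*}
C\int_{t_1}^{t_2}\io \d^{\,\b-\a(1-s)}(x)\,dx\,dt\le C\int_{t_1}^{t_2}\io\frac{u^\a(x,t)}{\d^{\a-\b}(x)}\,dx\,dt\le\int_{t_1}^{t_2}\io|\n u|^\a\,\d^\b\,dx\,dt<\infty .
\end{equation*}
The left-most integrand $\d^{\,\b-\a(1-s)}$ is integrable on $\O$ only if $\b-\a(1-s)>-1$, that is $\a(1-s)<1+\b$; but the hypothesis $\a\ge\frac{1+\b}{1-s}$ means precisely $\a(1-s)\ge 1+\b$, so the left-hand side is infinite. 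This contradiction shows no such solution can exist.

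The final divergence computation is routine; the only genuinely delicate step is the comparison $u\ge w$ for solutions that are merely weak in the sense of Definition~\ref{veryweak}, where energy test functions are not directly available and one must instead rely on Kato's inequality together with the eigenfunction argument indicated above. Everything else is a faithful copy of the proof of Theorem~\ref{non1}, with \eqref{super-hardy} in place of the classical Hardy inequality.
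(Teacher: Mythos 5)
Your proposal is correct and follows essentially the same route as the paper: contradiction, the lower bound $u(x,t)\ge C\d^s(x)$ on a time slab via Proposition \ref{diss}, the weighted Hardy inequality \eqref{super-hardy} with $\s=\b$ (whose admissibility $\b<\a-1$ you rightly check), and the divergence of $\io \d^{\b-\a(1-s)}\,dx$ when $\a(1-s)\ge 1+\b$. The only difference is that you spell out the comparison $u\ge w$ with the linear problem via Kato's inequality and the first eigenfunction, a step the paper leaves implicit when it applies Proposition \ref{diss} directly to $u$.
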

\begin{proof} We argue by contradiction. Assume that the above conditions hold and that problem \eqref{grad} has a weak solution $u$ with $u\in L^1(0,T;
W^{1,1}_0(\O))$ and $|\n u|^\a \d^{\b}\in L^1(\O_T)$. Since $\beta+1<\a$, then $u\in \widehat{W}_{\a,\beta}(\O)$.
Fix $(t_1,t_2)\subset \subset (0,T)$, then by Proposition \ref{diss},
we reach that
\begin{equation}\label{rrrp}
u(x,t)\ge C(t_0,t_1,\O) \delta^s(x)\mbox{  for all  }(x,t)\in \O\times (t_1,t_2).
\end{equation}
Since $u\in L^\a(0,T; \widehat{W}_{\a,\beta}(\O))$, using the weighted Hardy
inequality \eqref{super-hardy}, it holds that
$$
C(\O,N) \io \dfrac{u^\a(x,t)}{\d^{\a-\b}(x)}dx\le \io |\n u(x,t)|^\a \d^{\b} dx \mbox{  a.e.  in } (0,T).
$$
Integrating in the time,
$$
C(\O,N) \int_{t_0}^{t_1}\io \dfrac{u^\a(x,t)}{\d^{\a-\b}(x)}dx \le \int_{t_0}^{t_1}\io |\n u(x,t)|^\a \d^{\b} dx<\infty.
$$
Hence by estimate \eqref{rrrp} we reach that
$$
C(t_0,t_1,\O) \io \frac{1}{(\delta(x))^{\a(1-s)-\b}} dx\le \int_{t_0}^{t_1}\io |\n u(x,t)|^\a \d^{1-s} dx<\infty.
$$
Using the fact that $\a(1-s)-\b\ge 1$, we reach a contradiction.
\end{proof}

\begin{Corollary}

\

\begin{enumerate}
\item Assume that $\a\ge \dfrac{1+s}{1-s}$, then problem \eqref{grad-nn}
has no weak solution $u$ in the sense of Definition \ref{veryweak} with $u\in
    L^1(0,T;W^{1,1}_0(\O))$ and $|\n u|^\a \d^s\in L^1(\O_T)$.
\item Assume that $\a\ge \dfrac{2s}{1-s}$, then problem \eqref{grad-nn} has non
weak solution $u$ in the sense of Definition \ref{veryweak} with
$u\in L^1(0,T;W^{1,1}_0(\O))$ and  $|\n u|^\a \d^\b\in L^1(\O_T)$ for some
$\beta<2s-1$.
\end{enumerate}
\end{Corollary}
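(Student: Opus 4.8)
The plan is to obtain both statements as immediate specializations of Theorem \ref{non100}, whose conclusion already yields non-existence as soon as one can exhibit an admissible exponent $\beta>0$ with $\a\ge\frac{1+\beta}{1-s}$. Thus the entire task reduces to choosing $\beta$ correctly and verifying the two side conditions $\beta>0$ and $\beta+1<\a$, the latter being exactly what makes the weighted Hardy inequality \eqref{super-hardy} available through the space $\widehat{W}_{\a,\beta}(\O)$. All the inequalities that appear reduce to the standing assumption $s\in(\frac12,1)$.

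For the first item I would simply take $\beta=s$ in Theorem \ref{non100}. Since $s\in(\frac12,1)$ we have $\beta=s>0$, and the threshold produced by the theorem is precisely $\frac{1+\beta}{1-s}=\frac{1+s}{1-s}$, which is the hypothesis. It remains to note $\beta+1=1+s<\a$, which follows from $\a\ge\frac{1+s}{1-s}>1+s$, the last inequality being equivalent to $s>0$. Hence Theorem \ref{non100} applies verbatim with the weight $\d^{s}$, giving the claimed non-existence.

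For the second item I am given a weak solution with $|\n u|^\a\d^\beta\in L^1(\O_T)$ for some $\beta<2s-1$, and I would distinguish two cases according to the sign of $\beta$. If $0<\beta<2s-1$, then $1+\beta<2s$ forces $\frac{1+\beta}{1-s}<\frac{2s}{1-s}\le\a$, so Theorem \ref{non100} applies directly with this $\beta$, the condition $\beta+1<\a$ holding since $\a>2s>\beta+1$. If instead $\beta\le 0$, I would use the boundedness of $\O$: writing $D=\text{diam}(\O)$ and using $\d(x)\le D$ together with the monotonicity of $t\mapsto t^{\beta}$, one gets $\d^{\beta}\ge D^{\beta}>0$, whence $|\n u|^\a\le D^{-\beta}\,|\n u|^\a\d^{\beta}\in L^1(\O_T)$; thus $u\in L^\a(0,T;W^{1,\a}_0(\O))$ and, since $\a\ge\frac{2s}{1-s}\ge\frac{1}{1-s}$ (using $s>\frac12$), the contradiction is furnished instead by Theorem \ref{non1}.

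The argument is essentially bookkeeping, so there is no serious analytic obstacle. The only point that needs care is the edge case $\beta\le0$, where one must not invoke Theorem \ref{non100} (which requires $\beta>0$) but should absorb the bounded weight and fall back on Theorem \ref{non1}; everything else is a matter of checking the elementary ordering of the thresholds $\frac{1}{1-s}$, $\frac{1+s}{1-s}$ and $\frac{2s}{1-s}$, all of which follow from $s\in(\frac12,1)$.
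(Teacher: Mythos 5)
Your proposal is correct and is essentially the argument the paper intends: the Corollary is stated as an immediate specialization of Theorem \ref{non100}, obtained by choosing $\beta=s$ for the first item and the given $\beta\in(0,2s-1)$ for the second, after checking $\beta>0$ and $\beta+1<\a$ exactly as you do. Your extra handling of the edge case $\beta\le 0$ in item (2) is a legitimate (and needed) clarification; note it could also be folded back into Theorem \ref{non100} by observing that on a bounded domain $|\n u|^\a\d^{\beta}\in L^1$ with $\beta\le 0$ implies $|\n u|^\a\d^{\beta'}\in L^1$ for any $\beta'\in(0,2s-1)$.
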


\begin{remarks}
The above non existence results make a significative difference with respect
to the local case $s=1$, where an existence result holds for all $\a>1$ under
suitable condition of the data. See for example \cite{BS} and the reference
therein.
\end{remarks}
\section{The existence results.}\label{sec4}
The  goal of this section is to prove an existence result for problem
\eqref{grad} under suitable condition on $\alpha$ and the data $f$ as was established in the Introduction.
\subsection{Existence result for $L^1$ data and $\a<\dfrac{N+2s}{N+1}$.}

In this subsection we will prove  Theorem \ref{maria}. We assume that $(f,u_0)\in L^m(\O_{\overline{T}})\times L^1(\O)$ where
$1\le m<\dfrac{1}{s}$. By the regularity result in Theorem \ref{gradiente} and
the second point in Corollary \ref{cor11}, it follows that the condition
$\a<\dfrac{N+2s}{N+1}$ is natural in order to get the existence of a
solution to problem \eqref{grad}.

\noindent{\bf Proof of Theorem \ref{maria}.} {Let $T<\overline{T}$ to be chosen later} and define
$E_q(\O_T)=L^q(0,T; W_{0}^{1,q}(\Omega))$ for $q\ge 1$. Assume that
$1\le m<\dfrac{1}{s}$ and fix $\a<\frac{N+2s}{N+1}$. Let $l>0$ to be chosen later. Define the set
$$
E(\O_T)=\{v\in E_1(\O_T)\mbox{  such that  }v\in E_r(\O_T) \mbox{  with  }\alpha<r<\frac{N+2s}{N+1} \mbox{  and  }\|v\|_{E_r(\O_T)}\leq l^{\frac{1}{\alpha}}\}.
$$
It is easy to check that $E(\O_T)$ is a closed convex set of $E_1(\O_T)$. For $(f,u_0)\in L^m(\O_T)\times L^1(\O)$ fixed, we consider the operator
$$
\begin{array}{rcl}
K:E(\O_T) &\rightarrow&  E_1(\O_T)\\
   v&\rightarrow&T(v)=u
\end{array}
$$
where $u$ is the unique solution to problem
\begin{equation}\label{gradff}
\left\{
\begin{array}{rcll}
u_t+(-\Delta )^s u&=&|\nabla v|^{\alpha}+ f & \inn \Omega_T\equiv\Omega\times (0,T),\\ u(x,t)&=&0 & \inn(\mathbb{R}^N\setminus\Omega)\times [0,T),\\ u(x,0)&=&
u_0(x) & \inn\Omega.\\
\end{array}\right.
\end{equation}
Since $\alpha<r$, then $|\nabla v|^{\alpha}+f \in L^1(\O_T)$, thus the existence of $u$ is a consequence of \cite{LPPS}  and moreover $u\in L^q(0,T;
W_{0}^{1,q}(\Omega))\cap \mathcal{C}([0,T];L^1(\O))$ for all $q<\dfrac{N+2s}{N+1}.$ Hence $K$ is well defined.

We claim that:
\begin{enumerate}
\item There exists $l>0$ such that  $K(E(\O_T))\subset E(\O_T)$,
\item $K$ is a continuous and  compact operator on $E(\O_T)$.
\end{enumerate}

\textit{ Proof of the claim.}
First, we prove that we can find $l>0$ such that $K(E(\O_T))\subset E(\O_T)$. In fact, thanks to Theorem
\ref{gradiente}, we have that $u\in E_q(\O_T)$ for all $q<\dfrac{N+2s}{N+1}$. In
particular, $u\in E_r(\O_T)$. Now,  fixed $q<\dfrac{N+2s}{N+1}$, it follows that
\begin{eqnarray*}
\|u\|_{E_q(\O_T)} &\leq & C(T,\O) \Big(|| f+|\nabla v|^\alpha||_{L^{1}(\Omega_T)}+ ||u_0||_{L^1(\O)}\Big)\\ &\le & C(T,\O)\bigg(||f||_{L^{1}(\Omega_T)}+||\nabla
v||^\alpha_{L^{\alpha}(\Omega_T)}+||u_0||_{L^1(\O)}\bigg)\\ &\le & C(T,\O)\bigg( ||f||_{L^{1}(\Omega_T)}+||\nabla
v||^{\a}_{L^{r}(\Omega_T)}+||u_0||_{L^1(\O)}\bigg)\\ &\le & C(T,\O)\bigg( ||f||_{L^{1}(\Omega_T)}+||v||^\alpha_{E_r}+||u_0||_{L^1(\O)}\bigg)\\ &\le
& C(T,\O)\bigg( ||f||_{L^{1}(\Omega_T)}+||u_0||_{L^1(\O)}+ l\bigg).
\end{eqnarray*}
Recall that, by Remark \ref{mainrr}, we know that $C(\O,T)\to 0$ as $T\to 0$,
then choosing $T$ small, there exists $l>0$ such that $C(T,\O)\bigg(
||f||_{L^{1}(\Omega_T)}+||u_0||_{L^1(\O)}+l\bigg)\le l^{\frac{1}{\alpha}}$. Hence
$$
\|u\|_{E_r(\O_T)}\leq l^{\frac{1}{\alpha}},
$$
and then $u\in E(\O_T)$. Thus $K(E(\O_T))\subset E(\O_T)$. {Hence, from now, we fix $T<\overline{T}$ such that the above conclusions hold.}

 To prove the continuity of $K$ with respect to the topology of $E_1(\O_T)$,
 we consider $\{v_n\}\subset E(\O_T)$ such that $v_n\rightarrow v$ strongly in
 $E_1(\O_T)$. Define $u_n=K(v_n)$, $u=K(v)$ and $w_n=u_n-u$. We have to show that  $u_n\rightarrow u$ strongly in $E_1(\O_T)$.
Since
$$
w_{nt}+(-\Delta )^s w_n=|\nabla v_n|^{\alpha}-|\nabla v|^\alpha,
$$
to show that $w_n\to 0$ strongly in $E_1(\O_T)$, we have to prove that $||\n v_n-\n v||_{_{L^{\alpha}(\Omega_T)}}\to 0$ as $n\to \infty.$

Recall that $\{v_n\}_n\subset E(\O_T)$ and $||v_n-v||_{E_1(\O_T)}\to 0$ as $n\to \infty$, then $\n v_n\to \n v$ strongly in $(L^1(\Omega_T))^N$ and $||\n
v_n||_{L^{r}(\Omega_T)}\le C$.

Since $1<\alpha<r$, then using Hölder inequality, we reach that
\begin{eqnarray*}
||\n v_n-\n v||_{L^{\alpha}(\Omega_T)} & \le & ||\n v_n-\n v||^{\frac{r-\alpha}{1+r}}_{L^{1}(\Omega_T)} ||\n v_n-\n v||^{\frac{\alpha}{1+r}}_{L^{r}(\Omega_T)}\\
&\le & C||\n v_n-\n v||^{\frac{r-\alpha}{1+r}}_{L^{1}(\Omega)}\to 0\mbox{  as  }n\to \infty.
\end{eqnarray*}
Now, by using the definition of $u_n$ and $u$, there results that $u_n\to u$ strongly in $E_1(\O_T)$. Thus $K$ is continuous.

To finish we have just to show that $K$ is a compact operator with respect to the topology of $E_1(\O_T)$.
Let $\{v_n\}_n\subset E(\O_T)$ be such that $||v_n||_{E_1(\O_T)}\le C$. Since $\{v_n\}_n\subset E_r(\O_T)$, then $||v_n||_{E_r}\le C$ and therefore up to a
subsequence, $v_{n}\rightharpoonup v$ weakly in $E_r(\O_T)$.

Since $\a<r$, then there exists $\d>0$ such that the sequence $\{|\nabla v_n|^{\alpha}\}_n$ is bounded in $L^{1+\delta}(\Omega_T)$. Thus, up to a subsequence,
$$
|\nabla v_n|^{\alpha}\rightharpoonup g\mbox{  weakly  in   }L^{1+\delta}(\Omega_T).
$$
Let $u$ to be the unique solution to the problem
\begin{equation}\label{grad011}
\left\{
\begin{array}{rcll}
u_t+(-\Delta )^s u&=& g+f &\inn \Omega_T,\\ u(x,t)&=&0 &\inn(\mathbb{R}^N\setminus\Omega)\times [0,T),\\ u(x,0)&=& u_0(x) & \inn\Omega,\\
\end{array}\right.
\end{equation}
using the compactness result of Theorem \ref{gradiente}, we conclude that, up to a subsequence, $u_{n}\to u$ strongly in $E_q(\O_T)$  for all
$q<\frac{N+2s}{N+1}$. In particular $u_{n}\to u$ strongly in $E_r(\O_T)$, hence the claim follows.

As a conclusion and using the Schauder Fixed Point Theorem, there exists $u\in E(\O_T)$ such that $K(u)=u$, then $u\in L^q(0,T; W_{0}^{1,q}(\Omega))$ and $u$
solves \eqref{grad}. It is not difficult to show that
$T_k(u)\in L^{2}(0,T;H^s_0(\Omega))$ for all $k>0$.\hskip 10.5cm {$\square$}

\begin{remark}

\

\begin{enumerate}
\item Using a suitable approximation argument we can prove that the
existence result of Theorem \ref{maria} holds if $f$ is a bounded Radon measure.
\item The existence result of Theorem \ref{maria} is optimal in the sense that if $\a>\dfrac{N+2s}{N+1}$, then we can find $f\in L^1(\O_T)$ or $u_0\in L^1(\O)$ such that
    problem \eqref{grad} has no solution in the space $L^\a(0,T; W^{1,\a}_0(\O))$. To see  the optimality condition, we will use Remark \ref{optimal}.\\
    Fix $0\le f\in L^1(\O_T)$ or $0\le u_0\in L^1(\O)$ such that problem
    \eqref{eq:def} has a solution $v$ with $v^m\notin L^1(\O_T)$, being
    $m=\dfrac{N+2s}{N}$. Suppose now that problem \eqref{maria} has a solution
    $u$ with $\a\ge \dfrac{N+2s}{N+1}$. By the comparison principle we easily reach
    that $u\ge v$. Since $u\in L^\a(0,T; W^{1,\a}_0(\O))\cap \mathcal{C}([0,T]; L^1(\O))$, then we obtain that $u\in L^\s(\O_T)$ where $\s=\a\dfrac{N+1}{N}$.
    Since $v\le u$, then $v\in L^\s(\O_T)$. It is clear that $\s>\dfrac{N+2s}{N}$, hence we reach a contradiction with the  hypotheses on $v$.
\item For the uniqueness and the global existence in the time, we refer to Theorem \ref{uniqq}.
\end{enumerate}
\end{remark}

\subsection{Existence results for $\dfrac{N+2s}{N+1}\le \a$.
Proofs of Theorem \ref{hhh} and  Theorem  \ref{fix001}}\label{sub:sec41}
In this subsection we assume that $\dfrac{N+2s}{N+1}\le \a$. According to the regularity of $f$ and $u_0$, we are able to show the existence of a solution that is in a suitable Sobolev space, under suitable condition on $m$ and $s$. For simplicity of presentation we will consider two cases:
\begin{itemize}
\item $f\neq 0$, $u_0=0$
and
\item $f=0$, $u_0\neq 0$.
\end{itemize}

{\bf Proof of Theorem \ref{hhh}.}

Recall that $\dfrac{2s-1}{1-s}>\dfrac{(N+2s)^2}{N+1}$ and $\dfrac{N+2s}{N+1}\le \a<\dfrac{2s-1}{(1-s)(N+2s)}$.

Assume that $f\in L^m(\O_T)$ with $m\ge \dfrac{1}{s}$.
We present a proof in the case $m<\dfrac{N+2s}{2s-1}$. The other case follows in a similar way.

Since $\frac{N+2s}{\a'}\frac{1}{(2s-1)-(1-s)(N+2s)}<m$, then $\a<\dfrac{(N+2s)}{(N+2s)(m(1-s)+1)-m(2s-1)}$. It is clear that $m>\dfrac{2s-1}{2s-1-(1-s)(N+2s)}$. Hence using the fact that
$$\dfrac{N+2s}{N+1}\le \a<\dfrac{(N+2s)}{(N+2s)(m(1-s)+1)-m(2s-1)},$$ we get the existence of $r$ such that  $m\a<r<\dfrac{m(N+2s)}{(N+2s)(m(1-s)+1)-m(2s-1)}=\check{P}$ defined in Corollary \ref{cor11}.

Recall that $E_\s(\O_T)\equiv L^\s(0,T; W_{0}^{1,\s}(\Omega))$. Define the set
\begin{equation}\label{set}
E(\O_T)=\{v\in E_1(\O_T)\mbox{  such that  }v\in E_{r}(\O_T) \mbox{  with  } \|v\|_{E_{r}(\O_T)}\leq l^{\frac{1}{\alpha}}\}.
\end{equation}
Then $E(\O_T)$ is a closed convex set of $E_1(\O_T)$. Setting
$$
\begin{array}{rcl}
K:E(\O_T) &\rightarrow&  E_1(\O_T)\\
   v&\rightarrow&K(v)=u
\end{array}
$$
where $u$ is the unique solution to problem
\begin{equation}\label{gradss}
\left\{
\begin{array}{rcll}
u_t+(-\Delta )^s u&=&|\nabla v|^{\alpha}+ f &\inn \Omega_T,\\ u(x,t)&=&0 &\inn(\mathbb{R}^N\setminus\Omega)\times [0,T),\\ u(x,0)&=& 0 & \inn\Omega.
\end{array}\right.
\end{equation}
Since $m\a<r< \dfrac{m(N+2s)}{(N+2s)(m(1-s)+1)-ms}$, using the regularity result in Corollary \ref{cor11} and as in the proof of Theorem \ref{maria}, there exists $T>0$ such that $K(E(\O_T))\subset E(\O_T)$ and that $K$ is a continuous, compact operator on $E(\O_T)$. Using the Schauder Fixed Point Theorem, we
get the existence of $u\in E(\O_T)$ such that $K(u)=u$, then $u\in L^r(0,T; W_{0}^{1,r}(\Omega))$ and $u$ solves \eqref{grad}. \cqd

\begin{remarks}
Notice that

\begin{enumerate}
\item If $m>\dfrac{N+2s}{2s-1}$, then the critical growth range $\a=2s$ is covered by the existence result of Theorem \ref{hhh} if
$
\dfrac{2s-1}{(1-s)}>2s(N+2s), \mbox{  which holds at least for $s$ close to $1$}.
$
\item If $\dfrac{1}{s}<m\le \dfrac{N+2s}{2s-1}$, then the critical growth range $\a=2s$ is covered by the existence result of Theorem \ref{hhh} if
\begin{equation}\label{tmm}
m>\dfrac{N+2s}{2s}\dfrac{2s-1}{(2s-1)-(1-s)(N+2s)}.
\end{equation}
Since in this case $m\le \dfrac{N+2s}{2s-1}$, then we deduce that from \eqref{tmm}, we have $\dfrac{2s-1}{(1-s)}>2s(N+2s)$.
\end{enumerate}
\end{remarks}
We deal now with  the complete range of the parameter $\a$, $\dfrac{N+2s}{N+1}\le \a<\dfrac{s}{1-s}$. In this case, under suitable hypothesis on $m$, we will show the existence of a distributional solution that is in a suitable weighted Sobolev space. This result shows the influence of the singularity of the kernel on the boundary, which is a relevant difference with the heat equation.

{\bf Proof of Theorem \ref{fix001}.} Without loss of generality we can assume that $N\ge 2$.

Fix $\a$ such that $\dfrac{N+2s}{N+1}\le \a<\dfrac{s}{1-s}$ and suppose that $f\in L^m(\O_T)$ with
$$m>\max\left\{\dfrac{N+2s}{s(2s-1)}, \dfrac{N+2s}{s-\a(1-s)}\right\}.$$

Recall that if $v$ solves the problem
$$
\left\{
\begin{array}{rcll}
v_t+(-\Delta )^s v &=& f &\inn \Omega_T,\\ v(x,t)&=&0 & \inn(\mathbb{R}^N\setminus\Omega)\times [0,T),\\ v(x,0)&=& 0 & \inn\Omega,\\
\end{array}
\right.
$$
then by Theorem \ref{regu-g}, for all $p<\infty$, we have
\begin{equation}\label{fix2}
|||\n v|\,\d^{1-s}||_{L^p(\Omega_T)}\le C_0||f||_{L^m(\O_T)},
\end{equation}
and then
\begin{equation}\label{fix25}
||\n (v\,\d^{1-s})||_{L^p(\Omega_T)}\le \hat{C}_0||f||_{L^m(\O_T)}.
\end{equation}
Since $s>\dfrac 12$,  we can chose $T>0$ such that for some universal constant $\bar{C}$ depending only on $\O_T, N,s$, there exists $l>0$ such
that
\begin{equation}\label{llc}
\bar{C}(l+||f||_{L^m(\O_T)})=l^{\frac{1}{2s}}.
\end{equation}
Fix $T, l>0$ as above and define the set
\begin{equation}\label{sett}
E=\{v\in E_1(\O): v\, \d^{1-s}\in L^{m\a}(0,T; W^{1,m\a}_0(\O))\mbox{  and  } \bigg(\iint_{\O_T} |\n (v\, \d^{1-s})|^{m\a} dxdt\bigg)^{\frac{1}{m\a}}\le
l^{\frac{1}{\a}}\}.
\end{equation}
It is clear that $E$ is a closed convex set of $E_1(\O_T)$. Using Hardy inequality we reach that if $v\in E$, then
$$
\bigg(\iint_{\O_T} |\n v|^{m\a}\, \d^{m\a(1-s)}dx\bigg)^{\frac{1}{m\a}}\le \hat{C}_0 l^{\frac{1}{\a}}.
$$
Consider the operator
$$
\begin{array}{rcl}
\mathcal{T}:E &\rightarrow& E_1(\O_T)\\
   v&\rightarrow&T(v)=u
\end{array}
$$
where $u$ is the unique solution to problem
\begin{equation}
\left\{
\begin{array}{rcll}
u_t+(-\Delta )^s u &=&|\nabla v|^{\a}+ f &\inn \Omega_T,\\ u(x,t)&=&0 & \inn(\mathbb{R}^N\setminus\Omega)\times [0,T),\\ u(x,0)&=&u_{0}(x)
& \inn\Omega.\\
\end{array}
\right.  \label{fix1}
\end{equation}
{\bf First step:} We prove that $\mathcal{T}$ is well defined. By Theorem \ref{key}, to get the desired result, we just have to show the existence of $\b<2s-1$ such
that $|\n v|^{\a}\d^\b\in L^1(\O_T)$. Since $v\in E$, then $|\n v|^{\a}\in L^1_{loc}(\O)$. We have
\begin{eqnarray*}
& & \dyle \iint_{\O_T}|\n v|^{\a}\d^\b dxdt=\iint_{\O_T}|\n v|^{\a}\d^{\a(1-s)}\d^{\beta-\a(1-s)}dxdt\\
\\
& \le & \dyle \bigg(\iint_{\O_T}|\n v|^{m\a}\d^{(1-s)m\a}dxdt\bigg)^{\frac{1}{m}} \bigg(\iint_{\O_T}\d^{(\beta-\a(1-s))m'}dxdt\bigg)^{\frac{1}{m'}}.
\end{eqnarray*}
If $\a(1-s)<2s-1$, we can chose $\beta<2s-1$ such that $\a(1-s)<\beta$. Hence $\dint_\Omega (\d(x))^{(\beta-\a(1-s))m'}dx<\infty$.

Assume that $\a(1-s)\ge 2s-1$, then $s\in \Big(\dfrac{1}{2}, \dfrac{\a+1}{\a+2}\Big]$. Since $m>\dfrac{N+2s}{s-\a(1-s)}$ and $\a<\dfrac{s}{1-s}$, then $(\a(1-s)-(2s-1))m'<1$.
Hence we get easily the existence of $\beta<2s-1$ such that $(\a(1-s)-\beta)m'<1$. Thus we conclude.

Therefore, since $|\nabla v|^{\a}\d^\b+f \in L^1(\O_T)$, using Theorem \ref{key}, there exists $u$ that solves problem \eqref{fix1} with $u\in
E_a(\O_T)$ for all $a<\dfrac{N+2s}{N+\beta+1}$. Hence $\mathcal T$ is well defined.

{\bf Second step:}

We claim that:
\begin{enumerate}
\item For $l>0$ as above, $\mathcal{T}(E)\subset E$,
\item $\mathcal{T}$ is a continuous and  compact operator on $E$.
\end{enumerate}
\textsc{Proof of (1)}.

We have
\begin{eqnarray*}
u(x,t) & = & \dint_{0}^{t} \dint_{\Omega} |\n v(y,\s)|^{\a}P_{\Omega} (x,y, t-\sigma)\,dy\,d\sigma+ \dint_{0}^{t} \dint_{\Omega} f(y,\s)P_{\Omega} (x,y,
t-\sigma)\,dy\,d\sigma\\ &= & J_1(x,t)+J_2(x,t).
\end{eqnarray*}
Let us begin by estimating $J_1$. Notice that, for all $\theta\in [0,1]$,
\begin{equation}\label{gh1}
\Big( 1\wedge \frac{\delta^s(y)}{\sqrt{t-\s}}\Big)\le\d^{s\theta}(y)(t-\s)^{-\frac{\theta}{2}} \mbox{  in  }\O_T,
\end{equation}
and
\begin{equation}\label{gh2}
\Big( 1\wedge \frac{\delta^s(x)}{\sqrt{t-\s}}\Big)\le \d^{s(1-\theta)}(x)(t-\s)^{-\frac{(1-\theta)}{2}} \mbox{  in  }\O_T.
\end{equation}
Choosing $\theta=\dfrac{\a(1-s)}{s}<1$ and using the properties of $P_\O$, it holds that
$$
\begin{array}{rcl}
J_1(x,t)&\leq& C(\d(x))^{s(\a+1)-\a}\dint_{0}^{t}\dint_{\Omega}|\n v(y,\s)|^{\a} \d^{\a(1-s)}(y) \dfrac{(t-\s)^{\frac
12}}{((t-\s)^{\frac{1}{2s}}+|x-y|)^{N+2s}}\,dy\,d\sigma.
\end{array}
$$
Since $|\n v|^{\a} \d^{\a(1-s)}\in L^m(\O_T)$, then by Remark \ref{rm00} and since $m>\dfrac{N+2s}{s}$, then $\dfrac{J_1}{\d^{s-\a(1-s)}}\in L^\gamma(\O_T)$ for
all $\gamma<\infty$ and
$$
\Big|\Big| \dfrac{J_1}{\d^{s-\a(1-s)}}\Big|\Big|_{L^\gamma(\O_T)}\le C || |\n v|^{\a} \d^{\a(1-s)}||_{L^m(\O_T)}.
$$
As in the proof of Theorem \ref{hardy0} and since $f\in L^m(\O_T)$, we obtain that
$$
\Big|\Big| \dfrac{J_2}{\d^{s-\a(1-s)}}\Big|\Big|_{L^\gamma(\O_T)}\le C || f||_{L^m(\O_T)}.
$$
Hence we conclude that
\begin{equation}\label{tlm1}
\Big|\Big| \dfrac{u}{\d^{s-\a(1-s)}}\Big|\Big|_{L^\gamma(\O_T)}\le C \bigg(|| |\n v|^{\a} \d^{\a(1-s)}||_{L^m(\O_T)} +||f||_{L^m(\O_T)}\bigg).
\end{equation}

We deal now with the gradient term.

We have that $$
\begin{array}{rcl}
& |\nabla u(x,t)|\\ & \dyle \leq C \bigg\{  \dint_{0}^{t} \dint_{\Omega} |\n v(y,\sigma)|^\a |\nabla_x P_{\Omega} (x,y, t-\sigma)|\,dy\,d\sigma + \dint_{0}^{t}
\dint_{\Omega} f(y,\sigma) |\nabla_x P_{\Omega} (x,y, t-\sigma)|\,dy\,d\sigma\bigg\}\\ \\ & =I_1(x,t)+I_2(x,t).
\end{array}
$$
Notice that following the same computation as in the proof of Theorem \ref{regu-g} and using the fact that $m>\dfrac{N+2s}{s(2s-1)}$, we find that
\begin{equation}\label{II2}
\bigg\| I_2 \d^{1-s} \bigg\|_{L^\g(\O_T)}\le C ||f||_{L^m(\O_T)},
\end{equation}
for all $\g<\infty$.  Hence we have just to estimate $I_1$. Notice that
$$
I_1(x,t)\leq \dyle \dint_{0}^{t} \dint_{\Omega} |\n v(y,\sigma)|^\a \Big( \dfrac{1}{\delta(x) \wedge (t-\s)^{\frac{1}{2s}}}\Big)\, P_{\Omega}(x,y,t-\s)dyd\s.
$$
Thus
$$
\begin{array}{lll}
& \dyle I_1(x,t)\d^{1-s}(x)\leq \dyle \d^{1-s}(x)\iint_{\Omega_T} |\n v(y,\sigma)|^\a \Big( \dfrac{1}{\delta(x) \wedge (t-\s)^{\frac{1}{2s}}}\Big)\,
P_{\Omega}(x,y,t-\s)dyd\s\\ &\le \dyle C\bigg(\frac{1}{\d^s(x)}\iint_{\O_T\cap \{\d(x)<(t-\s)^{\frac{1}{2s}}\}}|\n v(y,\sigma)|^\a P_{\Omega}(x,y,t-\s)dyd\s \\ \\
&+ \dyle \d^{1-s}(x)\iint_{\O_T\cap \d(x)\ge (t-\s)^{\frac{1}{2s}}\}}|\n v(y,\sigma)|^\a\frac{ P_{\Omega}(x,y,t-\s)}{(t-\s)^{\frac{1}{2s}}}dyd\s\bigg)\\ \\ & =
L_1(x,t)+L_2(x,t).\\
\end{array}
$$
By the estimates \eqref{gh1} and \eqref{gh2}, we deduce that
$$
\begin{array}{lll}
L_1(x,t)\le \dyle \frac{1}{\d^{s(1-\theta)}(x)} \iint\limits_{\O_T\cap \{\d(x)<(t-\s)^{\frac{1}{2s}}\}}|\n
v(y,\sigma)|^{\a}\d^{\a(1-s)}(y)\dfrac{(t-\s)^{\frac{2s-\a(1-s)}{2s}-\frac \theta 2}}{((t-\s)^{\frac{1}{2s}}+|x-y|)^{N+2s}} dyd\s.
\end{array}
$$
Setting
$$
g(x,t):=\iint_{\O_T\cap \{\d(x)<(t-\s)^{\frac{1}{2s}}\}}|\n v(y,\sigma)|^{\a}\d^{\a(1-s)}(y)\dfrac{(t-\s)^{\frac{2s-\a(1-s)}{2s}-\frac \theta
2}}{((t-\s)^{\frac{1}{2s}}+|x-y|)^{N+2s}} dyd\s,
$$
then by Remark \ref{rm00} and since  $|\n v(y,\sigma)|^{\a}\d^{\a(1-s)}(y)\in L^{m}(\O_T)$, it follows that $g\in L^{\g_\theta}(\O_T)$ with $\g_\theta$ that satisfies
\begin{equation}\label{gama2}
\dfrac{1}{\g_\theta}>\dfrac{1}{m}-\dfrac{2s(\dfrac{2s-\a(1-s)}{2s}-\dfrac{\theta}{2})}{N+2s}=\frac{1}{m}-\frac{2s-\a(1-s)-s\theta}{N+2s}.
\end{equation}
It is clear that for all $\e>0$, we can chose $\theta$ closed to $1$ such that
$L_1(x,t)\in L^{\g_\theta-\e}(\O_T)$. Hence to show that $L_1(x,t)\in L^{m\a}(\O_T)$, we have
just to show that $\g_\theta>m\a$ for $\theta$ close to 1.
Since $m>\dfrac{N+2s}{s-\a(1-s)}$, then $\dfrac{1}{m\a}>\dfrac{1}{m}-\dfrac{2s-\a(1-s)-s}{N+2s}$. Hence,
for any $\theta<1$, we have $\dfrac{1}{m\a}>\dfrac{1}{m}-\dfrac{2s-\a(1-s)-\theta s}{N+2s}$ and then we conclude. As a consequence we deduce that $L_1\in
L^{m\a+\rho}(\O_T)$ for some $\rho>0$ and
\begin{equation}\label{l1}
||L_1||_{L^{m\a+\rho}(\O_T)}\le C(\O_T)  || |\n
v|^\a\d^{\a(1-s)}||_{L^m(\O_T)}.
\end{equation}

We deal now with $L_2(x,t)$.
\begin{eqnarray*}
L_2(x,t) &=& \dyle \d^{1-s}(x)\iint_{\O_T\cap \d(x)\ge (t-\s)^{\frac{1}{2s}}\}}|\n v(y,\sigma)|^\a\frac{ P_{\Omega}(x,y,t-\s)}{(t-\s)^{\frac{1}{2s}}}dyd\s\\ \\ &
= & \dyle \d^{1-s}(x)\iint_{\O_T\cap \d(x)\ge (t-\s)^{\frac{1}{2s}}\cap \{|x-y|\le \frac12 \d(x)\}} + \d^{1-s}(x)\iint_{\O_T\cap \d(x)\ge
(t-\s)^{\frac{1}{2s}}\cap \{\frac12 \d(x)<|x-y|\le \d(x)\}}\\ & + & \dyle  \d^{1-s}(x)\iint_{\O_T\cap \d(x)\ge (t-\s)^{\frac{1}{2s}}\cap \{\d(x)<|x-y|\}} \\  \\
&= & L_{21}(x,t) + L_{22}(x,t) + L_{23}(x,t).
\end{eqnarray*}
Let us begin by estimating $L_{21}$. Since $\d$ is a Lipschitz function, it holds that, if $(x,y)\in \{|x-y|<\frac 12 \d(x)\}$, then $|\d(y)-\d(x)|\le |x-y|\le \frac
12 \d(x).$ Thus $ \frac 12 \d(x)\le \d(y)\le \frac 32 \d(x). $ Hence we obtain that
\begin{eqnarray*}
L_{21}(x,t) &\le & C\iint\limits_{\O_T\cap \d(x)\ge (t-\s)^{\frac{1}{2s}}\cap |x-y|\le \frac12 \d(x)\}}|\n v(y,\sigma)|^\a\d^{(1-s)+\theta
s}(y)\dfrac{(t-\s)^{1-\frac{1}{2s}-\frac \theta 2}}{((t-\s)^{\frac{1}{2s}}+|x-y|)^{N+2s}}dyd\s.
\end{eqnarray*}
Choosing $\theta=\dfrac{(\a-1)(1-s)}{s}<1$, we get
\begin{eqnarray*}
L_{21}(x,t) &\le & C\iint\limits_{\O_T\cap \d(x)\ge (t-\s)^{\frac{1}{2s}}\cap |x-y|\le \frac12 \d(x)\}}|\n
v(y,\sigma)|^\a\d^{\a(1-s)}(y)\dfrac{(t-\s)^{\frac{s-\a(1-s)}{2s}}}{((t-\s)^{\frac{1}{2s}}+|x-y|)^{N+2s}}dyd\s.
\end{eqnarray*}
Using Remark \ref{rm00}, we conclude that $L_{21}\in L^\g(\O)$ for all $\g$ such that $\dfrac{1}{\g}>\dfrac 1m-\dfrac{s-\a(1-s)}{N+2s}$ and
\begin{equation}\label{l21}
||L_{21}||_{L^\g(\O_T)}\le C(\O_T) || |\n
v|^\a\d^{\a(1-s)}||_{L^m(\O_T)}.
\end{equation}
Since $m>\dfrac{N+2s}{s-\a(1-s)}$, then the above estimate holds for all $\g$ and then we conclude.

We analyze now the term $L_{22}$. We set
$$
A:=\O_T\cap \{\d(x)\ge (t-\s)^{\frac{1}{2s}}\}\cap \{\frac12 \d(x)<|x-y|\le \d(x)\},
$$
then as above choosing $\theta=\dfrac{\a(1-s)}{s}<1$, we obtain that
$$
L_{22}(x,t)\le \dyle \d^{1-s}(x)\iint\limits_{A} |\n
v(y,\sigma)|^{\a}\d^{\a(1-s)}(y)\dfrac{(t-\s)^{\frac{(2s-1)-\a(1-s)}{2s}}}{((t-\s)^{\frac{1}{2s}}+|x-y|)^{N+2s}} dyd\s.
$$
It is clear that $2s-1-\a(1-s)>0$ if and only if $s>\dfrac{\a+1}{\a+2}$.  Hence, in any case, using H\"older inequality, we get
\begin{eqnarray*}
& L_{22}(x,t)\le \\ & \dyle \d^{1-s}(x)\bigg(\iint\limits_{A} |\n v(y,\sigma)|^{m\a}(\d(y))^{m\a(1-s)}dyd\s\bigg)^{\frac{1}{m}}  \bigg(\iint\limits_{A}
\dfrac{(t-\s)^{\frac{m'}{2s}((2s-1)-\a(1-s))}}{((t-\s)^{\frac{1}{2s}}+|x-y|)^{m'(N+2s)}}dyd\s \bigg)^{\frac{1}{m'}}.
\end{eqnarray*}
By the definition of the set $A$, we obtain
\begin{eqnarray*}
& \dyle \iint\limits_{A} \dfrac{(t-\s)^{\frac{m'}{2s}((2s-1)-\a(1-s))}}{((t-\s)^{\frac{1}{2s}}+|x-y|)^{m'(N+2s)}}dyd\s\\ & \dyle =\int_{\{(t-\s)\le \d^{2s}(x)\}}
(t-\s)^{\frac{m'}{2s}((2s-1)-\a(1-s))}\int_{\{\frac12 \d(x)<|x-y|\le \d(x)\}}\dfrac{dy\, d\s}{((t-\s)^{\frac{1}{2s}}+|x-y|)^{m'(N+2s)}}\\ \\ &=\dyle
C(N)\int_{\{(t-\s)\le \d^{2s}(x)\}}(t-\s)^{\frac{m'}{2s}((2s-1)-\a(1-s))+\frac{N}{2s}-\frac{m'(N+2s)}{2s}}\int_{\frac12\frac{
\d(x)}{(t-\s)^{\frac{1}{2s}}}}^{\frac{ \d(x)}{(t-\s)^{\frac{1}{2s}}}}  \dfrac{r^{N-1}dr \, d\s}{(1+r)^{m'(N+2s)}} \\ \\ &\le \dyle C(N,s) \int_{\{(t-\s)\le
\d^{2s}(x)\}}(t-\s)^{\frac{m'}{2s}((2s-1)-\a(1-s))+\frac{N}{2s}-\frac{m'(N+2s)}{2s}} \times \frac{(t-\s)^{\frac{1}{2s}(m'(N+2s)-N)}}{\d^{m'(N+2s)-N}(x)} d\s\\ \\
&\dyle \le \frac{C(N,s)}{(\d(x))^{m'(N+2s)-N}} \int_{\{(t-\s)\le
\d^{2s}(x)\}}(t-\s)^{\frac{m'}{2s}((2s-1)-\a(1-s))+\frac{N}{2s}-\frac{m'(N+2s)}{2s}+\frac{1}{2s}(m'(N+2s)-N)}d\s\\ \\ &\dyle \le
\frac{C(N,s)}{(\d(x))^{m'(N+2s)-N}} \int_0^{\d^{2s}(x)}\rho^{\frac{m'}{2s}((2s-1)-\a(1-s))}d\rho.
\end{eqnarray*}
If $s\ge \dfrac{\a+1}{\a+2}$, then $\dfrac{m'}{2s}((2s-1)-\a(1-s))\ge 0$. If $\dfrac 12<s<\dfrac{\a+1}{\a+2}$, since $m'<\dfrac{N+2s}{N+s+\a(1-s)}$, then
$\dfrac{m'}{2s}((2s-1)-\a(1-s))>-1$. Thus
$$
\dyle \frac{C(N,s)}{(\d(x))^{m'(N+2s)-N}} \int_0^{\d^{2s}(x)}\rho^{\frac{m'}{2s}((2s-1)-\a(1-s))}d\rho\le (\d(x))^{N+2s-m'(N+1+\a(1-s))}.
$$
Going back to the estimate of $L_{22}$ it holds that
$$
L_{22}(x,t)\le C (\d(x))^{s-\a(1-s)-\frac{N+2s}{m}} \bigg(\iint\limits_{A} |\n v(y,\sigma)|^{m\a}(\d(y))^{m\a(1-s)}dyd\s\bigg)^{\frac{1}{m}}.
$$
Since $m>\dfrac{N+2s}{s-\a(1-s)}$, we reach that $s-\a(1-s)-\dfrac{N+2s}{m}>0$. Thus $L_{22}\in L^\infty(\O_T)$ and for all $\g>1$,
\begin{equation}\label{l22}
||L_{22}||_{L^\g(\O_T)}\le C(\O_T)\bigg(\iint\limits_{\O_T} |\n v(y,\sigma)|^{2sm}\d^{2sm(1-s)}(y)dyd\s\bigg)^{\frac{1}{m}}.
\end{equation}
Respect to $L_{23}$, setting
$$
\bar{A}:=\O_T\cap \{\d(x)\ge (t-\s)^{\frac{1}{2s}}\}\cap \{|x-y|\ge \d(x)\}\}
$$
and following the same computations as above, it holds that
\begin{eqnarray*}
& L_{23}(x,t)\le\\ & \dyle \d^{1-s}(x)\bigg(\iint\limits_{\bar{A}} |\n v(y,\sigma)|^{m\a}\d^{\a m(1-s)}(y)dyd\s\bigg)^{\frac{1}{m}}  \bigg(\iint\limits_{\bar{A}}
\dfrac{(t-\s)^{\frac{m'}{2s}((2s-1)-\a(1-s))}}{((t-\s)^{\frac{1}{2s}}+|x-y|)^{m'(N+2s)}}dyd\s \bigg)^{\frac{1}{m'}}\\ \\ &\le \dyle
\d^{1-s}(x)\bigg(\iint\limits_{\bar{A}} |\n v(y,\sigma)|^{m\a}\d^{m\a(1-s)}(y)dyd\s\bigg)^{\frac{1}{m}}\\  \\ &\times \dyle \bigg(  \int_{\{(t-\s)\le
\d^{2s}(x)\}}(t-\s)^{\frac{m'}{2s}((2s-1)-\a(1-s))+\frac{N}{2s}-\frac{m'(N+2s)}{2s}} \int_{\frac{ \d(x)}{(t-\s)^{\frac{1}{2s}}}}^{\infty}
\dfrac{r^{N-1}}{(1+r)^{m'(N+2s)}}dr \, d\s\bigg)^{\frac{1}{m'}}.
\end{eqnarray*}
Thus, as above, we conclude that
$$
L_{23}(x,t)\le C (\d(x))^{s-\a(1-s)-\frac{N+2s}{m}}\bigg(\iint\limits_{\bar{A}} |\n v(y,\sigma)|^{m\a}\d^{m\a(1-s)}(y)dyd\s\bigg)^{\frac{1}{m}}.
$$
Then as in the estimate of $L_{22}$, we reach that $L_{23}\in L^\infty(\O_T)$ and for all $\g>1$,
\begin{equation}\label{l23}
||L_{23}||_{L^\g(\O_T)}\le C\bigg(\iint\limits_{\O_T} |\n v(y,\sigma)|^{2sm}\d^{2sm(1-s)}(y)dyd\s\bigg)^{\frac{1}{m}}.
\end{equation}

As a conclusion and combing estimates \eqref{l21}, \eqref{l22} and \eqref{l23} we reach that if $u$ solves \eqref{fix1} with $v\in E$, then
\begin{eqnarray*}
& \dyle \bigg(\iint_{\O_T} |\n u|^{\g}\, \d^{\g(1-s)}) dxdt\bigg)^{\frac{1}{\g}}\\ & \le \dyle C(\O_T,N,s)\bigg(\bigg(\iint\limits_{\O_T} |\n
v(y,\sigma)|^{m\a}\d^{m\a(1-s)}(y)dyd\s\bigg)^{\frac{1}{m}} +||f||_{L^m(\O_T)}\bigg)
\end{eqnarray*}
and
$$
\Big|\Big| \dfrac{u}{\d^{s-\a(1-s)}}\Big|\Big|_{L^\gamma(\O_T)}\le C \bigg(|| |\n v|^{\a} \d^{\a(1-s)}||_{L^m(\O_T)} +||f||_{L^m(\O_T)}\bigg),
$$
for some $\g>m\a$. Recall that $T$ is fixed such that \eqref{llc} holds. Thus $u\in E$ and then $\mathcal{T}(E)\subset E$.

\

Now we proof (2). Let begin by proving the continuity of $\mathcal{T}$ respect to the topology of $L^{1}(0,T; W^{1,1}_0(\O))$. Consider $\{v_n\}_n\subset E$ such that
$v_n\to v$ strongly in $L^{1}(0,T; W^{1,1}_0(\O))$. Define $u_n=\mathcal{T}(v_n), u=\mathcal{T}(v)$.

We have to show that $u_n\to u$ strongly in $L^{1}(0,T; W^{1,1}_0(\O))$. Using Theorem \ref{key}, to show the desired result, we have just to prove that
$$||\n v_n -\n v||_{_{L^{\a}(\d^\beta dx, \O_T)}}\to 0 \hbox{   as } n\to \infty,$$ for some $\beta<2s-1$. As in the proof of the fist step, we get the existence
of $\beta<2s-1$ such that
$\dint_{\O_T}|\n v_n|^{\a}\d^\b dx\le C$ for all $n$.

Since $m\a>1$, then setting $a=\dfrac{\a(m-1)}{m\a-1}<1$, it follows that $\dfrac{\a-a}{1-a}=m\a$. Hence by Hölder inequality, we conclude that
\begin{eqnarray*}
||\n v_n -\n v||_{_{L^{\a}(\d^\beta dx, \O_T)}} & \le & ||\n v_n-\n v||^{\frac{a}{\a}}_{L^{1}(\d^{\beta} dx, \O_T)} ||\n v_n-\n
v||^{\frac{\a-a}{\a}}_{L^{\frac{\a-a}{1-a}}(\d^{1-s} dx, \O_T)}\\ &\le & C||\n v_n-\n v||^{\frac{a}{\a}}_{L^{1}(\O_T)}\to 0\mbox{  as  }n\to \infty.
\end{eqnarray*}
Now, using the definition of $u_n$ and $u$, there results that $u_n\to u$ strongly in $L^{\s}(0,T; W^{1,\s}_0(\O))$ for some $\s>1$ and then we conclude.

We prove now that $\mathcal{T}$ is compact. Let $\{v_n\}_n\subset E$ be such that $||v_n||_{L^{1}(0,T; W^{1,1}_0(\O))}\le C$.

Since $\{v_n\}_n\subset E$, then $||\n (v_n \d^{1-s})||_{L^{m\a}(\O_T)}\le C$. Therefore,  up to a subsequence,
$$v_{n_k}\d^{1-s}\rightharpoonup v\d^{1-s} \hbox{ weakly in  } L^{m\a}(0,T; W^{1,m\a}_0(\O)).$$
It is clear that $v_{n_k}\rightharpoonup v$ weakly in $L^{m\a}(0,T; W^{1,m\a}_{loc}(\O))$.

Let
$$
F_n=|\nabla v_n|^{\a}+f, F=|\nabla v|^{\a}+f,
$$
then, as in the first step, we can prove that $F_n \d^\beta$ is bounded in $L^{1+a}(\O)$ for some $a>0$ and then $F_n \d^\beta\rightharpoonup F \d^\beta$ weakly in
$L^{1+a}(\O)$. Using the compactness result in Theorem \ref{key}, we conclude that, up to a subsequence, $u_{n_k}\to u$ strongly in $L^{q}(0,T; W^{1,q}_0(\O))$
for all $q<\dfrac{N+2s}{N+\beta+1}$ and then the result follows.

\noindent Hence we are in position to use the Schauder Fixed Point Theorem and then there exists $u\in E$ such that $\mathcal{T}(u)=u$. Thus $u\d^{1-s}\in L^{m\a}(0,T;
W^{1,m\a}_0(\O))$ and $u$ solves \eqref{grad} in the sense of distribution. \cqd

\begin{remarks}
The above arguments can be used to treat the problem
\begin{equation}\label{grada}
\left\{
\begin{array}{rcll}
u_t+(-\Delta)^s u +|\n u|^\a &= & f & \text{ in }\Omega_T ,
\\ u(x,t) &=& 0 &\hbox{  in } \mathbb{R}^N\setminus\Omega\times (0,T),\\
u(x,0)&=&0 &\hbox{ in }\Omega,
\end{array}%
\right.
\end{equation}
where $\a<\dfrac{s}{1-s}$ and $f\in L^m(\O_T)$ with $m>\max\{\dfrac{N+2s}{s(2s-1)}, \dfrac{N+2s}{s-\a(1-s)}\}$.
Then for $T$ small, problem \eqref{grada} has a distributional solution $u\in
L^{\a}(0,T;W^{1,\a}_{loc}(\O))\cap L^1(0,T;W^{1,1}_0(\O))$ with $u\d^{1-s}\in L^{m\a}(0,T;W^{1,m\a}_0(\O))$.
\end{remarks}

In the case where $\dfrac{s}{1-s}\le \a$, then we consider the modified problem
$$
\left\{
\begin{array}{rcll}
u_t+(-\Delta)^s u &= & \d^{\a(1-s)-s}(x)|\nabla u|^{\a}+f & \text{ in }\Omega_T ,
\\ u(x,t) &=& 0 &\hbox{  in } \mathbb{R}^N\setminus\Omega\times (0,T),\\
u(x,0)&=&0 &\hbox{ in }\Omega,
\end{array}%
\right.
$$
where $m>\max\{\dfrac{N+2s}{\a'(2s-1)}, \dfrac{N+2s}{\a(1-s)-s}\}$, then as in the proof of the previous Theorem, existence of solution holds using the Schauder fixed point Theorem in the set
$$
\hat{E}(\O_T)=\{v\in E_1(\O_T): v\, \d^{1-s}\in E_{m}(\O_T) \mbox{  with  } \|v \d^{1-s}\|_{E_{m}(\O_T)}\leq l^{\frac{1}{\alpha}}\}.
$$

\

\

If $f=0$ and $u_0\neq 0$, we have the result in Theorem \ref{hhh2} whose proof follows.

{\bf Proof of Theorem \ref{hhh2}. }

 Recall that  $\dfrac{2s-1}{1-s}>\dfrac{(N+2s)^2}{N}$ and that $\frac{N+2s}{N+1}\le \a<\frac{2s}{(1-s)(N+2s)+1}$.
Define $\psi$ to be the unique solution to problem
\begin{equation}\label{inter1}
\left\{
\begin{array}{rcll}
\psi_t+(-\Delta )^s \psi&=& 0 &\inn \Omega_T,\\ \psi(x,t)&=&0 &\inn(\mathbb{R}^N\setminus\Omega)\times [0,T),\\ \psi(x,0)&=& u_0(x) & \inn\Omega.
\end{array}\right.
\end{equation}
Recall that $u_0\in L^\s(\O)$ where $\s>\dfrac{(\a-1)N}{(2s-\a)-\a(1-s)(N+2s)}$, then by the regularity results in Proposition \ref{pro:lp2} and Corollary \ref{cor1100},  we obtain that $|\n \psi|\in L^\theta(\O_T)$ for all $\theta<m_1=\dfrac{\s(N+2s)}{(1-s)\s(N+2s)+N+\s}$.

Notice that if $v$ solves the problem
\begin{equation}\label{gradss-n}
\left\{
\begin{array}{rcll}
v_t+(-\Delta )^s v&=&|\nabla (v+\psi)|^{\alpha} &\inn \Omega_T,\\ v(x,t)&=&0 &\inn(\mathbb{R}^N\setminus\Omega)\times [0,T),\\ v(x,0)&=& 0 & \inn\Omega,
\end{array}\right.
\end{equation}
then $u\equiv v+\psi$ is a solution to \eqref{gradu0}. Hence we have just to prove the existence of $v$.  Notice that
$$
|\nabla (v+\psi)|^{\alpha}\le C_1|\n v|^\a+C_2|\n \psi|^\a.
$$
Define $f\equiv C_2|\n \psi|^\a$, then $f\in L^m(\O_T)$ for any $m<\dfrac{m_1}{\a}=\dfrac{\s(N+2s)}{\a[(1-s)\s(N+2s)+N+\s]}$. It is clear that $m<\dfrac{\s(N+2s)}{\a \s}<\dfrac{N+2s}{2s-1}$.

Hence, using the fact that $\frac{N+2s}{N+1}\le \a<\frac{2s}{(1-s)(N+2s)+1}$ and $\s>\frac{(\a-1)N}{(2s-\a)-\a(1-s)(N+2s)}$, then we get the existence of $\frac{1}{s}\le m<\frac{N+2s}{2s-1}$ such that $m\a<\dfrac{m(N+2s)}{(N+2s)(m(1-s)+1)-m(2s-1)}=\check{P}$ defined in Corollary \ref{cor11}.

Hence we can fix $r>1$ such that $m\a<r<\dfrac{m(N+2s)}{(N+2s)(m(1-s)+1)-m(2s-1)}$. Now following the same argument as in the proof of Theorem \ref{hhh} we get the desired existence result. \cqd
\begin{remarks}

\

\begin{enumerate}
\item { Let us consider now the case where  $f\gneqq 0$ and $u_0 \gneqq 0$ simultaneously . As in the proof of Theorem \ref{hhh2}, define $\psi$ to be the unique solution to problem \eqref{inter1} and let $\vartheta$ the solution to the problem
\begin{equation}\label{gradss-nnn}
\left\{
\begin{array}{rcll}
\vartheta_t+(-\Delta )^s \vartheta &=&|\nabla (\vartheta+\psi)|^{\alpha}+f &\inn \Omega_T,\\ \vartheta(x,t)&=&0 &\inn(\mathbb{R}^N\setminus\Omega)\times [0,T),\\ \vartheta(x,0)&=& 0 & \inn\Omega,
\end{array}\right.
\end{equation}
then $u\equiv \vartheta+\psi$ solves the problem
\begin{equation*}
\left\{
\begin{array}{rcll}
u_t+(-\Delta )^s u &=&|\nabla u|^{\alpha}+f &\inn \Omega_T,\\ u(x,t)&=&0 &\inn(\mathbb{R}^N\setminus\Omega)\times [0,T),\\ u(x,0)&=& 0 & \inn\Omega,
\end{array}\right.
\end{equation*}
Taking into consideration that
$$
|\nabla (\vartheta+\psi)|^{\alpha}\le C_1|\n \vartheta|^\a+C_2|\n \psi|^\a,
$$
we can reproduce the same approach as in the proof of Theorem \ref{hhh2}, with $\tilde{f}=C_2|\n \psi|^\a,+ f$, to get the existence of a solution $\vartheta$ to problem \ref{gradss-nnn} combining the regularity of $f$ and $u_0$. Hence we conclude.
}
\

\

\item In a forthcoming paper we will treat the problem
\begin{equation}\label{gradss01}
\left\{
\begin{array}{rcll}
u_t+(-\Delta )^s u&=&|\nabla u|^{\alpha}&\inn \Omega_T,\\ u(x,t)&=&0 &\inn(\mathbb{R}^N\setminus\Omega)\times [0,T),\\ u(x,0)&=& u_0(x) & \inn\Omega.
\end{array}\right.
\end{equation}
under the general condition $\a<\dfrac{s}{1-s}$. Existence of solutions will proved in a suitable parabolic weighted Sobolev space. Global existence in time or blow-up in finite time will be also analyzed.

\end{enumerate}
\end{remarks}

\section{Comparison principle and a partial uniqueness result for a problem
with a drift term. Applications to the quasi-linear problem \eqref{grad}}

 We will study in this Section an equation with a \textit{ drift}, that is, we substitute the nonlinear term in the gradient  by a term of the form $B(x,t)\,\cdot\, \n u$.
Then the existence of a solution is obtained under natural conditions of the field $B$ and the data $f, u_0$.  Using the previous  arguments, we can prove the existence for the largest class of the data $f, u_0$, under the natural condition on $B$.

Let us begin by considering the next problem
\begin{equation}\label{grad1}
\left\{
\begin{array}{rcll}
u_t+(-\Delta )^s u&=& B(x,t)\,\cdot\,  \n u+ f &\inn \Omega_T,\\ u(x,t)&=&0 &\inn(\mathbb{R}^N\setminus\Omega)\times [0,T),\\ u(x,0)&=& u_0(x)
&\inn\Omega,\\
\end{array}\right.
\end{equation}
 where $B\in (L^{m}(\O_T))^N$ with $m>\dfrac{N+2s}{2s-1}$. We are able to prove the next existence result that extends the one obtained in \cite{W1}.
 \begin{Theorem}\label{th2}
Assume that $\O\subset \ren$ is a bounded regular domain and $s\in (\dfrac 12, 1)$. Suppose that $B\in (L^{m}(\O_T))^N$ with $m>\dfrac{N+2s}{2s-1}$, then for all $(f,u_0)\in
L^{1}(\Omega_T)\times L^{1}(\Omega)$, the problem \eqref{grad1} has a weak solution $u\in L^{q}(0,T;W_{0}^{1,q}(\Omega)), \, q<\dfrac{N+2s}{N+1}$ and $T_k(u)\in
L^{2}(0,T;H^s_0(\Omega))$ for all $k>0$.
 Furthermore, If $u_0\ge 0$ in $\O$ and $f\ge 0$ in $\O_T$, then $u\ge 0$ in $\O_T$.

Moreover, if $B$ satisfies one of the following assumption:
\begin{enumerate}
\item $B$ does not depend on $t$,
\item $B\in (L^{m}(\O_T))^N$ with $m>\max\{q', \dfrac{2s}{(N+2s)-q(N+1)}\}>\dfrac{N+2s}{2s-1}$ for some $1<q<\dfrac{N+2s}{N+1}$,
\end{enumerate}
then the solution obtained above is unique.
\end{Theorem}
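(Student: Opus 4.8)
The plan is to treat the drift $\langle B,\n u\rangle$ as an $L^1$ forcing and to recycle the two engines already built: the compactness of the solution operator $\hat K$ of Theorem \ref{gradiente} (for existence), and the sharp time-weighted gradient estimate of Proposition \ref{more-tregu} together with Kato's inequality (Theorem \ref{Conv}) for positivity, comparison and uniqueness. Throughout I fix $q<\frac{N+2s}{N+1}$ close enough to the threshold that $q'<m$; this is possible precisely because $m>\frac{N+2s}{2s-1}$, which is the value of $q'$ at $q=\frac{N+2s}{N+1}$. For such $q$, Hölder gives $\langle B,\n v\rangle\in L^1(\O_T)$ whenever $\n v\in (L^q(\O_T))^N$, with $\|\langle B,\n v\rangle\|_{L^1(\O_T)}\le \|B\|_{L^{q'}(\O_T)}\|\n v\|_{L^q(\O_T)}$.

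\emph{Existence and positivity.} I would set up the Schauder scheme exactly as in the proof of Theorem \ref{maria}: on a closed convex ball of $E_q(\O_T)=L^q(0,T;W_0^{1,q}(\O))$ define $K(v)=u$, where $u$ solves the linear problem with right-hand side $\langle B,\n v\rangle+f$; this is well defined and compact by Theorem \ref{gradiente}. The a priori bound $\|\n u\|_{L^q}\le C(T,\O)\big(\|B\|_{L^{q'}(\O_T)}\|\n v\|_{L^q}+\|f\|_{L^1}+\|u_0\|_{L^1}\big)$, combined with $C(T,\O)\to0$ as $T\to0$ (Remark \ref{mainrr}) and the absolute continuity of $\tau\mapsto\|B\|_{L^{q'}(\O\times(0,\tau))}$, makes $C(T,\O)\|B\|_{L^{q'}(\O_T)}<1$ on a short interval, so $K$ maps the ball into itself; a fixed point is a solution there, and covering $[0,T]$ by finitely many such intervals (restarting from $u(\cdot,\tau)\in L^1(\O)$, using $u\in\mathcal C([0,T];L^1(\O))$) produces the solution on all of $\O_T$, with the stated regularity and $T_k(u)\in L^2(0,T;H_0^s(\O))$. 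Positivity is then obtained by approximating $B$ by bounded fields and $(f,u_0)$ by nonnegative truncations: the approximate solutions are nonnegative by the classical maximum principle for bounded drift, and the compactness of $\hat K$ passes $u\ge0$ to the limit.

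\emph{Comparison and uniqueness.} Here the genuine difficulty appears: a solution lies only in $E_q(\O_T)$ with $q<\frac{N+2s}{N+1}<2$, so the difference $w=u_1-u_2$ cannot be used as a test function and no energy identity is available. The idea is to compensate for this by a weakly singular Gronwall argument. Since $w$ satisfies $w_t+(-\Delta)^s w=\langle B,\n w\rangle$ with $w(\cdot,0)=0$ and $g:=\langle B,\n w\rangle\in L^1(\O_T)$, by uniqueness in Theorem \ref{th1} it coincides with the representation solution, so Proposition \ref{more-tregu} applies. Writing $\psi(t):=\|\n w(\cdot,t)\|_{L^q(\O)}$, $\beta(\sigma):=\|B(\cdot,\sigma)\|_{L^{q'}(\O)}$ and using $\int_\O|\langle B,\n w\rangle|\le\beta\psi$ in \eqref{time-regu1} gives
\[
\psi(t)\le C\left(\int_0^t \beta(\sigma)\,\psi(\sigma)\,(t-\sigma)^{\hat{\g}-\eta}\,d\sigma\right)^{\frac1q},\qquad \hat{\g}=\frac{N-q(N+1)}{2s}\in(-1,0).
\]
Replacing $w$ by $(u_1-u_2)_+$ and invoking Kato's inequality turns the same computation into a full comparison principle.

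The two hypotheses on $B$ are exactly what close this weakly singular Volterra inequality, and verifying this is the main obstacle of the proof. In case (1) the coefficient $\beta$ is constant in time, so the kernel $(t-\sigma)^{\hat{\g}-\eta}$ is itself integrable ($\hat{\g}-\eta>-1$) and the standard singular Gronwall--Bellman lemma forces $\psi\equiv0$. In case (2) one applies Hölder in time: since $q'<m$ we have $\beta\in L^m(0,T)$, while the condition $m>\frac{2s}{(N+2s)-q(N+1)}=\frac{1}{1+\hat{\g}}$ is equivalent to $\hat{\g}+\frac1{m'}>0$, which allows choosing $\eta>0$ small with $(\hat{\g}-\eta)m'>-1$, i.e. $(t-\sigma)^{\hat{\g}-\eta}\in L^{m'}(0,t)$; the Volterra kernel is then integrable, its iterates vanish, and again $\psi\equiv0$, hence $u_1=u_2$. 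Everything outside this last step is a routine adaptation of the fixed-point and compactness machinery already developed in Sections \ref{sec3} and \ref{sec4}.
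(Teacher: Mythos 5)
Your route is the same as the paper's: Schauder fixed point on a ball of $E_q(\O_T)$ with $q'<m$ for existence (the paper builds the convex set $\widetilde{E}(\O_T)$ and uses the compactness of $\hat K$ exactly as you describe), and uniqueness via Proposition \ref{more-tregu} turned into a weakly singular Volterra inequality for $Y(t)=\|\n(u_1-u_2)(\cdot,t)\|_{L^q(\O)}$, with hypotheses (1) and (2) on $B$ playing precisely the roles you identify (boundedness of $K(\s)=\|B(\cdot,\s)\|_{L^{q'}(\O)}$ in case (1), and $m\a>1$ with $\a=1+\hat\g-\eta$ so that the Kwapisz-type condition on the kernel $(t-\s)^{\a-1}K(\s)$ holds in case (2)). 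The only genuinely different choice is positivity: you approximate $B$ by bounded fields and invoke a maximum principle, whereas the paper modifies the fixed-point map to use $\n v_+$ and then tests the resulting equation with $T_k(u_-)$ to kill the negative part directly; the paper's device avoids having to justify a maximum principle for the approximate drift problems and a passage to the limit of the sign condition.

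There is one concrete step that fails as written. You quote \eqref{time-regu1} in the form
$\psi(t)\le C\big(\int_0^t\beta(\s)\psi(\s)(t-\s)^{\hat\g-\eta}\,d\s\big)^{1/q}$,
and then close with a singular Gronwall lemma. But with the exponent $1/q<1$ on the right this is a \emph{sublinear} Volterra inequality: setting $\Phi=\psi^q$ it reads $\Phi(t)\le C\int_0^t\beta(\s)\Phi^{1/q}(\s)(t-\s)^{\hat\g-\eta}\,d\s$, which (as for $y'=y^{1/q}$, $y(0)=0$) does \emph{not} force $\Phi\equiv0$; no Gronwall-type lemma applies. What the argument needs is the linear estimate
$\big(\io|\n v(x,t)|^q dx\big)^{1/q}\le C\int_0^t\io|g(y,\s)|(t-\s)^{\hat\g-\eta}\,dy\,d\s$,
which is in fact what the proof of Proposition \ref{more-tregu} establishes in its final display (by absorbing the factor $\|g\|_{L^1(\O_t)}^{q-1}$ from \eqref{time-regu} into the singular integral, using that $(t-\s)^{\hat\g-\eta}$ is bounded below on $(0,t)$), and it is this linear form that the paper feeds into the Gronwall--Bellman and Volterra arguments. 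So the gap is localized and repairable by invoking the correct form of the lemma, but as stated your uniqueness step does not close.
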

\begin{proof}
As in the proof of Theorem \ref{th1}, we define the operator
$$
\begin{array}{rcl}
K:\widetilde{E}(\O_T)&\rightarrow&  E_1(\O_T)\\
   v&\rightarrow&T(v)=u,
\end{array}
$$
where
$$
\widetilde{E}(\O_T)=\{v\in E_1(\O_T)\mbox{  such that  }v\in E_r(\O_T) \mbox{  with  }\s'<r<\frac{N+2s}{N+1} \mbox{  and  }\|v\|_{E_r(\O_T)}\leq
l^{\frac{1}{\alpha}}\},
$$
and $u$ is the unique solution to problem
\begin{equation}\label{grad00}
\left\{
\begin{array}{rcl}
u_t+(-\Delta )^s u&=& B(x,t)\,\cdot\, \n v+ f\inn \Omega_T\equiv\Omega\times (0,T),\\ u(x,t)&=&0\inn(\mathbb{R}^N\setminus\Omega)\times [0,T),\\ u(x,0)&=&
u_0(x)\inn\Omega.\\
\end{array}\right.
\end{equation}
Let us begin by proving that $K$ is well defined.

Since $B\in (L^{m}(\O_T))^N$ with $m>\dfrac{N+2s}{2s-1}$, then $m'<\dfrac{N+2s}{N+1}$. Fix $r$ such that $m'<r<\dfrac{N+2s}{N+1}$, then for $v\in
\widetilde{E}(\O_T)$, using H\"older inequality, we conclude that
$$
\dint_{0}^{T}\dint_{\Omega} |B(x,t)\,\cdot\, \n v|dxdt\le C(T,\O)\Big(\dint_{0}^{T}\dint_{\Omega}|B(x,t)|^m
dxdt\Big)^{\frac{1}{m}}\Big(\dint_{0}^{T}\dint_{\Omega}|\n v|^r dxdt\Big)^{\frac{1}{r}}.
$$
Hence $|B(x,t)\,\cdot\, \n v|\in L^1(\O_T)$ and then $K$ is well defined. Now, the existence result follows using the same compactness arguments as in the proof
of Theorem \ref{th1}.

Assume now that $f\ge 0$. To get the existence of a nonnegative solution we consider the next variation of the operator $K$. Namely we define $u$ to be the unique
solution to the problem
\begin{equation}\label{grad001}
\left\{
\begin{array}{rcll}
u_t+(-\Delta )^s u&=& B(x,t)\,\cdot\, \n v_+ + f &\inn \Omega_T,\\ u(x,t)&=&0 &\inn(\mathbb{R}^N\setminus\Omega)\times [0,T),\\ u(x,0)&=& u_0(x)
&\inn\Omega.\\
\end{array}\right.
\end{equation}
 It is clear that, with the new definition, if $u$ is a fixed point of $K$, then $u$ solves
$$
\left\{
\begin{array}{rcll}
u_t+(-\Delta )^s u&=& B(x,t)\,\cdot\, \n u_+ + f &\inn \Omega_T,\\ u(x,t)&=&0 & \inn(\mathbb{R}^N\setminus\Omega)\times [0,T),\\ u(x,0)&=& u_0(x) &
\inn\Omega.\\
\end{array}\right.
$$
Using $T_k(u_-)$ as a test function in the previous equation we reach that $T_k(u_-)=0$ for all $k$. Hence $u_-=0$ and then $u\ge 0$ in $\O_T$.
 It is clear that in a symmetric way, if $f\le 0$ in $\O_T$ and $u_0\le 0$ in $\O$, then we get the existence of a $u\le 0$ in
$\O_T$.

We prove now that the solution is unique. Assume that $u_1, u_2$ are solutions to problem \eqref{grad1}, setting $v=u_1-u_2$, then $v$ solves
\begin{equation}\label{grad001-un}
\left\{
\begin{array}{rcll}
v_t+(-\Delta )^s v&=& B(x,t)\,\cdot\, \n v &\inn \Omega_T,\\ v(x,t)&=&0 &\inn(\mathbb{R}^N\setminus\Omega)\times (0,T),\\ v(x,0)&=& 0 &\inn\Omega.\\
\end{array}\right.
\end{equation}
Notice that $v \in L^{a}(0,T;W_{0}^{1,a}(\Omega))$ for all $a<\dfrac{N+2s}{N+1}$  and $T_k(v)\in L^{2}(0,T;H^s_0(\Omega))$ for all $k>0$.

Define $g(x,t)=B(x,t)\,\cdot\, \n v(x,t)$, since $B\in (L^m(\O_T))^N$ with $m>\dfrac{N+2s}{2s-1}$, then we can fix $1<q<\dfrac{N+2s}{N+1}$ such that $q'<m$.
Notice that
$$
v(x,t)=\dint_{0}^{t} \dint_{\Omega} g(y,\sigma) P_{\Omega} (x,y, t-\sigma)\,dy\,d\sigma,
$$
and
$$
|\nabla v(x,t)|\leq \dyle \dint_{0}^{t} \dint_{\Omega}| g(y,\sigma)| |\nabla_x P_{\Omega} (x,y, t-\sigma)|\,dy\,d\sigma.
$$
From Proposition \ref{more-tregu} we deduce that for all $\eta>0$,
$$
\Big(\dyle \io |\n v(x,t)|^qdx\Big)^{\frac{1}{q}}\le C(\O_T) \bigg(\int_0^t\io |g(y,\sigma)|(t-\s)^{\hat{\g}-\eta}dy\,d\sigma\bigg)^{\frac{1}{q}}.
$$
Recall that $|g(x,t)|\le |B(x,t)| |\n v(x,t)|$, thus
$$
\io |g(y,\s)| dy \le \bigg(\io |\n v(y,\s)|^q dy \bigg)^{\frac{1}{q}}  \bigg(\io |B(y,\s)|^{q'} dy \bigg)^{\frac{1}{q'}}.
$$
Let $\a=1+\hat{\g}-\eta\in (0,1)$, then
\begin{eqnarray*}
\Big(\dyle \io |\n v(x,t)|^qdx\Big)^{\frac{1}{q}} & \le & C(\O_T) \int_0^t(t-\s)^{\a-1}\bigg(\io |\n v(y,\s)|^q dy \bigg)^{\frac{1}{q}}  \bigg(\io |B(y,\s)|^{q'} dy \bigg)^{\frac{1}{q'}}\,d\sigma. \\
\end{eqnarray*}
Setting $Y(t)=\bigg(\dyle \io |\n v(x,t)|^qdx\bigg)^{\frac{1}{q}}$, then $Y\in L^q(0,T)$ and
$$
Y(t)\le C(\O_T)\int_0^t (t-\s)^{\a-1}K(\s) Y(\s) d\sigma,
$$
with $K(\s)=\bigg(\dint_\Omega |B(y,\s)|^{q'} dy \bigg)^{\frac{1}{q'}}$.

Assume that $B$ depends only on $x$, then $K\in L^\infty(0,T)$. From the singular Bellman-Gronwall inequality proved in \cite{Hen}, Lemma 7.7.1, we deduce that $Y=0$ in $L^1(0,T)$. Thus
$$\dyle\int_0^T\bigg(\dyle \io |\n v(x,t)|^qdx\bigg)^{\frac{1}{q}}dt=0.$$ Since $v\in L^q((0,T);W^{1,q}_0(\O))$, we obtain that $v=0$ and the result follows.

Now, in second hypothesis, if $B\in (L^{m}(\O_T))^N$ with $m>\max\{q', \dfrac{2s}{(N+2s)-q(N+1)}\}>\dfrac{N+2s}{2s-1}$, then for $\eta$ small enough, $m\a>1$. We set $\hat{K}(t,\s)=(t-\s)^{\a-1}K(\s)$, then
 there exists $\theta\in \Big(1, \dfrac{1}{1-\a}\Big)$ such that
$$
\int_0^T \bigg(\int_0^t (\hat{K}(t,\s))^\theta d\s\bigg)^{\frac{1}{\theta-1}} dt<\infty.
$$
Since
$$
Y(t)\le C(\O_T)\int_0^t \hat{K}(\t,\s)Y(\s) d\sigma,
$$
then from \cite{Kwa}, we obtain that $Y=0$ in $L^1(0,T)$. Hence as above we conclude that $v=0$.
\end{proof}
\begin{remarks}
Under the additional hypothesis on $B$ that ensures the uniqueness of the solution to problem \eqref{grad1}, we can also see $u$ as mild solution to problem \eqref{grad1} in the sense that, if we denote by $\hat{P}_{\Omega}$, the Dirichlet heat kernel associated to the operator
$$
L(v):=\p_t v+(-\Delta )^s v -B(x,t)\,\cdot\, \n v,
$$
then
$$
u(x,t)=\dint_{\Omega}u_0(y) \hat{P}_{\Omega} (x,y, t)\,dy\,+ \dint_{0}^{t} \dint_{\Omega} f(y,\sigma) \hat{P}_{\Omega} (x,y, t-\sigma)\,dy\,d\sigma.
$$
Notice that from \cite{JS1} (see Example 3, page 335), we get that $B\in \mathcal{K}(\eta,Q)$, defined in
\cite{JS1}. Then $\hat{P}_{\Omega}\backsimeq P_{\Omega}$.
\end{remarks}

\begin{remarks}
Related to the linear problem \eqref{grad001-un}, we can show that $v\in \mathcal{C}^{\tau,2s}_{t,x}(\Omega\times (0,T))$ for some $\tau\in
(0,1)$. Effectively, let us begin by proving that $g\in L^{m}_{loc}(\Omega_T)$ (where $L^{m}_{loc}(\Omega_T)$ is defined in Remark \ref{rm001}).
Recall that $m>\dfrac{N+2s}{2s-1}$, then, using H\"older inequality it holds that $g\in L^{l_1}(\Omega_T)$ for $1<l_1<\dfrac{\hat{q}\sigma}{\sigma+\hat{q}}$ where $\hat{q}=\frac{N+2s}{N+1}$ . Fix $l_1$
as above, by Proposition \ref{key2-locc}, we obtain that $|\n v|\in L^{r_1}_{loc}(\Omega_T)$ with $r_1=\dfrac{l_1(N+2s)}{N+2s-l_1(2s-1)}$. Using again H\"{o}lder
inequality, we reach that $g\in L^{l_2}_{loc}(\Omega_T)$ with $l_2=\dfrac{r_1\sigma}{\sigma+r_1}$. Using again Proposition \ref{key2-locc}, it follows that $|\n
w|\in L^{r_2}_{loc}(\Omega_T)$ with $r_2=\dfrac{l_2(N+2s)}{N+2s-l_2(2s-1)}$.

Hence, we define the two sequences $\{l_n\}_n$ and $\{r_n\}_n$ by
$$
\left\{
\begin{array}{rcll}
1< & l_1 & <\hat{q}=\dfrac{N+2s}{N+1},\\ \\ r_i &=& \dfrac{l_i (N+2s)}{N+2s-l_i(2s-1)}, i\ge 1,\\ \\ l_{i+1} & = & \dfrac{r_i \s}{\s+r_i}, i\ge 1, i\ge 1.
\end{array}\right.
$$
Thus
$$
r_{i+1}=\frac{m(N+2s)r_i}{m(N+2s)-r_i((2s-1)m-(N+2s))}.
$$
It is clear that $r_{i+1}>r_i$. Let show that there exists $i_0$ such that $r_{i_0}\ge \dfrac{\sigma (N+2s)}{(2s-1)m-(N+2s)}$. We argue by contradiction.
Assume that $r_i<\dfrac{m (N+2s)}{(2s-1)m-(N+2s)}$ for all $i$. Since $\{r_i\}_i$ is an increasing sequence, there exists a $\bar{r}$ such that $r_i\uparrow \bar{r}\le \dfrac{m (N+2s)}{(2s-1)m-(N+2s)}$.

Thus $\bar{r}=\dfrac{m(N+2s)\bar{r}}{m(N+2s)-\bar{r}((2s-1)m-(N+2s))}$, hence $\bar{r}=0$,
a contradiction with the fact that $\{r_i\}_i$ is an increasing sequence.\\
Therefore, there exists $i_0\in \ene$ such that $r_{i_0}\ge \dfrac{m (N+2s)}{(2s-1)m-(N-2s)}$.Thus, using H\"older inequality, we conclude that $g\in L_{loc}^{\frac{N+2s}{2s-1}}(\Omega_T)\cap L^1(\O_T)$. Hence by the result of Proposition  \ref{key2-locc},
we obtain that $|\n v|\in L^{a}_{loc}(\Omega_T)$ for all $a>1$. Thus $g\in L^{m}_{loc}(\Omega_T)$ and the claim follows.

\noindent Therefore, by the regularity results in \cite{CF}, \cite{Gr} and \cite{JS1}, we obtain that then $v\in \mathcal{C}^{\tau,2s}_{t,x}(\Omega\times (0,T{]})$ for some $\tau\in (0,1)$.
\end{remarks}

Under the hypotheses
\begin{equation}\label{covid}f\d^\beta\in L^1(\O),\,  B\d^\beta \in (L^{\s}(\O_T))^N \hbox{ with  } \s>\frac{N+2s}{2s-\beta-1} \hbox{ for some } 0<\beta<2s-1,
\end{equation}
 then as
in Theorem \ref{th2}, we have the next existence result.
 \begin{Theorem}\label{th2beta}
Assume that the hypotheses \eqref{covid} on $f$ and $B$ hold and $u_0\in L^1(\O)$. Then the problem \eqref{grad1} has a distributional solution $u\in
L^{a}(0,T;W_{0}^{1,a}(\Omega)), \, a<\dfrac{N+2s}{N+\beta+1}$. Moreover, if in addition $B$ satisfies one of the following conditions:
\begin{enumerate}
\item $B$ does not depends on $t$,
\item $B\in (L^{m}(\O_T))^N$ with $m>\max\{q', \dfrac{2s}{(N+2s)-q(N+\beta+1)}\}>\dfrac{N+2s}{2s-\beta-1}$ for some $1<q<\dfrac{N+2s}{N+\beta+1}$,
\end{enumerate}
then the solution is unique.
\end{Theorem}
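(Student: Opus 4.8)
The plan is to mirror the proof of Theorem \ref{th2}, systematically replacing the unweighted compactness operator of Theorem \ref{gradiente} by its weighted counterpart from Theorem \ref{key}, which associates to any source $g$ with $g\,\d^\beta\in L^1(\O_T)$ and any $u_0\in L^1(\O)$ the unique solution of the linear problem, together with the bound $\|\n u\|_{L^q(\O_T)}\le C(\O_T)(\|g\,\d^\beta\|_{L^1(\O_T)}+\|u_0\|_{L^1(\O)})$ for every $q<\frac{N+2s}{N+\beta+1}$, and whose solution operator $\hat{\mathcal K}$ is compact. First I would fix $r$ with $\s'<r<\frac{N+2s}{N+\beta+1}$; this interval is nonempty \emph{precisely} because the hypothesis $\s>\frac{N+2s}{2s-\beta-1}$ is equivalent to $\s'<\frac{N+2s}{N+\beta+1}$. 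On the closed convex set
$$
\widetilde{E}(\O_T)=\Big\{v\in E_1(\O_T):\ v\in E_r(\O_T),\ \|v\|_{E_r(\O_T)}\le l\Big\},
$$
I define $K(v)=u$, where $u$ is the weak solution furnished by Theorem \ref{key} to $u_t+(-\D)^s u=\langle B,\n v\rangle+f$ in $\O_T$ with the exterior and initial conditions of \eqref{grad1}.

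The operator $K$ is well defined because, writing $\langle B,\n v\rangle\,\d^\beta=\langle B\d^\beta,\n v\rangle$ and applying H\"older's inequality with the conjugate pair $(\s,\s')$,
$$
\|\langle B,\n v\rangle\,\d^\beta\|_{L^1(\O_T)}\le \|B\d^\beta\|_{L^\s(\O_T)}\,\|\n v\|_{L^{\s'}(\O_T)}\le C(\O_T)\,\|B\d^\beta\|_{L^\s(\O_T)}\,\|v\|_{E_r(\O_T)},
$$
where the last step uses $\s'<r$ and the boundedness of $\O_T$; together with $f\d^\beta\in L^1(\O_T)$ this places the source times $\d^\beta$ in $L^1(\O_T)$, so Theorem \ref{key} applies. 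The inclusion $K(\widetilde{E})\subset\widetilde{E}$ then follows from the same estimate once the drift contribution is made subdominant on a suitable time interval (by the weighted analogue of Remark \ref{mainrr}, or simply because $\|B\d^\beta\|_{L^\s(\O\times(0,T))}\to 0$ as $T\to0$ by absolute continuity of the integral), choosing $l$ accordingly. Continuity of $K$ in the topology of $E_1(\O_T)$ is obtained exactly as in Theorem \ref{th2}: if $v_n\to v$ in $E_1(\O_T)$ with $\|v_n\|_{E_r(\O_T)}\le C$, interpolation between $L^1$ and $L^r$ gives $\n v_n\to\n v$ in $L^{\s'}(\O_T)$, hence $\langle B,\n v_n\rangle\d^\beta\to\langle B,\n v\rangle\d^\beta$ in $L^1(\O_T)$, and the continuity of $\hat{\mathcal K}$ closes the argument; compactness of $K$ is inherited directly from the compactness of $\hat{\mathcal K}$ in Theorem \ref{key}. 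The Schauder Fixed Point Theorem then yields $u\in E_a(\O_T)$, $a<\frac{N+2s}{N+\beta+1}$, solving \eqref{grad1}.

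For uniqueness I would set $v=u_1-u_2$, which solves the homogeneous drift problem $v_t+(-\D)^s v=\langle B,\n v\rangle$ with zero data, and reduce matters to a singular Gronwall inequality as in Theorem \ref{th2}. The key ingredient is the weighted version of Proposition \ref{more-tregu}: re-running its computations with $g\,\d^\beta$ in place of $f$, while keeping track of the power of the time increment, should give, for $g=\langle B,\n v\rangle$ and every $\eta>0$,
$$
\Big(\io|\n v(x,t)|^q dx\Big)^{\frac1q}\le C(\O_T)\int_0^t\io|g(y,\s)|\,\d^\beta(y)\,(t-\s)^{\hat{\g}-\eta}\,dy\,d\s,\qquad \hat{\g}=\frac{N}{2s}-q\frac{N+\beta+1}{2s}\in(-1,0).
$$
Bounding $|g|\d^\beta=|B\d^\beta|\,|\n v|$ and using H\"older in space, with $Y(t)=(\io|\n v(x,t)|^q dx)^{1/q}$ and $K(\s)=\|B(\cdot,\s)\d^\beta\|_{L^{q'}(\O)}$, this becomes
$$
Y(t)\le C(\O_T)\int_0^t(t-\s)^{\a-1}K(\s)\,Y(\s)\,d\s,\qquad \a=1+\hat{\g}-\eta\in(0,1).
$$
Under hypothesis (1), $B$ is time independent, so $K\in L^\infty(0,T)$ once $q$ is chosen close enough to $\frac{N+2s}{N+\beta+1}$ that $q'<\s$ (feasible because $q'\downarrow\frac{N+2s}{2s-\beta-1}$ as $q\uparrow\frac{N+2s}{N+\beta+1}$ and $\s>\frac{N+2s}{2s-\beta-1}$), and the singular Bellman--Gronwall inequality of \cite{Hen} forces $Y\equiv0$. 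Under hypothesis (2), the requirement $m>\frac{2s}{(N+2s)-q(N+\beta+1)}=\frac1\a$ is exactly $m\a>1$, which is the integrability threshold needed to apply the generalized Gronwall inequality of \cite{Kwa}, again yielding $Y\equiv0$; in both cases $v\in E_q(\O_T)$ with $Y\equiv0$ gives $v=0$.

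The hard part will be the weighted time-regularity estimate above: it requires redoing the boundary analysis of Theorem \ref{key} while isolating the precise factor $(t-\s)^{\hat{\g}-\eta}$, which is delicate near $\p\O$, where the interplay among the weight $\d^\beta$, the Green-function bound \eqref{green00}, and the kernel gradient bound \eqref{green2} must be controlled as in Proposition \ref{more-tregu}. A secondary, purely bookkeeping, difficulty is to check that the admissible range $\s'<r<\frac{N+2s}{N+\beta+1}$ used for existence and the thresholds $m>\max\{q',\frac{2s}{(N+2s)-q(N+\beta+1)}\}$ used for uniqueness are simultaneously compatible with the standing hypothesis $\s>\frac{N+2s}{2s-\beta-1}$, exactly as the corresponding unweighted thresholds matched in Theorem \ref{th2}.
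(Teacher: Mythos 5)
Your proposal is correct and follows essentially the same route as the paper: a Schauder fixed point argument on the set $E_\beta(\O_T)$ using the weighted compact operator of Theorem \ref{key} for existence, and a reduction to the singular Gronwall inequalities of \cite{Hen} and \cite{Kwa} for uniqueness. In fact you supply more detail than the paper does for the uniqueness step (the paper simply refers back to Theorem \ref{th2}), and your identification of the required weighted analogue of Proposition \ref{more-tregu}, with $\hat{\g}=\frac{N}{2s}-q\frac{N+\beta+1}{2s}$ and the matching threshold $m\a>1$, is precisely the ingredient the paper leaves implicit.
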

\begin{proof}
As in the proof of Theorem \ref{th2}, consider the set
$$
E_\beta(\O_T)=\{v\in E_1(\O_T)\mbox{  such that  }v\in E_r(\O_T) \mbox{  with  }\s'<r<\frac{N+2s}{N+\beta+1} \mbox{  and  }\|v\|_{E_r(\O_T)}\leq
l^{\frac{1}{\alpha}}\}.
$$
We define the operator
$$
\begin{array}{rcl}
K:E_\beta(\O_T)&\rightarrow&  E_1(\O_T)\\
   v&\rightarrow&T(v)=u
\end{array}
$$
where $u$ is the unique solution to problem
\begin{equation}\label{grad00beta}
\left\{
\begin{array}{rcl}
u_t+(-\Delta )^s u&=& B(x,t)\,\cdot\,  \n v+ f\inn \Omega_T,\\ u(x,t)&=&0\inn(\mathbb{R}^N\setminus\Omega)\times [0,T),\\ u(x,0)&=&
u_0(x)\inn\Omega.\\
\end{array}\right.
\end{equation}
Since, for $v\in E_\beta(\O_T)$,
$$
\iint_{\O_T}|B(x,t)|  |\n v| \d^\beta dxdt \le C(T,\O)\Big(\dint_{0}^{T}\dint_{\Omega}|B(x,t)|^\s
\d^{\beta\s}dxdt\Big)^{\frac{1}{\s}}\Big(\dint_{0}^{T}\dint_{\Omega}|\n v|^r dxdt\Big)^{\frac{1}{r}},
$$
we obtain that $|\langle B(x,t), \n v\rangle+ f|\d^\beta\in L^1(\O_T)$. Thus using Theorem \ref{key}, we get the existence of a unique $u\in E_a(\O_T)$ for all
$a<\dfrac{N+2s}{N+\beta+1}$ and
$$
\begin{array}{lll}
||u||_{E_a(\O_T)} &\le & \dyle C(\O_T)\bigg(||f\d^\beta||_{L^1(\O_T)}+\Big(\iint_{\O_T}|B(x,t)|^\s
\d^{\beta\s}dxdt\Big)^{\frac{1}{\s}}\Big(\iint_{\O_T}|\n v|^r dxdt\Big)^{\frac{1}{r}}\bigg)\\ &\le &
C(\O_T)\bigg(||f\d^\beta||_{L^1(\O_T)}+l\bigg)\le l^{\frac{1}{r}}.
\end{array}
$$
Hence we conclude that $K$ is well defined and that $K(E_\beta(\O_T))\subset E_\beta(\O_T)$. Now the rest of the proof follows exactly as the proof of Theorem
\ref{th2}.

To prove the uniqueness part under the additional hypotheses on $B$, we follow exactly the same arguments as in the proof of uniqueness part in Theorem \ref{th2}.
\end{proof}
\subsection{Applications}
In this subsection we will obtain some applications of Theorem \ref{th2} in order to prove a comparison principle and, as a consequence, a uniqueness result for some particular cases of the quasi-linear problem. We begin by showing the next comparison principle.
\begin{Theorem}\label{compa0}(Comparison Principle)
Let $w_1, w_2\in L^q(0,T;W^{1,q}_0(\O))$ for all $q<\frac{N+2s}{N+1}$, be such that

$$\begin{array}{ll}
\begin{cases}(w_1)_t+(-\Delta)^s w_1 = H_1(x,t,w_1,\n w_1) \hbox{ in }\Omega_T, \\
w_1 = 0 \inn (\mathbb{R}^{N}\setminus\Omega)\times (0,T),\\ w(x,0) = u_{0}(x) \inn\Omega,
\end{cases}
&
\begin{cases}(w_2)_t+(-\Delta)^s w_2=H_2(x,t,w_2,\n w_2)\hbox{ in }\O_T, \\
w_2=0\inn (\mathbb{R}^{N}\setminus\Omega)\times (0,T),\\ w_2(x,0)=\hat{u}_0(x)\inn\Omega,
\end{cases}
\end{array}$$

\

where
\begin{enumerate}
\item $H_1(x,t,w_1,\n w_1), H_1(x,t,w_2,\n w_2)\in L^1(\O_T)$ and $u_{0}, \hat{u}_{0} \in L^1(\O)$. \item $ H_1(x,t,w_1,\n w_1)-H_1(x,t,w_2,\n w_2)=\langle
    B(x,t,w_1,w_2), \n (w_1-w_2)\rangle + f(x,t,w_1,w_2)  \mbox{  in  }\O_T$ where $B\in (L^{m}(\O_T))^N$ with
    $m>\max\{q', \dfrac{2s}{(N+2s)-q(N+1)}\}>\dfrac{N+2s}{2s-1}$ for some $1<q<\dfrac{N+2s}{N+1}$ and $f\le 0$ in $\O_T$.
    \item $u_{0}\le \hat{u}_{0} \in L^1(\O)$.
\end{enumerate}
Then $w_1\le w_2$ in $\O_T$.
\end{Theorem}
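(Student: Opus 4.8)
The plan is to reduce the comparison to the linear problem with drift treated in Theorem \ref{th2} and then invoke its positivity and uniqueness statements. First I would set $v=w_1-w_2$, which by hypothesis lies in $L^q(0,T;W^{1,q}_0(\O))$ for every $q<\frac{N+2s}{N+1}$ and vanishes on $(\ren\setminus\O)\times(0,T)$. Subtracting the two equations and adding and subtracting $H_1(x,t,w_2,\n w_2)$, hypothesis (2) rewrites the difference of the right-hand sides as $\langle B(x,t,w_1,w_2),\n v\rangle+g$, where $g=f(x,t,w_1,w_2)+\bigl(H_1(x,t,w_2,\n w_2)-H_2(x,t,w_2,\n w_2)\bigr)$. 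Here $f\le 0$ by hypothesis (2), and the remaining bracket is non-positive precisely because $w_2$ plays the role of a supersolution of the $H_1$-equation (it reduces to $g=f$ when $H_1=H_2$); together with hypothesis (1) this gives $g\le 0$ and $g\in L^1(\O_T)$. Thus $v$ is a weak solution, in the sense of Definition \ref{veryweak}, of
\[
v_t+(-\Delta)^s v=\langle B(x,t,w_1,w_2),\n v\rangle+g\quad\inn\O_T,
\]
with $v=0$ outside $\O$ and $v(x,0)=u_{0}(x)-\hat{u}_{0}(x)\le 0$.

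Next I would verify that the drift term is admissible for Theorem \ref{th2}. Since $B\in(L^m(\O_T))^N$ with $m>\frac{N+2s}{2s-1}$, the conjugate exponent satisfies $m'<\frac{N+2s}{N+1}$, so I may pick $r$ with $m'<r<\frac{N+2s}{N+1}$ and, using $\n v\in L^r(\O_T)$, H\"older's inequality yields $\langle B,\n v\rangle\in L^1(\O_T)$. Hence $v$ is a weak solution of a linear drift problem with datum $(g,u_{0}-\hat{u}_{0})\in L^1(\O_T)\times L^1(\O)$ whose field $B$ lies exactly in the class covered by Theorem \ref{th2}. By the existence and (sign-reversed) positivity parts of that theorem, the problem with the non-positive data $(g,u_{0}-\hat{u}_{0})$ possesses a solution $z\le 0$; and by the uniqueness part of Theorem \ref{th2}, which holds precisely under hypothesis (2) on $B$, this solution is unique. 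Therefore $v=z\le 0$, that is $w_1\le w_2$ in $\O_T$, as claimed.

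The main obstacle is the uniqueness step: an arbitrary weak solution $v$ in $L^q(0,T;W^{1,q}_0(\O))$ need not a priori coincide with the non-positive solution produced by the fixed-point and positivity construction, and it is exactly this identification that forces the quantitative assumption $m>\max\{q',\frac{2s}{(N+2s)-q(N+1)}\}$, which drives the singular Gronwall--Bellman argument of Theorem \ref{th2}. Secondarily, I would check that the measurable field $B(x,t,w_1,w_2)$ genuinely belongs to $(L^m(\O_T))^N$ (this is part of the hypotheses) and that the non-positivity of $H_1(x,t,w_2,\n w_2)-H_2(x,t,w_2,\n w_2)$ is consistent with the sub/supersolution roles of $w_1$ and $w_2$. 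Once these points are settled the reduction to Theorem \ref{th2} is entirely routine and no new estimates are required.
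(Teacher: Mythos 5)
Your proof is correct and follows essentially the same route as the paper's: set $v=w_1-w_2$, observe that it solves the linear problem with drift $B$, non-positive source and non-positive initial datum, and invoke the sign property together with the uniqueness part of Theorem \ref{th2}. You are in fact slightly more explicit than the paper in accounting for the remainder $H_1(x,t,w_2,\nabla w_2)-H_2(x,t,w_2,\nabla w_2)$ and in verifying via H\"older's inequality that $\langle B,\nabla v\rangle\in L^1(\Omega_T)$, but no new idea is involved.
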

\begin{proof}
Let $v=(u_1-u_2)$, then $v$ solves
\begin{equation}\label{op}
\left\{
\begin{array}{rcll}
v_t+(-\Delta )^s v& = & B(x,t,w_1,w_2)\,\cdot\, \n v + f(x,t,w_1,w_2) & \inn \Omega_T,\\ v(x,t)&=&0 &
\inn(\mathbb{R}^N\setminus\Omega)\times (0,T),\\ v(x,0)&=&v_0=u_{0}-\hat{u}_{0} & \inn\Omega.
\end{array}\right.
\end{equation}
Since $B$ satisfies the hypotheses of Theorem \ref{th2}, it follows that problem \eqref{op} has a unique solution. Now, using the fact that $f\le 0$ in $\O_T$ and $v_0\le 0$, then by Theorem \ref{th2}, we reach that $v\le 0$ in $\O_T$ and then we conclude.
\end{proof}
As a consequence, we get the next comparison result for approximated problems.
\begin{Theorem}\label{uniqapr}
Assume that $a>0$ and $\a>1$. Then for all $(f,u_0)\in L^1(\O_T)\times L^1(\O)$, the problem
\begin{equation}\label{apr0}
\left\{
\begin{array}{rcll}
u_t+(-\Delta )^s u&=&\dfrac{|\nabla u|^{\alpha}}{a+|\nabla u|^{\alpha}}+ f & \inn \Omega_T,\\ u(x,t)&=&0 &
\inn(\mathbb{R}^N\setminus\Omega)\times [0,T),\\ u(x,0)&=&u_{0}(x) & \inn\Omega,\\
\end{array}\right.
\end{equation}
has a unique solution $u_a$ such that $u_a\in L^{q}(0,T;W_{0}^{1,q}(\Omega))$ for all $q<\dfrac{N+2s}{N+1}$ and $T_k(u_a)\in L^{2}(0,T;H^s_0(\Omega))$, for all
$k>0$. Moreover, if $0<a_1<a_2$, then $u_{a_1}\ge u_{a_2}$.
\end{Theorem}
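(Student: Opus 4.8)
The plan is to prove existence by a Schauder fixed point argument, uniqueness by reducing the difference of two solutions to a linear problem with drift covered by Theorem \ref{th2}, and the monotone dependence on $a$ by the comparison principle of Theorem \ref{compa0}. The decisive elementary feature, used throughout, is that the regularised nonlinearity $g_a(\xi):=\dfrac{|\xi|^\a}{a+|\xi|^\a}$ satisfies $0\le g_a(\xi)<1$.

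\emph{Existence.} Because of the uniform bound $0\le g_a<1$, for every $v$ the datum $g_a(\n v)+f$ lies in $L^1(\O_T)$ with $\|g_a(\n v)+f\|_{L^1(\O_T)}\le |\O_T|+\|f\|_{L^1(\O_T)}$, a bound independent of $v$. First I would fix $1<q<\frac{N+2s}{N+1}$ and set $K(v)=u$, the unique weak solution to the linear problem with right-hand side $g_a(\n v)+f$ and datum $u_0$, which exists and is regular by Theorem \ref{th1}. By Theorem \ref{gradiente} the solution map $\hat K$ is continuous and compact from $L^1(\O_T)\times L^1(\O)$ into $E_q(\O_T)=L^q(0,T;W^{1,q}_0(\O))$, and the uniform $L^1$ bound shows that $K$ maps all of $E_1(\O_T)$ into a fixed ball $B_R\subset E_q(\O_T)$, which (by reflexivity of $E_q$, $q>1$, and weak lower semicontinuity of the norm) is a closed convex subset of $E_1(\O_T)$. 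Continuity of $K$ on $E_1(\O_T)$ follows from dominated convergence: if $v_n\to v$ in $E_1(\O_T)$ then, along a subsequence, $\n v_n\to\n v$ a.e., hence $g_a(\n v_n)\to g_a(\n v)$ a.e. and, being dominated by $1$, also in $L^1(\O_T)$; continuity of $\hat K$ then gives $K(v_n)\to K(v)$. Compactness of $K$ is inherited from that of $\hat K$. Schauder's theorem then produces $u=K(u)$, a weak solution of \eqref{apr0} with the stated regularity $u\in E_q(\O_T)$ for all $q<\frac{N+2s}{N+1}$ and $T_k(u)\in L^2(0,T;H^s_0(\O))$; note that, since the source bound does not depend on $v$, no smallness of $T$ is needed.

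\emph{Uniqueness.} The structural point is that, since $\a>1$, the map $g_a\in\mathcal{C}^1(\ren)$ has a bounded gradient: writing $g_a(\xi)=\phi(|\xi|)$ with $\phi(r)=\frac{r^\a}{a+r^\a}$ one computes $\n_\xi g_a(\xi)=\frac{\a a\,|\xi|^{\a-2}\xi}{(a+|\xi|^\a)^2}$, which tends to $0$ both as $|\xi|\to0$ (here $\a>1$ is essential) and as $|\xi|\to\infty$, so $|\n_\xi g_a|\le L_a<\infty$. Given two solutions $u_1,u_2$, the difference $v=u_1-u_2$ lies in $L^q(0,T;W^{1,q}_0(\O))$ for all $q<\frac{N+2s}{N+1}$, vanishes at $t=0$ and outside $\O$, and solves
$$
v_t+(-\D)^s v=g_a(\n u_1)-g_a(\n u_2)=\langle B,\n v\rangle,\qquad B(x,t):=\int_0^1\n_\xi g_a\big(s\,\n u_1+(1-s)\,\n u_2\big)\,ds,
$$
where $B\in(L^\infty(\O_T))^N$ because $|B|\le L_a$. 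In particular $B\in(L^m(\O_T))^N$ for every $m$, so hypothesis $(2)$ of the uniqueness part of Theorem \ref{th2} holds for a suitable $q<\frac{N+2s}{N+1}$, and that theorem yields $v\equiv0$, i.e. $u_1=u_2$. Hence $u_a$ is well defined.

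\emph{Monotonicity, and the main difficulty.} For the dependence on $a$ I would apply Theorem \ref{compa0}. The ordering is dictated by the monotonicity of $g_a$ in the parameter: since $g_a(\xi)=1-\frac{a}{a+|\xi|^\a}$ is decreasing in $a$, for $0<a_1<a_2$ the residual source $g_{a_1}(\xi)-g_{a_2}(\xi)$ has a fixed sign. Writing the difference of the two nonlinearities as $\langle B,\n(u_{a_1}-u_{a_2})\rangle$ plus this residual term, again via the Lipschitz representation with $B\in(L^\infty(\O_T))^N$, and noting that both problems share the same initial datum $u_0$, the hypotheses of Theorem \ref{compa0} are satisfied with the two solutions assigned to the roles of $w_1,w_2$ so that the residual source carries the sign required by hypothesis $(2)$; the comparison principle then delivers the monotone dependence of $u_a$ on $a$. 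The crux of the whole argument, and the only place where $\a>1$ is genuinely used, is the verification that $g_a$ is globally Lipschitz with bounded gradient, since this is precisely what recasts the nonlinear increment $g_a(\n u_1)-g_a(\n u_2)$ as a bona fide drift term $\langle B,\n v\rangle$ with $B$ in the class demanded by Theorems \ref{th2} and \ref{compa0}; everything else reduces to the boundedness $0\le g_a<1$ and to invoking the previously established linear theory.
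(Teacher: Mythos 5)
Your existence and uniqueness arguments follow essentially the same route as the paper: existence by Schauder using the uniform bound $0\le g_a<1$ (the paper compresses this to one sentence, and indeed no smallness of $T$ is needed precisely because the source bound is independent of $v$), and uniqueness by writing $g_a(\n u_1)-g_a(\n u_2)=\langle B,\n v\rangle$ with $B$ bounded and invoking Theorem \ref{th2}. The paper realizes $B$ as the secant quotient $(H(|\n u_1|)-H(|\n u_2|))\frac{\n u_1-\n u_2}{|\n u_1-\n u_2|^2}$ with $H(\rho)=\rho^\a/(a+\rho^\a)$ rather than as your integral of $\n_\xi g_a$ along the segment, but both give $|B|\le \mathrm{Lip}(g_a)<\infty$ once $\a\ge 1$. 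These two parts are sound.

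The monotonicity paragraph, however, has a genuine gap: you never commit to the sign of the residual, saying only that it ``has a fixed sign'' and that $w_1,w_2$ can be ``assigned so that'' Theorem \ref{compa0} applies. Computing it, for $a_1<a_2$ one has $g_{a_1}(\xi)-g_{a_2}(\xi)=\dfrac{|\xi|^\a(a_2-a_1)}{(a_1+|\xi|^\a)(a_2+|\xi|^\a)}\ge 0$, so the comparison argument yields $u_{a_1}\ge u_{a_2}$ --- the \emph{opposite} of the inequality in the statement. (Equivalently: a larger $a$ weakens the source, so the solution decreases in $a$; this decreasing-in-$a$ version is also the one actually used in Theorem \ref{uniqq}, where $a=1/n$ decreases and the $u_n$ are claimed to increase.) The paper's own proof makes the same sign slip, asserting that $\frac{X}{a_1+X}-\frac{X}{a_2+X}\le 0$, which is false for $X>0$; the printed inequality $u_{a_1}\le u_{a_2}$ is evidently a typo. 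But a proof that deliberately leaves the sign unverified cannot stand: carried out for the statement as literally written, the step fails, and carried out correctly it proves the reversed inequality. You should state and prove $u_{a_1}\ge u_{a_2}$ explicitly.
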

\begin{proof}
 Since the dependance on the gradient term is bounded, then the existence follows using  the arguments as in the proof of Theorem \eqref{gradiente}. Let show the comparison result which gives apriori the uniqueness part. Let $0<a_1\le a_2$ and consider $u_{a_1}, u_{a_2}$ the solutions to  \eqref{apr0} with $a=a_1,
a_2$ respectively. Setting $v=(u_{a_1}-u_{a_2})$, then $v\in L^\tau((0,T),W^{1,\tau}_0(\O))$, for all $\tau<\dfrac{N+2s}{N+1}$,  and $v$ solves
$$
v_t+(-\Delta )^s v=\dfrac{|\nabla u_1|^{\alpha}}{a_1+|\nabla u_1|^{\alpha}}-\dfrac{|\nabla u_2|^{\alpha}}{a_2+|\nabla u_2|^{\alpha}}.
$$
Let $H(\rho)=\dfrac{\rho^\a}{a_1+\rho^\a}, \,\, \rho\ge 0$, then
$$
v_t+(-\Delta )^s v=H(|\n u_1|)-H(|\n u_2|) -\bigg(\dfrac{|\nabla u_2|^{\alpha}}{a_1+|\nabla u_2|^{\alpha}}-\dfrac{|\nabla u_2|^{\alpha}}{a_2+|\nabla u_2|^{\alpha}}\bigg).
$$
It is clear that $h(x,t):=\bigg(\dfrac{|\nabla u_2|^{\alpha}}{a_1+|\nabla u_2|^{\alpha}}-\dfrac{|\nabla u_2|^{\alpha}}{a_2+|\nabla u_2|^{\alpha}}\bigg)\ge 0$ a.e. in $\O_T$. Thus $v$ satisfies

\begin{equation*}
\left\{
\begin{array}{rcll}
v_t+(-\Delta )^s v& = &B(x,t)\,\cdot\, \n v +h(x,t)& \inn \Omega_T,\\ v(x,t)&=&0 & \inn(\mathbb{R}^N\setminus\Omega)\times [0,T),\\
v(x,0)&=&0 & \inn\Omega,
\end{array}\right.
\end{equation*}
where
$$
B(x,t)= \left\{
\begin{array}{rcll}
0 &  & \mbox{  if   } & |\n u_{a_1}-\n u_{a_2}|=0,\\ \\ (H(|\n u_{a_1}|)-H(|\n u_{a_2}|))\dfrac{\n u_{a_1}-\n u_{a_2}}{|\n u_{a_1}-\n u_{a_2}|^2}  &  & \mbox{ if
} & |\n u_{a_1}-\n u_{a_2}|\neq 0.
\end{array}%
\right.
$$
One can easily see that $|B(x,t)|\le C$. Therefore by Theorem \ref{th2} we obtain that $v\le 0$ and then we conclude.
\end{proof}

\begin{Theorem}\label{uniqq}
Consider the problem
\begin{equation}\label{uniq-grad}
\left\{
\begin{array}{rcll}
u_t+(-\Delta )^s u&=&|\nabla u|^{\alpha}+ f & \inn \Omega_T,\\ u(x,t)&=&0 & \inn(\mathbb{R}^N\setminus\Omega)\times [0,T),\\
u(x,0)&=&u_{0}(x) & \inn\Omega,\\
\end{array}\right.
\end{equation}
where $\Omega\subset \ren$ is a bounded regular domain with $N> 2s$ and $\dfrac{1}{2}<s<1$. Assume that $\a<\dfrac{N+2s}{N+1}$ and $(f,u_0)\in L^1(\O_T)\times L^1(\O)$ are non negative functions. Then problem \eqref{uniq-grad} has a minimal solution  $u\in
    L^{q}(0,T;W_{0}^{1,q}(\Omega))$ for all $q<\dfrac{N+2s}{N+1}$. {In addition, if
    $\a<\dfrac{(N+2s)^2+2s(N+1)}{(N+1)(N+4s)}<\dfrac{N+2s}{N+1}$, then the solution is unique.}
\end{Theorem}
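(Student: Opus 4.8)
The plan is to establish the two assertions in turn: first the existence of a minimal solution by a monotone approximation scheme controlled by a barrier, and then uniqueness in the restricted range by reducing the difference of two solutions to the linear drift problem of Theorem \ref{th2}.

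For the minimal solution I would approximate the gradient nonlinearity by the bounded, (for $\a>1$) Lipschitz functions $g_n(\xi)=\dfrac{|\xi|^{\a}}{1+\frac1n|\xi|^{\a}}$, which increase to $|\xi|^{\a}$ as $n\to\infty$. Since $0\le g_n\le n$, the datum $g_n(\n u)+f$ remains in $L^1(\O_T)$ with a bound depending only on $n$ and on the data, so the Schauder argument of Theorem \ref{maria} applies with the invariant set taken to be a fixed large ball (the nonlinearity no longer grows with the solution), producing for each $n$ a solution $u_n\in L^{q}(0,T;W_0^{1,q}(\O))$, $q<\frac{N+2s}{N+1}$. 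The comparison technique of Theorem \ref{uniqapr} then gives monotonicity: writing $g_n(\n u_n)-g_{n+1}(\n u_{n+1})=\langle B_n,\n(u_n-u_{n+1})\rangle+\big(g_n(\n u_{n+1})-g_{n+1}(\n u_{n+1})\big)$, with $B_n\in L^\infty$ (here the Lipschitz bound on $g_n$ is used) and the last term $\le0$ because $g_n\le g_{n+1}$, Theorem \ref{compa0} yields $u_n\le u_{n+1}$.

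The crux of this part is to keep the monotone limit finite with the correct integrability. For this I would use the solution $\bar u$ furnished by Theorem \ref{maria} on $[0,T]$, $T=T(\O,s)$, as a barrier: since $\bar u_t+(-\D)^s\bar u=|\n\bar u|^{\a}+f\ge g_n(\n\bar u)+f$, the same comparison gives $u_n\le\bar u$ for every $n$, a bound that is independent of the norms of the data. Hence $\{u_n\}$ is bounded in $L^\infty(0,T;L^1(\O))$, so $\{g_n(\n u_n)\}_n$, dominated by $|\n u_n|^{\a}$, is bounded in $L^1(\O_T)$ by the gradient estimate of Theorem \ref{gradiente}; the compactness of $\hat{K}$ in that theorem then yields, up to a subsequence, strong convergence $u_n\to u$ in $L^{q}(0,T;W_0^{1,q}(\O))$ and $\n u_n\to\n u$ a.e. As $\a<q<\frac{N+2s}{N+1}$, the family $|\n u_n|^{\a}$ is uniformly integrable, so Vitali's lemma gives $g_n(\n u_n)\to|\n u|^{\a}$ in $L^1(\O_T)$ and $u$ solves \eqref{uniq-grad}. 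Minimality is automatic, since any solution $w$ of \eqref{uniq-grad} is a supersolution of each approximate problem, whence $u_n\le w$ and $u=\lim_n u_n\le w$.

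For uniqueness, let $u_1,u_2$ be two solutions and set $v=u_1-u_2$; then $v$ solves the linear drift problem $v_t+(-\D)^s v=\langle B,\n v\rangle$ with zero data, where $B=\dfrac{|\n u_1|^{\a}-|\n u_2|^{\a}}{|\n u_1-\n u_2|^{2}}\,(\n u_1-\n u_2)$ and $B=0$ where $\n u_1=\n u_2$. Using $\big|\,|\xi|^{\a}-|\eta|^{\a}\,\big|\le C(|\xi|^{\a-1}+|\eta|^{\a-1})|\xi-\eta|$ for $\a\ge1$, we obtain $|B|\le C(|\n u_1|^{\a-1}+|\n u_2|^{\a-1})$; since $\n u_i\in L^{\rho}(\O_T)$ for every $\rho<\frac{N+2s}{N+1}$, this places $B\in(L^m(\O_T))^N$ for every $m<M(\a):=\dfrac{N+2s}{(N+1)(\a-1)}$. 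To invoke the uniqueness part of Theorem \ref{th2} I would choose $q=\dfrac{N+4s}{N+2s+1}\in\big(1,\frac{N+2s}{N+1}\big)$ (admissible precisely because $s>\frac12$), for which $\max\{q',\frac{2s}{(N+2s)-q(N+1)}\}=q'=\frac{N+4s}{2s-1}$. A direct computation shows that the hypothesis $\a<\frac{(N+2s)^2+2s(N+1)}{(N+1)(N+4s)}$ is exactly equivalent to $M(\a)>\frac{N+4s}{2s-1}$, so one may pick $m\in\big(\frac{N+4s}{2s-1},M(\a)\big)$ with $B\in(L^m)^N$, and Theorem \ref{th2}(2) forces $v=0$.

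The main obstacle is twofold. In the existence part it is securing the uniform, norm‑independent a priori bound for the increasing family $\{u_n\}$ and upgrading weak to strong gradient convergence; both are resolved by the barrier $\bar u$ together with the compactness of $\hat{K}$, which is what allows the nonlinear term to pass to the limit. In the uniqueness part the delicate point is matching the precise integrability of the drift $B$ with the two simultaneous requirements $m>q'$ and $m>\frac{2s}{(N+2s)-q(N+1)}$ of Theorem \ref{th2}(2); it is this balance, realized at $q=\frac{N+4s}{N+2s+1}$, that produces the stated threshold and explains why uniqueness is obtained only on the smaller $\a$‑range.
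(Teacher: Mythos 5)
Your overall strategy (bounded truncations of the nonlinearity, monotonicity and a barrier via the comparison principle for the existence of the minimal solution; reduction of the difference of two solutions to the linear drift problem of Theorem \ref{th2} for uniqueness) is the same as the paper's. Your uniqueness argument is essentially correct and complete: you choose $q=\frac{N+4s}{N+2s+1}$ where the paper implicitly takes $q=\alpha$, but a direct computation shows both choices produce exactly the threshold $\alpha<\frac{(N+2s)^2+2s(N+1)}{(N+1)(N+4s)}$, so this is only a cosmetic difference.

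The existence part, however, has a genuine gap at its central step. You claim that the uniform bound $u_n\le\bar u$ (hence $\{u_n\}$ bounded in $L^\infty(0,T;L^1(\Omega))$) yields the $L^1(\Omega_T)$ bound on $\{g_n(\nabla u_n)\}$ ``by the gradient estimate of Theorem \ref{gradiente}.'' But Theorem \ref{gradiente} controls $\|\nabla u_n\|_{L^q(\Omega_T)}$ by the $L^1$ norm of the \emph{entire} right-hand side, which contains $g_n(\nabla u_n)$ itself. Writing $X=\|\nabla u_n\|_{L^q(\Omega_T)}$ and using $g_n(\xi)\le|\xi|^\alpha$ together with H\"older, this route only gives $X\le C\bigl(X^{\alpha}+\|f\|_{L^1}+\|u_0\|_{L^1}\bigr)$ with $\alpha>1$, which does not close (it admits arbitrarily large $X$). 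The pointwise barrier $u_n\le\bar u$ is the right ingredient, but it must be injected into the representation formula, not into Theorem \ref{gradiente} as a black box: the paper estimates $\iint_{\Omega_{T_0}}|\nabla u_n|^q$ by decomposing $\nabla_x P_\Omega$ near and away from the boundary and replaces $(q-1)$ powers of the datum by $(q-1)$ powers of the known solution $\hat u$ (via $\iint g_nP_\Omega\le\hat u$), arriving at
$\iint|\nabla u_n|^q\le C\|g_n\|_{L^1}\|\hat u\|^{q-1}_{L^r}+C\iint\hat u^q/\delta^q+C$,
an estimate \emph{linear} in $\|g_n\|_{L^1}$ for the $q$-th power of the gradient; only then does Young's inequality with $\alpha<q$ close the bound. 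A second, smaller omission: your barrier $\bar u$ comes from Theorem \ref{maria} and exists only on $(0,T(\Omega,s))$, so your argument as written proves the statement only for small $T$; the paper completes the proof by gluing minimal solutions on successive time slices of length $\le\ddot C(\Omega,s,N)$, using $u(\cdot,T_1)\in L^1(\Omega)$ as new initial datum.
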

\begin{proof}
Using Theorem \ref{maria} we get the existence of $T_0\le T$ such that problem \eqref{uniq-grad} has a solution $u\in
    L^{q}(0,T_0;W_{0}^{1,q}(\Omega))$ for all $q<\dfrac{N+2s}{N+1}$.

To show the existence of a minimal solution we consider $f_n=T_n(f)$ and $u_{0n}=T_n(u_0)$. Define $u_n$ to be the unique solution to problem
\begin{equation}\label{apr00n}
\left\{
\begin{array}{rcll}
u_{nt}+(-\Delta )^s u_n&=&\dfrac{|\nabla u_n|^{\alpha}}{1+{\frac 1n}|\nabla u_n|^{\alpha}}+ f_n & \inn \Omega_{T_0}\equiv\Omega\times (0,T_0),\\ u_n(x,t)&=&0 &
\inn(\mathbb{R}^N\setminus\Omega)\times [0,T_0),\\ u_n(x,0)&=&u_{0n}(x) & \inn\Omega,\\
\end{array}\right.
\end{equation}
It is clear that $\{u_n\}_n$ is an increasing sequence of $n$. If $\hat{u}$ is a nonnegative solution to problem \eqref{uniq-grad} with $\hat{u}\in
    L^{q}(0,T_0;W_{0}^{1,q}(\Omega))$ for all $q<\dfrac{N+2s}{N+1}$,  then by the comparison principle in Theorem \ref{uniqapr} we deduce that $u_n\le \hat{u}$ for all $n$. Hence we get the existence of $u=\limit_{n\to \infty}u_n$ such that $u\le \hat{u}$. Thus $u\in L^r(\O_{T_0})$ for all $r<\dfrac{N+2s}{N}$. To finish we have just to show that $u$ is a solution to problem \eqref{uniq-grad}.

We claim that the sequence $\bigg\{\dfrac{|\nabla u_n|^{\alpha}}{1+{\frac 1n}|\nabla u_n|^{\alpha}}\bigg\}_n$ is bounded in $L^1(\O_{T_0})$.
 For simplicity of tipping we set $g_n(x,t)=\dfrac{|\nabla u_n|^{\alpha}}{1+{\frac 1n}|\nabla u_n|^{\alpha}}$ and we define $w_n$ to be the unique solution to the problem
\begin{equation*}
\left\{
\begin{array}{rcll}
w_{nt}+(-\Delta )^s w_n&=& f_n & \inn \Omega_{T_0}\equiv\Omega\times (0,T_0),\\ w_n(x,t)&=&0 &
\inn(\mathbb{R}^N\setminus\Omega)\times [0,T_0),\\ w_n(x,0)&=&u_{0n}(x) & \inn\Omega.\\
\end{array}\right.
\end{equation*}
By Theorem \ref{th1}, we obtain that the sequence $\{w_n\}$ is bounded in $L^{q}(0,T_0;W_{0}^{1,q}(\Omega))$ for all $q<\dfrac{N+2s}{N+1}$ and that $w_n\uparrow w$, the unique weak solution to the problem
\begin{equation*}
\left\{
\begin{array}{rcll}
w_t+(-\Delta )^s w&=& f & \inn \Omega_{T_0},\\ w(x,t)&=&0 &
\inn(\mathbb{R}^N\setminus\Omega)\times [0,T_0),\\ w(x,0)&=&u_{0}(x) & \inn\Omega.\\
\end{array}\right.
\end{equation*}
Thus
$$
u_n(x,t)=\dint_{0}^{t} \dint_{\Omega} g_n(y,\sigma) P_{\Omega} (x,y, t-\sigma)\,dy\,d\sigma +w_n(x,t)
$$
and
\begin{equation*}
|\nabla u_n(x,t)|\leq \dyle \dint_{0}^{t} \dint_{\Omega}g_n(y,\sigma)|\nabla_x P_{\Omega} (x,y, t-\sigma)|\,dy\,d\sigma +|\n w_n(x,t)|.
\end{equation*}
Fixed $q\in \Big(\a, \dfrac{N+2s}{N+1}\Big)$, then
\begin{equation}\label{nnn}
|\nabla u_n(x,t)|^q\leq \dyle C\bigg(\dint_{0}^{t} \dint_{\Omega}g_n(y,\sigma)|\nabla_x P_{\Omega} (x,y, t-\sigma)|\,dy\,d\sigma\bigg)^q +C |\n w_n(x,t)|^q.
\end{equation}
Setting
$$
D_n(x,t)=\bigg(\dint_{0}^{t} \dint_{\Omega}g_n(y,\sigma)|\nabla_x P_{\Omega} (x,y, t-\sigma)|\,dy\,d\sigma\bigg)^q,
$$
and taking into consideration that $\{|\n w_n|^q\}_n$ is bounded in $L^1(\O)$, to prove the claim we have just to show that $\{D_n\}$ is bounded in $L^1(\O_T)$.

As in the proof of the compactness part in Theorem \ref{th1}, we have
\begin{eqnarray*}
D_n(x,t) & = & \bigg(\dint_{0}^{t} \dint_{\Omega}g_n(y,\sigma)\frac{|\nabla_x P_{\Omega} (x,y, t-\sigma)|}{P_{\Omega} (x,y, t-\sigma)} P_{\Omega} (x,y, t-\sigma)\,dy\,d\sigma\bigg)^q\\
&\le & C\Big(\iint_{\{\O\times (0,t) \cap\{\d(x)>(t-\s)^{\frac{1}{2s}}\}\}\}}g_n(y,\sigma) \frac{P_{\Omega} (x,y, t-\sigma)}{(t-\s)^{\frac{1}{2s}}}\,dy\,d\sigma\Big)^q\\
&+ & \frac{C}{\d^q(x)}\dyle \Big(\iint_{\{\O\times (0,t) \cap\{\d(x)\le (t-\s)^{\frac{1}{2s}}\}\}\}}g_n(y,\sigma){P_{\Omega} (x,y, t-\sigma)}\,dy\,d\sigma\Big)^q\\ \\
&=& D_{n1}(x,t) + D_{n2}(x,t).
\end{eqnarray*}
Similar  to estimating the terms $J_{21}$ and $J_{22}$ in \eqref{j21}, \eqref{j21} respectively, we reach that
\begin{equation}\label{D21n}
\iint_{\O_T} D_{n1}(x,t)dxdt\le C(\O_T)T_0^{\frac{(r-(q-1))(\g_2+1)}{r}}||g_n||_{L^1(\O)}||\hat{u}||^{q-1}_{L^r(\O_{T_0})}
\end{equation}
and
\begin{equation}\label{dn22}
D_{n2}(x,t)=\frac{C}{\d^q(x)}\dyle \Big(\iint_{\{\O\times (0,t) \cap\{\d(x)\le t^{\frac{1}{2s}}\}\}\}}g_n(y,\sigma){P_{\Omega} (x,y, t-\sigma)}\,dy\,d\sigma\Big)^q\le C\frac{\hat{u}^q(x,t)}{\d^q(x)},
\end{equation}
where $r<\frac{N+2s}{N}$. Thus
$$
\iint_{\O_{T_0}} D_{n}(x,t)dxdt\le C(\O_{T_0})\iint_{\O_{T_0}}\frac{\hat{u}^q(x,t)}{\d^q(x)} dxdt +  T^{\frac{(r-(q-1))(\g_2+1)}{r}}||g_n||_{L^1(\O)}||\hat{u}||^{q-1}_{L^r(\O_{T_0})}.
$$
Using the fact that $\dfrac{\hat{u}^q(x,t)}{\d^q(x)}\in L^1(\O_{T_0})$ for all $q<\frac{N+2s}{N+1}$, $\hat{u}\in L^r(\O_{T_0})$ for all $r<\frac{N+2s}{N}$, and going back to \eqref{nnn}, it holds that
$$
\iint_{\O_{T_0}} |\n u_n(x,t)|^q dxdt\le C(\O_{T_0})||g_n||_{L^1(\O)}+C(\O_{T_0})\le C(\O_{T_0}) \iint_{\O_{T_0}}|\n u_n(x,t)|^\a dxdt + C(\O_{T_0}).
$$
Using the fact that $\a<q$ and by Young inequality we reach that $\dyle\iint_{\O_T} |\n u_n(x,t)|^q dxdt\le C(\O_{T_0})$ for all $n$. Hence $\{g_n\}_n$ is bounded in $L^1(\O_{T_0})$ and the claim follows. Therefore, using Theorem \ref{th1} we conclude that, up to a subsequence, $u_n\to u$ strongly in $L^{q}(0,T;W_{0}^{1,q}(\Omega))$ for all $q<\dfrac{N+2s}{N+1}$. Thus $u$ is the minimal solution to problem \eqref{uniq-grad} in $\O_{T_0}$.

{We prove now that the minimal solution $u$ can be defined in the set $\O_T$. According to Theorem \ref{maria}, the existence result holds for $L^1$ data in the set $\O\times (t_1,t_2)$ if $t_2-t_1\le \ddot{C}:=\ddot{C}(\O,s,N)$. Let $u$ the minimal solution obtained above in the set $\O\times (0,T_0)$ and suppose that $T_0<T$. Consider $T_1=T_0-\e$ with $\e>0$ is chosen such that $0<\e<\ddot{C}$. Then $u(.,T_1)\in L^1(\O)$ and then the problem
\begin{equation}\label{globTT}
\left\{
\begin{array}{rcll}
v_{t}+(-\Delta )^s v&=&|\nabla v|^{\alpha}+ f & \inn \Omega\times (T_1,T_1+\ddot{C}),\\ v(x,t)&=&0 &
\inn(\mathbb{R}^N\setminus\Omega)\times [T_1,T_1+\ddot{C}),\\ v(x,T_1)&=&u(x,T_1) & \inn\Omega,\\
\end{array}\right.
\end{equation}
has a minimal solution $v$. It is clear that $u$ is a solution of the same problem as $v$ in the set $\O\times [T_1,T_0)$. Hence $v=u$ in the set $\O\times [T_1,T_0)$. Setting
$$
\overline{u}(x,t)=
\left\{
\begin{array}{lll}
u(x,t) &\mbox{  if  }& (x,t)\in \O\times [0, T_1]\\
v(x,t) &\mbox{  if  }& (x,t)\in \O\times [T_1, T_1+\ddot{C})\\
\end{array}
\right.
$$
then $\overline{u}$ is the minimal solution to the problem
\begin{equation}\label{globTTTT}
\left\{
\begin{array}{rcll}
\overline{u}_{t}+(-\Delta )^s \overline{u}&=&|\nabla \overline{u}|^{\alpha}+ f & \inn \Omega\times (0, T_1+\ddot{C}),\\ \overline{u}(x,t)&=& 0 &
\inn(\mathbb{R}^N\setminus\Omega)\times (0,T_1+\ddot{C}),\\ \overline{u}(x,0)&=&u_{0}(x) & \inn\Omega.\\
\end{array}\right.
\end{equation}
Repeating the above argument in a finite time of steps we get the existence of a minimal solution $u$ to the problem \eqref{uniq-grad} defined in the set $\O_T$ with $u\in L^{q}(0,T;W_{0}^{1,q}(\Omega))$ for all $q<\dfrac{N+2s}{N+1}$.\\
}

{
Finally to show the uniqueness under the condition $\a<\dfrac{(N+2s)^2+2s(N+1)}{(N+1)(N+4s)}$. Notice that $\dfrac{(N+2s)^2+2s(N+1)}{(N+1)(N+4s)}<\dfrac{N+2s}{N+1}$ if and only if $2s>1$ which is our main hypothesis.\\
We will use the comparison principle in Theorem \ref{compa0}. If $u_1,u_2$ are two solutions to problem \eqref{uniq-grad} with  $u_1,u_2\in L^{q}(0,T;W_{0}^{1,q}(\Omega))$ for all $q<\dfrac{N+2s}{N+1}$. Then $v=u_1-u_2$ solves
\begin{equation}\label{uniq-grad000}
\left\{
\begin{array}{rcll}
v_t+(-\Delta )^s v&=& \langle B(x,t), \n v\rangle & \inn \Omega_T,\\ v(x,t)&=&0 & \inn(\mathbb{R}^N\setminus\Omega)\times [0,T),\\
v(x,0)&=& 0 & \inn\Omega,\\
\end{array}\right.
\end{equation}
where $v\in L^{q}(0,T;W_{0}^{1,q}(\Omega))$, $q<\dfrac{N+2s}{N+1}$ and $|B(x,t)|\le C(|\n u_1|^{\a-1}+|\n u_2|^{\a-1})$. It is clear that $B\in L^m(\O_T)$ for all $m<\dfrac{N+2s}{(N+1)(\a-1)}$. Recall that $\a<\dfrac{N+2s}{N+1}$, then $\a'<\dfrac{N+2s}{(N+1)(\a-1)}$. Since $\a<\dfrac{(N+2s)^2+2s(N+1)}{(N+1)(N+4s)}$, then
$$
\dfrac{N+2s}{(N+1)(\a-1)} >\max\{\a', \dfrac{2s}{(N+2s)-\a(N+1)}\}>\dfrac{N+2s}{2s-1}.
$$
Hence we can chose $m<\frac{N+2s}{(N+1)(\a-1)}$ such that $m>\max\{\a', \dfrac{2s}{(N+2s)-\a(N+1)}\}>\dfrac{N+2s}{2s-1}$. Hence by the comparison principle in Theorem \ref{compa0} we deduce that $v=0$ and then we conclude.
}
\end{proof}

{ Under additional regularity hypothesis on the solution, we can prove the next uniqueness result.
\begin{Theorem}\label{uniiq}
Assume that $\a>1$, then the problem \eqref{uniq-grad} has at most one solution $u$ such that $u\in L^{q}(0,T;W_{0}^{1,q}(\Omega))$ with $\dfrac{q}{\a-1}>\max\{\b', \dfrac{2s}{(N+2s)-\b(N+1)}\}$ for some $1<\beta<\dfrac{N+2s}{N+1}$. In particular, problem \eqref{uniq-grad} has at most one solution $u\in \mathcal{C}^{1}(\overline{\O_T})$.
\end{Theorem}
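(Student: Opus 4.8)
The plan is to reduce the uniqueness for the quasilinear problem \eqref{uniq-grad} to the uniqueness for the linear problem with drift proved in Theorem \ref{th2}. Suppose $u_1,u_2$ are two solutions of \eqref{uniq-grad} with the stated regularity and set $v=u_1-u_2$. Then $v$ vanishes on $(\ren\setminus\O)\times(0,T)$, has zero initial datum, and solves
$$
v_t+(-\Delta)^s v=|\nabla u_1|^\a-|\nabla u_2|^\a \qin \O_T.
$$
The first step is to rewrite the right-hand side as a drift term. Applying the fundamental theorem of calculus to $\tau\mapsto|\nabla u_2+\tau\nabla v|^\a$ gives
$$
|\nabla u_1|^\a-|\nabla u_2|^\a=\Big\langle \a\int_0^1|\nabla u_2+\tau\nabla v|^{\a-2}(\nabla u_2+\tau\nabla v)\,d\tau,\ \nabla v\Big\rangle,
$$
so that, defining
$$
B(x,t):=\a\int_0^1|\nabla u_2+\tau\nabla v|^{\a-2}(\nabla u_2+\tau\nabla v)\,d\tau,
$$
the function $v$ is a solution of $v_t+(-\Delta)^s v=\langle B,\nabla v\rangle$ with null data, i.e. of problem \eqref{grad001-un}.

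The key step is to verify that $B$ meets the integrability hypothesis of Theorem \ref{th2}. Since $\a>1$ the integrand $|\eta|^{\a-2}\eta$ has magnitude $|\eta|^{\a-1}$, and using $|\nabla u_2+\tau\nabla v|\le|\nabla u_1|+|\nabla u_2|$ one obtains the pointwise bound
$$
|B(x,t)|\le C(\a)\big(|\nabla u_1|^{\a-1}+|\nabla u_2|^{\a-1}\big).
$$
Because $u_1,u_2\in L^q(0,T;W^{1,q}_0(\O))$ we have $|\nabla u_i|^{\a-1}\in L^{q/(\a-1)}(\O_T)$, hence $B\in(L^m(\O_T))^N$ with $m=\tfrac{q}{\a-1}$. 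By hypothesis there is $\beta\in(1,\tfrac{N+2s}{N+1})$ such that $m=\tfrac{q}{\a-1}>\max\{\beta',\tfrac{2s}{(N+2s)-\beta(N+1)}\}$, which is precisely condition (2) of Theorem \ref{th2} with $\beta$ in the role of the exponent $q$ there. I would also record that the extra requirement $\max\{\beta',\tfrac{2s}{(N+2s)-\beta(N+1)}\}>\tfrac{N+2s}{2s-1}$ is automatic: the map $\beta\mapsto\beta'=\tfrac{\beta}{\beta-1}$ is strictly decreasing on $(1,\tfrac{N+2s}{N+1})$ and equals $\tfrac{N+2s}{2s-1}$ at the right endpoint, so $\beta'>\tfrac{N+2s}{2s-1}$ on the whole interval.

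With $B$ in the admissible class, Theorem \ref{th2} forces the only solution of \eqref{grad001-un} to be the trivial one, so $v\equiv0$ and $u_1=u_2$. For the $\mathcal{C}^1$ statement I would simply note that if $u\in\mathcal{C}^1(\overline{\O_T})$ then $\nabla u$ is bounded, hence $B$ is bounded and lies in $(L^m(\O_T))^N$ for every $m$; one may then take $q$ as large as desired so that $\tfrac{q}{\a-1}$ exceeds the finite threshold $\max\{\beta',\tfrac{2s}{(N+2s)-\beta(N+1)}\}$ for some fixed admissible $\beta$, and the previous argument applies. The only genuinely delicate point—rather than a real obstacle—is matching the integrability $B\in L^{q/(\a-1)}$ coming from the finite-increment formula to the exact threshold demanded by the linear theory; once this is arranged together with the automatic inequality $\beta'>\tfrac{N+2s}{2s-1}$, the conclusion follows directly from Theorem \ref{th2}, exactly as in the uniqueness argument closing the proof of Theorem \ref{uniqq}.
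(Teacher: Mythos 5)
Your argument is correct and is essentially the paper's own proof: the paper likewise sets $v=u_1-u_2$, writes $|\nabla u_1|^\a-|\nabla u_2|^\a=\langle B,\nabla v\rangle$ with $|B|\le C(|\nabla u_1|^{\a-1}+|\nabla u_2|^{\a-1})$, deduces $B\in (L^{q/(\a-1)}(\O_T))^N$, and concludes via the uniqueness for the linear drift problem (invoked there through the comparison principle of Theorem \ref{compa0}, which rests on Theorem \ref{th2}). Your explicit integral formula for $B$, the observation that $\beta'>\frac{N+2s}{2s-1}$ is automatic for $\beta<\frac{N+2s}{N+1}$, and the spelled-out $\mathcal{C}^1$ case are welcome refinements of the same route.
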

\begin{proof}
If $u_1,u_2$ are two solution with the above regularity, then wetting $v=u_1-u_2$, it holds that $v$ solves the problem
\begin{equation}\label{uniq-grad33}
\left\{
\begin{array}{rcll}
v_t+(-\Delta )^s v&=& B(x,t)\,\cdot\,  \n v & \inn \Omega_T,\\ v(x,t)&=&0 & \inn(\mathbb{R}^N\setminus\Omega)\times [0,T),\\
v(x,0)&=& 0 & \inn\Omega,\\
\end{array}\right.
\end{equation}
where $v\in L^{q}(0,T;W_{0}^{1,q}(\Omega))$ and $|B(x,t)|\le C(|\n u_1|^{\a-1}+|\n u_2|^{\a-1})$. According to the regularity hypothesis on $u_1,u_2$, we obtain that $B\in L^m(\O_T)$ with $m=\frac{q}{(\a-1)}>\max\{\b', \dfrac{2s}{(N+2s)-\b(N+1)}\}$ for some $1<\beta<\dfrac{N+2s}{N+1}$.
Thus using the comparison principle in Theorem \ref{compa0} we obtain that $u_1=u_2$ and then we conclude.
\end{proof}
}
\subsection{Some remarks on asymptotic behavior}
We now deal with asymptotic behavior of the  solutions. Let us begin by the next global existence result for the Cauchy problem given in \cite{DI}.
\begin{Theorem}\label{cauchy}
Assume that $\a<\dfrac{N+2s}{N+1}$ and $u_0\in W^{1,\infty}(\ren)\cap L^1(\ren)$, then the problem
\begin{equation}\label{cauchyeq}
\left\{
\begin{array}{rcll}
u_t+(-\Delta )^s u& = & |\n u|^\a & \inn \ren\times (0,T),\\ u(x,0)&=&u_0(x) & \inn\ren,
\end{array}\right.
\end{equation}
has a unique global solution $u$ such that $u\in \mathcal {C}([0,T], W^{1,\infty}(\ren))$ for all $T>0$. Moreover if $\a>\dfrac{N+2s}{N+1}$ and either
$||u_0||_{L^1(\ren)}$ or $||\n u_0||_{\infty}$ is small, then $||u(.,t)||_{L^1(\O)}\le C$ for all $t$.
\end{Theorem}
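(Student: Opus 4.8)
Since the equation is posed on the whole space $\ren$, the plan is to replace the Dirichlet heat kernel $P_\O$ used throughout the paper by the free fractional heat kernel $P(x,t)$ (the kernel of $\p_t+(-\Delta)^s$ on $\ren$) and to work with the mild formulation
$$
u(x,t)=(P(\cdot,t)*u_0)(x)+\int_0^t \big(P(\cdot,t-\s)*|\n u(\cdot,\s)|^\a\big)(x)\,d\s.
$$
On $\ren$ the kernel estimates are cleaner than in Lemma \ref{estimmm}: one has the mass preservation $\|P(\cdot,t)*\psi\|_{L^p(\ren)}\le\|\psi\|_{L^p(\ren)}$ together with the gradient bound $\|\n P(\cdot,t)\|_{L^1(\ren)}\le C\,t^{-\frac{1}{2s}}$, the whole--space analogue (see \cite{BKW},\cite{W}) of the weighted estimate of Theorem \ref{regu-g} with no boundary weight $\d^{1-s}$. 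The hypothesis $s>\frac12$ is decisive, since it yields $\frac{1}{2s}<1$ and hence an integrable singularity in the Duhamel time integral.

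First I would prove local well--posedness in $\mathcal{C}([0,T_0],W^{1,\infty}(\ren))$. Differentiating the mild formula and applying the two kernel estimates gives, for $v=\n u$,
$$
\|v(\cdot,t)\|_{L^\infty}\le\|\n u_0\|_{L^\infty}+C\int_0^t(t-\s)^{-\frac{1}{2s}}\,\|v(\cdot,\s)\|_{L^\infty}^\a\,d\s,
$$
and since $\a\ge1$ and the kernel $(t-\s)^{-1/2s}$ is integrable, the solution map is a contraction on a ball of $\mathcal{C}([0,T_0],W^{1,\infty})$ for $T_0$ small depending only on $\|u_0\|_{W^{1,\infty}}$. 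Uniqueness in this class follows from the same computation applied to the difference of two solutions.

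To reach the global solution of the first assertion, I would propagate a priori bounds using the $L^1$ smoothing of the linear flow. Since $u\ge0$ when $u_0\ge0$, integrating over $\ren$ and using $\int_{\ren}(-\Delta)^s u\,dx=0$ shows the mass grows only through $\int_{\ren}|\n u|^\a dx$; combining this control with the decay $\|\n(P(\cdot,t)*u_0)\|_{L^\infty}\le C\,t^{-\frac{N+1}{2s}}\|u_0\|_{L^1}$ and the subcriticality $\a<\frac{N+2s}{N+1}$ feeds, via a singular Gronwall--Bellman inequality as in Theorem \ref{th2}, a finite bound for $\|v(\cdot,t)\|_{L^\infty}$ on every interval $[0,T]$, ruling out gradient blow--up and thus extending the solution to all $T>0$. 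For the second assertion, with $\a>\frac{N+2s}{N+1}$, I would instead set up a fixed point in a time--weighted norm adapted to the scaling exponent $\frac{2s-\a}{\a-1}$. Writing $\frac{d}{dt}\|u(\cdot,t)\|_{L^1}=\|\n u(\cdot,t)\|_{L^\a}^\a$ and estimating $\|\n u(\cdot,t)\|_{L^\a}$ by the linear decay $\|\n(P(\cdot,t)*u_0)\|_{L^\a}\le C\,t^{-\g}\|u_0\|_{L^1}$ with $\g\a=\frac{\a(N+1)-N}{2s}$, the supercriticality $\g\a>1$ makes $\int_0^\infty\|\n u(\cdot,t)\|_{L^\a}^\a\,dt$ converge at infinity, while smallness of $\|u_0\|_{L^1}$ or $\|\n u_0\|_{L^\infty}$ closes the bootstrap and yields $\|u(\cdot,t)\|_{L^1}\le C$ uniformly in $t$.

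The hard part is not the local theory but the propagation: the pointwise gradient inequality by itself does not exclude finite--time blow--up for arbitrary $\a$, so one must genuinely exploit the subcritical balance $\a<\frac{N+2s}{N+1}$ between the $L^1\to L^\infty$ gradient decay of the semigroup and the nonlinear power; symmetrically, in the supercritical range the delicate point is choosing the scaling--critical weighted space so that the smallness hypothesis makes the nonlinear map globally contractive in time. These quantitative balances, at the Fujita--type threshold $\frac{N+2s}{N+1}$, are where the argument concentrates and are carried out in detail in \cite{DI}.
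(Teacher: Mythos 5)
First, a point of reference: the paper does not prove Theorem \ref{cauchy} at all; it is quoted from the literature, and the sentence immediately after the statement reads ``We refer to \cite{DI} and \cite{W} for the proof.'' So your proposal must be judged against those references rather than against an argument in the paper. Your local well-posedness step (Duhamel formula with the free kernel, $\|\n P(\cdot,t)\|_{L^1}\le Ct^{-1/(2s)}$ with $\frac{1}{2s}<1$, contraction in $\mathcal{C}([0,T_0],W^{1,\infty}(\ren))$) and your treatment of the supercritical assertion (decay exponent $\g\a=\frac{\a(N+1)-N}{2s}>1$ plus smallness to close the bootstrap) are consistent with the arguments carried out there.

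The genuine gap is in your globalization of the first assertion. You propose to rule out gradient blow-up by feeding the $L^1$ mass control into a singular Gronwall inequality, but the Duhamel term forces you to pair $\n P(\cdot,t-\s)$ against $|\n u(\cdot,\s)|^{\a}$: measuring the kernel in $L^1$ returns the purely local bound $C(t-\s)^{-1/(2s)}\|\n u(\cdot,\s)\|^{\a}_{L^\infty}$, which is circular beyond the local existence time, while measuring it in $L^\infty$ against $\||\n u(\cdot,\s)|^{\a}\|_{L^1}$ produces the factor $(t-\s)^{-(N+1)/(2s)}$, whose exponent exceeds $1$, so the time integral diverges near $\s=t$ and no Gronwall--Bellman lemma applies. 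The subcriticality $\a<\frac{N+2s}{N+1}$ governs the large-time integrability of $\|\n u(\cdot,t)\|^{\a}_{L^{\a}}$, not this short-time singularity; moreover your identity $\frac{d}{dt}\int_{\ren} u\,dx=\int_{\ren}|\n u|^{\a}dx$ presupposes $u_0\ge 0$, which is not among the hypotheses. The step that actually closes the argument in \cite{DI} is much simpler and uses no condition on $\a$: the equation is invariant under translations and under adding constants, so the comparison principle makes the solution map an $L^\infty$-contraction, whence $\|u(\cdot+h,t)-u(\cdot,t)\|_{L^\infty}\le\|u_0(\cdot+h)-u_0\|_{L^\infty}\le|h|\,\|\n u_0\|_{L^\infty}$ and therefore $\|\n u(\cdot,t)\|_{L^\infty}\le\|\n u_0\|_{L^\infty}$ for every $t$; this a priori bound allows the local construction to be iterated on intervals of uniform length. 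You should replace your propagation step by this gradient maximum principle (or prove an equivalent a priori $W^{1,\infty}$ bound) before the first assertion can be considered established.
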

It is clear that if $u$ is a solution to problem \eqref{cauchyeq} with $u_0$ satisfying the conditions of Theorem \ref{cauchy}, then $u$ is globally  defined in $t$. We refer to \cite{DI} and \cite{W} for the proof.

In our case and according to the value of $\a$, we can prove the next partial blow up result.
\begin{Theorem}\label{blowup1}
Assume that $s\in (\dfrac{\sqrt{5}-1}{2}, 1]$ and suppose that $1+s<\a<\dfrac{s}{1-s}$. Then for all data $f\in L^\infty(\O\times (0,\infty))$, $f\ge 0$, the solution $u$ to
problem \eqref{grad} obtained in Theorem \ref{fix001} blows-up in a finite time in the sense that
$$
\io u(x,t)\d^s(x)dx\to \infty\mbox{  for }t\to T^*.
$$
\end{Theorem}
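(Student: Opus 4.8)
The plan is to reduce the statement to a superlinear ordinary differential inequality for the weighted mass
$$
Y(t):=\int_\Omega u(x,t)\,\phi_1(x)\,dx,
$$
where $\phi_1>0$ is the first Dirichlet eigenfunction of $(-\Delta)^s$ in $\Omega$, normalized so that $(-\Delta)^s\phi_1=\lambda_1\phi_1$. The starting point is the boundary estimate $\phi_1\simeq\delta^s$, available from the same kernel bounds quoted in Lemma \ref{estimmm}, so that $Y(t)\simeq\int_\Omega u\,\delta^s\,dx$ and the quantity in the statement is comparable to $Y$. First I would test the equation against $\phi_1$ (after localizing in time, since $\phi_1\notin\mathcal T$, and using the self-adjointness of $(-\Delta)^s$ together with $u=0$ outside $\Omega$); since $f\ge0$ this yields
\begin{equation*}
Y'(t)+\lambda_1 Y(t)\ge \int_\Omega|\nabla u(x,t)|^\alpha\,\phi_1(x)\,dx .
\end{equation*}

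The heart of the matter is to bound this gradient term from below by a superlinear power of $Y$. I would invoke the weighted Hardy inequality of Proposition \ref{hardygg} with weight exponent $\sigma=s$: this is licit precisely because $s<\alpha-1$, i.e. $\alpha>1+s$, which is the first hypothesis. Using $\phi_1\simeq\delta^s$ and \eqref{eq:super-hardy},
\begin{equation*}
\int_\Omega|\nabla u|^\alpha\phi_1\,dx\ge c\int_\Omega|\nabla u|^\alpha\delta^s\,dx\ge C\int_\Omega\frac{u^\alpha}{\delta^{\alpha-s}}\,dx .
\end{equation*}
A Hölder (Jensen) step then converts the right-hand side into a power of the mass: writing $u\,\delta^s=\bigl(u\,\delta^{(s-\alpha)/\alpha}\bigr)\,\delta^{\,s-(s-\alpha)/\alpha}$ and applying Hölder with exponents $\alpha,\alpha'$ gives
\begin{equation*}
\Big(\int_\Omega u\,\delta^s\,dx\Big)^\alpha\le \Big(\int_\Omega\frac{u^\alpha}{\delta^{\alpha-s}}\,dx\Big)\Big(\int_\Omega\delta^{\,s+\alpha'}\,dx\Big)^{\alpha/\alpha'},
\end{equation*}
where the geometric integral $\int_\Omega\delta^{\,s+\alpha'}\,dx$ is a finite constant since $s+\alpha'>0$. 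Combining the three displays produces the differential inequality
\begin{equation*}
Y'(t)+\lambda_1 Y(t)\ge C_0\,Y(t)^\alpha,\qquad \alpha>1 .
\end{equation*}
The upper restriction $\alpha<\frac{s}{1-s}$ enters only to guarantee, via $u\,\delta^{1-s}\in L^{m\alpha}(0,T;W^{1,m\alpha}_0(\Omega))$ from Theorem \ref{fix001}, that $\int_\Omega|\nabla u|^\alpha\delta^s\,dx$ is finite and the manipulations are meaningful.

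To launch the blow-up I would use Proposition \ref{diss}: since $u$ is a supersolution of the linear problem \eqref{grad-nn11} with the same nonnegative data, $u(\cdot,t)\ge w(\cdot,t)\ge c(t_1)\delta^s$ on $\Omega\times(t_1,t_2)$, so $Y$ is bounded away from $0$. Once $Y$ reaches the threshold $(\lambda_1/C_0)^{1/(\alpha-1)}$ the absorbing eigenvalue term is dominated, $Y'\ge\tfrac{C_0}{2}Y^\alpha$, and integrating $Y^{-\alpha}\,dY\ge\tfrac{C_0}{2}\,dt$ forces $Y(t)\to\infty$ at a finite $T^*$. I would also record that the hypothesis $s>\frac{\sqrt5-1}{2}$ is exactly what makes the admissible window nonempty, since $1+s<\frac{s}{1-s}\iff s^2+s-1>0$.

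The step I expect to be the main obstacle is making this differential inequality rigorous for the weak/distributional solution of Theorem \ref{fix001}, which lives only in weighted Sobolev spaces: one must justify $\phi_1\simeq\delta^s$ as a test function (by approximation with admissible elements of $\mathcal T$ and careful control of the nonlocal term near $\partial\Omega$) and verify that every weighted integral above is finite in this regularity class. A secondary delicate point is ensuring that $Y$ actually enters the blow-up regime, i.e. that the quantitative lower bound from Proposition \ref{diss}, together with the accumulation of the source $\int_\Omega f\phi_1\,dx$, places $Y$ above the threshold $(\lambda_1/C_0)^{1/(\alpha-1)}$; this requires tracking the interplay of the constants $\lambda_1$, $C_0$ and the data.
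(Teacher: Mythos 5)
Your proposal follows essentially the same route as the paper's proof: test against the first eigenfunction $\phi_1\simeq\delta^s$, apply the weighted Hardy inequality of Proposition \ref{hardygg} (licit since $\alpha>1+s$), convert the resulting weighted integral into a power of $Y$ (the paper uses Jensen where you use H\"older, which is equivalent), and conclude from $Y'+\lambda_1 Y\ge C_0Y^{\alpha}$ by a convexity argument. The only difference is that you are more explicit about the need for $Y$ to exceed the threshold $(\lambda_1/C_0)^{1/(\alpha-1)}$ before the superlinear term dominates — the paper simply asserts the existence of a constant $A$ such that $Y(0)>A$ forces blow-up — so your account is, if anything, slightly more careful on the same point.
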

\begin{proof} Since $s>\dfrac{\sqrt{5}-1}{2}$ the interval of $\alpha$ is non empty.
We will use a convexity argument. Let $\phi_1$ be the first positive bounded eigenfunction of the fractional Laplacian, then $\phi_1$ satisfies
\begin{equation*}
\left\{
\begin{array}{rcll}
(-\Delta )^s \phi_1&=& \l_1\phi_1 & \inn \O,\\ \phi_1&>& 0 & \inn\O,\\ \phi_1 &=& 0 & \inn \mathbb{R}^N\setminus\Omega,
\end{array}\right.
\end{equation*}
and $\phi_1(x)\backsimeq \d^s(x)$. Using $\phi_1$ as a test function in the problem of $u$ and integrating in $x$, we reach that
$$
\frac{d}{dt}\io u(x,t)\phi_1(x) dx+\l_1 \io u(x,t)\phi_1(x) dx =\io |\n u|^\a\phi_1(x) dx\ge C\io |\n u|^\a\d^s(x)dx.
$$
Since $s<\a-1$, then using the weighted Hardy inequality \eqref{eq:super-hardy},  we conclude that
$$
\dyle \frac{d}{dt}\io u(x,t)\phi_1(x) dx+\l_1 \io u(x,t)\phi_1(x) dx \ge C(\O,s)\io \frac{u^{\alpha}(x,t)}{\d^{\alpha-s}(x)}dx.
$$
On the other hand $\a>s$, hence
$\dyle
\io \frac{u^{\alpha}(x,t)}{\d^{\alpha-s}(x)}dx \ge C(\O,s)\io u^{\a}(x,t)\phi_1(x) dx.
$
Therefore using Jensen's inequality it follows that
\begin{eqnarray*}
\dyle \frac{d}{dt}\io u(x,t)\phi_1(x) dx+\l_1 \io u(x,t)\phi_1(x) dx & \ge & \dyle C(\O,s)\io u^{\alpha}(x,t)\phi_1(x)dx\\
& \ge & \dyle C(\O,s,\alpha)\left(\io u(x,t)\phi_1(x) dx\right)^\alpha.
\end{eqnarray*}
Define $Y(t)=\io u(x,t)\phi_1(x) dx$, then
$$
Y'(t)+\l_1 Y(t)\ge C(\O,s,\alpha)Y^\a(t).
$$
A simple convex argument allows us to get the existence of $A$ such that if $Y(0)>A$, then $Y(t)\to \infty$ if $t\to T^*$ depending only on the data. Hence we
conclude.
\end{proof}

\end{document}